\newcommand{\bfzero}{\mathbf{0}}
\newcommand{\bfone}{\mathbf{1}}
\newcommand{\bfA}{\mathbf{A}}
\newcommand{\bfC}{\mathbf{C}}
\newcommand{\bfE}{\mathbf{E}}
\newcommand{\bfI}{\mathbf{I}}
\newcommand{\bfK}{\mathbf{K}}
\newcommand{\bfM}{\mathbf{M}}
\newcommand{\bfQ}{\mathbf{Q}}
\newcommand{\bfW}{\mathbf{W}}
\newcommand{\bfX}{\mathbf{X}}
\newcommand{\bfY}{\mathbf{Y}}
\newcommand{\bfZ}{\mathbf{Z}}
\newcommand{\bfd}{\mathbf{d}}
\newcommand{\bfs}{\mathbf{s}}
\newcommand{\bfv}{\mathbf{v}}
\newcommand{\bfw}{\mathbf{w}}
\newcommand{\bfx}{\mathbf{x}}
\newcommand{\bbE}{\mathbb{E}}
\newcommand{\bbI}{\mathbb{I}}
\newcommand{\bbP}{\mathbb{P}}
\newcommand{\bbR}{\mathbb{R}}
\newcommand{\bbone}{{\ifmmode\mathrm{1\!l}\else\mbox{\(\mathrm{1\!l}\)}\fi}}
\newcommand{\mcalA}{\mathcal{A}}
\newcommand{\mcalH}{\mathcal{H}}
\newcommand{\mcalO}{\mathcal{O}}
\newcommand{\mcalS}{\mathcal{S}}
\newcommand{\mcalY}{\mathcal{Y}}
\newcommand{\mcalZ}{\mathcal{Z}}
\newcommand{\sign}{\mbox{sign}}
\newcommand{\convinprob}{\stackrel{p}{\rightarrow}}
\newcommand{\convinlaw}{\stackrel{d}{\rightarrow}}
\DeclareMathOperator*{\argmin}{arg\,min}
\DeclareMathOperator{\var}{var}
\DeclareMathOperator{\cov}{cov}
\newcommand{\normal}{\mathcal{N}}
\newcommand{\figurewidth}{0.45}
\newtheorem{theorem}{Theorem}
\newtheorem{lemma}[theorem]{Lemma}
\newtheorem{corollary}[theorem]{Corollary}
\newenvironment{assumption}[1]
{\par\medskip\noindent{\bf #1}\hspace{0.5em}\it}
{\par\medskip}
\begin{document}

\title{Asymptotic distribution and sparsistency \\
for $\ell_{1}$ penalized parametric M-estimators, \\
with applications to linear SVM and logistic regression}
\author {
Guilherme Rocha
\footnote{Indiana University, gvrocha@indiana.edu, gvrocha.stat@gmail.com}, 
Xing Wang 
\footnote{Renmin University of China, wangxingscy@gmail.com}
and 
Bin Yu
\footnote{University of California, Berkeley, binyu@stat.berkeley.edu}
}
\maketitle

\onehalfspacing

\begin{abstract}
	Since its early use in least squares regression problems, the $\ell_{1}$-penalization framework for variable selection has been employed in conjunction with a wide range of loss functions encompassing regression, classification and survival analysis.
	While a well developed theory exists for the $\ell_{1}$-penalized least squares estimates, few results concern the behavior of $\ell_{1}$-penalized estimates for general loss functions.
	In this paper, we derive two results concerning penalized estimates for a wide array of penalty and loss functions.
	Our first result characterizes the asymptotic distribution of penalized parametric M-estimators under mild conditions on the loss and penalty functions in the classical setting (fixed-$p$-large-$n$).
	Our second result explicits necessary and sufficient generalized irrepresentability (GI) conditions for $\ell_{1}$-penalized parametric M-estimates to consistently select the components of a model (sparsistency) as well as their sign (sign consistency).
	In general, the GI conditions depend on the Hessian of the risk function at the true value of the unknown parameter.
	Under Gaussian predictors, we obtain a set of conditions under which the GI conditions can be re-expressed solely in terms of the second moment of the predictors.
	We apply our theory to contrast $\ell_{1}$-penalized SVM and logistic regression classifiers and find conditions under which they have the same behavior in terms of their model selection consistency (sparsistency and sign consistency).
	Finally, we provide simulation evidence for the theory based on these classification examples.

\end{abstract}

%
%
\newpage


\doublespacing

\section{Introduction}
\label{section:introduction}

When modeling the a response variable $\bfY\in \mcalY$ as a function of a set of predictors $\bfX\in \bbR^{p}$, statisticians often rely on M-estimators for linear models defined as
\begin{eqnarray}
	\label{equation:definition_unpenalized_estimate_linear_model}
	\begin{array}{ccccc}
	\left(\hat{\alpha}_{n}, \hat{\beta}_{n} \right)
	& := & 
	\argmin\limits_{a\in\bbR, b\in\bbR^{p}}
	\left[\frac{1}{n}\cdot\sum_{i=1}^{n}L(Y_{i}, a+X_{i}^{T}b, t)\vphantom{\int_{0}^{1}}\right],  
	\end{array}
\end{eqnarray}
where $Z_{i} = (Y_{i}, X_{i})$, $i=1, \ldots, n$,  are independent observations of $\bfZ = (\bfY, \bfX)$ and the loss function $L:\mcalY\times \bbR\rightarrow\bbR_{+}$ measures the lack of quality of $a+X_{i} b$ in representing $Y_{i}$.
For a given problem, many alternative loss functions can be used.
Some recent results are aimed at comparing the properties of estimates obtained from alternative loss functions \citep{zhang:2004:statistical-behavior-and-consistency-of-classification-methods-based-on-convex-risk-minimization, bartlett:2005:convexity}.

The choice of an appropriate loss function must take the goal of the analysis into account.
Often, the estimates in \eqref{equation:definition_unpenalized_estimate_linear_model} are used as a tool in understanding the effects of $\bfX$ on $\bfY$.
In that case, sparse estimates $\hat{\beta}_{n}$ are desirable as they select which predictors in $\bfX$ have an effect on the response $\bfY$.
Sparse estimates are often achieved by a penalized estimate
\begin{eqnarray}
	\label{equation:definition_penalized_estimate_linear_model}
	\left(
	\hat{\alpha}_{n}(\lambda_{n}),
	\hat{\beta}_{n}(\lambda_{n}) 
	\right)
	& := & 
	\argmin_{a\in\bbR, b\in\bbR^{p}}
	\left[
	\frac{1}{n}\sum_{i=1}^{n}L(Y_{i}, a+X_{i}b) + \lambda_{n} \cdot T(b)\vphantom{\int}
	\right],
\end{eqnarray}
where $\lambda_{n}\ge0$ is a regularization parameter and $T:\bbR^{p}\rightarrow \bbR_{+}$ is a function penalizing non-sparse models.
Many alternative sparsity inducing penalties exist and a popular family of such penalties is the set of $\ell_{\gamma}$ norms with $\gamma\in(0,1]$ used in bridge estimates \citep{frank:1993:bridge-regression}. The $\ell_{\gamma}$ norm function given by $\|b\|_{\gamma} := \left(\sum_{j=1}^{p}\|b_{j}\|^{\gamma}\right)^{\frac{1}{\gamma}}$.
Two important particular cases are the $\ell_{0}$-penalty -- defined as a penalty on the number of non-zero terms in the estimate used in \citep{akaike:1973:information-criterion, akaike:1974:a-new-look-at-the-statistical-model-identification, schwarz:1978:bic, rissanen:1978:modeling-by-shortest-data-description, hansen:2001:gmdl}, and
the $\ell_{1}$-penalty used in the LASSO \citep{tibshirani:1996:lasso} and basis pursuit \citep{chen:2001:basis-pursuit}.

Recently, a large number of $\ell_{1}$-penalized estimates based on different loss functions have been proposed in the literature.
Some examples are the logistic regression and Cox's proportional hazards model loss \citep{tibshirani:1997:cox_lasso, park:2006:l1-regularization-path-algorithms-for-generalized-linear-models}, 
the hinge loss function for classification \citep{zhu:2004:l1_penalized_svms}, 
the quantile regression loss \citep{li:2006:the-l1-norm-quantile-regression}, and 
the log-determinant Bregman divergence of covariance matrices \citep{banerjee:2005:sparse-covariance-selection-via-robust-maximum-likelihood-estimation, 
ravikumar:2008:high-dimensional-covariance-estimation-minimizing-ell1-penalized-log-determinant-divergence}.
Simultaneously, many families of sparse inducing penalties have been introduced such as the SCAD penalty \citep{fan:1999:scad_penalty} and the generalized elastic net \citep{friedman:2008:fast-sparse-regression-and-classification}.
In this paper, we present theoretical results allowing the behavior of estimates based on different loss and penalty functions to be compared.

Our first main result is a characterization of the asymptotic distribution of the penalized estimates in \eqref{equation:definition_penalized_estimate_linear_model} for a wide class of penalty and convex loss functions.
Our result extends previous results for the squared error loss and $\ell_{\gamma}$ norms by  \citet{knight:2000:asymptotics} and applies to the classical asymptotic setup (large $n$, fixed $p$).
We state our results in a modular fashion so they encompass several combinations of loss and penalty functions.
We provide sufficient conditions on the loss and on the penalty functions for our results to apply.
On the loss side, our results depend on convexity of the loss function and on the risk function defined as
\begin{eqnarray}
	\label{equation:definition_risk_function_linear_model}
	R(t) := \bbE_{\bfX, \bfY}\left[L(\bfY, a + b^{T}\bfX)\right], \mbox{ for } (a, b)\in\bbR^{1+p},
\end{eqnarray}
to be twice continuously differentiable at the ``true'' value of the parameters $(\alpha, \beta)$
\begin{eqnarray}
	\label{equation:definition_risk_minimizer_linear_model}
	\left(
	\alpha, 
	\beta
	\right)
	& := & 
	\argmin_{t\in\bbR^{p}} R(a, b).
\end{eqnarray}

Our second result obtains necessary and sufficient conditions for the $\ell_{1}$-penalized estimate in \eqref{equation:definition_penalized_estimate_linear_model} to consistently select the zeroes (sparsistency) and signs (sign consistency) in the parameter $\beta$.
Previous results for $\ell_{1}$-penalized least squares linear regression show that the set of active and inactive predictors must be sufficiently disentangled for sparsistency to hold.
This requirement is embodied in ``incoherence'' or ``irrepresentability'' conditions  \citep{meinshausen:2004:consistent-neighborhood-selection-for-sparse-high-dimensional-graphs-with-the-lasso, zhao:2006:lasso_model_selection_consistency,  zou:2006:adaptive_lasso, wainwright:2006:sharp-thresholds-for-high-dimensional-and-noisy-recovery-of-sparsity}. 
We call the condition for sparsistency and sign-consistency of general $\ell_{1}$-penalized M-estimators the generalized irrepresentability (GI) condition.
Intuitively, the GI condition can be interpreted as a requirement that the the effects of active and inactive predictors on the loss are distinguishable enough (after ``controlling'' for the intercept term).
This second result relies on the quadratic approximation developed on the first result and is thus only applicable on the classical small-$p$-large-$n$ case.

Our third result shows that, if the predictors are zero-meaned Gaussian and the response variable only depends on $\bfX$ through an affine function of the predictors, the conditions for $\ell_{1}$-penalized estimates as in \eqref{equation:definition_penalized_estimate_linear_model} do not depend on the loss function.
In that case, the GI condition reduces to the ``irrepresentable'' condition in \citet{zhao:2006:lasso_model_selection_consistency}.
This surprising result stems from the properties of the multivariate Gaussian distribution, namely on its  linear mean and constant variance when conditioned on one of its linear combinations.

We apply the theory to contrast and compare linear classifiers based on the hinge loss (parametric SVM) and logistic regression. 
We obtain expressions for the Hessians of the SVM and logistic regression risks and characterize them as weighted averages of the second moment matrices of the predictors conditional on a properly defined ``linear predictor'' variable $\bfM = \alpha+\beta^{T}\bfX$.
Based on this characterization and using our third result, we show that, for a given joint distribution $(\bfY, \bfX)$ where the predictors are Gaussian and the response variable only depends on the predictors through an affine transformation, the two classifiers are either both sparsistent or not.
For more general joint distributions, one of the classifiers can be sparsistent while the other is not.
Over a set of cases where the predictors are mixed Gaussian, we observed logistic regression to be sparsistent more often than SVM classifiers but also observed mixed results in finite samples.
The conditionally weighted second moment characterization of the Hessians also evidences that the Hessians of both SVM and logistic regression risk functions emphasize the second moment of the predictors closer to the optimal separating hyperplane.
This emphasis on the region close to the margin echoes previous results in the non-parametric works of \citet{audibert:2007:fast-learning-rates-for-plug-in-classifiers} and \citet{steinwart:2007:fast-rates-for-support-vector-machines-using-gaussian-kernels} and help explain the similarities between SVM and logistic regression classifiers.

The remainder of this paper is organized as follows.
Section \ref{section:theory} presents our asymptotic results for penalized empirical risk minimizers for general loss and penalty functions.
Section \ref{section:application} presents necessary and sufficient conditions for model selection consistency of $\ell_{1}$-norm penalized empirical risk minimizers for general loss functions.
Section \ref{section:linear_classifiers} applies the results in the previous two sections to the study and comparison of SVM and logistic regression classifiers satisfy the requirements for our results from the previous two sections to apply.
Section \ref{section:simulations} shows a series of simulations providing empirical support for the model selection consistency theory we developed as well as comparisons between SVM and logistic regression classifiers.
Finally, Section \ref{section:discussion} concludes with a brief discussion.

\section{Asymptotic distribution of penalized parametric M-estimators}
\label{section:theory}

In this section, we present the first main result of this paper (Theorem \ref{result:main_result}) which characterizes the asymptotic distribution of penalized empirical risk minimizers for a broad range of penalty and loss functions for a fixed number of predictors.
Theorem \ref{result:main_result} extends previous results by \citet{knight:2000:asymptotics} regarding norm-penalized least squares estimates.
In essence, the steps in the proof of \ref{result:main_result} closely parallel the ones used by \citet{knight:2000:asymptotics}, but we keep the study of the convergence of loss and penalty functions separate so our results can be applied to any combination of loss and penalty functions satisfying the conditions detailed below.

Before proceeding, we introduce some notation.
Our results apply to penalized estimates defined as
\begin{eqnarray}
	\label{equation:definition_penalized_estimate}
	\hat{\theta}_{n}(\lambda_{n}) 
	& := & 
	\argmin_{t\in\Theta\subset\bbR^{p+1}}
	\left[
	\frac{1}{n}\sum_{i=1}^{n}L(Z_{i}, t) + \lambda_{n} \cdot T(t)\vphantom{\int}
	\right].
\end{eqnarray}
The definition in \eqref{equation:definition_penalized_estimate_linear_model} is a particular case that encompasses linear models by setting $Z_{i} = (Y_{i}, X_{i})\in \mcalY\times \bbR^{p}$, $t=(a,b)$ and $L(Z_{i}, t) = L(Y_{i}, a+b^{T}X_{i})$.
In this extended case, the best model we can select is parameterized by
\begin{eqnarray}
	\label{equation:definition_risk_minimizer}
	\theta
	& := & 
	\argmin_{t\in\Theta\subset\bbR^{p+1}} R(t),
\end{eqnarray}
where the risk function has the usual definition
\begin{eqnarray}
	\label{equation:definition_risk_function}
	R(t) := \bbE_{\bfZ}\left[L(\bfZ, t)\right], \mbox{ for } t\in\Theta\subset\bbR^{p+1}.
\end{eqnarray}

Let $u\in \bbR^{p+1}$ and $q_{n}$ be a sequence of non-negative numbers such that $q_{n}\rightarrow\infty$ as $n\rightarrow\infty$. Define
\begin{eqnarray*}
	\begin{array}{cclc}
	C^{(n)}_{\theta}(\bfZ, u)
	& := & 
	\sum_{i=1}^{n}
	\left[
	L\left(\bfZ, \theta+\frac{u}{q_{n}}\right)
	-
	L\left(\bfZ, \theta\right)
	\right]
	,
	\\
	G^{(n)}_{\theta}(u)
	& := & 
	\left[
	T\left(\theta+\frac{u}{q_{n}}\right) 
	-
	T\left(\theta\right)
	\right]
	, & \mbox{ and }
	\\
	V^{(n)}_{\theta}(\bfZ, \lambda_{n}, u) 
	& := &
	C^{(n)}_{\theta}(\bfZ, u)
	+
	\lambda_{n}
	\cdot
	G^{(n)}_{\theta}(u).	
	\end{array} 
\end{eqnarray*}
The $V^{(n)}_{\theta}$ function corresponds to a recentered and rescaled version of the objective function in \eqref{equation:definition_penalized_estimate} so
\begin{eqnarray*}
	q_{n}\cdot\left(\hat{\theta}_{n}(\lambda_{n})-\theta\right)
	& = & 
	\argmin_{u\in\bbR^{p}} V^{(n)}_{\theta}(Z, \lambda_{n}, u).
\end{eqnarray*}
The asymptotic behavior of $\hat{\theta}_{n}(\lambda_{n})$ can be characterized in terms of asymptotic results for $V_{\theta}^{(n)}$ and its minimizer.
A close study of the proof used by \citet{knight:2000:asymptotics} shows that for the most part, the convergences of the loss $\left(C_{\theta}^{(n)}\right)$ and the penalty $\left(G_{\theta}^{(n)}\right)$ functions are studied separately.
This is reflected in our Theorem \ref{result:assembling_theorem}: a versatile and an important ``assembling'' tool.
Any set of assumptions made on the loss and penalty functions that ensures the conditions required by Theorem \ref{result:assembling_theorem} can be used to obtain a characterization of the distribution of penalized estimates.

\begin{theorem}
\label{result:assembling_theorem}
Let $\lambda_{n}\ge 0$ be a sequence of positive (potentially random) real numbers, $Z_{i}$, $i=1, \ldots, n,$ be a sequence of i.i.d. realizations from a distribution $\bbP_{\bfZ}$, $L:\mcalZ\times \Theta\rightarrow\bbR$ be a loss function and $T:\Theta\rightarrow \bbR$ be a penalty function.
Let $\hat{\theta}(\lambda_{n})$ be as defined in \eqref{equation:definition_penalized_estimate}.

Suppose there exist functions $C_{\theta}, G_{\theta}$, a constant $\lambda$, a random vector $\bfW$ and a sequence $q_{n}$ of deterministic positive real numbers 
with $q_{n}\rightarrow \infty$ as $n\rightarrow\infty$ such that, for any compact set $K\subset \bbR^{p}$:
\begin{enumerate}
	\item[i)]  
	$\sup\limits_{u\in K}\left|
	\sum_{i=1}^{n}
	\left[
	L\left(Z_{i}, \theta+\frac{u}{q_{n}}\right)
	-
	L\left(Z_{i}, \theta\right)
	\right]
	-
	C_{\theta}(\bfW, u)
	\right| \stackrel{p}{\rightarrow}0$;
	\item[ii)] 
	$\sup\limits_{u\in K}\left|
	\lambda_{n}
	\left[T\left(\theta+\frac{u}{q_{n}}\right) 
	-
	T\left(\theta\right)\right]
	-
	\lambda\cdot
	G_{\theta}(u)
	\right| \stackrel{p}{\rightarrow}0$;
	\item [iii)] $\hat{\theta}_{n}(\lambda_{n})$ is $O_{p}(q_{n}^{-1})$.
\end{enumerate}
Let $V_{\theta}(\bfW, u) = C_{\theta}(\bfW, u) + \lambda\cdot G_{\theta}(u)$.\\
If i) and ii) hold, then: 
\begin{enumerate}
  \item [a)] 
  $\sup\limits_{u\in K}
  \left|V^{(n)}_{\theta}(\bfZ, \lambda_{n}, u) 
  - 
  V_{\theta}(\bfW, u)\right|\stackrel{p}{\rightarrow}0$;
\end{enumerate}
If i), ii) and iii) hold, then: 
\begin{enumerate}
  \item [b)] 
  $q_{n}\left(\hat{\theta}_{n}(\lambda_{n})-\theta\right)
  \stackrel{d}{\rightarrow}
  \arg\min\limits_{u} V_{\theta}(\bfW, u)$.
\end{enumerate}
\end{theorem}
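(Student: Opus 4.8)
Part (a) follows by unwinding the definitions and applying a triangle inequality; no convexity is needed. By construction $V^{(n)}_{\theta}(\bfZ,\lambda_{n},u) = C^{(n)}_{\theta}(\bfZ,u) + \lambda_{n}G^{(n)}_{\theta}(u)$ and $V_{\theta}(\bfW,u) = C_{\theta}(\bfW,u) + \lambda G_{\theta}(u)$, where $C^{(n)}_{\theta}(\bfZ,u) = \sum_{i=1}^{n}[L(Z_{i},\theta+u/q_{n})-L(Z_{i},\theta)]$ and $\lambda_{n}G^{(n)}_{\theta}(u) = \lambda_{n}[T(\theta+u/q_{n})-T(\theta)]$ are exactly the quantities sitting inside the suprema in i) and ii). Hence, for a fixed compact $K$, $\sup_{u\in K}|V^{(n)}_{\theta}(\bfZ,\lambda_{n},u)-V_{\theta}(\bfW,u)| \le \sup_{u\in K}|C^{(n)}_{\theta}(\bfZ,u)-C_{\theta}(\bfW,u)| + \sup_{u\in K}|\lambda_{n}G^{(n)}_{\theta}(u)-\lambda G_{\theta}(u)|$, and the two terms on the right tend to $0$ in probability by i) and ii).

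For part (b) I would use the standard convexity argument for minimizers of convex criterion functions, following the treatment of Knight and Fu. Its ingredients here are four: (1) $u_{n} := q_{n}(\hat{\theta}_{n}(\lambda_{n})-\theta)$ is a global minimizer of $u\mapsto V^{(n)}_{\theta}(\bfZ,\lambda_{n},u)$ --- this is the recentering/rescaling identity recorded just before the theorem --- and this map is convex because $L(Z,\cdot)$ and $T(\cdot)$ are convex and $\lambda_{n}\ge 0$; (2) $\{u_{n}\}$ is tight, by hypothesis iii); (3) the uniform-on-compacts convergence established in part a), whose limit $V_{\theta}(\bfW,\cdot)$, being a locally uniform limit of finite convex functions, is itself finite and convex, hence continuous; and (4) the minimizer $\hat{u}$ of $V_{\theta}(\bfW,\cdot)$ is almost surely unique. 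Combining these is the routine convexity-lemma bookkeeping: given $\varepsilon>0$, tightness of $\{u_{n}\}$ and of $\hat{u}$ produces a ball $B_{M}$ containing all the $u_{n}$ and containing $\hat{u}$ in its interior outside an event of probability at most $2\varepsilon$; on that event $u_{n}$ and $\hat{u}$ are, respectively, the minimizers of $V^{(n)}_{\theta}(\bfZ,\lambda_{n},\cdot)$ and of $V_{\theta}(\bfW,\cdot)$ over $B_{M}$, and by part a) together with the elementary fact that uniform convergence of functions on a compact set plus uniqueness of the limit's minimizer implies convergence of the minimizers, $u_{n}$ is driven to $\hat{u}$; since $\varepsilon>0$ is arbitrary this yields $u_{n}\convinprob\hat{u}$, i.e.\ $q_{n}(\hat{\theta}_{n}(\lambda_{n})-\theta)\convinlaw\hat{u}=\arg\min_{u}V_{\theta}(\bfW,u)$, which is conclusion (b).

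The step I expect to demand the most care --- the real obstacle --- is ingredient (4): hypotheses i)--iii) do not on their own force $V_{\theta}(\bfW,\cdot)$ to have a unique, or even an attained, minimizer, yet this is needed both for the right-hand side of (b) to name an honest random vector and for the convexity argument to identify the limit. In every application in this paper uniqueness holds because $C_{\theta}(\bfW,\cdot)$ is strictly convex and coercive: there $C_{\theta}(\bfW,u) = -\bfW^{T}u + \tfrac{1}{2}\,u^{T}Hu$ with $H = \nabla^{2}R(\theta)$ positive definite and $G_{\theta}$ convex, so $V_{\theta}(\bfW,\cdot)$ inherits strict convexity and coercivity and has a unique minimizer. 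I would therefore either attach an explicit unique-minimizer proviso to (b) or fold strict convexity of the limiting loss into the standing assumptions; the remaining ingredients --- the convexity and finiteness bookkeeping, the localization via tightness, and the deterministic minimizer-convergence lemma --- are all standard.
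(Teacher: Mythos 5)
Your proposal is correct and follows essentially the same route as the paper: part (a) via the identical triangle-inequality decomposition into the loss and penalty terms, and part (b) by localizing to a compact set using the $O_{p}(q_{n}^{-1})$ hypothesis and then invoking the uniform-on-compacts convergence from part (a) to pass to the limit of the minimizers. Your flagged concern about uniqueness of $\arg\min_{u}V_{\theta}(\bfW,u)$ is well placed --- the paper's own proof silently assumes it in its final step (and the conclusion of (b) already presupposes the argmin names a well-defined random vector), so making it an explicit proviso, verified in the applications via positive definiteness of $H(\theta)$, is a refinement rather than a departure.
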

Roughly speaking, we can prove Theorem \ref{result:assembling_theorem} by observing that boundedness in probability of the sequence $\hat{\theta}_{n}(\lambda_{n})$ implies that $\hat{\theta}_{n}(\lambda_{n}) \in K$, for some compact set $K$ with probability approaching $1$.
Given this condition, it follows that the uniform convergence in probability over compact sets is sufficient to ensure that the minimizer of $V^{(n)}_{\theta}$ converges in probability to the minimizer of $V_{\theta}$.
A detailed proof is given in Appendix \ref{section:proof}.

Based on Theorem \ref{result:assembling_theorem}, we now proceed to study the loss and penalty functions separately.

\subsection{Loss functions}
\label{section:theory_loss}

We now establish sufficient conditions for the loss function to display the convergence required in Theorem \ref{result:assembling_theorem}.
Our results use standard approximations for the loss function in terms of the risk function combined with the Convexity Lemma by \citet{pollard:1991:asymptotics-for-least-absolute-deviation-regression-estimators}, which is used as a tool to upgrade pointwise convergence results to uniform convergence over compact sets.

\begin{assumption}{Loss Assumptions (LA)}
\begin{enumerate}
	\item [L1.] The parameter $\theta = \argmin\limits_{t\in \Theta}\bbE\left[L(\bfZ, t)\right]$ is bounded and unique; 

	\item [L2.] $\bbE\left| L(\bfZ, t)\right|< \infty$ for each $t$; 
	
	\item [L3.] The loss function $L(Z, t)$ is such that:
    \begin{enumerate}
		\item [a)] $L(Z, t)$ is differentiable with respect to $t$ at $t=\theta$ for $\bbP_{\bfZ}$-almost every $Z$ with derivative $\nabla_{t}L(Z,\theta)$ and
		\begin{eqnarray}
		\label{equation:covariance_of_scores}
		J(\theta) & := & \bbE\left[\nabla_{t}L(\bfZ,\theta)\nabla_{t}L(\bfZ,\theta)^{T}\right]<\infty;	
		\end{eqnarray}

		\item [b)] the risk function $R(t)=\bbE\left[L(\bfZ, t)\right]$ is twice differentiable with respect to $t$ at $t=\theta$ with positive definite Hessian matrix
		\begin{eqnarray}
		\label{equation:risk_hessian}
			\left[H(\theta)\right]_{ij} 
			:= 
			\left.
			\frac{\partial^{2}R(t)}{\partial t_{i}\partial t_{j}}
			\right|_{t=\theta}
			= 
			\left.
			\frac
			{\partial^{2}\left(\bbE\left[ L(\bfZ, t) \right]\right)}
			{\partial t_{i} \partial t_{j}}
			\right|_{t=\theta};
		\end{eqnarray}

    \end{enumerate}

	\item [L4.] The loss function $L(Z, t)$ is convex with respect to its argument $t$ for $\bbP_{\bfZ}$-almost every $Z$.
\end{enumerate}
\end{assumption}

Assumptions L1-L4 -- the L being a mnemonic for the loss function -- are relatively mild.
The first assumption on the loss function (L1) ensures that the parameter $\theta$ in \eqref{equation:definition_risk_minimizer} is well defined and is thus a minimal requirement.
Assumption L2 yields that a  law of large numbers is valid for each value of $t$, and thus that the risk function equals the pointwise limit of the empirical risk.
In our proofs, assumption L3 is used extensively to obtain local quadratic asymptotic approximations to the risk function around the parameter $\theta$ that are pointwise valid around $\theta$ (i.e., for each $\theta+\frac{u}{q_{n}}$ for a sequence $0<q_{n}\rightarrow \infty$ as $n\rightarrow\infty$). 
The requirement that the risk function is twice differentiable does not require differentiability of the loss function itself, as will become evident in our analysis of the hinge loss in Section \ref{section:linear_classifiers}.
Finally, assumption L4 is used to upgrade the local approximation for the risk function from pointwise to uniform over compact sets by means of Pollard's convexity lemma \citep{pollard:1991:asymptotics-for-least-absolute-deviation-regression-estimators}.
Alternative assumptions can replace L4: any set of conditions yielding uniform convergence over compact sets will do.
One could, for instance, replace it by conditions on the local complexity/entropy of the loss function \citep[see, for instance, ][]{dudley:1999:uniform-central-limit-theorems}.
We stick to convexity here given its computational convenience and widespread use in statistics and machine learning \citep{bartlett:2005:convexity}.

\begin{lemma}
  \label{result:uniform_convergence_for_loss}
  Under the LA assumptions L1, L2, and L3:
  \begin{enumerate}
	\item [a)] There exists a $p$-dimensional random vector $\bfW\sim N\left(0, J(\theta)\right)$ such that
	\begin{eqnarray*}
      \frac{1}{n}\sum_{i=1}^{n}
	  \left[
	  L\left(Z_{i}, \theta+\frac{u}{\sqrt{n}}\right)
      -
      L\left(Z_{i}, \theta\right) 
	  \right]
      -
      \left[u^{T}\cdot H(\theta)\cdot u + \bfW^{T}\cdot u\right]
      \stackrel{p}{\rightarrow}0, \mbox{ for each } u\in \bbR^{p}.
	\end{eqnarray*}
	
  \end{enumerate}

  \begin{enumerate}
	\item [b)]
	If, in addition, LA assumption L4 holds, then:
	\begin{enumerate}
		\item [b.1)]
		for every compact subset $K\subset \bbR^{p}$,
		\begin{eqnarray*}
		\sup\limits_{u\in K}
		 \left\|
      	 \frac{1}{n}\sum_{i=1}^{n}
	  	 \left[
	  	 L\left(Z_{i}, \theta+\frac{u}{\sqrt{n}}\right)
      	 -
      	 L\left(Z_{i}, \theta\right) 
	  	 \right]
      	 -
      	 \left[u^{T}\cdot H(\theta)\cdot u + \bfW^{T}\cdot u\right]
      	 \right\|
      	 \stackrel{p}{\rightarrow}0, \mbox{ and }
		\end{eqnarray*}
		\item [b.2)]
		$\sqrt{n}\cdot\hat{\theta}_{n}(0) = O_{p}(1)$.
	\end{enumerate}
  \end{enumerate}
\end{lemma}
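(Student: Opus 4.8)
The plan is to follow the template of \citet{knight:2000:asymptotics} and \citet{pollard:1991:asymptotics-for-least-absolute-deviation-regression-estimators}: establish the pointwise statement (a) first, then use convexity together with Pollard's Convexity Lemma to upgrade it to the uniform statement (b.1), and finally read off the rate (b.2) from a standard argmin-of-a-convex-process argument. For (a), I would first record from L1 and L3b that $\theta$ is the unique minimiser of the twice-differentiable risk $R$, so $\nabla R(\theta)=\bfzero$; combined with L2 and L3a (dominated convergence on the difference quotients of $R$) this gives $\bbE[\nabla_{t}L(\bfZ,\theta)]=\nabla R(\theta)=\bfzero$. Then, for fixed $u$, I would split each per-observation increment as
\[
L\!\left(Z_{i},\theta+\tfrac{u}{\sqrt n}\right)-L(Z_{i},\theta)=\tfrac{1}{\sqrt n}\,u^{\top}\nabla_{t}L(Z_{i},\theta)+r_{ni}(u),
\]
where differentiability of $L$ at $\theta$ (L3a) forces $\sqrt n\,r_{ni}(u)\to0$ almost surely. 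Summing over $i$ writes $C^{(n)}_{\theta}(\bfZ,u)=u^{\top}\bfW_{n}+\sum_{i}r_{ni}(u)$ with $\bfW_{n}:=\tfrac{1}{\sqrt n}\sum_{i}\nabla_{t}L(Z_{i},\theta)$; the linear part converges by the multivariate CLT (mean zero by the previous step, covariance $J(\theta)<\infty$ by L3a), so $\bfW_{n}\convinlaw\bfW\sim N(0,J(\theta))$.

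For the remainder, I would compute its mean, $n\,\bbE[r_{n1}(u)]=n\big[R(\theta+u/\sqrt n)-R(\theta)\big]\to u^{\top}H(\theta)u$ by a second-order expansion of $R$ at $\theta$ (L3b, using $\nabla R(\theta)=\bfzero$), and then show that its variance $n\,\var(r_{n1}(u))$ vanishes, so that Chebyshev gives $\sum_{i}r_{ni}(u)\convinprob u^{\top}H(\theta)u$; combining the two pieces (and $\bfW_{n}\convinlaw\bfW$) yields (a). The step I expect to be the \emph{main obstacle} is exactly this variance bound: since $L$ is assumed differentiable only once and only at the single point $\theta$, the empirical loss cannot be Taylor-expanded to second order, so the curvature must be imported through the genuinely twice-differentiable risk while the stochastic remainder is shown negligible; beyond $\sqrt n\,r_{ni}(u)\to0$ a.s.\ this needs a domination/uniform-integrability bound on the rescaled remainders $\sqrt n\, r_{ni}(u)$, to be extracted from $\bbE|L(\bfZ,t)|<\infty$ (L2) and $\bbE\|\nabla_{t}L(\bfZ,\theta)\|^{2}<\infty$ (L3a) — under convexity (L4) the same bound is immediate from monotonicity of difference quotients, which is one reason L4 is invoked again in part (b).

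For (b.1), convexity of $L$ (L4) makes each $u\mapsto C^{(n)}_{\theta}(\bfZ,u)$ convex, hence also the remainder process $u\mapsto\sum_{i}r_{ni}(u)=C^{(n)}_{\theta}(\bfZ,u)-u^{\top}\bfW_{n}$, which by (a) converges pointwise in probability to the finite, strictly convex quadratic $u\mapsto u^{\top}H(\theta)u$ (strict because $H(\theta)$ is positive definite, L3b). Pollard's Convexity Lemma then upgrades this to $\sup_{u\in K}|\sum_{i}r_{ni}(u)-u^{\top}H(\theta)u|\convinprob0$ for every compact $K$; adding back $u^{\top}\bfW_{n}$ and recalling $\bfW_{n}\convinlaw\bfW$ gives (b.1).

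For (b.2), let $\hat u_{n}:=\sqrt n(\hat\theta_{n}(0)-\theta)$, the minimiser of the convex process $C^{(n)}_{\theta}(\bfZ,\cdot)$ (i.e.\ of $V^{(n)}_{\theta}(\bfZ,\lambda_{n},\cdot)$ with $\lambda_{n}=0$). By (b.1) this process converges uniformly on compacta to the strictly convex quadratic $V(u)=u^{\top}H(\theta)u+\bfW^{\top}u$, whose minimiser is a.s.\ finite. The usual argmin-of-a-convex-process argument finishes: for $R$ large the limit satisfies $V(u)>0=V(\bfzero)$ on the sphere $\{|u|=R\}$ with probability close to $1$ (as $\bfW$ is a.s.\ finite, using $V(u)\ge\lambda_{\min}(H(\theta))R^{2}-|\bfW|R$ there), uniform convergence on that compact sphere transfers the same inequality to $C^{(n)}_{\theta}(\bfZ,\cdot)$ for large $n$, and convexity then forces $\hat u_{n}\in\{|u|<R\}$; hence $\hat u_{n}=O_{p}(1)$, i.e.\ $\hat\theta_{n}(0)-\theta=O_{p}(n^{-1/2})$, which is (b.2). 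Note that (b.2) cannot instead be deduced from Theorem~\ref{result:assembling_theorem}(b), since that result already presupposes the $O_{p}(q_{n}^{-1})$ rate one would be trying to prove.
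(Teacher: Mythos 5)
Your proposal is correct and follows essentially the same route as the paper: the same decomposition of each increment into the linear score term plus a remainder, a CLT for $\bfW_{n}$, the quadratic expansion of the risk to handle $\bbE\left[\sum_{i}r_{ni}(u)\right]$, a vanishing-variance/Chebyshev step for the fluctuations, Pollard's Convexity Lemma for (b.1), and the Pollard-style localization of the argmin of a convex process for (b.2) (the paper centers the ball at $-\tfrac{1}{2}(C^{-1})^{T}\bfW_{n}$ rather than at the origin, which is immaterial). You are in fact slightly more careful than the paper on the one delicate point, namely that the variance bound on the remainders requires a domination argument beyond the almost-sure $o(n^{-1/2})$ rate.
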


Our proof of the pointwise convergence (a) and of boundedness of the M-estimator (b.2) is offered in the Appendix \ref{section:proof}.
It can be seen as an extension of the results for the absolute error loss due to \citet{pollard:1991:asymptotics-for-least-absolute-deviation-regression-estimators}.
The upgrade from pointwise convergence to uniform convergence over compact sets is a direct application of the Convexity Lemma in  \citet{pollard:1991:asymptotics-for-least-absolute-deviation-regression-estimators}.

\subsection{Penalty functions}
\label{section:theory_penalty}

Lemma \ref{result:uniform_convergence_for_penalty} establish conditions for non-adaptive penalties to satisfy the conditions required by Theorem \ref{result:assembling_theorem}.

\begin{assumption}{Penalty Assumptions (PA)}
	\begin{enumerate}
		\item [P1.] $T:\Theta\rightarrow \bbR$ is non-random and $T(t)\ge 0$ for all $t\in\Theta$;
		
		\item [P2.] $T$ is continuous in $t\in\Theta$;
		
		\item [P3.] The function 
		\begin{eqnarray}
			\label{equation:definition_penalty_pseudogradient}
			G_{\theta}( u) := \lim_{h\downarrow 0}\frac{T(\theta+ u\cdot h)-T(\theta)}{h}
		\end{eqnarray}
		is well defined and continuous for all $ u\in \bbR^{p}$;

		\item [P4.] The set $\{t\in \Theta: T(t)\le c\}$ is compact for all $c<T(\theta)$.

	\end{enumerate}
\end{assumption}

The set of assumptions P1 through P4 on the penalties -- P is a mnemonic for penalty function -- 
is broad enough to encompass  all $\ell_{\gamma}$ norms with $\gamma>0$ and the set of generalized elastic net penalties in \citet{friedman:2008:fast-sparse-regression-and-classification}.
With minor adjustments, the SCAD penalty \citep{fan:1999:scad_penalty} can also be treated by our theory.
We emphasize that convexity is not a requirement.
Non-randomness and continuity (assumptions P1 and P2) make it easy to obtain uniform convergence over compact sets.
Condition P3 is similar but milder than a differentiability requirement. 
We prove that the penalty function converges uniformly over compact sets by using conditions P1 through P3.
Condition P4 is useful in ensuring that the penalized estimates are bounded in probability.
It amounts to a requirement that the penalty function $T$ constrains the penalized estimates to be within a compact set for all $\lambda>0$.

\begin{lemma}
  \label{result:uniform_convergence_for_penalty}
	Let $\theta$ be as defined in \eqref{equation:definition_risk_minimizer}, $q_{n}$ be a sequence of non-random positive real numbers satisfying  $q_{n}\rightarrow\infty$ and $\lambda_{n}$ be a sequence on non-negative (potentially random) real numbers with $\lambda_{n}\cdot q_{n}^{-1}\convinprob \lambda$ as $n\rightarrow\infty$.
	Suppose that the $T$ is a penalty function satisfying the PA conditions P1 through P3.
	Then, for all compact subsets $K\in \bbR^{p}$:
	\begin{eqnarray*}
	\sup_{ u\in K}\left|\lambda_{n}\cdot \left[T\left(\theta + \frac{ u}{q_{n}}\right) - T(\theta)\right] - \lambda\cdot G_{\theta}( u)\right|\rightarrow 0, 
	 \mbox{ as } n\rightarrow\infty.	
	\end{eqnarray*}
\end{lemma}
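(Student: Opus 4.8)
\quad The plan is to peel the (possibly random) multiplier $\lambda_n$ off first and thereby reduce the statement to a purely deterministic uniform-approximation property of the penalty near $\theta$. For $u\in K$ write
\[
\lambda_n\Big[T\big(\theta+\tfrac{u}{q_n}\big)-T(\theta)\Big]-\lambda\,G_\theta(u)
=\frac{\lambda_n}{q_n}\Big(q_n\big[T(\theta+\tfrac{u}{q_n})-T(\theta)\big]-G_\theta(u)\Big)
+\Big(\frac{\lambda_n}{q_n}-\lambda\Big)G_\theta(u).
\]
By P3 the map $G_\theta$ is continuous, hence bounded on the compact set $K$, so the last term is at most $\big|\tfrac{\lambda_n}{q_n}-\lambda\big|\cdot\sup_{u\in K}|G_\theta(u)|\convinprob 0$; and since $\tfrac{\lambda_n}{q_n}\convinprob\lambda$ is $O_p(1)$, the whole display will go to $0$ uniformly over $u\in K$ (deterministically when $\lambda_n$ is deterministic) once I establish the deterministic claim
\[
\Delta_n:=\sup_{u\in K}\Big|q_n\big[T(\theta+\tfrac{u}{q_n})-T(\theta)\big]-G_\theta(u)\Big|\longrightarrow 0\quad(n\to\infty).
\]
Note that merely invoking ``pointwise convergence of continuous functions to a continuous function on a compact set'' does not give this; some extra structure is needed.

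For $\Delta_n$ I would first record that $G_\theta$ is positively homogeneous of degree one, $G_\theta(cu)=c\,G_\theta(u)$ for $c>0$ and $G_\theta(0)=0$ (rescale the limiting variable $h$ in the definition of $G_\theta$). Substituting $v=u/q_n$ and using this homogeneity gives $q_n[T(\theta+v)-T(\theta)]-G_\theta(q_nv)=q_n\big(T(\theta+v)-T(\theta)-G_\theta(v)\big)$, so, with $K\subseteq\{\|u\|\le M\}$,
\[
\Delta_n\;\le\;\sup_{0<\|v\|\le M/q_n}q_n\,\big|T(\theta+v)-T(\theta)-G_\theta(v)\big|
\;\le\;M\sup_{0<\|v\|\le M/q_n}\frac{\big|T(\theta+v)-T(\theta)-G_\theta(v)\big|}{\|v\|},
\]
since $\|v\|\le M/q_n$ forces $q_n\le M/\|v\|$. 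Hence it suffices to prove that $\rho(r):=\sup_{0<\|v\|\le r}\|v\|^{-1}\big|T(\theta+v)-T(\theta)-G_\theta(v)\big|\to 0$ as $r\downarrow 0$, i.e.\ that $T(\theta+v)=T(\theta)+G_\theta(v)+o(\|v\|)$ \emph{uniformly} in the direction of $v$, not merely along each fixed ray (the latter being exactly what P3 supplies).

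This uniform upgrade is the main obstacle. I would argue by contradiction using compactness of the unit sphere $S^{p-1}$: if $\rho(r)\not\to0$, pick $\delta>0$ and $v_k\to0$ with $|T(\theta+v_k)-T(\theta)-G_\theta(v_k)|>\delta\|v_k\|$, set $r_k=\|v_k\|\to0$ and $\omega_k=v_k/r_k$, and pass to a subsequence with $\omega_k\to\omega_\ast\in S^{p-1}$. Dividing through by $r_k$, using $G_\theta(v_k)=r_kG_\theta(\omega_k)$ and continuity of $G_\theta$ (P3) gives $\big|r_k^{-1}(T(\theta+r_k\omega_k)-T(\theta))-G_\theta(\omega_\ast)\big|\ge\delta/2$ for all large $k$; but P3 along the single ray $\omega_\ast$ gives $r_k^{-1}(T(\theta+r_k\omega_\ast)-T(\theta))\to G_\theta(\omega_\ast)$, so a contradiction follows provided $r_k^{-1}\big(T(\theta+r_k\omega_k)-T(\theta+r_k\omega_\ast)\big)\to0$. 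Plain continuity of $T$ (P2) only yields that this numerator tends to $0$; to kill it after dividing by $r_k$ I would use that $T$ is locally Lipschitz near $\theta$, whence the numerator is $O(r_k\|\omega_k-\omega_\ast\|)=o(r_k)$. Every penalty that P1--P4 are meant to accommodate ($\ell_\gamma$ norms with $\gamma\ge1$, the generalized elastic net, SCAD) is locally Lipschitz, so this is harmless; and in the convex case one can avoid it entirely, since then $h\mapsto h^{-1}(T(\theta+hu)-T(\theta))$ is nondecreasing in $h$ and decreases to $G_\theta(u)$, so Dini's theorem on $S^{p-1}$ (continuity of $T$ and of $G_\theta$) gives uniform convergence directly, and homogeneity extends it to all of $K$.
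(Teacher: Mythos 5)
Your proposal follows the same overall route as the paper's proof --- peel off the multiplier via $\lambda_{n}q_{n}^{-1}\convinprob\lambda$ and reduce to the deterministic uniform convergence of the difference quotients $q_{n}\bigl[T(\theta+u/q_{n})-T(\theta)\bigr]$ to $G_{\theta}(u)$ --- but it diverges substantively at the key step, and to the proposal's credit. (Your arrangement of the decomposition is also slightly cleaner: you only need $\sup_{u\in K}|G_{\theta}(u)|<\infty$, which is immediate from P3, whereas the paper bounds the difference quotient itself uniformly in $u$ and $n$ by an $M_{K}$ justified only by ``$T$ is continuous,'' which does not by itself give that bound.) For the main step the paper simply asserts that ``because $G_{\theta}$ is continuous, the pointwise convergence can be strengthened to uniform convergence over compact sets.'' As you correctly point out, pointwise convergence of continuous functions to a continuous limit on a compact set is not in general uniform, and under P1--P3 alone the claim can actually fail: in $\bbR^{2}$ take $\theta=0$ and $T(v)=\|v\|\bigl(1+\phi(v)\bigr)$ with $\phi(x,y)=x^{2}y/(x^{4}+y^{2})$, $\phi(0)=0$; then $T$ is continuous and nonnegative, $G_{0}(u)=\|u\|$ is continuous, yet along $v=(x,x^{2})$ the difference quotient misses $G_{0}$ by a fixed amount, so the supremum over $\|u\|\le 1$ does not vanish. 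Your argument --- positive homogeneity of $G_{\theta}$, reduction to uniform-in-direction first-order approximation of $T$ at $\theta$, and the compactness/contradiction step --- genuinely closes this gap, at the cost of an additional hypothesis (local Lipschitz continuity of $T$ near $\theta$, or convexity plus Dini's theorem) that is not among P1--P3 but that you rightly observe is satisfied by every penalty the paper intends to cover, and that some such strengthening of P2--P3 is in fact necessary for the lemma to hold. In short: your proof is correct where it claims to be, is more rigorous than the paper's, and identifies a real defect in both the paper's proof and, strictly speaking, its stated hypotheses.
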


A proof for Lemma \ref{result:uniform_convergence_for_penalty} is offered in Appendix \ref{section:proof}.


\subsection{Convergence of penalized empirical risk minimizers}
\label{section:theory_convergence}

We now state our first main result, which characterizes the asymptotic distribution of penalized parametric M-estimators.

\begin{theorem}
	\label{result:main_result}
	Assume $\lambda_{n}$ be a sequence of non-negative (potentially random) real numbers such that $\lambda_{n}\cdot n^{-\frac{1}{2}}\convinprob \lambda\ge0$ as $n\rightarrow\infty$.
	Let $\theta$, $\hat{\theta}_{n}(\lambda_{n})$, $J(\theta)$, $H(\theta)$, and $G_{\theta}(u)$ be as defined in \eqref{equation:definition_risk_minimizer}, \eqref{equation:definition_penalized_estimate},\eqref{equation:covariance_of_scores}, \eqref{equation:risk_hessian}, and \eqref{equation:definition_penalty_pseudogradient} respectively.
	Define:	
	\begin{eqnarray*}
		V_{\theta}(w,  u)
		& = &
		 u^{T}\cdot H(\theta)\cdot u + w^{T}\cdot  u + \lambda\cdot G_{\theta}( u), \mbox{ for } w\in \bbR^{p}.
	\end{eqnarray*}
	If the loss function satisfies the LA assumptions and the penalty function satisfies the PA assumptions, then there exists a $p$-dimensional random vector $\bfW\sim N\left(0, J(\theta)\right)$ such that:
	\begin{eqnarray*}
		\sqrt{n}\left(\hat{\theta}_{n}(\lambda_{n})-\theta\right)
		\convinlaw 
		\argmin_{ u} V_{\theta}(\bfW,  u).
	\end{eqnarray*}
\end{theorem}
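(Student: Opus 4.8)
The plan is to deduce Theorem~\ref{result:main_result} from the assembling theorem (Theorem~\ref{result:assembling_theorem}) applied with $q_{n}=\sqrt{n}$, the Gaussian vector $\bfW\sim N(0,J(\theta))$ furnished by Lemma~\ref{result:uniform_convergence_for_loss}, the limiting loss increment $C_{\theta}(\bfW,u)=u^{T}H(\theta)u+\bfW^{T}u$, and the limiting penalty increment $G_{\theta}(u)$ of \eqref{equation:definition_penalty_pseudogradient}. Once the three hypotheses of Theorem~\ref{result:assembling_theorem} are in place, its conclusion~(b) reads $\sqrt{n}(\hat{\theta}_{n}(\lambda_{n})-\theta)\convinlaw\argmin_{u}V_{\theta}(\bfW,u)$ with $V_{\theta}(\bfW,u)=C_{\theta}(\bfW,u)+\lambda G_{\theta}(u)=u^{T}H(\theta)u+\bfW^{T}u+\lambda G_{\theta}(u)$, which is exactly the asserted statement. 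So all the work is in verifying hypotheses (i)--(iii).

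Hypotheses (i) and (ii) are essentially direct citations of the two preceding lemmas. Under the LA assumptions, Lemma~\ref{result:uniform_convergence_for_loss}(a) produces a single random vector $\bfW\sim N(0,J(\theta))$ delivering the pointwise quadratic approximation, and part~(b.1)---which additionally invokes convexity (L4) through Pollard's convexity lemma---upgrades it to uniform convergence on every compact $K$; this is what hypothesis~(i) requires, with $C_{\theta}(\bfW,u)=u^{T}H(\theta)u+\bfW^{T}u$. Under the PA assumptions and the hypothesis $\lambda_{n}n^{-1/2}\convinprob\lambda$, Lemma~\ref{result:uniform_convergence_for_penalty} with $q_{n}=\sqrt{n}$ gives $\sup_{u\in K}\left|\lambda_{n}[T(\theta+u/\sqrt{n})-T(\theta)]-\lambda G_{\theta}(u)\right|\to 0$, which is hypothesis~(ii); that $\lambda_{n}$ may be random is harmless since its limit $\lambda$ is deterministic and Lemma~\ref{result:uniform_convergence_for_penalty} already permits random $\lambda_{n}$. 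I would also record here that $V_{\theta}(\bfW,\cdot)$ attains a minimum: $H(\theta)$ is positive definite by L3(b), while $G_{\theta}$ is continuous (P3) and positively homogeneous of degree one, hence $O(\|u\|)$, so $V_{\theta}(\bfW,u)\to+\infty$ as $\|u\|\to\infty$ and $\argmin_{u}V_{\theta}(\bfW,u)$ is a well-defined, tight random variable (and is a.s.\ unique when $T$ is convex, e.g.\ for the $\ell_{1}$ penalty, since then $V_{\theta}(\bfW,\cdot)$ is strictly convex---the case that makes the argmin continuous-mapping step unambiguous).

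The substantive step is hypothesis~(iii), $\hat{\theta}_{n}(\lambda_{n})=O_{p}(n^{-1/2})$, because Lemma~\ref{result:uniform_convergence_for_loss}(b.2) only supplies this for the \emph{unpenalized} estimate $\hat{\theta}_{n}(0)$. I would establish it in three layers. (1)~Boundedness in probability: assumption P4 (compact sublevel sets of $T$ strictly below $T(\theta)$), together with nonnegativity of $L$ and $T$, confines $\hat{\theta}_{n}(\lambda_{n})$ to a compact set with probability tending to one---the role P4 is designed to play. (2)~Consistency: on that compact set the loss part of the objective converges uniformly (L2, L4, Pollard) to $R$, whose unique minimizer is $\theta$ (L1), while the penalty contributes a term negligible at this scale because $\lambda_{n}=o_{p}(n)$; hence $\hat{\theta}_{n}(\lambda_{n})\convinprob\theta$. (3)~Rate: restricting to a shrinking neighborhood of $\theta$ and mimicking the proof of Lemma~\ref{result:uniform_convergence_for_loss}(b.2), the local quadratic lower bound on $R$ coming from the positive definite $H(\theta)$ forces $\sqrt{n}(\hat{\theta}_{n}(\lambda_{n})-\theta)=O_{p}(1)$; the only new ingredient is the penalty increment $\lambda_{n}[T(\hat{\theta}_{n}(\lambda_{n}))-T(\theta)]$, which is $O_{p}(1)$ on the $\sqrt{n}$-scale by Lemma~\ref{result:uniform_convergence_for_penalty} and, being bounded below, cannot let $\hat{\theta}_{n}(\lambda_{n})$ drift far without being overwhelmed by the dominant quadratic term. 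With (i), (ii) and (iii) in hand, Theorem~\ref{result:assembling_theorem}(b) yields the claim. The one place I expect genuine friction is layer~(3): reconciling the diverging multiplier $\lambda_{n}$ with the $\sqrt{n}$ rate and making the ``the quadratic term wins'' heuristic rigorous; the rest is bookkeeping and appeals to the two lemmas and the assembling theorem.
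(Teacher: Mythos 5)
Your overall architecture is exactly the paper's: verify hypotheses (i)--(iii) of Theorem~\ref{result:assembling_theorem} with $q_{n}=\sqrt{n}$, obtain (i) from Lemma~\ref{result:uniform_convergence_for_loss} with $C_{\theta}(\bfW,u)=u^{T}H(\theta)u+\bfW^{T}u$, obtain (ii) from Lemma~\ref{result:uniform_convergence_for_penalty}, and then invoke conclusion~(b). For hypotheses (i) and (ii) your argument is the paper's argument. The divergence --- and the gap --- is entirely in hypothesis~(iii).

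The paper does not run a consistency-plus-rate argument on the penalized objective. It proves (iii) by a transfer device (Lemma~\ref{result:bounded_unregularized_estimate_implies_bounded_penalized_estimate}): Lemma~\ref{result:uniform_convergence_for_loss}(b.2) already yields $\sqrt{n}\,\hat{\theta}_{n}(0)=O_{p}(1)$ for the \emph{unpenalized} estimator, and the key observation is that $T(\hat{\theta}_{n}(\lambda_{n}))\le T(\hat{\theta}_{n}(0))$ always holds: if the penalized minimizer had strictly larger penalty value, adding $\lambda_{n}T(\cdot)$ to the empirical-risk inequality $\frac{1}{n}\sum_{i}L(Z_{i},\hat{\theta}_{n}(0))\le\frac{1}{n}\sum_{i}L(Z_{i},\hat{\theta}_{n}(\lambda_{n}))$ would contradict the optimality of $\hat{\theta}_{n}(\lambda_{n})$. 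Combined with P4, this confines $\hat{\theta}_{n}(\lambda_{n})$ to a compact sublevel set of $T$ determined by $\hat{\theta}_{n}(0)$ and transfers the $O_{p}(q_{n}^{-1})$ rate. Your three-layer substitute has two concrete problems. Layer~(1) asserts that P4 by itself confines $\hat{\theta}_{n}(\lambda_{n})$ to a compact set, but P4 only makes sublevel sets $\{t:T(t)\le c\}$ compact; without an a priori bound on $T(\hat{\theta}_{n}(\lambda_{n}))$ --- which is precisely what the comparison $T(\hat{\theta}_{n}(\lambda_{n}))\le T(\hat{\theta}_{n}(0))$ supplies --- there is nothing to apply P4 to. Layer~(3) is circular as stated: you control the penalty increment $\lambda_{n}[T(\hat{\theta}_{n}(\lambda_{n}))-T(\theta)]$ via Lemma~\ref{result:uniform_convergence_for_penalty}, but that lemma gives uniform control only for $u$ ranging over compact sets, i.e.\ only once $\hat{\theta}_{n}(\lambda_{n})$ is already known to lie within $O_{p}(n^{-1/2})$ of $\theta$, which is the very thing being proved. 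Note also that the increment is not usefully bounded below: $T(t)-T(\theta)$ can be negative and, at scale $\lambda_{n}\asymp\sqrt{n}$, actively pulls the estimate away from $\theta$, so ``the quadratic term wins'' genuinely requires the localization you do not yet have. You correctly sensed the friction; the paper's resolution is the monotonicity-of-the-penalty-value trick rather than a direct rate computation. (Your side remark that $V_{\theta}(\bfW,\cdot)$ is coercive, hence has a well-defined minimizer, is a correct and worthwhile addition that the paper leaves implicit.)
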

\begin{proof} \textbf{Theorem \ref{result:main_result}.}\\
In the appendix, we prove that $\hat{\theta}_{n}(0)=O_{p}(1)$ implies $\hat{\theta}_{n}(\lambda_{n})=O_{p}(1)$ for all $\lambda_{n}>0$ (Lemma \ref{result:bounded_unregularized_estimate_implies_bounded_penalized_estimate}).
Thus, under the assumptions made, Lemma \ref{result:uniform_convergence_for_loss} along with Lemma \ref{result:bounded_unregularized_estimate_implies_bounded_penalized_estimate} ensures that conditions (i) and (iii) in Theorem \ref{result:assembling_theorem} are satisfied.
Additionally, Lemma \ref{result:uniform_convergence_for_penalty} ensures that condition (ii) in Theorem \ref{result:assembling_theorem} is met.
The result then follows directly from Theorem \ref{result:assembling_theorem}.
\end{proof}

We emphasize that the approximation afforded by Theorem \ref{result:main_result} is valid for the unique minimizer of the risk function $\theta$ as defined in \eqref{equation:definition_risk_minimizer}.
As the penalty function is not assumed to be convex, local minima may exist in finite samples.
However, the conditions in Theorem \ref{result:main_result} ensure that asymptotically the penalty component of the $V_{\theta}^{(n)}$ function is negligible in comparison to the risk component and asymptotically the minimizer is unique.

In the next section, we use the asymptotic characterization of the distribution of $\ell_{1}$-norm penalized empirical risk minimizers in Theorem \ref{result:main_result} to obtain necessary and sufficient conditions for the existence of a sequence of tuning parameters $\lambda_{n}$ for which $\hat{\theta}_{n}(\lambda_{n})$ is model selection consistent.

\section{Model selection consistency of $\ell_{1}$-penalized for M-estimators}
\label{section:application}

Our main result concerning $\ell_{1}$-norm penalized estimates gives necessary and sufficient conditions ensuring the existence of a sequence of regularization parameters $\lambda_{n}$ such that $\hat{\theta}_{n}(\lambda_{n})$ correctly identify the signs of the entries in the optimal vector of coefficients $\theta$ as defined in \eqref{equation:definition_risk_minimizer} as the sample size increases.
Before we can state this result, we must introduce some notation and terminology.
To allow the usual practice of including non-penalized intercepts to linear models, we write the risk minimizer as $\theta=\left(\alpha, \beta\right)\in \bbR^{p+1}$, where only the coefficients in $\beta\in\bbR^{p}$ are included in the $\ell_{1}$-penalty.
We define a partition of $\beta$ in terms of its sparsity pattern:
\begin{eqnarray*}
	\mcalA = \left\{j\in \{1, \ldots, p\}: \beta_{j}\neq0\right\}, & \mbox{ and } & \mcalA^{c} = \left\{j\in \{1, \ldots, p\}: \beta_{j}=0\right\}.
\end{eqnarray*}
We let $q$ denote the number of indices in the set $\mcalA$.
We will say that an estimate $\hat{\theta}_{n}(\lambda)$ is \textit{sign-correct} if $\sign(\hat{\beta}_{n}(\lambda)) = \sign(\beta)$, where $\sign(t)$ for a vector $t\in \bbR^{p}$ is a $p$-dimensional vector with:
\begin{eqnarray}
	\left[\sign(t)\right]_{j} 
	& = & 
	\left\{
	\begin{array}{ll}
	1,  \mbox{ if } t_{j}>0, \\
	0,  \mbox{ if } t_{j}=0, \mbox{ and } \\
	-1, \mbox{ if } t_{j}<0.
	\end{array}
	\right.
\end{eqnarray}
We will say that a sequence of estimates of regularization paths $\hat{\theta}(.):\bbR\rightarrow\Theta$ is \textit{sparsistent and sign consistent} if there exists a sequence $\lambda_{n}$ of (potentially random) non-negative values of the regularization parameters such that
\begin{eqnarray*}
	\lim_{n\rightarrow\infty}\bbP\left(\hat{\beta}_{n}(\lambda_{n}) \mbox{ is sign correct}\right)=1.
\end{eqnarray*}
We emphasize that the definition requires only the penalized components of $\theta$ to be asymptotically sign-correct.

For a risk function satisfying assumption L2 above, rearrange and partition the $\left(1+q+(p-q)\right)\times\left(1+q+(p-q)\right)$ Hessian:
\begin{eqnarray}
	H(\theta) 
	& = & 
	\left[
	\begin{array}{ccc}
	H_{\alpha,     \alpha    }(\theta)  & H_{\alpha,     \mcalA    }(\theta)  & H_{\alpha    , \mcalA^{c}}(\theta)
	\\
	H_{\mcalA,     \alpha    }(\theta)  & H_{\mcalA,     \mcalA    }(\theta)  & H_{\mcalA    , \mcalA^{c}}(\theta)
	\\
	H_{\mcalA^{c}, \alpha    }(\theta)  & H_{\mcalA^{c}, \mcalA    }(\theta)  & H_{\mcalA^{c}, \mcalA^{c}}(\theta)
	\end{array}
	\right].
\end{eqnarray}

\begin{theorem}
	\label{result:generalized_irrepresentability_condition}
	Let $\hat{\theta}_{n}(\lambda_{n}) = \left(\hat{\alpha}_{n}(\lambda_{n}), \hat{\beta}_{n}(\lambda_{n})\right)$ be as defined in \eqref{equation:definition_penalized_estimate} above with an $\ell_{1}$-penalty applied only to the terms in $\hat{\beta}_{n}(\lambda_{n})$.	
	Suppose the loss function satisfy the conditions in Assumption Set 1 and define
	\begin{eqnarray}
	\label{equation:generalized_irrepresentability_condition}
	\eta(\theta) 
	:= 
	1
	-
	\left\|H_{\mcalA^{c}, \mcalA    }(\theta)
	\left[ 
	H_{\mcalA,\mcalA}(\theta)
	- 
	H_{\mcalA,\alpha}(\theta)H_{\alpha,\alpha}(\theta)^{-1}H_{\alpha,\mcalA}(\theta)
	\right]^{-1}\sign(\beta_{\mcalA})\right\|_{\infty}\ge 0.
	\end{eqnarray}

\begin{enumerate}
	\item [a)]
	Let $\lambda_{n}$ is a sequence of non-negative (potentially random) real numbers such that such that $\lambda_{n}\cdot n^{-1}\convinprob 0$, and $\lambda_{n}\cdot n^{-\frac{1+c}{2}}\convinprob \lambda > 0 $ for some $0<c<\frac{1}{2}$ as $n\rightarrow\infty$.
	If $\eta(\theta)>0$, then:
		\begin{eqnarray*}
		\bbP\left[\sign\left(\hat{\beta}_{n}(\lambda_{n})\right)=\sign(\beta)\right] \ge 1 - \exp[-n^{c}].
		\end{eqnarray*}	
		
		\item [b)] Conversely, if $\eta(\theta)<0$, then, for any sequence of non-negative numbers  $\lambda_{n}$
		\begin{eqnarray*}
		\lim_{n\rightarrow\infty}\bbP\left[\sign\left(\hat{\beta}_{n}(\lambda_{n})\right)=\sign(\beta)\right] < 1.
		\end{eqnarray*}
\end{enumerate}
\end{theorem}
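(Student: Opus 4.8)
The plan is to establish both directions by a primal--dual witness (restricted--minimizer) argument, using the local quadratic approximation of the empirical risk developed in Section \ref{section:theory} to linearize the first--order conditions of \eqref{equation:definition_penalized_estimate} around $\theta$. Convexity of the loss (LA assumption L4) and of the $\ell_{1}$ penalty make the penalized objective convex, so (asymptotically) its minimizer is unique and is characterized by its subgradient optimality conditions; it therefore suffices to exhibit a point satisfying those conditions together with the sign pattern $\sign(\beta)$. Throughout, write $\tilde{H}(\theta)$ for the principal block of $H(\theta)$ indexed by $\{\alpha\}\cup\mcalA$, which is positive definite because $H(\theta)\succ 0$ (L3), and recall $\bbE[\nabla_{t}L(\bfZ,\theta)]=\nabla R(\theta)=0$ since $\theta$ minimizes $R$.

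For part (a), let $\hat{\theta}^{R}_{n}=(\hat{\alpha}_{n},\hat{\beta}_{n,\mcalA},0)$ minimize the penalized objective subject to $b_{\mcalA^{c}}=0$; this is itself a penalized M-estimation problem of the form \eqref{equation:definition_penalized_estimate} whose loss satisfies the LA assumptions with Hessian $\tilde{H}(\theta)$. Step 1: using Lemma \ref{result:uniform_convergence_for_loss} applied to this restricted model, the basic inequality (the penalized objective at $\hat{\theta}^{R}_{n}$ is no larger than at $\theta$), and the local strong convexity furnished by $\tilde{H}(\theta)\succ 0$, show that $\hat{\theta}^{R}_{n}-\theta$ tends to $0$ at the rate set by the penalty scale, which under the hypotheses of (a) is $o(1)$ but strictly larger than $n^{-1/2}$; since $\min_{j\in\mcalA}|\beta_{j}|>0$ is fixed, this already forces $\sign(\hat{\beta}_{n,\mcalA})=\sign(\beta_{\mcalA})$, and a Bernstein-type bound for $\|\frac{1}{n}\sum_{i}\nabla_{t}L(Z_{i},\theta)\|_{\infty}$ fed through the linearized conditions upgrades this to probability $1-\exp[-c'n^{c}]$, the exponent $c$ being exactly calibrated so that the event that the empirical gradient falls below the penalty scale has probability of that order. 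Step 2: write the first--order condition of the \emph{unconstrained} problem along the $b_{\mcalA^{c}}$ directions evaluated at $\hat{\theta}^{R}_{n}$, divide by the penalty scale, and call the result $\hat{z}_{\mcalA^{c}}$; then $\hat{\theta}^{R}_{n}$ extends to a global minimizer with $\hat{\beta}_{n,\mcalA^{c}}=0$ precisely when $\|\hat{z}_{\mcalA^{c}}\|_{\infty}\le 1$. Substituting the linearization of the active--block conditions, which expresses $(\hat{\alpha}_{n}-\alpha,\hat{\beta}_{n,\mcalA}-\beta_{\mcalA})$ as $-\tilde{H}(\theta)^{-1}$ times the active--block empirical gradient plus a penalty term, shows that $\hat{z}_{\mcalA^{c}}$ equals $(H_{\mcalA^{c},\alpha}(\theta),\,H_{\mcalA^{c},\mcalA}(\theta))\,\tilde{H}(\theta)^{-1}(0,\sign(\beta_{\mcalA}))^{T}$ up to an error that is $O_{p}(n^{-1/2})$ relative to the penalty scale and hence $o_{p}(1)$; carrying out the block product via the Schur complement of $H_{\alpha,\alpha}(\theta)$ in $\tilde{H}(\theta)$ yields exactly the vector inside the $\ell_{\infty}$ norm in \eqref{equation:generalized_irrepresentability_condition}, whose norm is $1-\eta(\theta)$. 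Thus $\eta(\theta)>0$ gives $\|\hat{z}_{\mcalA^{c}}\|_{\infty}<1$ with the same $1-\exp[-c'n^{c}]$ probability. Step 3: on the intersection of the two events, $\hat{\theta}^{R}_{n}$ together with the subgradient $(0,\sign(\beta_{\mcalA}),\hat{z}_{\mcalA^{c}})$ satisfies the optimality conditions of \eqref{equation:definition_penalized_estimate}, so by convexity $\hat{\theta}_{n}(\lambda_{n})=\hat{\theta}^{R}_{n}$ is sign--correct; a union bound gives the claimed $1-\exp[-n^{c}]$.

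For the converse (b), argue by contraposition through the same conditions. On the event $\{\sign(\hat{\beta}_{n}(\lambda_{n}))=\sign(\beta)\}$ the estimator has $\hat{\beta}_{n,\mcalA^{c}}=0$, hence minimizes the restricted problem and must satisfy $\|\hat{z}_{\mcalA^{c}}\|_{\infty}\le 1$ for $\hat{z}_{\mcalA^{c}}$ built as in Step 2. Split on the tuning regime: if $\sqrt{n}$ times the penalty scale diverges along a subsequence, Step 2's expansion forces $\hat{z}_{\mcalA^{c}}$ to converge to a vector of $\ell_{\infty}$ norm $1-\eta(\theta)>1$, contradicting $\|\hat{z}_{\mcalA^{c}}\|_{\infty}\le 1$, so $\bbP[\sign(\hat{\beta}_{n})=\sign(\beta)]\to 0$ there; if instead $\sqrt{n}$ times the penalty scale stays bounded, the leading stochastic term $\frac{1}{n}\sum_{i}\nabla_{b_{\mcalA^{c}}}L(Z_{i},\theta)$---which has a nondegenerate Gaussian limit once rescaled by $\sqrt{n}$ (the $\bfW$ of Lemma \ref{result:uniform_convergence_for_loss})---is not $o_{p}$ of the penalty scale, so $\bbP[\hat{\beta}_{n,\mcalA^{c}}=0]$ stays bounded away from $1$ irrespective of $\eta(\theta)$. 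A subsequence argument applied to an arbitrary sequence $\lambda_{n}$ then delivers $\limsup_{n}\bbP[\sign(\hat{\beta}_{n}(\lambda_{n}))=\sign(\beta)]<1$.

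I expect the main obstacle to be the uniform control of the linearization remainder in the first--order conditions over the shrinking-but-not-$n^{-1/2}$ neighborhood of $\theta$ that is relevant here: since $L$ need not be even once differentiable (only $R$ is, by L3), the remainder $[\frac{1}{n}\sum_{i}\nabla_{t}L(Z_{i},t)-\nabla R(t)]-[\frac{1}{n}\sum_{i}\nabla_{t}L(Z_{i},\theta)-\nabla R(\theta)]$ must be shown to be $o_{p}(\|t-\theta\|)+o_{p}(n^{-1/2})$ uniformly, which is the stochastic-equicontinuity content already packaged, via Pollard's convexity lemma, into Lemma \ref{result:uniform_convergence_for_loss}; pushing that lemma from fixed compact sets of $u=\sqrt{n}(t-\theta)$ to the slowly growing sets needed here, and sharpening the deviation probabilities enough for the $\exp[-n^{c}]$ bound, is where most of the work lies. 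A secondary point is verifying that the restricted problem is well posed---unique minimizer and positive definite restricted Hessian---which follows from L1 and $H(\theta)\succ 0$ but must be in hand before the linearization can be run.
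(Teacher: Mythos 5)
Your primal--dual witness construction is a genuinely different route from the paper's. The paper never linearizes the empirical first-order conditions: it invokes Theorem \ref{result:assembling_theorem} and Lemma \ref{result:uniform_convergence_for_loss} to pass to the limiting penalized quadratic program $u^{T}H(\theta)u+\bfW^{T}u+\tfrac{\lambda_{n}}{\sqrt{n}}\bigl(u_{\mcalA}+\|u_{\mcalA^{c}}\|_{1}\bigr)$, writes the KKT conditions of that \emph{limit} problem, solves the active block by Schur inversion to produce a Gaussian vector $\tilde{\bfW}$, and then reads off the failure events as Gaussian tail events whose probabilities it bounds by $1-\Phi(r)\le\exp[-r^{2}/2]$; the $\exp[-n^{c}]$ rate comes from the fact that $\lambda_{n}/\sqrt{n}\sim\lambda n^{c/2}\to\infty$ while $\tilde{\bfW}$ is a fixed Gaussian. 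The advantage of that route is that it stays entirely at the level of objective values, where Pollard's convexity lemma and the argmin continuous-mapping step do all the work, so the non-differentiability of $L$ never has to be confronted at the gradient level.

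Two concrete gaps in your plan. First, the ``Bernstein-type bound for $\|\frac{1}{n}\sum_{i}\nabla_{t}L(Z_{i},\theta)\|_{\infty}$'' yielding $\exp[-c'n^{c}]$ deviations is not available under the stated assumptions: L3(a) only guarantees $J(\theta)=\bbE[\nabla_{t}L\,\nabla_{t}L^{T}]<\infty$, i.e.\ finite second moments of the scores, which gives only polynomial (Chebyshev-type) tails. To run your Step 1 you would need sub-exponential scores, an assumption the theorem does not make; the paper obtains the exponential rate only \emph{after} the CLT, as a tail bound on the limiting normal $\tilde{\bfW}$, which is why its part (a) is really an asymptotic statement dressed in finite-sample clothing. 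Second, the linearization of the (sub)gradient optimality conditions --- the identity expressing $(\hat{\alpha}_{n}-\alpha,\hat{\beta}_{n,\mcalA}-\beta_{\mcalA})$ as $-\tilde{H}(\theta)^{-1}$ times the empirical active-block gradient plus a penalty term, and the $o_{p}$ control of the remainder in $\hat{z}_{\mcalA^{c}}$ --- is not ``already packaged'' into Lemma \ref{result:uniform_convergence_for_loss}: that lemma (and Pollard's convexity lemma behind it) controls differences of \emph{objective values}, not empirical subgradients, and upgrading it to a uniform stochastic-equicontinuity statement for the subdifferential of a possibly non-smooth $L$ (the hinge loss being the motivating case) is a separate argument you have identified but not supplied. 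Since that step is precisely where your dual-feasibility vector $\hat{z}_{\mcalA^{c}}$ acquires the form whose $\ell_{\infty}$ norm is $1-\eta(\theta)$, the proposal as written does not yet constitute a proof; it is a viable alternative architecture whose hardest two steps remain open, one of them (the exponential concentration) unfillable without strengthening the moment hypotheses.
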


The result in Theorem \ref{result:generalized_irrepresentability_condition} extends  the model selection consistency results in  \citet{zhao:2006:lasso_model_selection_consistency} concerning LASSO estimates (based on $L_{2}$-loss) to more general parametric estimates defined as $\ell_{1}$-norm penalized M-estimators based on loss functions satisfying the conditions in Assumption Set 1.
We will call the condition in \eqref{equation:generalized_irrepresentability_condition} the \textit{generalized irrepresentability condition} (GI condition) which in the case of the $L_{2}$-loss with zero-mean predictors recovers Zhao and Yu's irrepresentable condition.
Accordingly, we call $\eta(\theta)$ the \textit{GI index},
which can be interpreted as a measure of incoherence between the active and inactive predictors.
Positive values of $\eta(\theta)$ imply the effects of active and inactive predictors are distinguishable enough so the $\ell_{1}$-penalized estimate can correctly identify the signs of all coefficients in the optimal model given a sufficiently large sample size.

A condition similar to the generalized irrepresentability condition \eqref{equation:generalized_irrepresentability_condition} appears in \citet{ravikumar:2008:high-dimensional-covariance-estimation-minimizing-ell1-penalized-log-determinant-divergence}.
There, the GI condition is used to obtain sufficient conditions for the consistent selection of the terms of an infinite dimensional precision matrix estimate defined as the $\ell_{1}$-norm penalized minimizer of the log-likelihood loss for Gaussian distributions.
This suggests it is possible to extend Theorem \ref{result:generalized_irrepresentability_condition} to the non-parametric setting where the number of regressors $p$ grows with the sample size $n$ (i.e., $p=p_{n}\rightarrow\infty$ as $n\rightarrow\infty$).
Such extension will be the subject of future research.

Finally, we would like to emphasize that, even if $\eta(\theta)<0$, it may be possible to correctly recover the signs of $\beta$ with a relatively high probability.
What the converse in Theorem \ref{result:generalized_irrepresentability_condition} says is that this probability is bounded away from $1$ in the limit.

\subsection{Simplification of the GI condition under linear models and Gaussian predictors}

Our next result gives sufficient conditions for the $\eta(\alpha, \beta)$ to be computable directly from the covariance of the predictors.
This result is limited to $\ell_{1}$-penalized linear models as defined in \eqref{equation:definition_penalized_estimate_linear_model}.
Since the loss function only depends on $\bfX$ through an affine transformation, the Hessian $H(a, b)$ of the risk function $R(a, b)$ as well as the covariance matrix of scores $J(a, b)$ involves the expected value of an expression involving the second order cross products in the matrix $Q(\bfX)$ defined as
\begin{eqnarray}
	\label{equation:demi_hessian}
	Q(\bfX)
	& := &
	\left[
	\begin{array}{cc}
	1 & \bfX^{T} \\
	\bfX& \bfX\bfX^{T}
	\end{array}
	\right].
\end{eqnarray}

\begin{theorem}
	\label{result:simplified_gi_condition}
	Let the coefficients of a linear model $(\alpha, \beta)$ be as defined in \eqref{equation:definition_risk_minimizer_linear_model}.
	If
	\begin{enumerate}
		\item [a)] $\bfX\sim N(0, \Sigma)$, and 
		\item [b)] the Hessian of the risk function in \eqref{equation:definition_risk_function_linear_model} can be written in the form
		\begin{eqnarray}
			\label{equation:linear_model_risk_hessian}
			H\left(\alpha,\beta\right)
			& = &
			\bbE
			\left[
			\bbE
			\left[
			Q(\bfX)
			\left|
			\vphantom{\int}
			\alpha+\bfX^{T}\beta
			\right.
			\right]
			\cdot
			w(\alpha+\bfX^{T}\beta)
			\right], \mbox{ for some function } w:\bbR\rightarrow\bbR, 
		\end{eqnarray}
	\end{enumerate}
	then
$
		\eta(\alpha, \beta)
		=
		1- 
		\left[\bbE\left(\bfX_{\mcalA^{c}}\bfX_{\mcalA    }^{T}\right)\right]
		\left[\bbE\left(\bfX_{\mcalA    }\bfX_{\mcalA    }^{T}\right)\right]^{-1}
		\sign(\beta_{\mcalA}).
$
\end{theorem}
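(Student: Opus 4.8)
\textbf{Proof plan for Theorem~\ref{result:simplified_gi_condition}.} The approach is to exploit the one feature that makes the Gaussian case special: conditioning a Gaussian vector on a single linear combination $M:=\alpha+\bfX^{T}\beta$ of its coordinates produces a Gaussian whose mean is affine in $M$ and whose covariance does not depend on $M$. This lets us evaluate $\bbE[Q(\bfX)\mid M]$ in closed form, substitute it into the assumed representation \eqref{equation:linear_model_risk_hessian} of the risk Hessian, and then see that all the loss-specific information collapses into a single invertible matrix factor that cancels out of the generalized irrepresentability ratio in \eqref{equation:generalized_irrepresentability_condition}. The one place requiring care --- the ``main obstacle'' --- is the bookkeeping of the loss-dependent scalars: one must verify they enter the relevant Hessian blocks only through that common factor, and in particular that this remains true after the unpenalized intercept is profiled out.

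\textbf{Step 1 (Gaussian conditioning).} Put $\sigma^{2}:=\beta^{T}\Sigma\beta$, which is strictly positive whenever $\beta_{\mcalA}\neq 0$ (the case $\mcalA=\emptyset$ being vacuous). Then $M\sim N(\alpha,\sigma^{2})$, $\cov(\bfX,M)=\Sigma\beta$, and the usual Gaussian conditioning formulas give
\[
\bbE[\bfX\mid M]=\frac{1}{\sigma^{2}}\,\Sigma\beta\,(M-\alpha),\qquad \bbE[\bfX\bfX^{T}\mid M]=\Gamma+\frac{1}{\sigma^{4}}\,\Sigma\beta\beta^{T}\Sigma\,(M-\alpha)^{2},
\]
where $\Gamma:=\Sigma-\sigma^{-2}\Sigma\beta\beta^{T}\Sigma$ is the (constant) conditional covariance. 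Consequently $\bbE[Q(\bfX)\mid M]$ has $(1,1)$-entry equal to $1$, intercept-to-$\bfX$ block equal to $(M-\alpha)\,c$, and $\bfX\bfX^{T}$-block equal to $\Gamma+(M-\alpha)^{2}cc^{T}$, with $c:=\sigma^{-2}\Sigma\beta$.

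\textbf{Step 2 (profiled Hessian blocks).} Applying $\bbE[\,\cdot\,w(M)]$ as prescribed by \eqref{equation:linear_model_risk_hessian} and setting $m_{k}:=\bbE[(M-\alpha)^{k}w(M)]$ for $k=0,1,2$, one reads off $H_{\alpha,\alpha}=m_{0}$, the intercept-to-$\beta$ cross block $=m_{1}c^{T}$, and the $\beta$-to-$\beta$ block $=m_{0}\Gamma+m_{2}cc^{T}$. Partitioning by $\mcalA,\mcalA^{c}$ and forming, for $B\in\{\mcalA,\mcalA^{c}\}$, the intercept-profiled blocks $\bar H_{B,\mcalA}:=H_{B,\mcalA}-H_{B,\alpha}H_{\alpha,\alpha}^{-1}H_{\alpha,\mcalA}$ entering \eqref{equation:generalized_irrepresentability_condition}, a short computation --- in which the $m_{2}cc^{T}$ contribution and the $-H_{B,\alpha}H_{\alpha,\alpha}^{-1}H_{\alpha,\mcalA}$ correction merge into a single rank-one term --- yields
\[
\bar H_{B,\mcalA}=m_{0}\,\Sigma_{B,\mcalA}+b\,(\Sigma\beta)_{B}(\Sigma\beta)_{\mcalA}^{T},\qquad b:=\frac{m_{2}-m_{1}^{2}/m_{0}}{\sigma^{4}}-\frac{m_{0}}{\sigma^{2}},
\]
with the \emph{same} scalar $b$ for $B=\mcalA$ and for $B=\mcalA^{c}$. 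Because $\beta_{\mcalA^{c}}=0$ we have $(\Sigma\beta)_{B}=\Sigma_{B,\mcalA}\,\beta_{\mcalA}$, hence
\[
\bar H_{B,\mcalA}=\Sigma_{B,\mcalA}\bigl(m_{0}I_{q}+b\,\beta_{\mcalA}\beta_{\mcalA}^{T}\Sigma_{\mcalA,\mcalA}\bigr)=\Sigma_{B,\mcalA}\,N,
\]
where $N:=m_{0}I_{q}+b\,\beta_{\mcalA}\beta_{\mcalA}^{T}\Sigma_{\mcalA,\mcalA}$ does not depend on $B$.

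\textbf{Step 3 (cancellation and conclusion).} Assumption L3(b) gives $H(\theta)\succ0$, so its Schur complement relative to the intercept is positive definite; in particular $\bar H_{\mcalA,\mcalA}\succ0$ is invertible, and since $\Sigma\succ0$ so is $\Sigma_{\mcalA,\mcalA}$, whence $N=\Sigma_{\mcalA,\mcalA}^{-1}\bar H_{\mcalA,\mcalA}$ is invertible. Therefore
\[
\bar H_{\mcalA^{c},\mcalA}\,\bar H_{\mcalA,\mcalA}^{-1}=\Sigma_{\mcalA^{c},\mcalA}\,N\,N^{-1}\,\Sigma_{\mcalA,\mcalA}^{-1}=\Sigma_{\mcalA^{c},\mcalA}\,\Sigma_{\mcalA,\mcalA}^{-1},
\]
so \eqref{equation:generalized_irrepresentability_condition} becomes $\eta(\alpha,\beta)=1-\bigl\|\Sigma_{\mcalA^{c},\mcalA}\,\Sigma_{\mcalA,\mcalA}^{-1}\sign(\beta_{\mcalA})\bigr\|_{\infty}$. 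Finally, since $\bfX$ is zero-mean, $\Sigma=\bbE[\bfX\bfX^{T}]$, so $\Sigma_{\mcalA^{c},\mcalA}=\bbE[\bfX_{\mcalA^{c}}\bfX_{\mcalA}^{T}]$ and $\Sigma_{\mcalA,\mcalA}=\bbE[\bfX_{\mcalA}\bfX_{\mcalA}^{T}]$, which is exactly the asserted expression. The reason the loss washes out is now transparent: the Gaussian structure forces all the curvature $w$ adds to $H$ to lie in the single direction $\Sigma\beta$, which (thanks to $\beta_{\mcalA^{c}}=0$) points into the active block, and right-multiplying the two relevant blocks by the common invertible $N$ leaves the irrepresentable ratio unchanged.
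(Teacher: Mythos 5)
Your proof is correct and rests on the same two pillars as the paper's own argument: the Gaussian conditional-moment formulas for $\bbE\left[Q(\bfX)\mid \alpha+\bfX^{T}\beta\right]$, and the observation that, because $\beta_{\mcalA^{c}}=0$, all the curvature the weight $w$ adds to $H$ lies in the direction $\Sigma\beta=\Sigma_{\cdot,\mcalA}\beta_{\mcalA}$, so the loss-dependent scalars enter the relevant blocks only through a common invertible right factor that cancels in the irrepresentability ratio. Where you genuinely differ is in the treatment of the unpenalized intercept. The paper's proof never profiles it out: it factors the raw blocks as $H_{B,\mcalA}=\Sigma_{B,\mcalA}\bfA$ with $\bfA=\bfI_{q}+\kappa\,\nu_{\mcalA}\nu_{\mcalA}^{T}\Sigma_{\mcalA,\mcalA}$ and cancels $\bfA$ in $H_{\mcalA^{c},\mcalA}H_{\mcalA,\mcalA}^{-1}$, silently dropping the $-H_{\mcalA,\alpha}H_{\alpha,\alpha}^{-1}H_{\alpha,\mcalA}$ correction that appears in \eqref{equation:generalized_irrepresentability_condition}. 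You instead profile the intercept out of both the $\mcalA^{c}$ row and the $\mcalA$ block and verify that the common factor $N$ survives the profiling (with $b$ absorbing the extra $-m_{1}^{2}/m_{0}$ term); this is more careful, and it is the version that matches the KKT analysis in the proof of Theorem \ref{result:generalized_irrepresentability_condition}, where the deterministic bias at an inactive coordinate is governed by $\bar H_{\mcalA^{c},\mcalA}\bar H_{\mcalA,\mcalA}^{-1}$. One caveat worth recording for both arguments: the formula for $\eta$ as printed in \eqref{equation:generalized_irrepresentability_condition} pairs the \emph{unprofiled} numerator $H_{\mcalA^{c},\mcalA}$ with the \emph{profiled} inverse, and that hybrid quantity reduces to $\Sigma_{\mcalA^{c},\mcalA}\Sigma_{\mcalA,\mcalA}^{-1}$ only when $m_{1}=\bbE\left[(M-\alpha)w(M)\right]=0$, since in your notation $H_{\mcalA^{c},\mcalA}=\Sigma_{\mcalA^{c},\mcalA}\left(m_{0}I_{q}+b'\,\beta_{\mcalA}\beta_{\mcalA}^{T}\Sigma_{\mcalA,\mcalA}\right)$ with $b'=m_{2}/\sigma^{4}-m_{0}/\sigma^{2}\neq b$ while $\bar H_{\mcalA,\mcalA}$ carries the factor with $b$. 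So your derivation, like the paper's, establishes the theorem for a self-consistent reading of the GI index (yours: both blocks intercept-profiled; the paper's: neither), not for the mixed expression as literally displayed; that mismatch is an inconsistency in the paper's definition rather than a flaw in your reasoning, and your choice is the defensible one.
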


A proof is given in Appendix \ref{section:proof}.
Theorem~\ref{result:simplified_gi_condition} tells us that, for zero-mean Gaussian predictors and loss functions whose Hessian can be expressed as a weighted ``average'' of second moments of $\bfX$ conditional on the linear predictor variable $\bfM_{\alpha, \beta}(\bfX) := \alpha+\bfX^{T}\cdot\beta$, the GI condition can be computed directly from the matrix of second moments $\bfE\left[\bfX\bfX^{T}\right]$.
In Section \ref{section:linear_classifiers}, we will see that Theorem~\ref{result:simplified_gi_condition} holds for linear SVM and logistic regression classifiers.s
Besides the particular cases studied in Section \ref{section:linear_classifiers}, we notice that Theorem~\ref{result:simplified_gi_condition} can find ample use for $\ell_{1}$ penalized estimates in view of our next result.
\begin{corollary}
	\label{result:simplified_gi_condition_smooth_linear_models}
	Suppose that:
	\begin{itemize}
		\item [a)] $\bfX\sim N(0, \Sigma)$, 
		\item [b)] $L(\bfZ, t) = L(\bfY, a + b^{T}\bfX)$, 
		\item [c)] $L(\bfY, a+\bfX^{T}b)$ is twice differentiable in  its second argument for almost every $\bfY$, and 
		\item [d)] $\bfY \perp \bfX | \alpha+\beta^{T}\bfX$.
	\end{itemize} 
	Then, 
$
		\eta(\alpha, \beta)
		=
		1- 
		\left[\bbE\left(\bfX_{\mcalA^{c}}\bfX_{\mcalA    }^{T}\right)\right]
		\left[\bbE\left(\bfX_{\mcalA    }\bfX_{\mcalA    }^{T}\right)\right]^{-1}
		\sign(\beta_{\mcalA}).
$
\end{corollary}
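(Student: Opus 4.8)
The plan is to derive Corollary~\ref{result:simplified_gi_condition_smooth_linear_models} from Theorem~\ref{result:simplified_gi_condition} by verifying that hypotheses (a)--(d) of the corollary imply hypothesis (b) of the theorem, namely that the Hessian of the risk has the conditionally-weighted second-moment form \eqref{equation:linear_model_risk_hessian}. Hypothesis (a) of the corollary is identical to (a) of the theorem, so the only work is to produce the weight function $w(\cdot)$.

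First I would write the risk as an iterated expectation conditioning on the linear predictor $\bfM = \alpha+\beta^{T}\bfX$: using (d), $\bfY\perp\bfX\mid\bfM$, so the conditional law of $\bfY$ given $\bfX$ coincides with its conditional law given $\bfM$. Because $L(\bfZ,t)=L(\bfY,a+b^{T}\bfX)$ by (b), and using (c) to justify differentiating under the integral sign (the standard dominated-convergence argument, which I would state requires the usual local integrability of the first two derivatives of $L$ in its scalar argument near $\theta$ — a mild regularity condition implicitly covered by the LA assumptions), I would compute
\begin{eqnarray*}
H(\alpha,\beta)
& = &
\bbE\left[\ell''(\bfM)\cdot \nabla_{t}(a+b^{T}\bfX)\,\nabla_{t}(a+b^{T}\bfX)^{T}\right],
\end{eqnarray*}
where $\ell''(m) := \bbE[\,\partial_{2}^{2}L(\bfY, m)\mid \bfM=m\,]$ is the conditional expected second derivative of the loss evaluated at the optimum, and the gradient of $a+b^{T}\bfX$ with respect to $t=(a,b)$ at $t=\theta$ is exactly $(1,\bfX^{T})^{T}$. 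Hence the outer product is precisely $Q(\bfX)$ as defined in \eqref{equation:demi_hessian}. Conditioning the inner expectation on $\bfM$ and pulling out the $\bfM$-measurable factor $\ell''(\bfM)$ gives
\begin{eqnarray*}
H(\alpha,\beta)
& = &
\bbE\left[\bbE\left[Q(\bfX)\mid \alpha+\bfX^{T}\beta\right]\cdot w(\alpha+\bfX^{T}\beta)\right],
\qquad w(m):=\ell''(m),
\end{eqnarray*}
which is exactly \eqref{equation:linear_model_risk_hessian}. Theorem~\ref{result:simplified_gi_condition} then applies verbatim and yields the stated formula for $\eta(\alpha,\beta)$.

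The main obstacle is the interchange of differentiation and expectation: hypothesis (c) gives pointwise twice differentiability of $L$ in its second argument, but to conclude that $R$ is twice differentiable with the Hessian equal to the expectation of the pointwise second derivative, one needs a dominating integrable envelope for the difference quotients near $\theta$. I would handle this by invoking the LA assumptions (in particular L3(b), which already posits that $R$ is twice differentiable at $\theta$ with the Hessian given by differentiating inside the expectation) — so the interchange is granted by hypothesis rather than re-proved here. A secondary, purely bookkeeping point is that the conditional-independence step (d) is what lets $\ell''$ depend on $\bfX$ only through $\bfM$; without it the weight would not be a function of the linear predictor alone and Theorem~\ref{result:simplified_gi_condition} would not apply. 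Once those two points are dispatched, the corollary is immediate.
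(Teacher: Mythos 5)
Your proposal is correct and follows essentially the same route as the paper: write the Hessian as an expectation of $Q(\bfX)$ times the second derivative of $L$ in its scalar argument, use the conditional independence (d) to replace that second derivative by its conditional expectation given the linear predictor (so the weight is a function of $\alpha+\bfX^{T}\beta$ alone), and then invoke Theorem~\ref{result:simplified_gi_condition}. Your explicit remarks on justifying the interchange of differentiation and expectation via the LA assumptions are a reasonable elaboration of a point the paper leaves implicit, but they do not change the argument.
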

\begin{proof}
	Let $\frac{\partial^{2} L(\bfY, \alpha+\bfX^{T}\beta)}{(\partial v)^{2}}$ denote the second derivative of $L$ with respect to its second argument.
	Since $\bfY \perp \bfX | \alpha+\beta^{T}\bfX$, we get
	\begin{eqnarray*}
		H(\alpha, \beta)
		& = &
		\bbE
		\left[
		\bbE
		\left[
		Q(\bfX)
		\cdot
		\frac{\partial^{2} L(\bfY, \alpha+\bfX^{T}\beta)}{(\partial v)^{2}}
		\left|
		\vphantom{\frac{\partial^{2} L(\bfY, \alpha+\bfX^{T}\beta)}{(\partial v)^{2}}}
		\alpha + \bfX^{T}\beta
		\right.
		\right]
		\right]
		\\
		& = & 
		\bbE
		\left[
		\bbE
		\left[
		Q(\bfX)
		\left|
		\vphantom{\frac{\partial^{2} L(\bfY, \alpha+\bfX^{T}\beta)}{(\partial v)^{2}}}
		\alpha + \bfX^{T}\beta
		\right.
		\right]
		\cdot
		\bbE
		\left[
		\frac{\partial^{2} L(\bfY, \alpha+\bfX^{T}\beta)}{(\partial v)^{2}}
		\left|
		\vphantom{\frac{\partial^{2} L(\bfY, \alpha+\bfX^{T}\beta)}{(\partial v)^{2}}}
		\alpha + \bfX^{T}\beta
		\right.
		\right]
		\right].
	\end{eqnarray*}
	Condition (b) in Theorem \ref{result:simplified_gi_condition} is thus satisfied with $w(\alpha+\beta^{T}\bfX) = \bbE\left[\frac{\partial^{2} L(\bfY, \alpha+\bfX^{T}\beta)}{(\partial v)^{2}}\left|\vphantom{\frac{\partial^{2} L(\bfY, \alpha+\bfX^{T}\beta)}{(\partial v)^{2}}}\alpha + \bfX^{T}\beta\right.\right]$.
\end{proof}

Corollary \ref{result:simplified_gi_condition_smooth_linear_models} shows that, if the predictors are Gaussian and the response $\bfX$ only depends on $\bfX$ through an affine transform, the conditions for model selection consistency of many Generalized Linear Models \citep[GLMs][]{nelder:1972:generalized-linear-models} only depends on the covariance between relevant and irrelevant predictors even if the model is not correctly specified.
For canonical GLMs, condition (d) can be relaxed as the weight function can be shown not to depend on the response $\bfY$.

As we will see in the case of the hinge loss, twice differentiability of the loss with respect to its second argument is not essential.
For condition (b) in Theorem \ref{result:simplified_gi_condition} to be satisfied, what seems to be essential is that the loss has the form shown in \eqref{equation:definition_penalized_estimate_linear_model} and that $\bfY$ is conditionally independent of $\bfX$ given $\alpha+\bfX^{T}\beta$.


\section{Application to SVM and logistic regression classifiers}
\label{section:linear_classifiers}

We now obtain the limiting behavior of some linear classifiers to study the model selection consistency of their $\ell_{1}$-penalized estimates.
We will use these results along with Theorem \ref{result:generalized_irrepresentability_condition} to study the model selection consistency of $\ell_{1}$-penalized SVM and logistic regression classifiers.
The response variable $\bfY\in\{-1,1\}$ is modeled in terms of a linear transformation of a set of predictors $\bfX\in \bbR^{p}$.
Setting some of the coefficients on the estimates of the $\beta$ parameter to zero corresponds to eliminating some effects from the model thus leading to more interpretable models.

In what follows, we will characterize the asymptotic behavior of the loss functions associated to logistic regression and support vector machines.
Logistic regressions are a particular case of Generalized Linear Models \citep{nelder:1972:generalized-linear-models, mccullagh:1989:generalized-linear-models} and are widely used by statisticians when modeling the outcome of binomial variables.
Support vector machines \citep{cortes:1995:support-vector-networks} are amply used for obtaining linear classification rules and is based on the hinge-loss function.
For both the logistic regression and support vector machines, the corresponding loss functions are often interpreted as convex surrogates for the $0-1$ classification loss \citep{zhang:2004:statistical-behavior-and-consistency-of-classification-methods-based-on-convex-risk-minimization, bartlett:2005:convexity}.
Efficient algorithms exist for obtaining both the $\ell_{1}$-norm penalized SVM \citep{zhu:2004:l1_penalized_svms} and logistic  \citep{park:2006:l1-regularization-path-algorithms-for-generalized-linear-models} classifiers.
Both SVM classification and logistic regression have been used to select relevant predictors in areas as diverse as genomics \citep[see, for instance][]{guyon:2002:gene-selection-for-cancer-classification-using-support-vector-machines, meier:2006:logistic-glasso} and text categorization \citep[see, for instance][]{joachims:1998:text-categorization-with-support-vector-machines:-learning-with-many-relevant-features, genkin:2007:logistic_lasso_text}.

We now set up terminology and notation we will use in connection with the SVM and logistic classifiers for the remainder of the paper.
Given a value for the parameters in the linear classification model $t=(a, b)\in \bbR^{1+p}$, a \textit{linear classification rule} is defined as
\begin{eqnarray}
	\label{equation:linear_classification_rule}
	\hat{\bfY}\left(\bfX|t\right) = \mbox{sign}\left(a+\bfX^{T}b\right).
\end{eqnarray}
The separating hyperplane $\mcalH(t)$ associated to a linear classification rule as in \eqref{equation:linear_classification_rule} is defined as
\begin{eqnarray}
	\label{equation:definition_separating_hyperplane}	
	\mcalH(t) & := & \{\bfx\in \bbR^{p}: a + \bfx b = 0\}, \mbox{ for } t=(a, b).
\end{eqnarray}
The set $\mcalH(t)$ defines the boundary in the predictor space between the points where, for the linear classification rule based in $t$, the response variable is predicted to be $1$ (the set $\{\bfx: \hat{\bfY}(\bfx|t)=1\} = \{\bfx: a+\bfx^{t}b>0\}$) from the points where $\bfY$ is predicted to be $-1$ (the set $\{\bfx: \hat{\bfY}(\bfx|t)=1\} = \{\bfx: a+\bfx^{t}b<0\}$).
We call \textit{optimal linear classification rule} the classification rule corresponding to setting $t=\theta$ and the \textit{estimated linear classification rules} the classification rule formed by setting $t=\hat{\theta}_{n}(\lambda_{n})$ with $\hat{\theta}_{n}(\lambda)$ as defined in 
\eqref{equation:definition_penalized_estimate}.
We define the \textit{linear predictor variable}:
\begin{eqnarray}
	\label{equation:definition_margin_variable}
	\bfM & := & \alpha+\bfX^{T}\beta,
\end{eqnarray}
which measures the distance from point $\bfX$ to the separating hyper-plane defined by the optimal linear classifier.
If the distribution of $\bfY$ only depends on $\bfX$ through a linear combination, both the linear SVM and logistic regression are known to recover the optimal Bayes classifier.
We also define the true conditional distribution of $\bfY$ given $\bfX$ as:
\begin{eqnarray}
	\label{equation:probability_function}
	p(\bfX) = \bbP\left(\bfY=1|\bfX\right).
\end{eqnarray}

\subsection{Regularity conditions and model selection consistency for SVM and logistic classifiers}

Before we can use the results from Section \ref{section:application} to study and compare the $\ell_{1}$-penalized SVM and logistic linear classifiers, we must obtain a set of conditions on the joint distribution of $(\bfX, \bfY)$ such that the hinge and logistic regression losses satisfy the requirements on loss functions laid out in Assumption Set 1.
Conditions C1-C3 -- C a mnemonic for classification -- gives one such a set of sufficient conditions in terms of the marginal distribution of the predictors $\bfX$ and the conditional distribution of $\bfY$ given $\bfX$.

\begin{assumption}{Classification Assumptions (CA)}
\begin{enumerate} 
	\item [C1.] $\var\left[\bfX\left|\bfY\right.\right]\in \bbR^{p\times p}$ is a positive definite matrix for $\bfY\in\{1,-1\}$, 
	\item [C2.] The distribution of $\bfX$ has a density $f_{\bfX}(\bfx)>0$, for all $\bfx\in \bbR^{p}$, and 
	\item [C3.] $p(\bfX)\in (0,1)$ for almost every $\bfX$, that is, for all values $\bfX$ in the support of the distribution of $\bfX$, $\bfY$ can assume any of its two possible values;
\end{enumerate}
\end{assumption}

Condition C1 rules out the case of perfectly correlated predictors and is required to ensure uniqueness of the minimizer $\theta$ as defined in \eqref{equation:definition_risk_minimizer}.
Assumptions C2 and C3 are used to ensure the SVM and logistic regression loss functions satisfy the assumptions in Lemma \ref{result:uniform_convergence_for_loss}, but can be relaxed.

The remainder of this section describes linear SVM and logistic classification, shows how Conditions C1-C3 ensure their corresponding loss functions are amenable to the theory laid out in Section \ref{section:application} and provide expressions for the covariance matrix of scores $J(\theta)$ and the Hessian $H(\theta)$ for the risk functions associated to the linear SVM and logistic regression classifiers.

\subsubsection{Logistic Regression}
\label{section:logistic_risk}

The canonical logistic regression is one instance of Generalized Linear Model \citep{nelder:1972:generalized-linear-models} where the probability of $\bfY=1$ is modeled as:
\begin{eqnarray}
	\label{equation:logistic_conditional_probability_model}
	\bbP\left(\bfY=1|\bfX, a, b\right)
	& = &
	\frac{\exp(a+\bfX^{T}b)}{1+\exp(a+\bfX^{T}b)},
\end{eqnarray}
where $a\in\bbR$ and $b\in\bbR^{p}$ are parameters to be determined.
The population parameters $\alpha$ and $\beta$ are defined as the minimizers of the Kullbach-Leibler divergence between the true conditional distribution of $\bfY$ given $\bfX$ and the Bernoulli distribution with parameter given by \eqref{equation:logistic_conditional_probability_model}.
The corresponding loss function is:
\begin{eqnarray}
	\label{equation:logistic_loss_definition}
	L \left(\bfY, a+b^{T}\bfX\right) 
	& = & 
	-a\cdot \bbI\left(\bfY=1\right) - \bbI\left(\bfY=1\right)\cdot \bfX^{T}\cdot b  + \log\left[1-\exp\left(a+\bfX^{T}\cdot b\right)\right],
\end{eqnarray}
where $\bbI\left(\bfY=1\right)$ is the indicator of $\bfY=1$.
An estimate for $\theta=\left(\alpha, \beta\right)$ is obtained by minimizing the empirical risk with respect to $t=(a, b).$

\begin{lemma}
	\label{result:variance_and_hessian_for_logistic_loss}
	Suppose that the conditions in Assumption Set 3 are observed.
	Then, the logistic regression loss function \eqref{equation:logistic_loss_definition} satisfies the conditions in Assumption Set 1 with:
	\begin{eqnarray*}
		J(\theta) 
		& = & 
		\bbE
		\left[
		Q(\bfX)
		\cdot 
		\left[
		p\left(\bfX\right)
		-
		2 
		\cdot
		p\left(\bfX\right)
		\cdot
		\frac
		{\exp\left(\alpha+\bfX\beta\right)}
		{1+\exp\left(\alpha+\bfX\beta\right)}
		+
		\left(
		\frac
		{\exp\left(\alpha+\bfX\beta\right)}
		{1+\exp\left(\alpha+\bfX\beta\right)}
		\right)
		^{2}
		\right]
		\right], \mbox{ and }
		\\
		H(\theta) 
		& = & 
		\bbE
		\left[
		\vphantom{\int}
		Q(\bfX)
		\cdot 
		\frac
		{\exp\left(\alpha+\bfX\beta\right)}
		{\left(1+\exp\left(\alpha+\bfX\beta\right)\vphantom{\int}\right)^{2}}
		\right].
	\end{eqnarray*}
\end{lemma}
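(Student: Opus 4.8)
\textbf{Proof proposal for Lemma \ref{result:variance_and_hessian_for_logistic_loss}.}

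The plan is to verify directly that the logistic loss \eqref{equation:logistic_loss_definition} meets each of the Loss Assumptions L1--L4 under the Classification Assumptions C1--C3, and then to compute $J(\theta)$ and $H(\theta)$ by differentiating inside the expectation. Writing $v = a + b^{T}\bfX$ and abbreviating the fitted probability $\pi(v) = e^{v}/(1+e^{v})$, the loss is smooth and convex in $v$, hence convex in $t=(a,b)$ for every $\bfZ$, which gives L4 immediately; the chain of affine-then-convex composition is standard. For L1, uniqueness and boundedness of $\theta$, I would argue that $R(t) = \bbE[L(\bfY, a+b^{T}\bfX)]$ is strictly convex: its Hessian (computed below) is $\bbE[Q(\bfX)\,\pi(v)(1-\pi(v))]$, and since $\pi(v)(1-\pi(v))>0$ everywhere and $\bfX$ has a positive density on $\bbR^{p}$ (C2), together with C1 ensuring no degenerate linear combination of $(1,\bfX)$, this matrix is positive definite; coercivity of $R$ (so the minimizer exists and is bounded) follows from C2--C3 by the usual argument that pushing $\|b\|\to\infty$ forces the log-term to blow up on a set of positive probability on which $\bfY$ takes the "wrong" sign. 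L2 holds because $|L|$ is bounded by $|a| + |b^{T}\bfX| + \log 2 + |v|$, integrable whenever $\bfX$ has a finite first moment (implied by C1--C2, Gaussian-like tails not needed, just $L^1$).

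Next I would establish L3 by differentiating. The first derivative with respect to $t=(a,b)$ is $\nabla_{t}L(\bfZ,t) = \bigl(\pi(v) - \bbI(\bfY=1)\bigr)\cdot(1,\bfX^{T})^{T}$, so at $t=\theta$, using $\bbE[\bbI(\bfY=1)\mid\bfX] = p(\bfX)$ and expanding the square,
\begin{eqnarray*}
J(\theta) = \bbE\Bigl[\bigl(\pi(\bfM)-\bbI(\bfY=1)\bigr)^{2}\,Q(\bfX)\Bigr]
= \bbE\Bigl[Q(\bfX)\bigl(p(\bfX) - 2p(\bfX)\pi(\bfM) + \pi(\bfM)^{2}\bigr)\Bigr],
\end{eqnarray*}
where I have used $\bbI(\bfY=1)^{2}=\bbI(\bfY=1)$ and conditioned on $\bfX$; this matches the stated $J(\theta)$ with $\pi(\bfM) = e^{\alpha+\bfX^{T}\beta}/(1+e^{\alpha+\bfX^{T}\beta})$. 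Finiteness of $J(\theta)$ follows since the bracketed scalar lies in $[0,1]$ and $\bbE[\|\bfX\|^{2}]<\infty$ under C1--C2. The second derivative of $L$ in $v$ is $\pi(v)(1-\pi(v)) = e^{v}/(1+e^{v})^{2}$, which does not depend on $\bfY$, so differentiating $R$ twice (justified by dominated convergence, the second derivative being bounded by $1/4$) gives $H(\theta) = \bbE[Q(\bfX)\,e^{\alpha+\bfX^{T}\beta}/(1+e^{\alpha+\bfX^{T}\beta})^{2}]$, positive definite by the argument already used for L1.

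The one point needing genuine care — the main obstacle — is justifying the interchange of differentiation and expectation needed for L3(b) (that $R$ is \emph{twice} differentiable at $\theta$ with the stated Hessian) and, relatedly, the strict positive-definiteness of $H(\theta)$ rather than mere nonnegativity. For the interchange I would invoke a dominated-convergence / mean-value argument: the map $v\mapsto \pi(v)(1-\pi(v))$ is globally bounded and Lipschitz, so difference quotients of $\nabla_{t}L$ are dominated by an integrable envelope of the form $\mathrm{const}\cdot(1+\|\bfX\|^{2})$, legitimizing differentiation under the integral on a neighborhood of $\theta$. For positive-definiteness, I would note that $H(\theta) = \bbE[g(\bfX)(1,\bfX^{T})^{T}(1,\bfX^{T})]$ with $g(\bfX) = e^{\bfM}/(1+e^{\bfM})^{2} > 0$ for every $\bfX$; if $c^{T}H(\theta)c = 0$ for some $c\in\bbR^{p+1}$ then $(c_{0} + c_{1:p}^{T}\bfX)^{2}g(\bfX) = 0$ almost surely, forcing $c_{0}+c_{1:p}^{T}\bfX = 0$ a.s., which contradicts C1 (and C2) unless $c=0$. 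This completes the verification that all of L1--L4 hold and that $J(\theta), H(\theta)$ take the asserted forms.
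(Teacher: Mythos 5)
Your proposal is correct and follows essentially the same route as the paper: verify L1--L4 directly, compute $\nabla_{t}L$ and the second derivative of the loss in the linear predictor, and pass derivatives under the expectation by dominated convergence to obtain the stated $J(\theta)$ and $H(\theta)$ (your expansion of $\bbE[(\bbI(\bfY=1)-\pi(\bfM))^{2}\mid\bfX]$ is exactly the paper's computation of $J$). The only substantive differences are in two sub-arguments --- you prove positive definiteness of $H(\theta)$ by noting that $c^{T}H(\theta)c=0$ would force $c_{0}+c_{1:p}^{T}\bfX=0$ almost surely, contradicting C1--C2, whereas the paper truncates to a compact set $\{\|\bfX\|\le S\}$ to get a uniform positive lower bound on the weight; and your coercivity argument for L1 uses blow-up of the loss on the positive-probability ``wrong-sign'' sets guaranteed by C3, whereas the paper uses a convexity lower bound together with dominated convergence of the slope --- but both versions are standard and equally valid.
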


A proof is given in Appendix \ref{section:proof}. 
%
The expression for the Hessian of the logistic loss can be rewritten as
\begin{eqnarray}
	\label{equation:logistic_hessian_in_conditional_form}
		H(\theta) 
		& = & 
		\bbE
		\left[
		\vphantom{\int}
		\bbE
		\left[
		Q(\bfX)
		\left|
		\vphantom{\int_{0}^{1}}
		\alpha+\bfX\beta
		\right.
		\right]	
		\cdot 
		\frac
		{\exp\left(\alpha+\bfX\beta\right)}
		{\left(1+\exp\left(\alpha+\bfX\beta\right)\vphantom{\int}\right)^{2}}
		\right],
\end{eqnarray}
and hence satisfies the conditions of Theorem \ref{result:simplified_gi_condition} even if the model is not correctly specified.
Indeed, the Hessian for the logistic risk does not depend on the distribution of $\bfY$ at all.

In addition, equation \eqref{equation:logistic_hessian_in_conditional_form} tells us that the Hessian for the logistic regression risk function is a weighted average of second moment matrices conditional on the linear predictor variable $\alpha+\bfX\beta$.
Because $\frac{\exp\left(\alpha+\bfX\beta\right)}{\left(1+\exp\left(\alpha+\bfX\beta\right)\right)^{2}}$ is an even function of the linear predictor variable, the matrices of conditional second moments at predictor variables that are equally distant from the separating hyperplane are equally weighted.
In addition, the higher weight is given to $\bbE\left[Q(\bfX)\left|\vphantom{\int_{0}^{1}}\alpha+\bfX\beta=0\right.\right]$ and the weighting is decreasing on the absolute value of the linear predictor variable.
As a result, in what concerns asymptotic model selection consistency of $\ell_{1}$-norm penalized logistic coefficient estimates, the correlation structure of the predictors on regions closer to the separating hyperplane have the most importance confirming the margin phenomenon observed earlier in non-parametric works by \citet{audibert:2007:fast-learning-rates-for-plug-in-classifiers} and \citet{steinwart:2007:fast-rates-for-support-vector-machines-using-gaussian-kernels}.

\subsubsection{The parametric SVM: linear classification with the Hinge loss function}
\label{section:svm_risk}

Classification by means of Support Vector Machines with linear kernel was first introduced in the case where it is possible to perfectly separate the space of predictors $\bfX$ according to the the binomial variable $\bfY$.
In that setting, the SVM parameters define a hyper-plane (characterized by the parameters $\alpha, \beta$) that maximizes the gap between the classes:
\begin{eqnarray*}
	\begin{array}{cccccl}
		(\hat{\alpha}, \hat{\beta}) & = & \argmin\limits_{a, b} & & \|b\|_{2} \\
                                    &   & \mbox{s.t.}              & & \bfY_{i}\cdot\left(a-\bfX_{i}^{T}b\right) \ge 1, & \mbox{ for all } i = 1, \ldots, n.
	\end{array}
\end{eqnarray*}
To adapt this method to the ``no perfect-separation'' case, non-negative slack variables $\xi_{i}$ are introduced and the optimization problem becomes
\begin{eqnarray*}
	\begin{array}{cccccl}
		(\hat{\alpha}, \hat{\beta}) & = & \argmin\limits_{a, b} & & \|\beta\|_{2} + C\cdot\sum_{i=1}\xi_{i}\\
                                    &   & \mbox{s.t.}                      & & \bfY_{i}\cdot\left(a-\bfX_{i}^{T}b\right) \ge 1-\xi_{i}, 
                                                                           & \mbox{ for all } i = 1, \ldots, n, \mbox{ and } \\
		                            &   &                                  & & \xi_{i}\ge 0, 
		                                                                   & \mbox{ for all } i = 1, \ldots, n,
	\end{array}
\end{eqnarray*}
where $C$ is a constant controlling the trade-off between margin maximization and total amount of slack.
The ``lack of fit'' in SVM is measured by the total distance of the misclassified points to the classification boundary, represented as the sum of the slack variables.
The Euclidean norm acts as a penalization term: in the perfect separation case it unsureness uniqueness of the solution.
More consistently with the form in \eqref{equation:definition_penalized_estimate}, the empirical SVM parameter estimates can then be rewritten as:
\begin{eqnarray*}
	\label{equation:hinge_loss_definition}
		(\hat{\alpha}, \hat{\beta}) & = & \argmin\limits_{a, b} \sum_{i=1}L(\bfY_{i}, a+b^{T}\bfX) + \lambda\cdot\|\beta\|_{2}, \mbox{ with }\\ 
	L(\bfY_{i}, a+b^{T}\bfX) 
	& = & 
	\left[\vphantom{\int}1-\bfY_{i}\left(a-\bfX_{i}^{T} b\right)\right]\cdot\bbI\left[1-\bfY_{i}\cdot\left(a-\bfX_{i}^{T} b\ge 0\right)\right], 
	\mbox{the \textit{hinge-loss function.}}
	\end{eqnarray*}
Here, we will consider the hinge loss on its own, in the spirit of the ``assembling'' Lemma \ref{result:assembling_theorem}.
The next result establishes that under the conditions of Assumption Set 3, the hinge loss satisfies the assumptions in Theorem \ref{result:uniform_convergence_for_loss}.

\begin{lemma}
	\label{result:variance_and_hessian_for_hinge_loss}
	Suppose the conditions in Assumption Set 3 hold.
	If in addition $\beta\neq0$, then the hinge loss function \eqref{equation:hinge_loss_definition} satisfies the conditions in Assumption Set 1 with:
	\begin{eqnarray*}
		J(\theta) 
		& = & 
		\bbE
		\left[
		\vphantom{\frac{\int_{0}^{1}}{\int_{0}^{1}}}
		\left[
		\vphantom{\int_{0}^{1}}
		p(\bfX)\cdot\bbI(1-\alpha-\bfX^{T} \beta\ge0)
		+ 
		\left(1-p(\bfX)\right)\cdot\bbI(1+\alpha+\bfX^{T} \beta\ge0)
		\right]
		\cdot
		Q(\bfX)
		\right], \mbox{ and }
		\\
		H(\theta) 
		& = & 
		\bbE
		\left[
		\vphantom{\frac{\int_{0}^{1}}{\int_{0}^{1}}}
		\left[
		\vphantom{\int_{0}^{1}}
		p(\bfX)
		\cdot
		\delta\left(1-\alpha-\bfX^{T}\beta\right)
		+
		\left(1-p(\bfX)\right)
		\cdot
		\delta\left(1+\alpha+\bfX^{T}\beta\right)
		\right]
		\cdot
		Q(\bfX)
		\right], 
	\end{eqnarray*}
	where $\delta$ denotes Dirac delta function.
\end{lemma}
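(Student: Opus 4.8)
\textbf{Proof plan for Lemma \ref{result:variance_and_hessian_for_hinge_loss}.} The plan is to verify the loss assumptions L1--L4 for the hinge loss, which I write as $L(\bfZ, t) = (1 - \bfY M_t)_+$ with $(s)_+ := \max\{s, 0\}$, $t = (a, b) \in \bbR^{1+p}$ and $M_t := a + \bfX^T b$, in the order L4, L2, L3, L1; the stated expressions for $J(\theta)$ and $H(\theta)$ come out of L3, and L1 is cheapest once the Hessian is in hand.

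Convexity (L4) is immediate, since $t \mapsto 1 - \bfY M_t$ is affine and $(\cdot)_+$ is convex and nondecreasing, so $L(\bfZ, \cdot)$ is convex for every $\bfZ$. For L2, $0 \le L(\bfZ, t) \le 1 + |a| + \|b\|_2 \|\bfX\|_2$, and C1 forces $\bbE\|\bfX\|_2^2 < \infty$ (a positive-definite conditional variance presupposes finite second moments), so $\bbE|L(\bfZ, t)| < \infty$ for each $t$; the analogous bound with $\|\bfX\|_2^2$ will give finiteness of $J(\theta)$ and $H(\theta)$ below. For L3(a), $m \mapsto (1 - ym)_+$ fails to be differentiable only at $m = 1/y$, so $L(\bfZ, \cdot)$ is nondifferentiable at $t = \theta$ only when $\bfY(\alpha + \bfX^T\beta) = 1$, i.e.\ when $\bfX$ lies on one of the hyperplanes $\{\alpha + \bfx^T\beta = 1\}$ or $\{\alpha + \bfx^T\beta = -1\}$; since $\beta \neq 0$ and $\bfX$ has a density (C2) this event has probability zero, and on its complement $\nabla_t L(\bfZ, \theta) = -\bfY\,(1, \bfX^T)^T\,\bbI\{\bfY(\alpha + \bfX^T\beta) < 1\}$. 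Using $\bfY^2 = 1$ and $(1, \bfX^T)^T (1, \bfX^T) = Q(\bfX)$, the outer product is $Q(\bfX)\,\bbI\{\bfY(\alpha + \bfX^T\beta) < 1\}$; splitting the indicator over $\bfY \in \{1, -1\}$ and taking conditional expectation given $\bfX$ (so $\bbI\{\bfY = 1\}$ becomes $p(\bfX)$) reproduces exactly the stated $J(\theta)$, finite by the moment bound.

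The crux is L3(b). The first derivative $\nabla_t R(t) = -\bbE[\bfY (1, \bfX^T)^T \bbI\{\bfY M_t < 1\}]$ is obtained by dominated convergence, the difference quotients being dominated by the integrable envelope $1 + \|\bfX\|_2$ (the hinge is $1$-Lipschitz in $M_t$). To differentiate once more I would disintegrate with respect to the linear predictor $M_\theta = \alpha + \bfX^T\beta$ (equivalently, invoke the co-area formula): for bounded measurable $\phi$, $\frac{d}{dc}\bbE[\phi(\bfX)\,\bbI\{\alpha + \bfX^T\beta < c\}] = \bbE[\phi(\bfX) \mid \alpha + \bfX^T\beta = c]\,f_{M_\theta}(c)$, which is legitimate because $\beta \neq 0$ and C2 give $M_\theta$ a density and make $c \mapsto \bbE[\phi(\bfX)\mid\alpha + \bfX^T\beta = c]\,f_{M_\theta}(c)$ continuous. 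Applying this coordinatewise to $\nabla_t R$ at $t = \theta$ (with $\phi$ running through $1, \bfX_i, \bfX_i \bfX_j$ and their products with $p(\bfX)$ and $1 - p(\bfX)$) produces the two contributions supported on $\{M_\theta = 1\}$ and $\{M_\theta = -1\}$ and yields exactly $H(\theta) = \bbE\big[\big(p(\bfX)\,\delta(1 - \alpha - \bfX^T\beta) + (1-p(\bfX))\,\delta(1 + \alpha + \bfX^T\beta)\big)\,Q(\bfX)\big]$, the Dirac deltas being shorthand for those surface integrals (this is the form rewritten as a conditional expectation in the next subsection). For positive definiteness, $Q(\bfX) \succeq 0$ gives $H(\theta) \succeq 0$; if $v = (v_0, \tilde v)$ satisfies $v^T H(\theta) v = 0$ then, since $p(\bfX), 1 - p(\bfX) > 0$ a.e.\ (C3) and both hyperplanes carry positive surface mass ($f_{M_\theta}(\pm 1) > 0$ by C2 and $\beta \neq 0$), the affine map $\bfx \mapsto v_0 + \bfx^T\tilde v$ must vanish on each of $\{\alpha + \bfx^T\beta = 1\}$ and $\{\alpha + \bfx^T\beta = -1\}$; the first forces $(v_0, \tilde v) = \lambda(\alpha - 1, \beta)$, the second $(v_0, \tilde v) = \mu(\alpha + 1, \beta)$, and comparing (using $\beta \neq 0$) gives $\lambda = \mu$ and then $\lambda = 0$, so $v = 0$ and $H(\theta) \succ 0$.

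Finally L1. Convexity of $L$ makes $R$ convex, and a convex function whose minimizer set contains a point with positive-definite Hessian has that point isolated, hence the minimizer set is a singleton and $\theta$ is unique. Boundedness of $\theta$ follows from coercivity of $R$: C1 makes $\{1, \bfX_1, \ldots, \bfX_p\}$ linearly independent in $L^2$ (if $v_0 + \tilde v^T\bfX = 0$ a.s.\ then conditioning on $\bfY$ and using $\var[\bfX\mid\bfY] \succ 0$ forces $v = 0$), so the law of $M_t$ genuinely spreads as $\|t\| \to \infty$; combined with C3, a non-vanishing fraction of the mass of $\bfZ$ then sits where $\bfY M_t < 0$, on which $(1 - \bfY M_t)_+ \ge |M_t|$, forcing $R(t) \to \infty$ and hence a bounded minimizer. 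Throughout, $\beta \neq 0$ is essential: without it the hyperplanes $\{\alpha + \bfx^T\beta = \pm 1\}$ degenerate and both the a.e.\ differentiability in L3(a) and the positive definiteness in L3(b) break down. The one genuinely delicate point I expect is the differentiation through the indicators in L3(b): turning the Dirac-delta heuristic into a theorem rests on the disintegration/co-area step together with enough regularity (continuity) of $c \mapsto \bbE[\phi(\bfX)\mid\alpha + \bfX^T\beta = c]\,f_{M_\theta}(c)$, which is what C2 (possibly after a mild strengthening to a continuous density) is there to supply.
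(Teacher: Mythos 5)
Your proposal follows the same overall skeleton as the paper's proof -- verify L1--L4, read off $J(\theta)$ from the a.e.\ gradient, obtain $H(\theta)$ by disintegrating on the linear predictor so the second derivative concentrates on the two elbows $\{\alpha+\bfx^{T}\beta=\pm1\}$, and deduce uniqueness from strict positive definiteness of the Hessian -- and it is correct, but two of your sub-arguments differ from the paper's in a way worth recording. For positive definiteness, the paper lower-bounds each elbow term by $\left(\frac{\kappa-\alpha}{\|\beta\|}\right)^{2}\bfv_{\beta}\bfv_{\beta}^{T}+\var\left(\bfX\,|\,\bfY,\bfX^{T}\bfv_{\beta}=\frac{\kappa-\alpha}{\|\beta\|}\right)$ and argues via the rank of the conditional variance, which forces a case split on whether $\alpha\in\{-1,1\}$ (one elbow term can be singular, the other then carries the day); your argument -- that $v^{T}H(\theta)v=0$ forces the affine map $\bfx\mapsto v_{0}+\bfx^{T}\tilde v$ to vanish on both parallel hyperplanes, which is impossible for $v\neq0$ -- uses the two elbows jointly, avoids the case split, and is cleaner. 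For coercivity in L1, the paper bounds $\inf_{\|v\|=1}\bbE|\tilde{\bfX}^{T}v|$ from below using the everywhere-positive continuous density on a compact box (C2) and a Lebesgue-measure computation, whereas you use non-degeneracy from C1 plus C3; your route works, but you should make the uniformity over directions explicit (continuity of $v\mapsto\bbE[(1-\bfY\tilde{\bfX}^{T}v)_{+}]$ on the compact unit sphere, or an equivalent device), since pointwise divergence along each ray does not by itself give a bounded sublevel set.

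The one step that is under-specified is the second differentiation in L3(b), which you correctly flag as the delicate point. The co-area identity you state differentiates $\bbE[\phi(\bfX)\,\bbI\{\alpha+\bfX^{T}\beta<c\}]$ in the \emph{threshold} $c$, but a perturbation $t=\theta+\Delta t$ enters as $\bbI\{M_{\theta}<1-\tilde{\bfX}^{T}\Delta t\}$: the cutoff is random and the hyperplane tilts, so the formula must be upgraded to a disintegration on $M_{\theta}$ that carries the factor $\tilde{\bfX}^{T}\Delta t$ inside the conditional expectation, yielding $\bbE[\phi(\bfX)\tilde{\bfX}^{T}\Delta t\mid M_{\theta}=1]\tilde{f}(1)$ in the limit. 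This is precisely what the paper's rotation to coordinates $\bfW=(\tilde{\bfx}^{T}t,\bfW_{2})$ and the subsequent change of variables accomplish, so the gap is one of execution rather than of idea; filling it requires the continuity in $c$ of $c\mapsto\bbE[\phi(\bfX)\tilde{\bfX}\tilde{\bfX}^{T}\mid M_{\theta}=c]\,\tilde{f}(c)$ that C2 supplies.
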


The expressions for $J(\theta)$ and $H(\theta)$ in Lemma \ref{result:variance_and_hessian_for_hinge_loss} closely parallel results by \citet{koo:2008:a-bahadur-representation-of-the-linear-support-vector-machine} concerning the Bahadur representation of the linear support vector machines.
In Appendix \ref{section:proof}, we present an alternative proof similar in spirit to the construction by \citet{phillips:1991:a-shortcut-to-lad-estimator-asymptotics}.
In \citet{koo:2008:a-bahadur-representation-of-the-linear-support-vector-machine} conditions ensuring $\beta\neq0$ are also obtained.

Borrowing from the terminology for support vector regression, we call the set where $\alpha+\bfX^{T}\beta=-1$ the negative ``elbow'' of the SVM risk.
Similarly, the positive ``elbow'' of the SVM risk is the set where $\alpha+\bfX^{T}\beta=1$.
Assuming that $\bfY$ is independent of $\bfX$ given $\alpha+\bfX^{T}\beta$, the expression for the Hessian in Lemma \ref{result:variance_and_hessian_for_hinge_loss} can be rewritten in a more revealing form in terms of conditional expectations at these elbows of the SVM risk:
\begin{eqnarray}
	\label{equation:svm_hessian_in_conditional_form}
	\begin{array}{ccl}
	H(\theta) 
	& = & 
	\bbE
	\left[
	\bbE
	\left[
	\vphantom{\frac{\int_{0}^{1}}{\int_{0}^{1}}}
	Q(\bfX)
	\left|
	\vphantom{{\int_{0}^{1}}}
	\alpha+\bfX^{T}\beta
	\right.
	\right]
	\cdot 
	\bbP\left(\vphantom{\int_{0}^{1}}\bfY=1\left|\vphantom{\int}\alpha+\bfX^{T}\beta\right.\right)
	\cdot
	\delta(1-\alpha-\bfX^{T}\beta)
	\right]
	\\
	& &
	+
	\bbE
	\left[
	\bbE
	\left[
	\vphantom{\frac{\int_{0}^{1}}{\int_{0}^{1}}}
	Q(\bfX)
	\left|
	\vphantom{{\int_{0}^{1}}}
	\alpha+\bfX^{T}\beta
	\right.
	\right]
	\cdot 
	\left[
	1-\bbP\left(\vphantom{\int_{0}^{1}}\bfY=1\left|\vphantom{\int}\alpha+\bfX^{T}\beta\right.\right)
	\right]
	\cdot
	\delta(-1-\alpha-\bfX^{T}\beta)
	\right]
	\\
	& = & 
	\bbE
	\left[
	\vphantom{\frac{\int_{0}^{1}}{\int_{0}^{1}}}
	Q(\bfX)
	\left|
	\vphantom{{\int_{0}^{1}}}
	\alpha+\bfX^{T}\beta=1
	\right.
	\right]
	\cdot 
	\bbP\left(\vphantom{\int_{0}^{1}}\bfY=1\left|\vphantom{\int}\alpha+\bfX^{T}\beta=1\right.\right)
	\cdot
	\tilde{f}(1)
	\\
	& &
	+
	\bbE
	\left[
	\vphantom{\frac{\int_{0}^{1}}{\int_{0}^{1}}}
	Q(\bfX)
	\left|
	\vphantom{{\int_{0}^{1}}}
	\alpha+\bfX^{T}\beta=-1
	\right.
	\right]
	\cdot 
	\bbP\left(\vphantom{\int_{0}^{1}}\bfY=-1\left|\vphantom{\int}\alpha+\bfX^{T}\beta=-1\right.\right)
	\cdot
	\tilde{f}(-1),
	\end{array}
\end{eqnarray}
where $\tilde{f}$ denotes the density of the linear predictor variable $\alpha+\bfX^{T}\beta$.
This representation for the Hessian of the linear SVM risk (expected value of the hinge loss over $\bfY$ and $\bfX$) shows that if $\bfY$ is independent of $\bfX$ given $\alpha+\bfX^{T}\beta$ the hinge loss function is amenable to the results in Theorem \ref{result:simplified_gi_condition}.
It also provides many insights into the behavior of the linear SVM classifier.

Equation \eqref{equation:svm_hessian_in_conditional_form} tells us that the Hessian of the SVM risk is a weighted sum of the conditional second moments of the predictors given that the linear predictor variable $\alpha+\bfX^{T}\beta$ is at the elbows of the SVM risk.
According to Theorem \ref{result:generalized_irrepresentability_condition}, the generalized irrepresentability condition is not affected if the Hessian matrix is multiplied by a constant.
It follows that, with respect to model selection consistency of $\ell_{1}$-norm penalized linear SVM classifiers, the scalar factors
$\bbP\left(\vphantom{\int_{0}^{1}}\bfY=-1\left|\vphantom{\int}\alpha+\bfX^{T}\beta=-1\right.\right)\cdot\tilde{f}(-1)$
and 
$\bbP\left(\vphantom{\int_{0}^{1}}\bfY=-1\left|\vphantom{\int}\alpha+\bfX^{T}\beta=-1\right.\right)\cdot\tilde{f}(-1)$
only determine the relative importance of the two conditional second moment matrices, $\bbE\left[\vphantom{\frac{\int_{0}^{1}}{\int_{0}^{1}}}Q(\bfX)\left|\vphantom{{\int_{0}^{1}}}\alpha+\bfX^{T}\beta=1\right.\right]$ and $\bbE\left[\vphantom{\frac{\int_{0}^{1}}{\int_{0}^{1}}}Q(\bfX)\left|\vphantom{{\int_{0}^{1}}}\alpha+\bfX^{T}\beta=-1\right.\right]$, in the composition of the Hessian.
If the two conditional moment matrices happen to be equal, the scalar factors have no bearings in whether the generalized irrepresentable condition is met or not.
If the two conditional moment matrices are different, the relative importance of the conditional second moments at the two elbows depends on the density of the linear predictor variable $\alpha+\bfX^{T}\beta$ and how well defined a class is at each of the elbows.
For example, if $\tilde{f}(1)\gg \tilde{f}(-1)$ and $\bbP\left(\vphantom{\int_{0}^{1}}\bfY=1\left|\vphantom{\int}\alpha+\bfX^{T}\beta=1\right.\right)\gg \bbP\left(\vphantom{\int_{0}^{1}}\bfY=-1\left|\vphantom{\int}\alpha+\bfX^{T}\beta=-1\right.\right)$, the SVM Hessian will be largely determined by the second moment of the predictor at the positive elbow $\bbE\left[\vphantom{\frac{\int_{0}^{1}}{\int_{0}^{1}}}Q(\bfX)\left|\vphantom{{\int_{0}^{1}}}\alpha+\bfX^{T}\beta=1\right.\right]$, which in turn will have the most influence in determining whether $\ell_{1}$-norm penalized SVM classifier is model selection consistent.

In addition to determining the weighting between the conditional covariances, the density of the predictors and the probabilities of $\bfY$ belonging to each class on the positive and negative can inflate or deflate the covariance matrix of $\hat{\theta}_{n}$.
Standard results concerning parametric M-estimators \citep[see, for instance][]{bickel:2001:mathematical-statistics, casella:2001:statistical-inference}
yield that $\lim_{n\rightarrow\infty}\var\left[\sqrt{n}\cdot\left(\hat{\theta}_{n}-\theta\right)\right]=H^{-1}(\theta)J(\theta)H^{-1}(\theta)$. As a result, the higher the density of the predictors and the easier the separation of the classes at the elbows, the larger the Hessian and the less variable the coefficients in the SVM classifier.





\section{Simulations}
\label{section:simulations}

We now present a series of simulation results which give empirical evidence supporting the theory for model selection consistency for $\ell_{1}$-penalized linear SVM and logistic regression classifiers.
In addition, we use the simulations to compare the model selection performance of $\ell_{1}$-penalized linear SVM and logistic regression classifiers asymptotically and in finite samples.
To avoid a simulation set-up that is biased in favor of either linear SVMs or logistic regression, we base our conclusions on randomly selected joint distributions for $(\bfY, \bfX)$, where $\bfY$ is the binomial response variable and $\bfX$ is the predictor.
We start off by detailing how the designs used throughout this section are sampled.

\subsection{Randomly constructing joint distributions $(\bfY, \bfX)$}
\label{section:sampling_joint_distributions}

Throughout our simulation experiments, we will call a design the joint distribution of $(\bfY, \bfX)$ characterized by the parameters of the conditional distribution of $\bfY$ given $\bfX$ and the the distribution of the predictors $\bfX$.

The conditional distribution of the binomial random variable $\bfY\in\{-1,1\}$ given $\bfX\in\bbR^{p}$ is characterized by a \textit{probability profile function} $g:\bbR\rightarrow (0,1)$, an intercept $\zeta$ and a normal direction to the separating hyperplane $\nu \in \bbR^{p}$.
Given these elements, we set $\bbP(\bfY=1|\bfX) = g(\zeta+\bfX^{T}\nu)$, so $\bfY$ is independent of $\bfX$ given any one-to-one transformation of $\zeta+\bfX^{T}\nu$, in particular $\alpha+\bfX^{T}\beta$.
In all designs, we set $\zeta=0$.
Given a number of non-zero terms $q$, we partition the normal direction according to $\nu = \left[\begin{array}{cc}\nu_{\mcalA} & \bfzero\end{array}\right] \in \bbR^{q}\times \bbR^{p-q}$.
The non-zero component of the normal direction to the separating hyper-plane $\bfv_{\mcalA}$ is sampled uniformly on the unit sphere on $\bbR^{q}$.
One problem with this sampling scheme is that it may result in tiny coefficients which are hard to detect in finite samples, thus complicating the comparison between asymptotic and experimental results.
To avoid such tiny coefficients, we discard directions $\nu_{\mcalA}$ having $\max_{1\le j\le q} |\nu_{j}|/\min_{1\le j\le q} |\nu_{j}|>5$.
To provide stronger evidence in favor of Theorem \ref{result:generalized_irrepresentability_condition}, we will consider two different probability profile functions $g$:
	\begin{eqnarray*}
		\begin{array}{cccll}			
		\mbox{the logistic function,} & g_{1}(r)  & := & \frac{\exp(r)}{1+\exp(r)}, & \mbox{ and }	
		\\
		\mbox{the ``blip'' function,} & g_{2}(r) & := & \frac{1}{2}\left(1+r\cdot\exp\left(\frac{1-r^{2}}{2}\right)\right).
		\end{array}
	\end{eqnarray*}
The logistic function ($g_{1}$) is the canonical link for Bernoulli GLM models.
The ``blip'' function ($g_{2}$) concentrates all the action close to the separation boundary between the classes and is thus expected to favor SVM classifiers.

For the distribution of the predictors, we consider two families of distributions: Gaussian and mixture of Gaussian distributions.
For the Gaussian predictors, the mean is fixed at $\bfzero\in\bbR^{p}$ and a covariance matrix $\Sigma\in \bbR^{p}$ is sampled as follows.
First, $\tilde{\Sigma}\in \bbR^{p}$ is sampled from a Wishart$(\bfI_{p}, p, p)$ distribution, where $\bfI_{p}$ is the identity matrix. Then, $\tilde{\Sigma}\in \bbR^{p}$ is normalized to have unit diagonal and $\Sigma=\gamma\cdot\tilde{\Sigma}$ with the scalar $\gamma>0$ chosen so that $\nu^{T}\Sigma\nu=\sigma^{2}$, where $\sigma^{2}$ is a parameter controlling the variance of $\bfX$.
The mixed Gaussian predictors are a mixture of two Gaussian distributions with equal proportions, common variance $\Sigma$ and symmetric means $\mu$ and $-\mu$.
The parameter $\mu$ is randomly selected as $\mu = \frac{4}{5}\cdot \tilde{\mu}\cdot \sigma$, where $\tilde{\mu} = |\chi|\cdot \nu+ \mbox{w}$, with $\chi\sim N(0,1)$ and $\mbox{w}\sim N(0, \bfI_{p})$.
The common variance matrix of the components of the mixture of Gaussian is sampled similarly as the covariance matrix for the Gaussian case, with the difference that $\gamma$ is chosen so $\nu^{T}\Sigma\nu=\frac{9}{25}\cdot \sigma^{2}$.
The factors $\frac{4}{5}$ for $\mu$ and $\frac{9}{25}$ for $\Sigma$ are used to ensure that the contribution of the mean and variance for the second moment $\bbE \bfX\bfX^{T} = \mu\mu^{T} + \Sigma$ is somewhat balanced.

To obtain the population parameter $\theta=(\alpha, \beta)$ as defined in \eqref{equation:definition_risk_minimizer} for each of the sample designs, we first notice that the probability profile functions satisfy $g_{j}(z) = 1-g_{j}(-z)$, for $j=1,2$, $z\in \bbR$ and the distribution of the predictors are symmetric about zero.
It thus follows that the optimization problem defining $\theta$ is symmetric about $0\in \bbR^{p}$ and we have $\alpha=0$ for all designs.
Then, because $\bbP(\bfY=1|\bfX)$ only depends on $\bfX$ through $\bfX^{T}\nu$, $\beta$ has the form $\beta = c^{*}\cdot \nu$, for some scalar $c^{*}\in \bbR$.
The value of $c^{*}$ that minimizes the risk is obtained by numerically minimizing the average of the risk function conditional on $\bfX$ for a large sample ($10^6$) from the predictor distribution.
For any given design, the value of $c^{*}$ differs depending on the risk function being used.

\subsection{Model selection consistency and the GI condition for linear classifiers}

We now provide empirical evidence of the validity of Theorem \ref{result:generalized_irrepresentability_condition} for $\ell_{1}$-norm penalized linear SVM and logistic regression classifiers.
According to Theorem \ref{result:generalized_irrepresentability_condition}, the proportion of paths containing sign-correct estimates should approach $1$ as $n\rightarrow\infty$ if the GI index $\eta(\theta)$ is positive.

To estimate the probability that a sample regularization path contains a sign-consistent estimate for a given design, we used replicates of the regularization path by sampling from the joint distribution of $(\bfY, \bfX)$ and computing the regularization path for $\ell_{1}$-norm penalized linear SVM \citep{li:2006:the-l1-norm-quantile-regression} and logistic regression \citep{park:2006:l1-regularization-path-algorithms-for-generalized-linear-models}.
To compute the GI index for a given design, we can use the expressions in equations \eqref{equation:logistic_hessian_in_conditional_form} and \eqref{equation:svm_hessian_in_conditional_form} in conjunction with the expressions for the conditional second moments of Gaussian and mixed Gaussian random variables shown in Appendix \ref{section:analytical_hessians}.

Figures~\ref{figure:classification_rnorm_model_selection_phase_transition}~and~\ref{figure:classification_rmixednorm_model_selection_phase_transition} show plots of the proportion of sample regularization paths containing a sign-correct solution against the GI index $\eta(\theta)$ under various conditions.
In all cases considered, the proportion of times the $\ell_{1}$-penalized classifier contains a sign-correct estimate in its regularization path increases as $n$ increases if $\eta(\theta)>0$.

Figures~\ref{figure:classification_rnorm_model_selection_phase_transition}~and~\ref{figure:classification_rmixednorm_model_selection_phase_transition} also show that  that, in most cases, correct recovery of the signs of $\theta$ is harder if $\eta(\theta)<0$.
One notable exception occurs for mixed Gaussian predictors under the ``blip'' conditional probability profile.
In that case, it is possible to have a high probability of correct sign recovery even under $\eta(\theta)<0$.
Notice that this result does not contradict Theorem \ref{result:generalized_irrepresentability_condition}. 
Even though there the probability that the signs will not be recovered correctly is never zero if $\eta(\theta)<0$, it can be quite small.
A more careful analysis of the probability of correct sign recovery must take into account the variance of the estimates $\hat{\beta}^{(n)}_{j}(\lambda_{n})$ with indices in $\mcalA^{c} = \{j\in 1, \ldots, p: \beta_{j}=0\}$.

It also possible to notice that, given the asymptotic nature of the results, the probability of correct sign-recovery can still be small for smaller sample sizes $n$ and for larger number of predictors $p$ especially under a fainter signal (``blip'' conditional probability profile).
The extension to the theory in Section \ref{section:application} to the non-parametric case $p=p_{n}\rightarrow\infty$ can potentially offer more precise answers on the how the total number of predictors affects the chance that the regularization path contains a sign-correct model.

\setlength{\tabcolsep}{2pt}
\renewcommand{\figurewidth}{0.2}
\begin{figure}[p]
\begin{center}
\begin{tabular}{|c|cc|cc|}
\cline{2-5}
\multicolumn{1}{c|}{}
&
\multicolumn{2}{|c|}{SVM}
&
\multicolumn{2}{|c|}{Logistic}
\\
\multicolumn{1}{c|}{}
&
\begin{tabular}{c}
$p=08$
\\
$q=04$
\end{tabular}
&
\begin{tabular}{c}
$p=16$
\\
$q=04$
\end{tabular}
&
\begin{tabular}{c}
$p=08$
\\
$q=04$
\end{tabular}
&
\begin{tabular}{c}
$p=16$
\\
$q=04$
\end{tabular}
\\
\hline
\begin{tabular}{c}
\begin{sideways}
	{
	\begin{small}
	\begin{tabular}{c} Gaussian \\ $g_{1}$ profile \\ $n=100$, $\sigma=0.5$ \end{tabular}
	\end{small}
	}
\end{sideways}
\end{tabular}
&
\begin{tabular}{c}
\includegraphics[width=\figurewidth\textwidth, angle=270]
{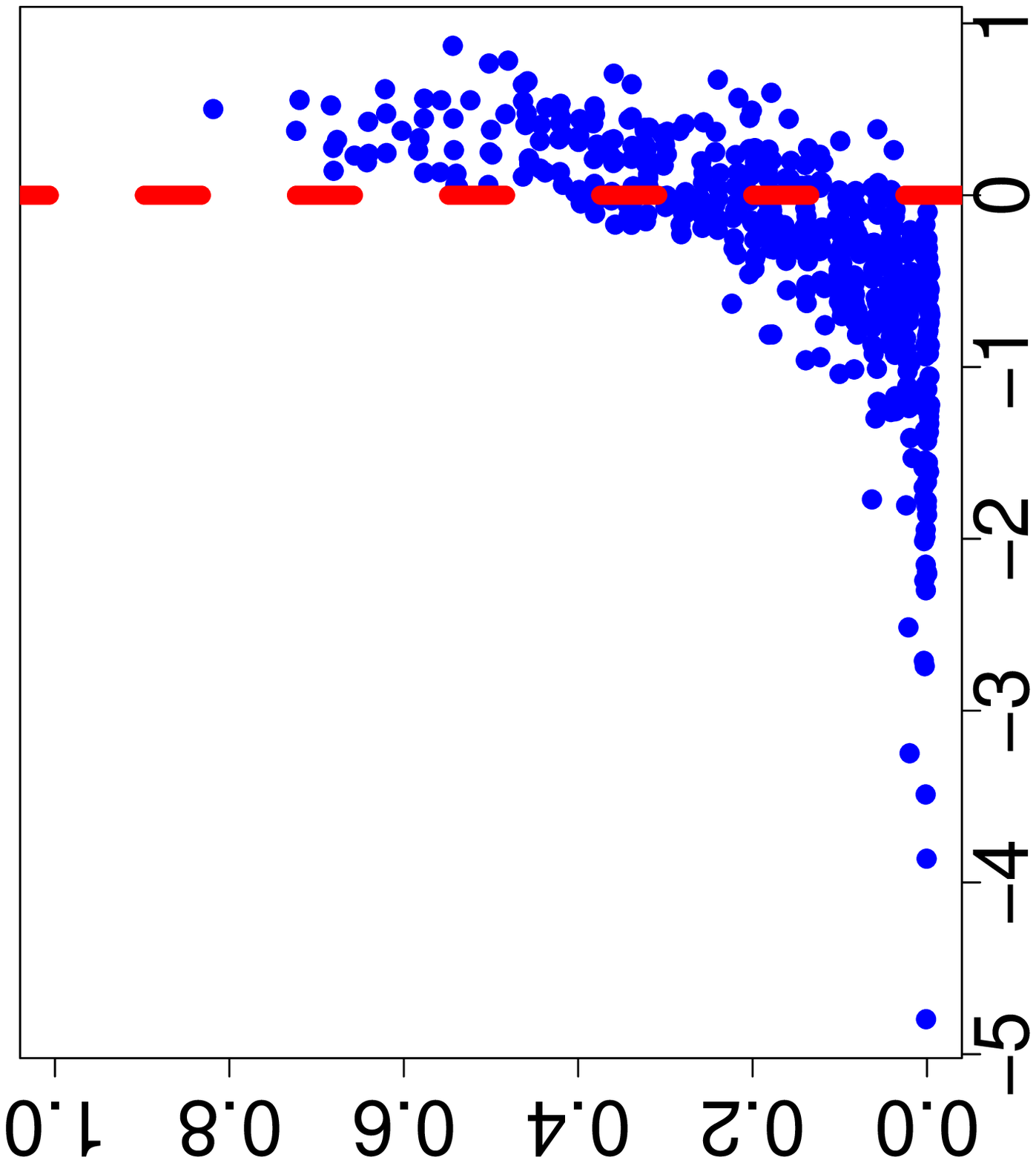}
\end{tabular}
&
\begin{tabular}{c}
\includegraphics[width=\figurewidth\textwidth, angle=270]
{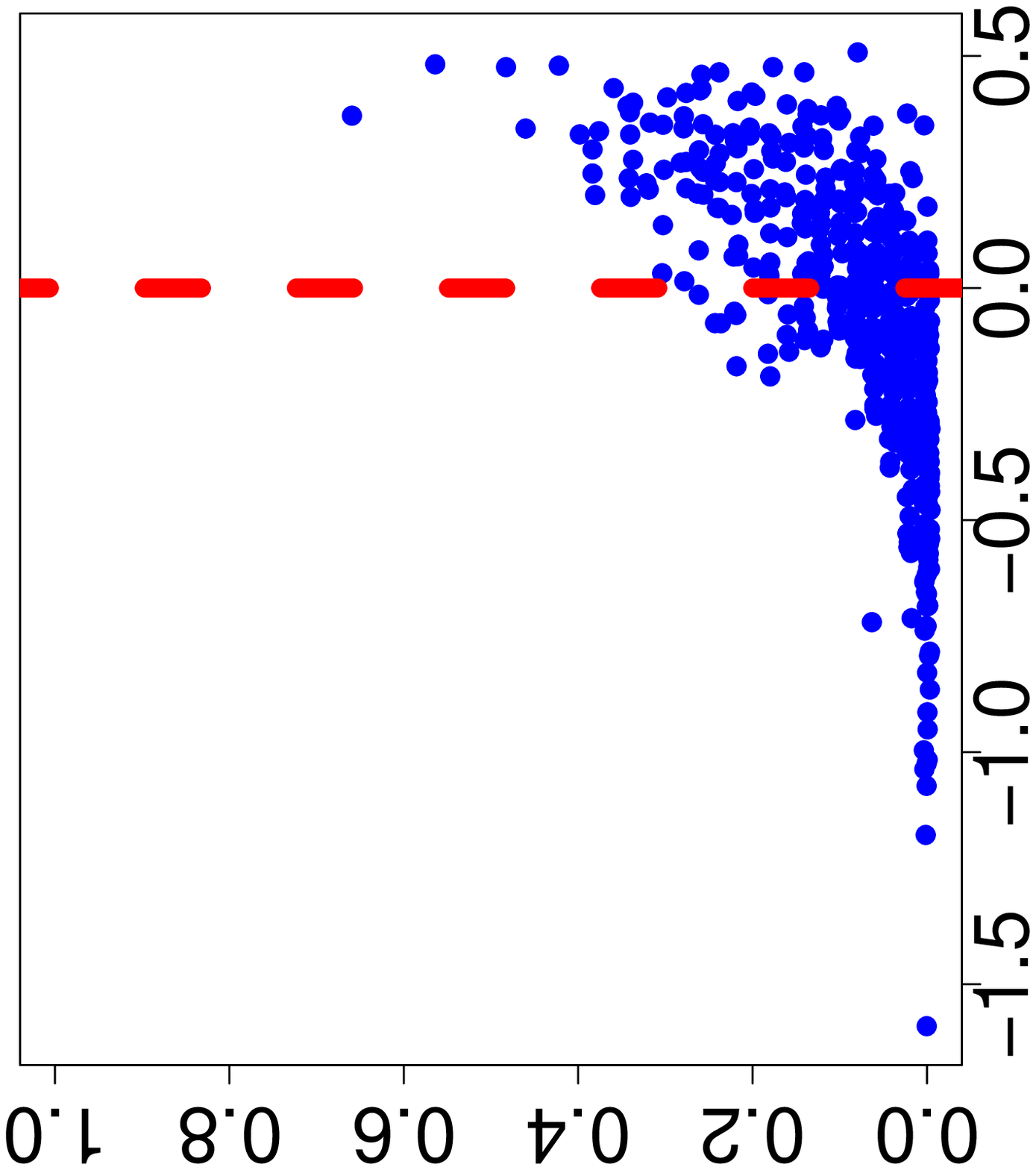}
\end{tabular}
&
\begin{tabular}{c}
\includegraphics[width=\figurewidth\textwidth, angle=270]
{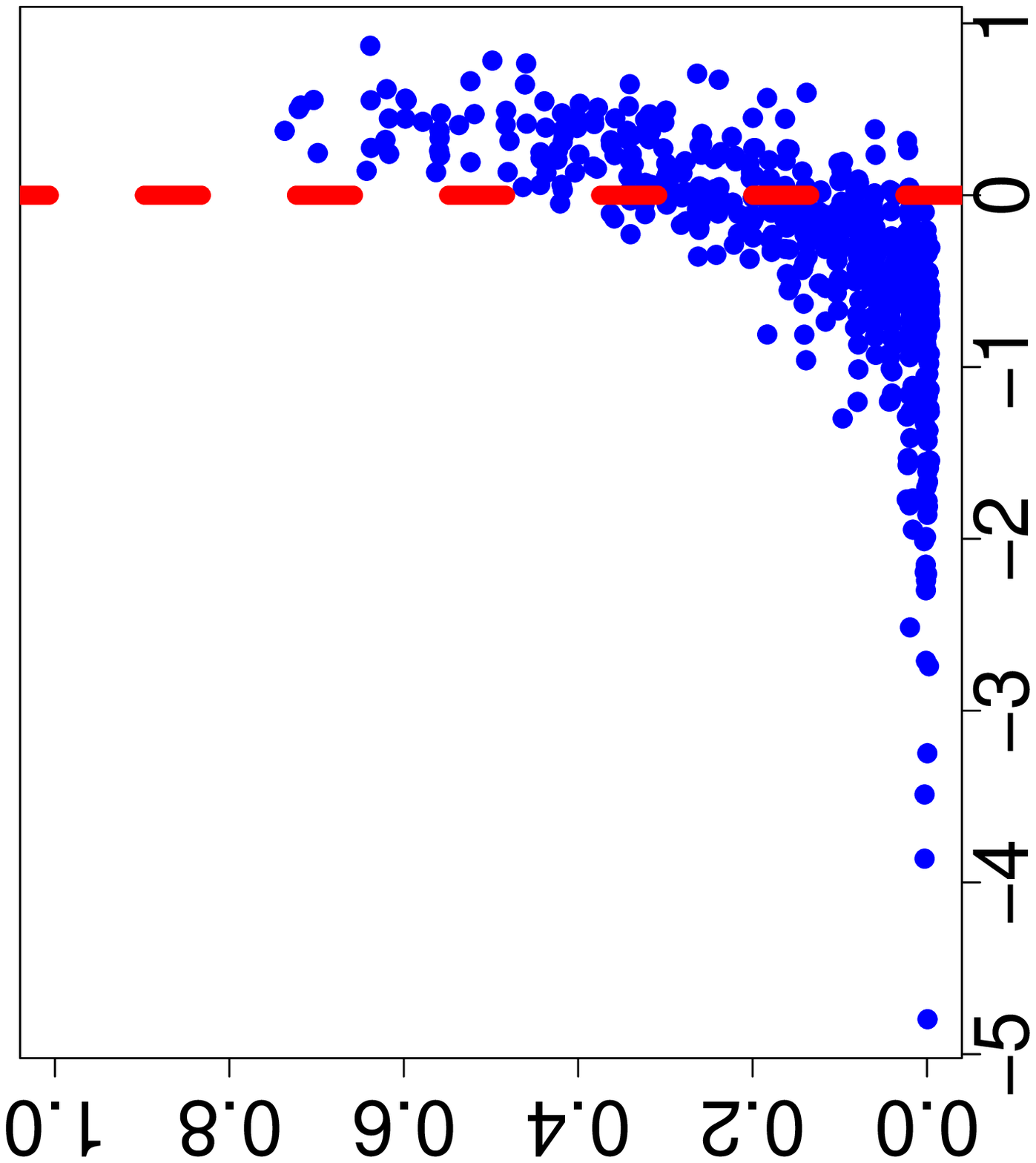}
\end{tabular}
&
\begin{tabular}{c}
\includegraphics[width=\figurewidth\textwidth, angle=270]
{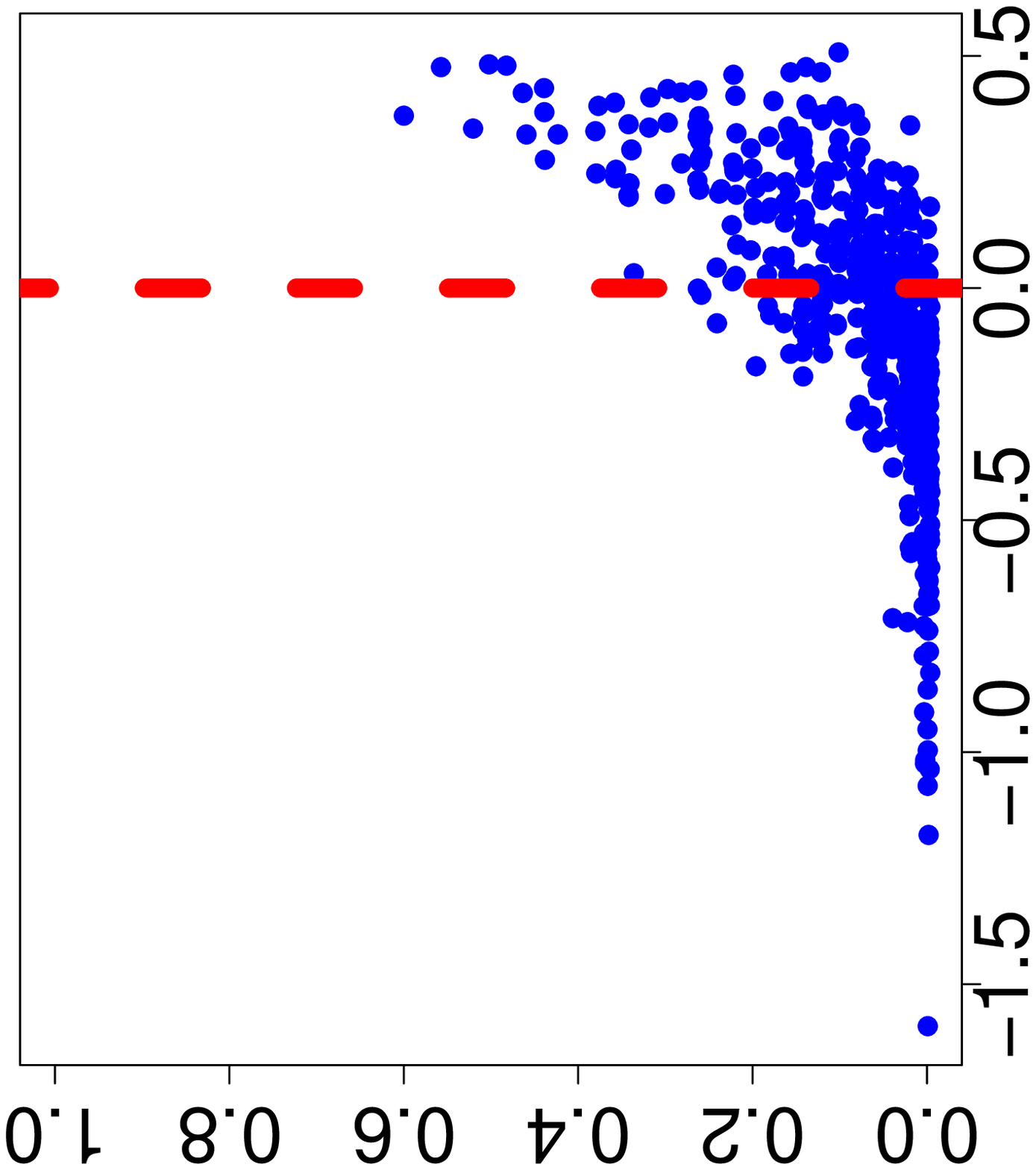}
\end{tabular}
\\
\hline
\begin{tabular}{c}
\begin{sideways}
	{
	\begin{small}
	\begin{tabular}{c} Gaussian \\ $g_{1}$ profile \\ $n=500$, $\sigma=0.5$ \end{tabular}
	\end{small}
	}
\end{sideways}
\end{tabular}
&
\begin{tabular}{c}
\includegraphics[width=\figurewidth\textwidth, angle=270]
{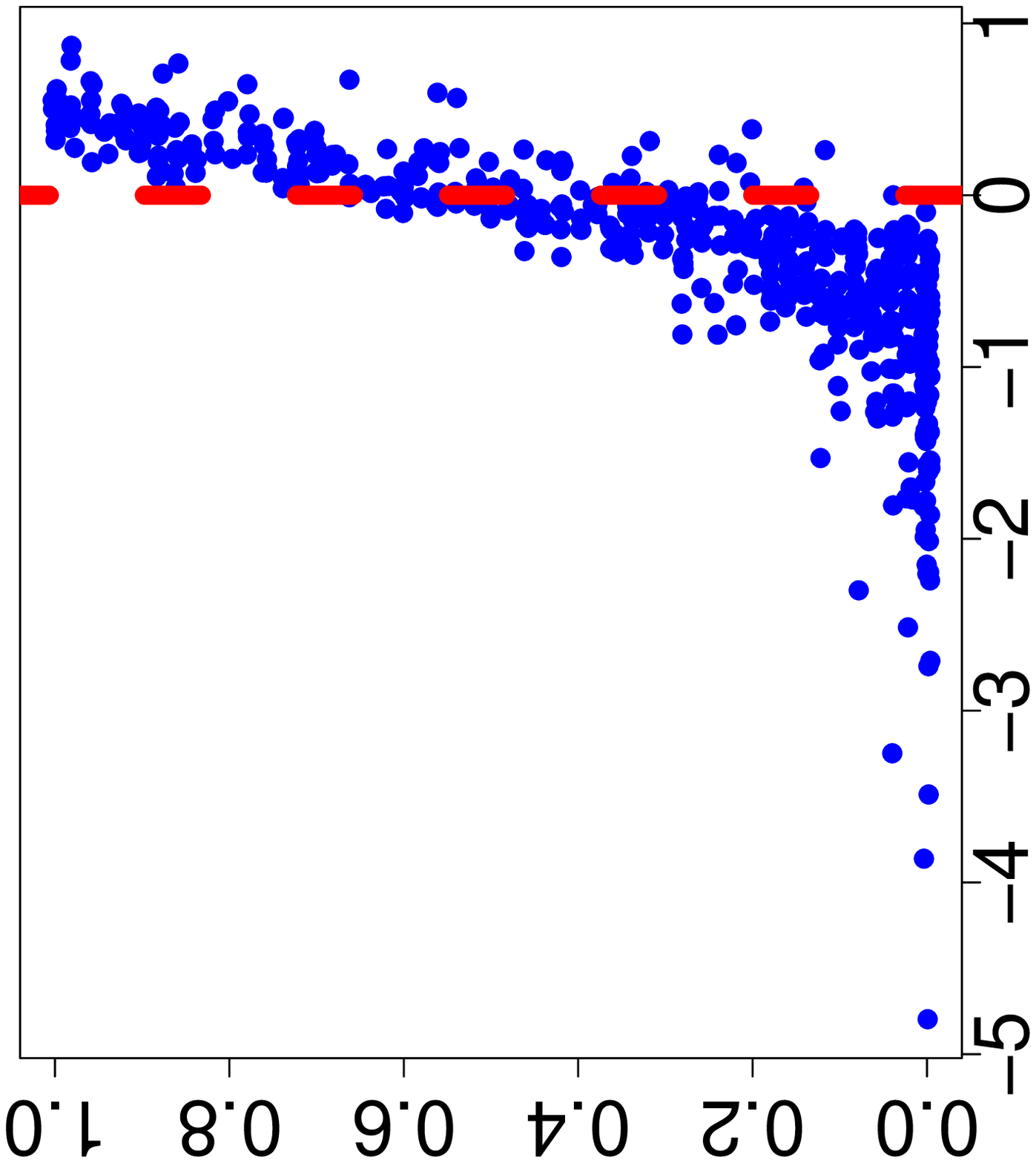}
\end{tabular}
&
\begin{tabular}{c}
\includegraphics[width=\figurewidth\textwidth, angle=270]
{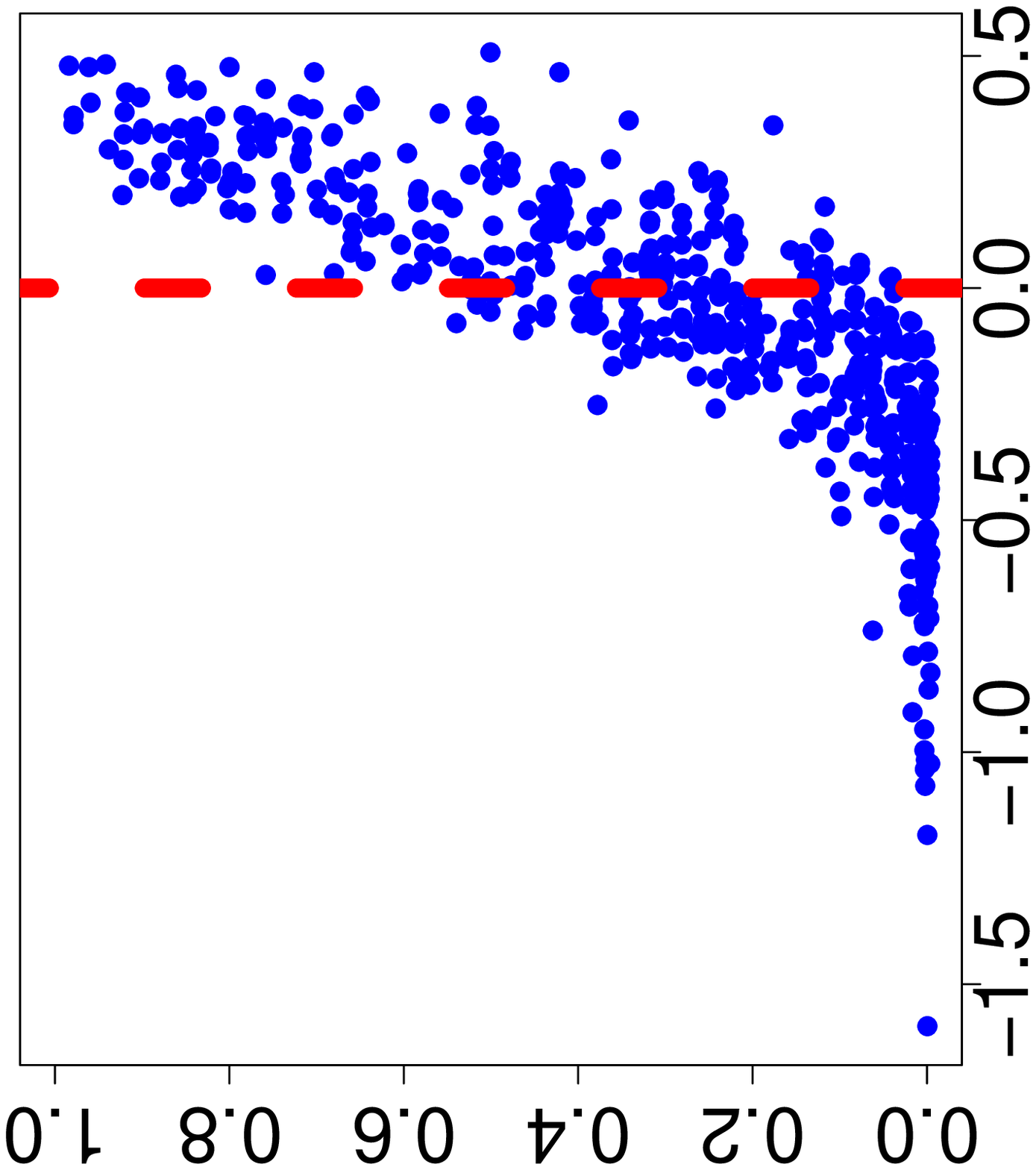}
\end{tabular}
&
\begin{tabular}{c}
\includegraphics[width=\figurewidth\textwidth, angle=270]
{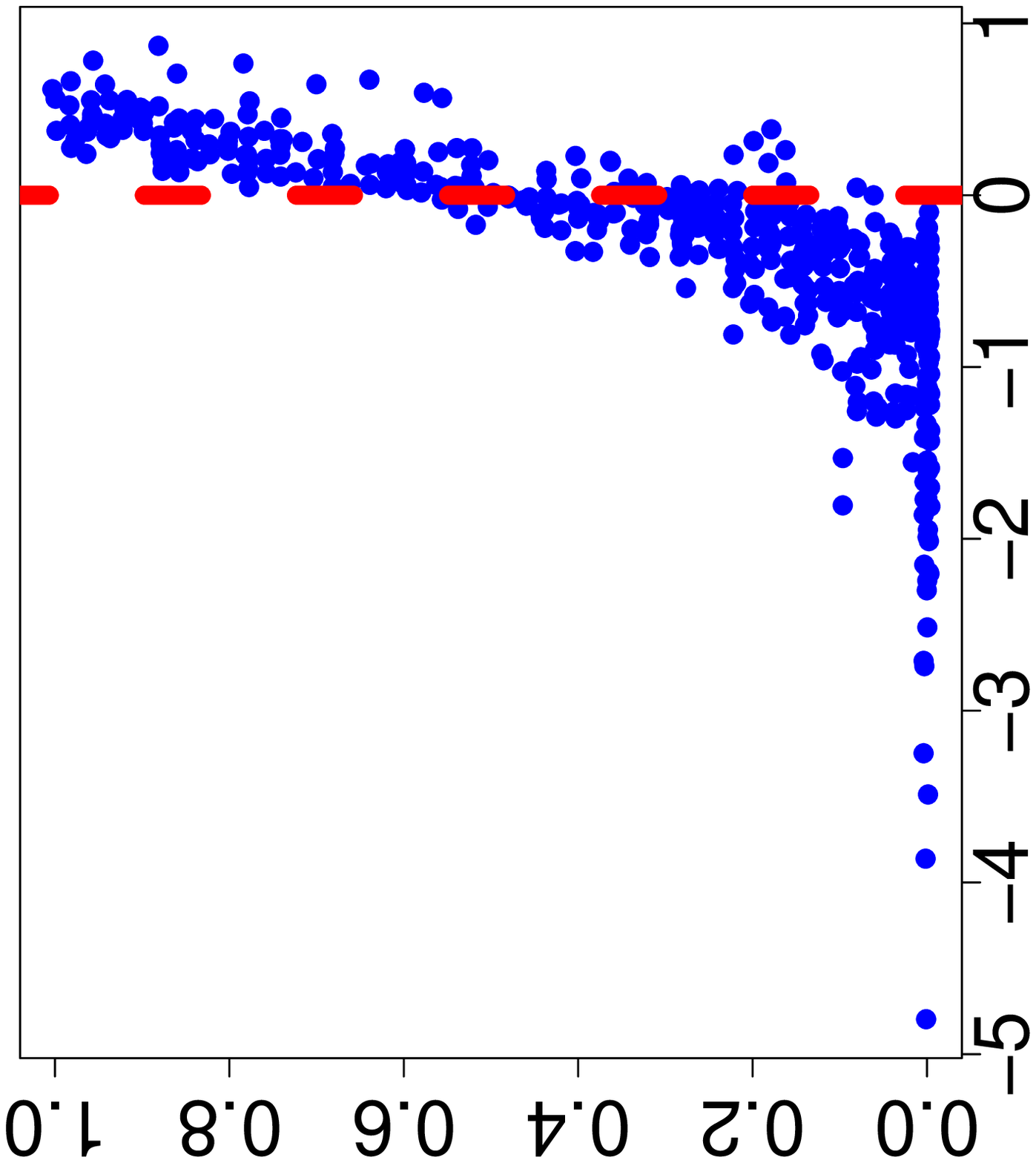}
\end{tabular}
&
\begin{tabular}{c}
\includegraphics[width=\figurewidth\textwidth, angle=270]
{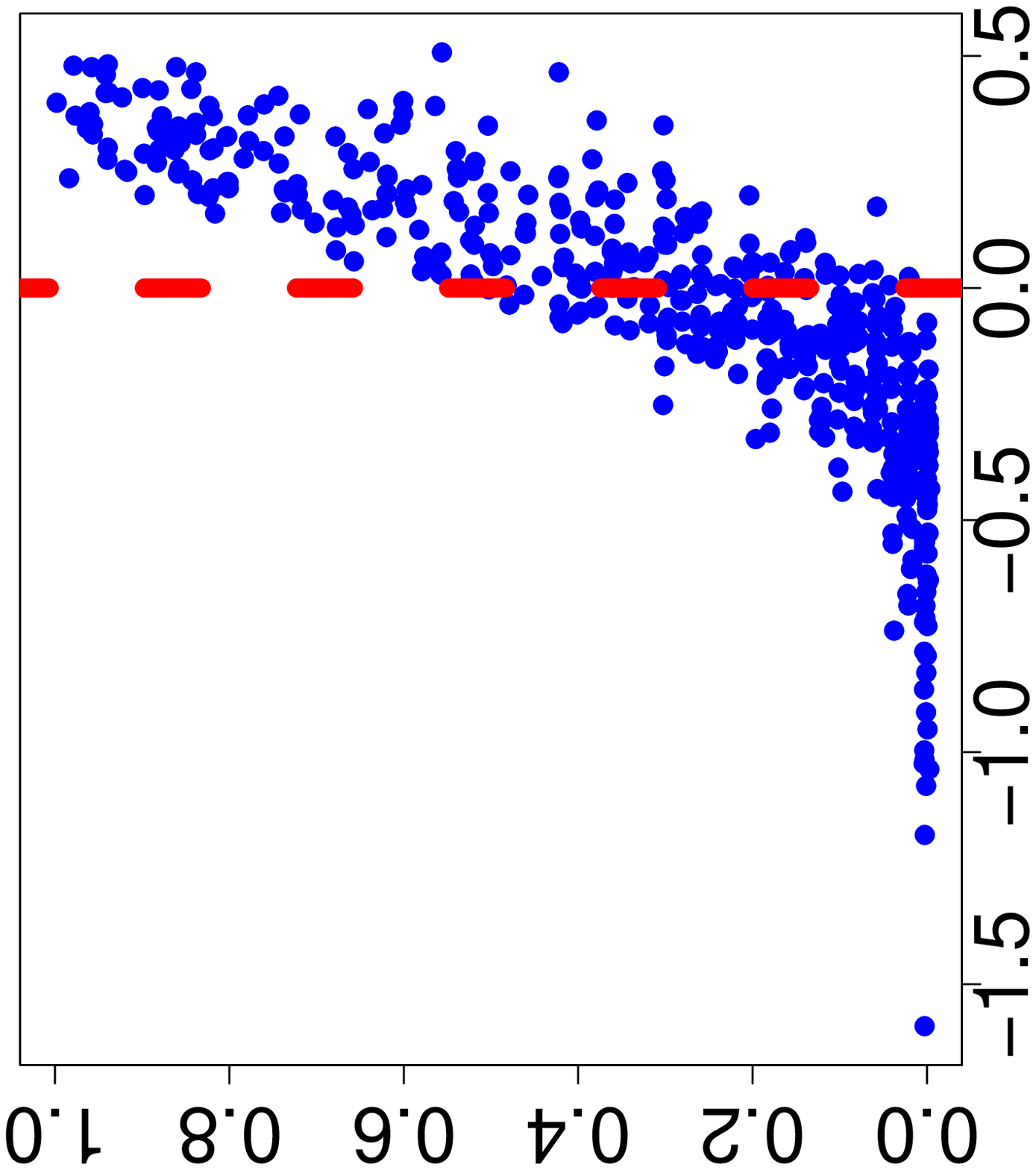}
\end{tabular}
\\
\hline
\begin{tabular}{c}
\begin{sideways}
	{
	\begin{small}
	\begin{tabular}{c} Gaussian \\ $g_{2}$ profile \\ $n=500$, $\sigma=0.5$ \end{tabular}
	\end{small}
	}
\end{sideways}
\end{tabular}
&
\begin{tabular}{c}
\includegraphics[width=\figurewidth\textwidth, angle=270]
{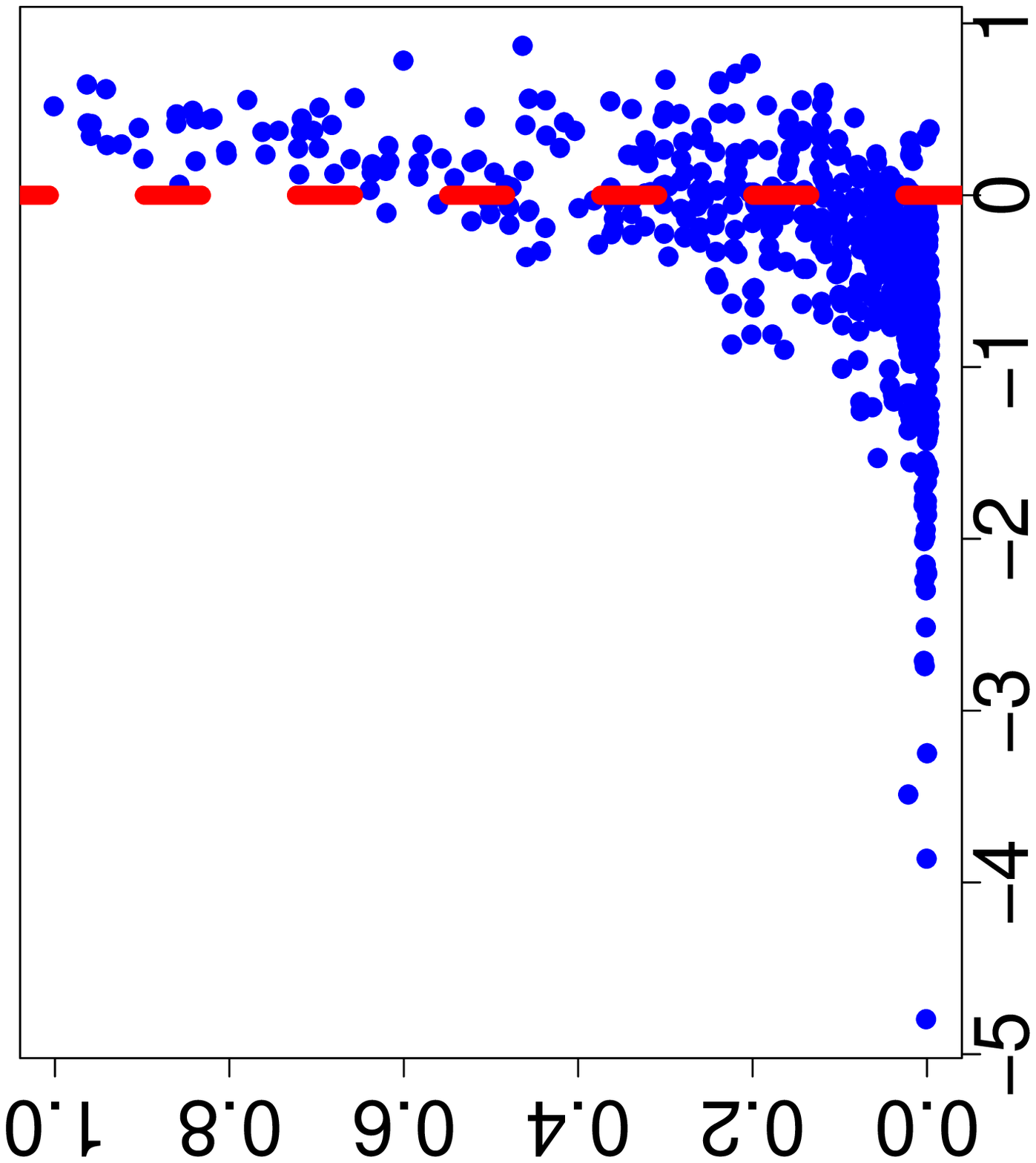}
\end{tabular}
&
\begin{tabular}{c}
\includegraphics[width=\figurewidth\textwidth, angle=270]
{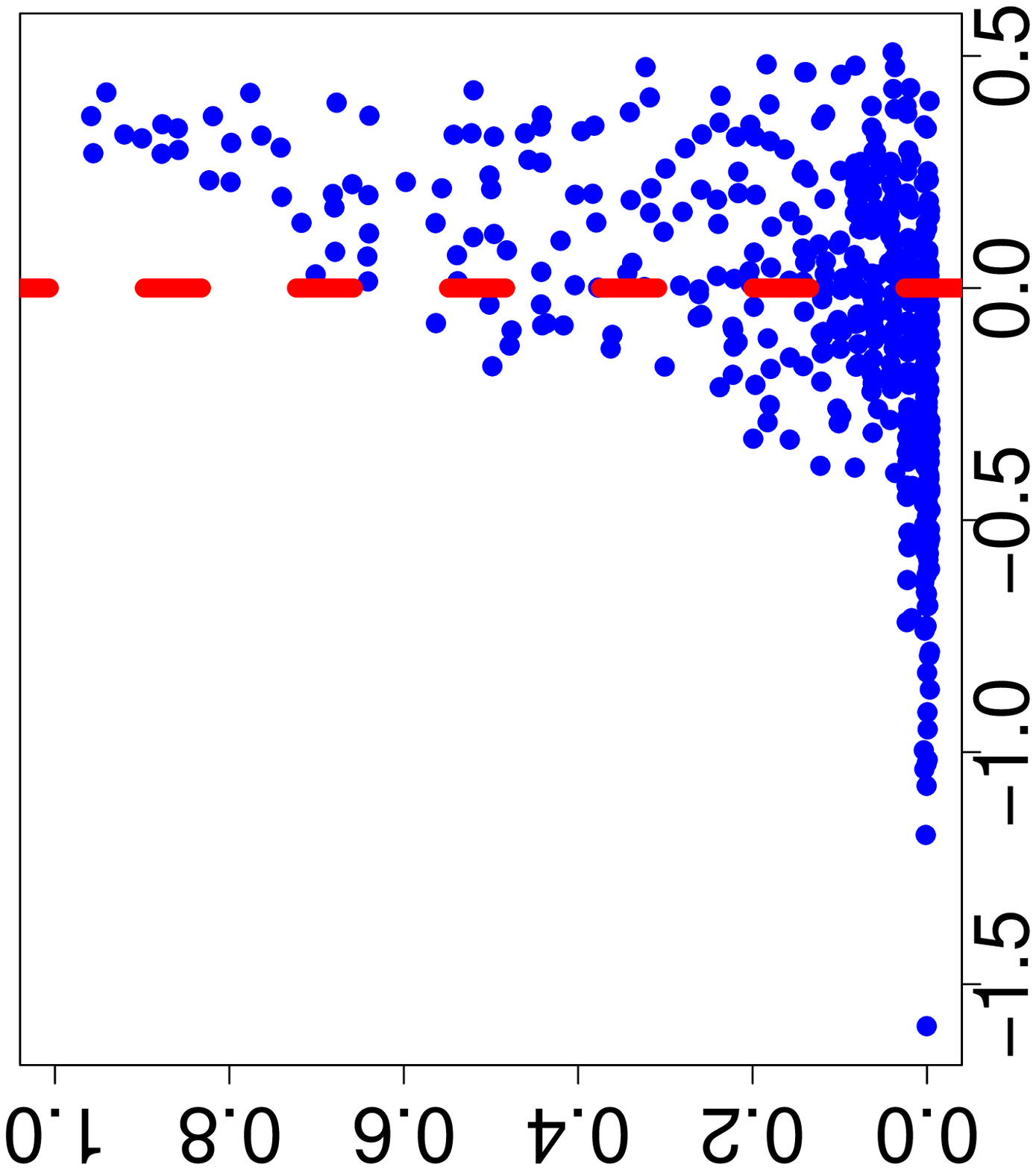}
\end{tabular}
&
\begin{tabular}{c}
\includegraphics[width=\figurewidth\textwidth, angle=270]
{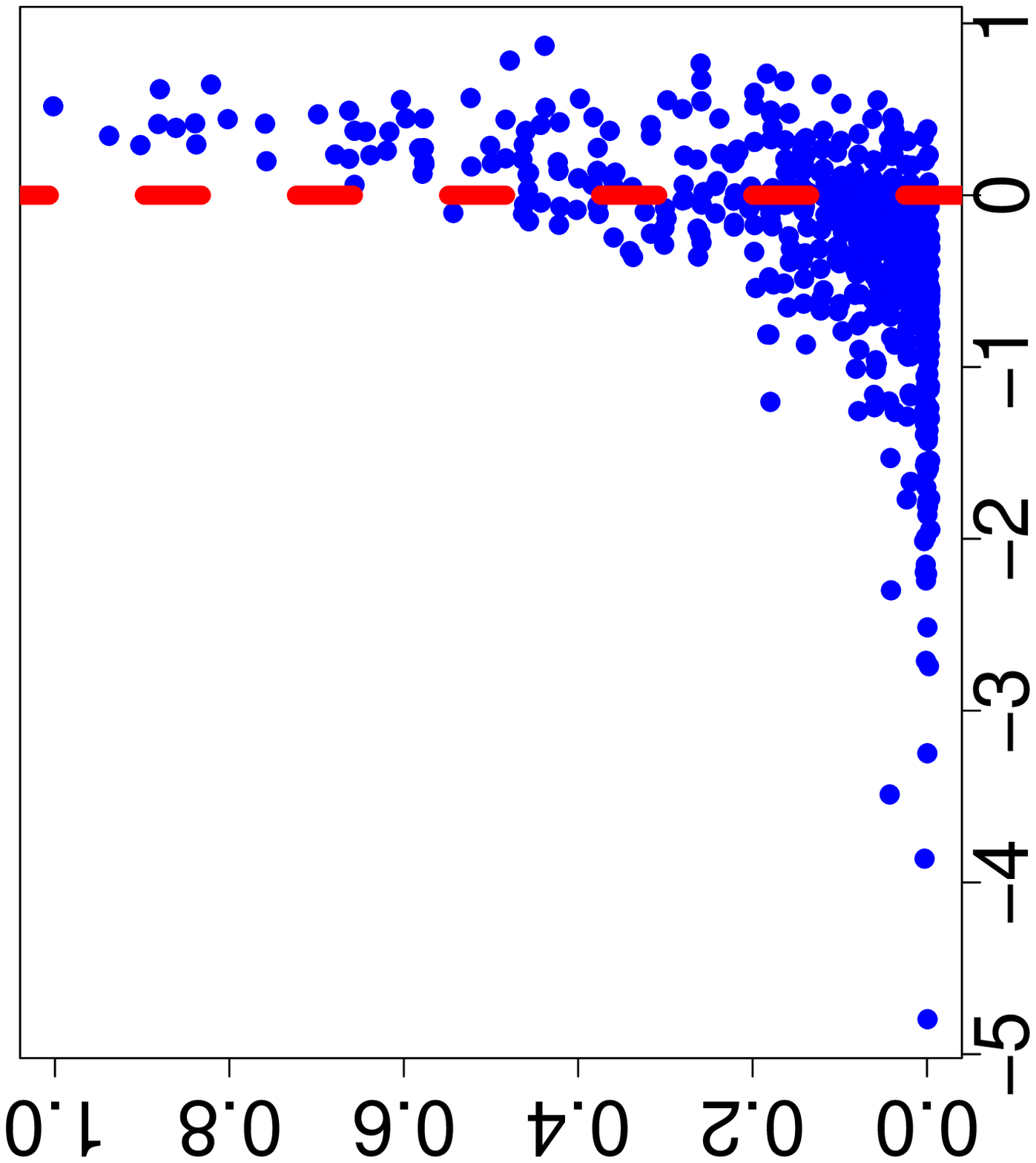}
\end{tabular}
&
\begin{tabular}{c}
\includegraphics[width=\figurewidth\textwidth, angle=270]
{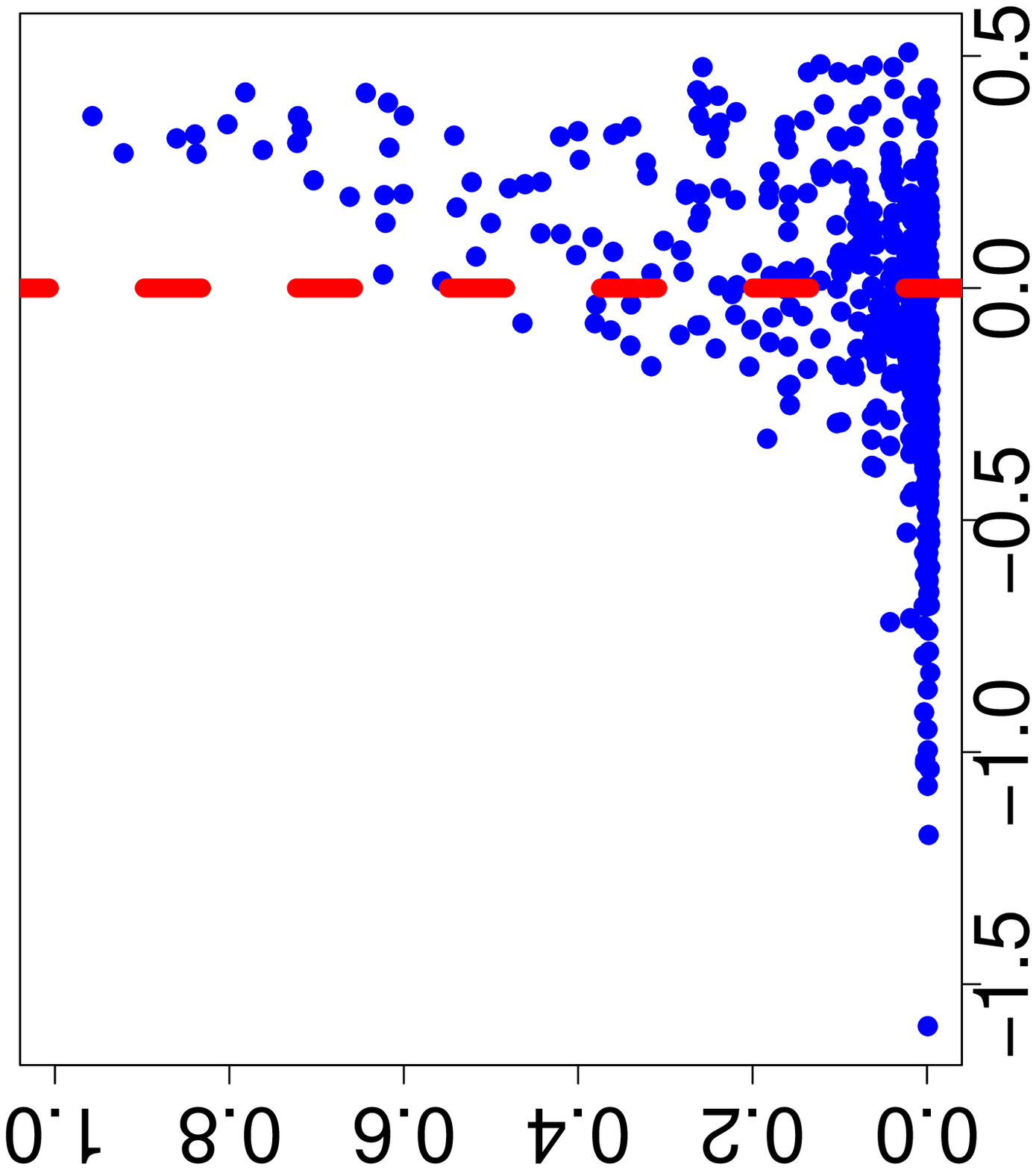}
\end{tabular}
\\
\hline
\begin{tabular}{c}
\begin{sideways}
	{
	\begin{small}
	\begin{tabular}{c} Gaussian \\ $g_{2}$ profile \\ $n=1,000$, $\sigma=0.5$ \end{tabular}
	\end{small}
	}
\end{sideways}
\end{tabular}
&
\begin{tabular}{c}
\includegraphics[width=\figurewidth\textwidth, angle=270]
{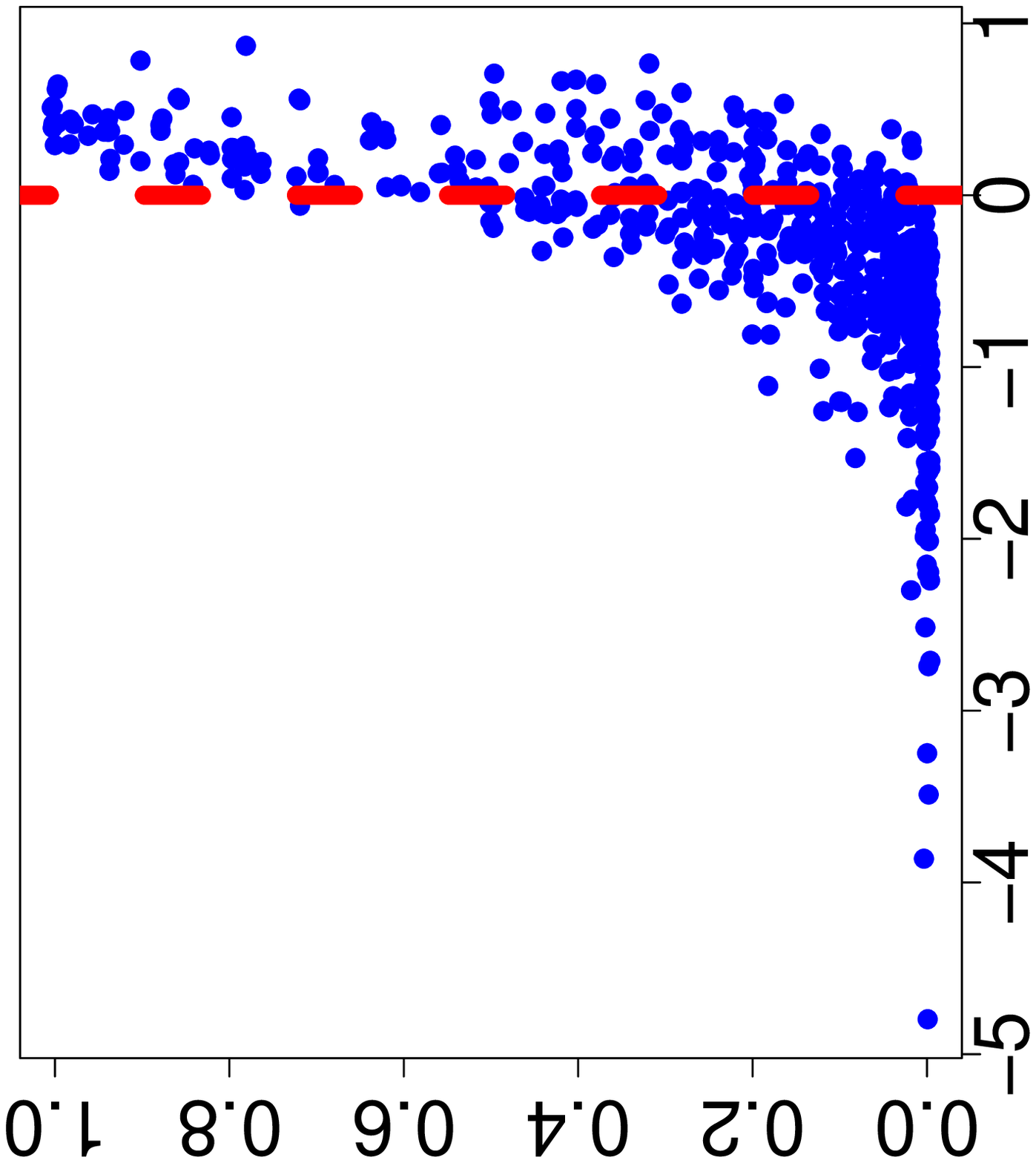}
\end{tabular}
&
\begin{tabular}{c}
\includegraphics[width=\figurewidth\textwidth, angle=270]
{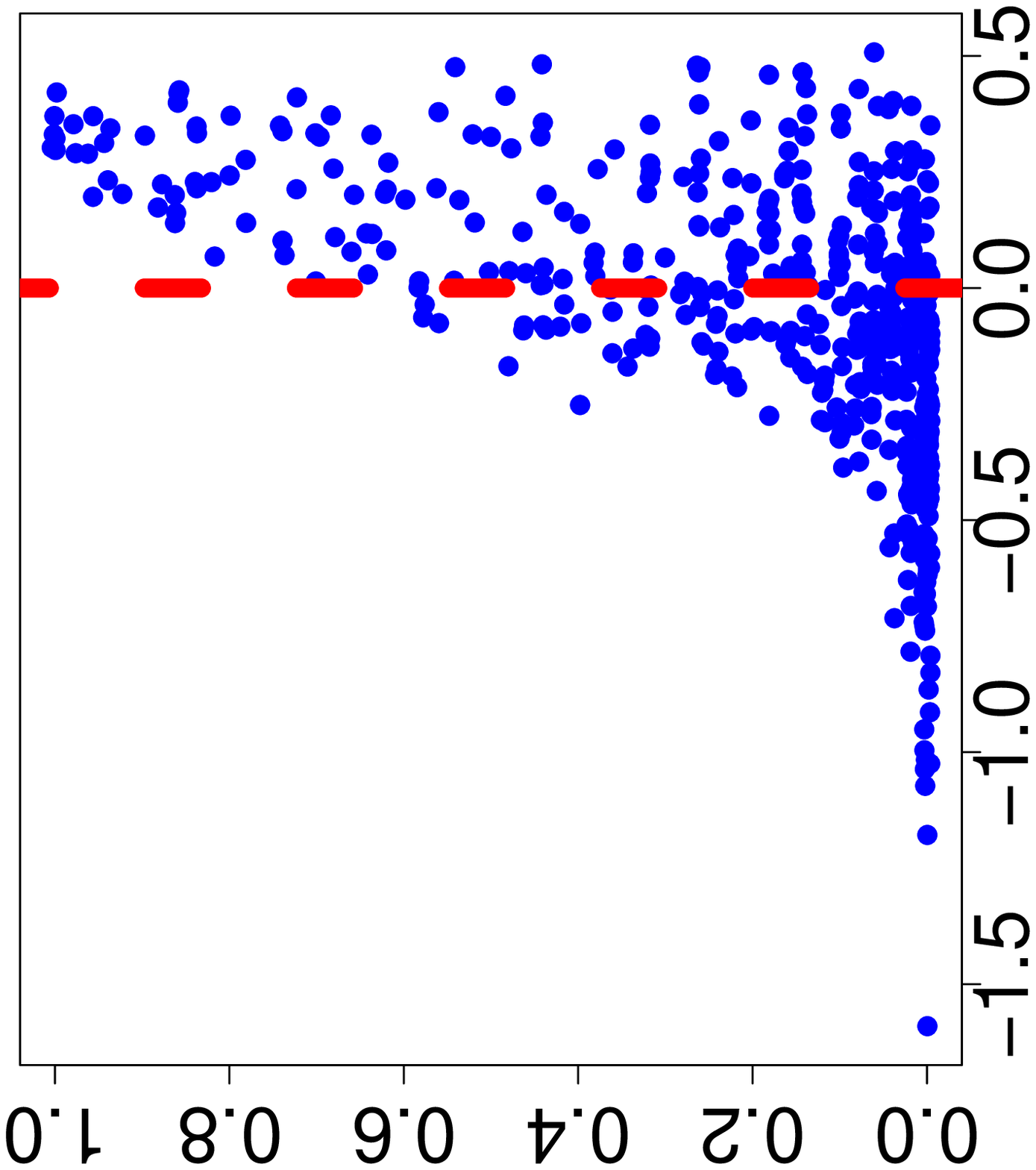}
\end{tabular}
&
\begin{tabular}{c}
\includegraphics[width=\figurewidth\textwidth, angle=270]
{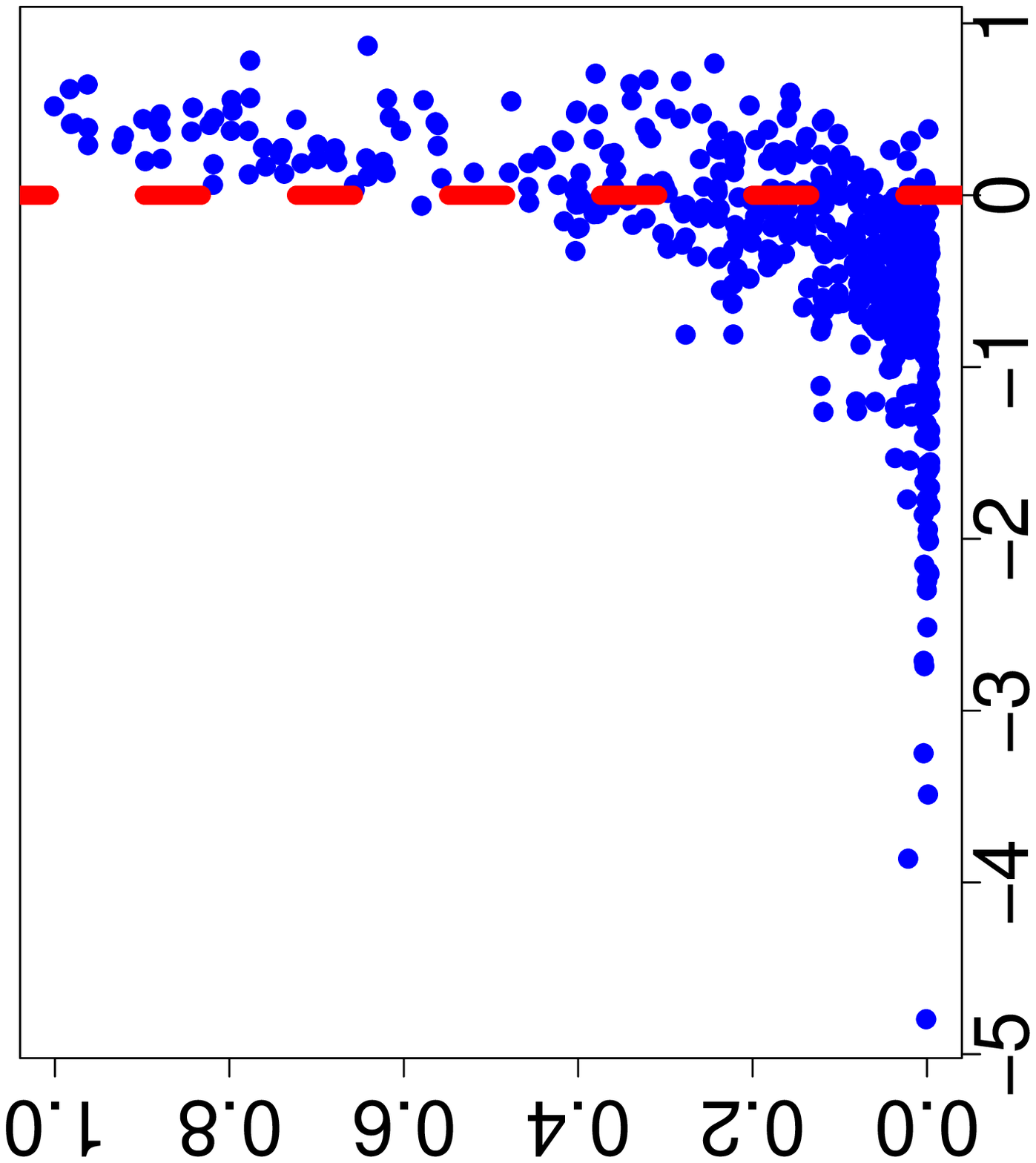}
\end{tabular}
&
\begin{tabular}{c}
\includegraphics[width=\figurewidth\textwidth, angle=270]
{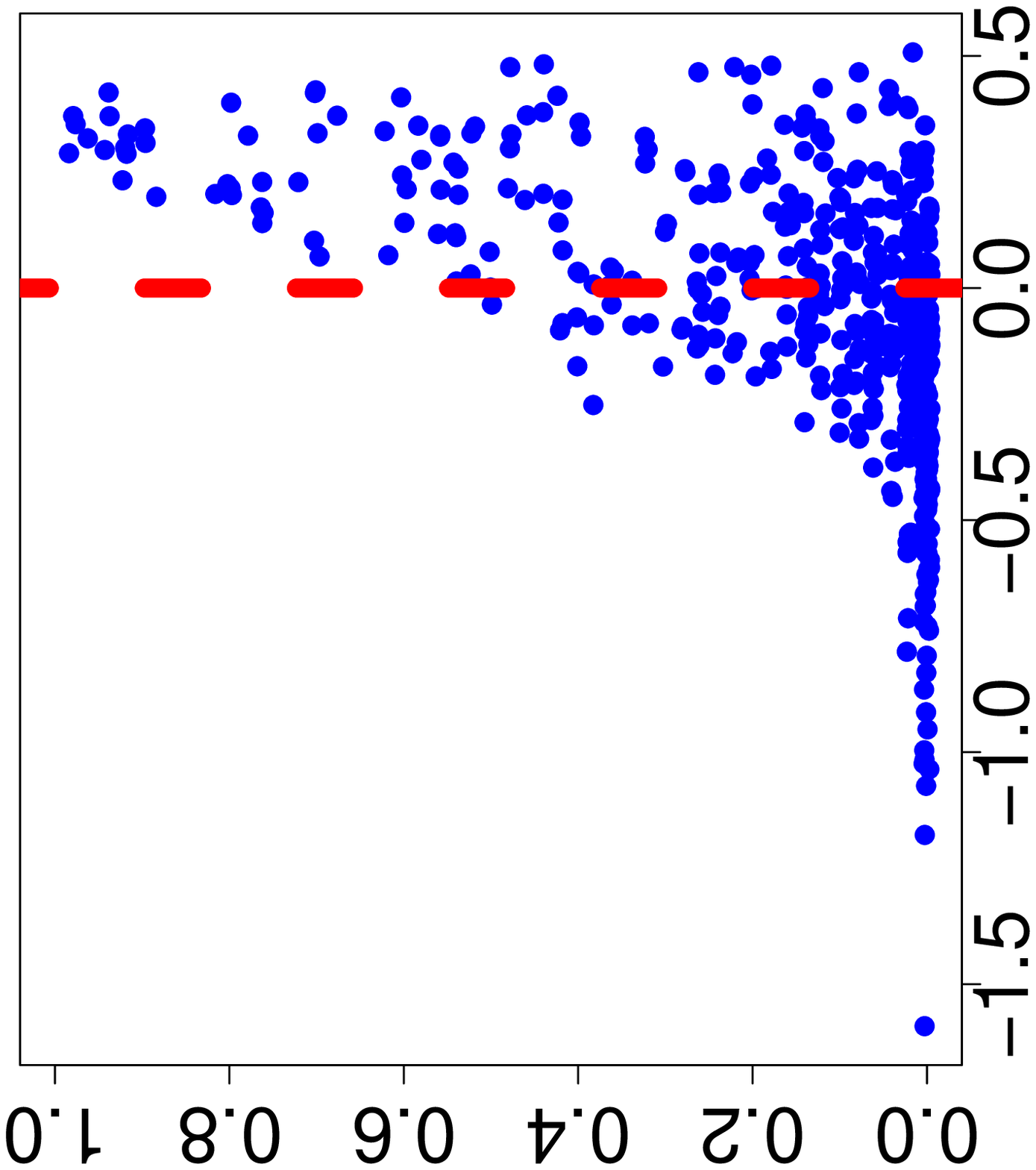}
\end{tabular}
\\
\hline
\end{tabular}
\caption{
\label{figure:classification_rnorm_model_selection_phase_transition}
\textbf{Proportion of sample regularization paths containing a sign-model  vs. GI index $\eta(\theta)$ under Gaussian predictors:}
The proportion at each point is based on 50 replicates of the sample regularization path for the corresponding design.
The results displayed in these panels show good agreement with the theory for sign consistency of general $\ell_{1}$-penalized M-estimators developed in Section \ref{section:application}: for increasing sample sizes, the proportion of paths containing sign correct model approaches one as the sample size increases whenever $\eta(\theta)>0$.
For $\eta(\theta)<0$, the chance of correct sign recovery are low throughout.
Not surprisingly, the asymptotic approximation works better for smaller $p$.
Also notice that the fainter signal of the ``blip'' profile makes the recovery of the correct signs harder.
}
\end{center}	
\end{figure}
\afterpage{\clearpage}

\renewcommand{\figurewidth}{0.20}

\begin{figure}[p]
\begin{center}
\begin{tabular}{|c|cc|cc|}
\cline{2-5}
\multicolumn{1}{c|}{}
&&&&
\\
\multicolumn{1}{c|}{}
&
\multicolumn{2}{|c|}{SVM}
&
\multicolumn{2}{|c|}{Logistic}
\\
\multicolumn{1}{c|}{}
&
\begin{tabular}{c}
$p=08$
\\
$q=04$
\end{tabular}
&
\begin{tabular}{c}
$p=16$
\\
$q=04$
\end{tabular}
&
\begin{tabular}{c}
$p=08$
\\
$q=04$
\end{tabular}
&
\begin{tabular}{c}
$p=16$
\\
$q=04$
\end{tabular}
\\
\hline
\begin{tabular}{c}
\begin{sideways}
	\begin{tabular}{c} 
	Mixed Gaussian \\ $g_{1}$ profile \\ $n=500$, $\sigma=1.0$ 
	\end{tabular}
\end{sideways}
\end{tabular}
&
\begin{minipage}[l]{\figurewidth\textwidth}
\includegraphics[width=\textwidth, angle=270]
{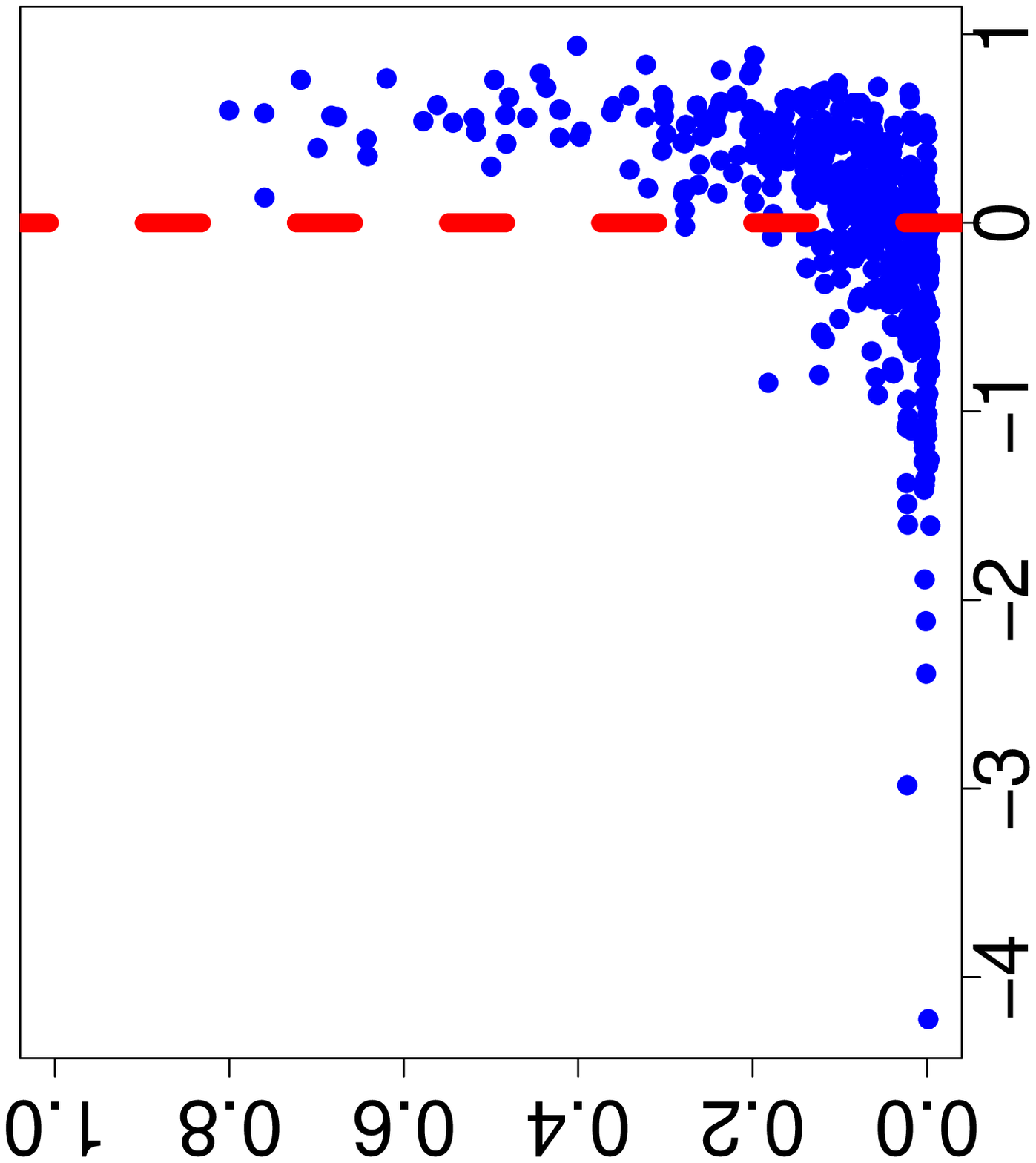}
\end{minipage}
&
\begin{minipage}[l]{\figurewidth\textwidth}
\includegraphics[width=\textwidth, angle=270]
{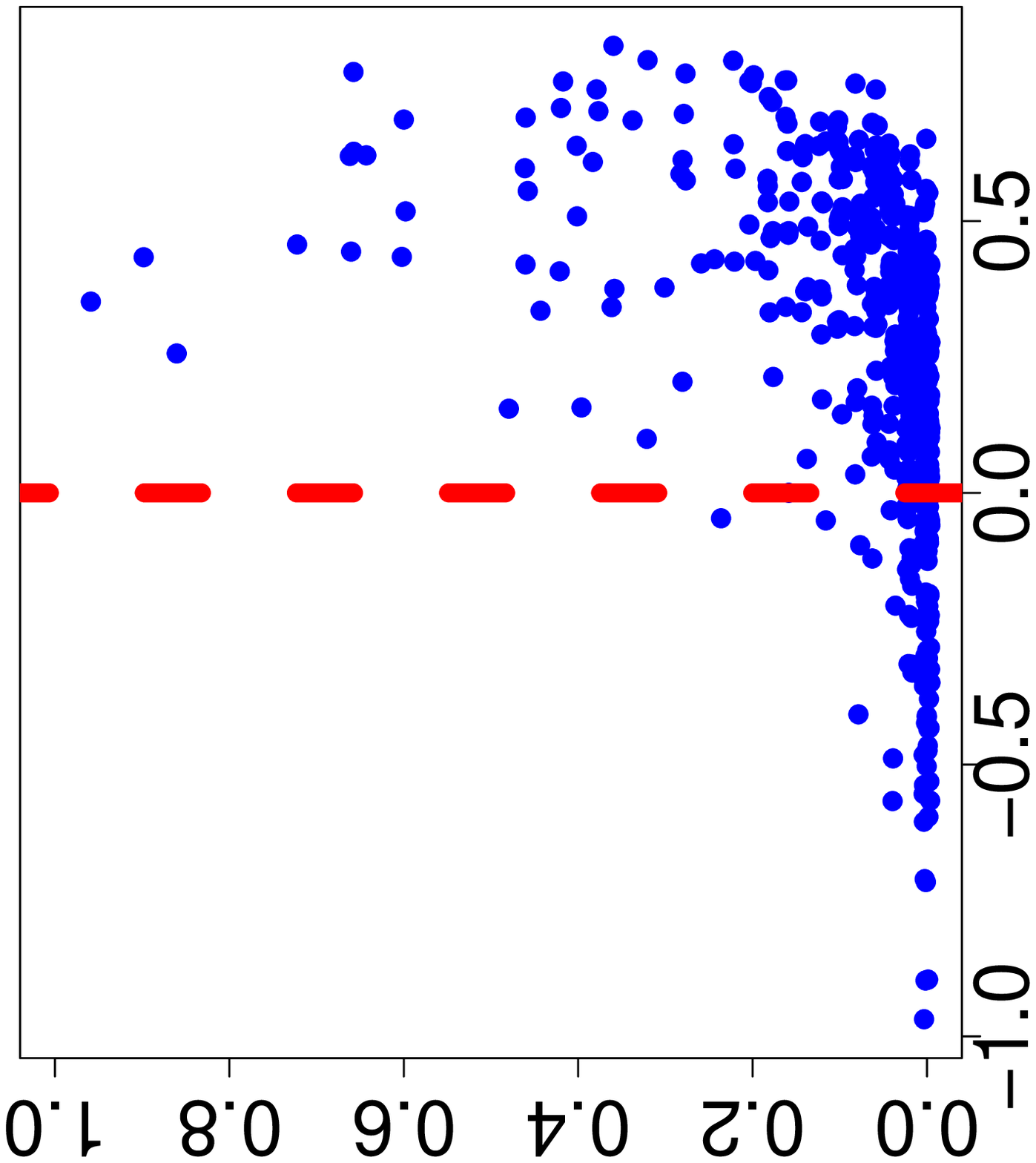}
\end{minipage}
&
\begin{minipage}[l]{\figurewidth\textwidth}
\includegraphics[width=\textwidth, angle=270]
{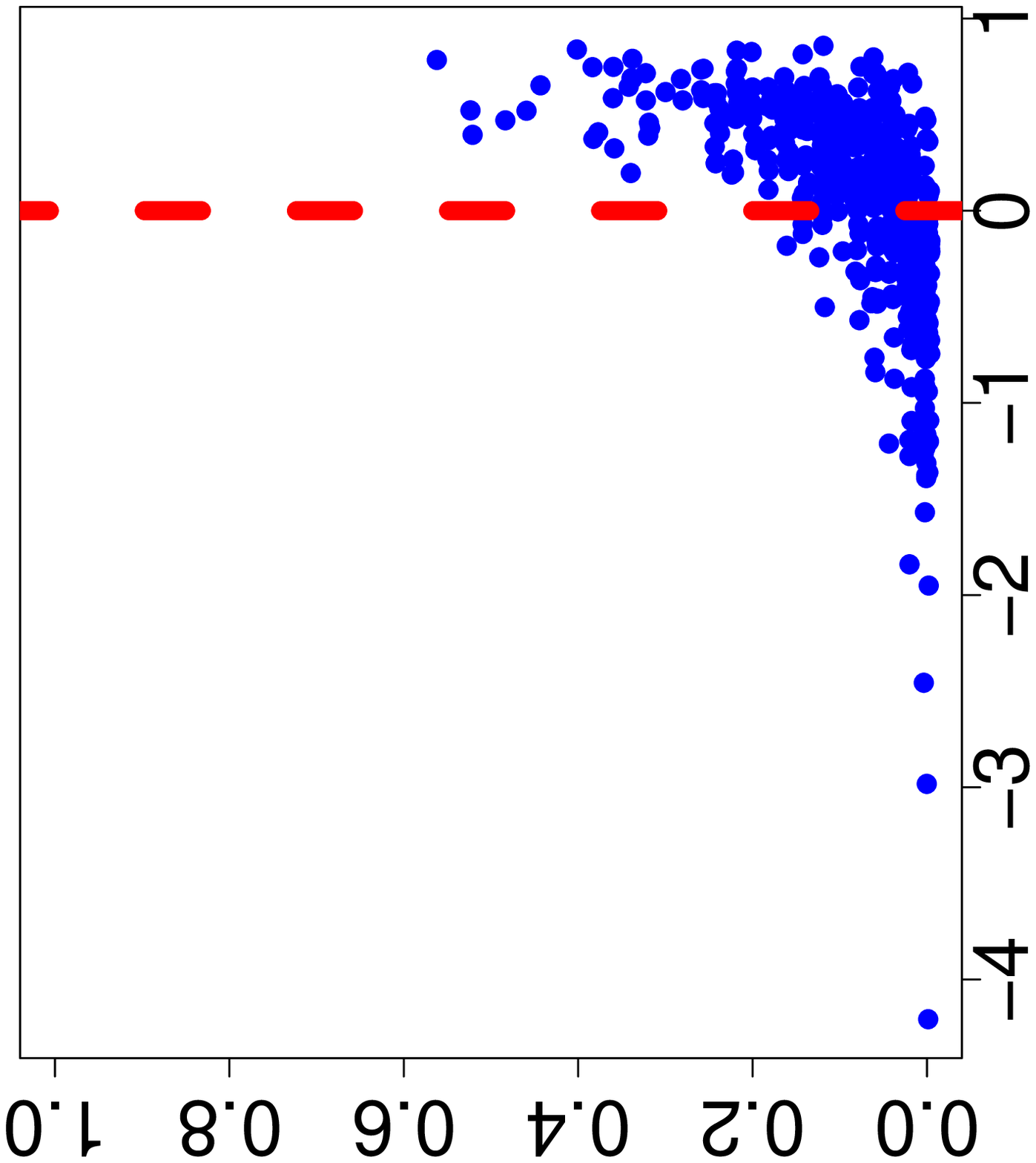}
\end{minipage}
&
\begin{minipage}[l]{\figurewidth\textwidth}
\includegraphics[width=\textwidth, angle=270]
{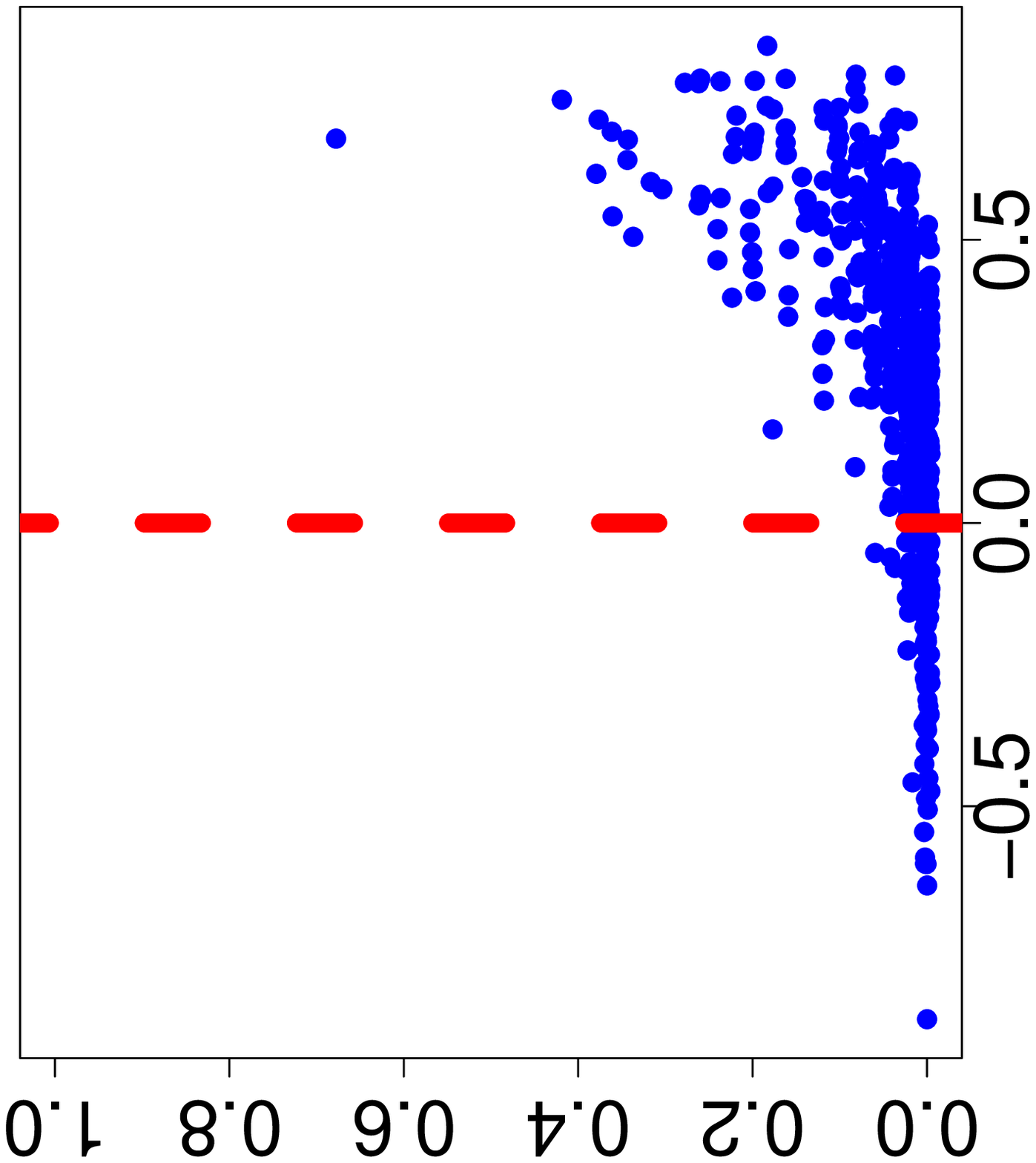}
\end{minipage}
\\
\hline
\begin{tabular}{c}
\begin{sideways}
	\begin{tabular}{c} 
	Mixed Gaussian \\ $g_{1}$ profile \\ $n=500$, $\sigma=1.0$ 
	\end{tabular}
\end{sideways}
\end{tabular}
&
\begin{minipage}[l]{\figurewidth\textwidth}
\includegraphics[width=\textwidth, angle=270]
{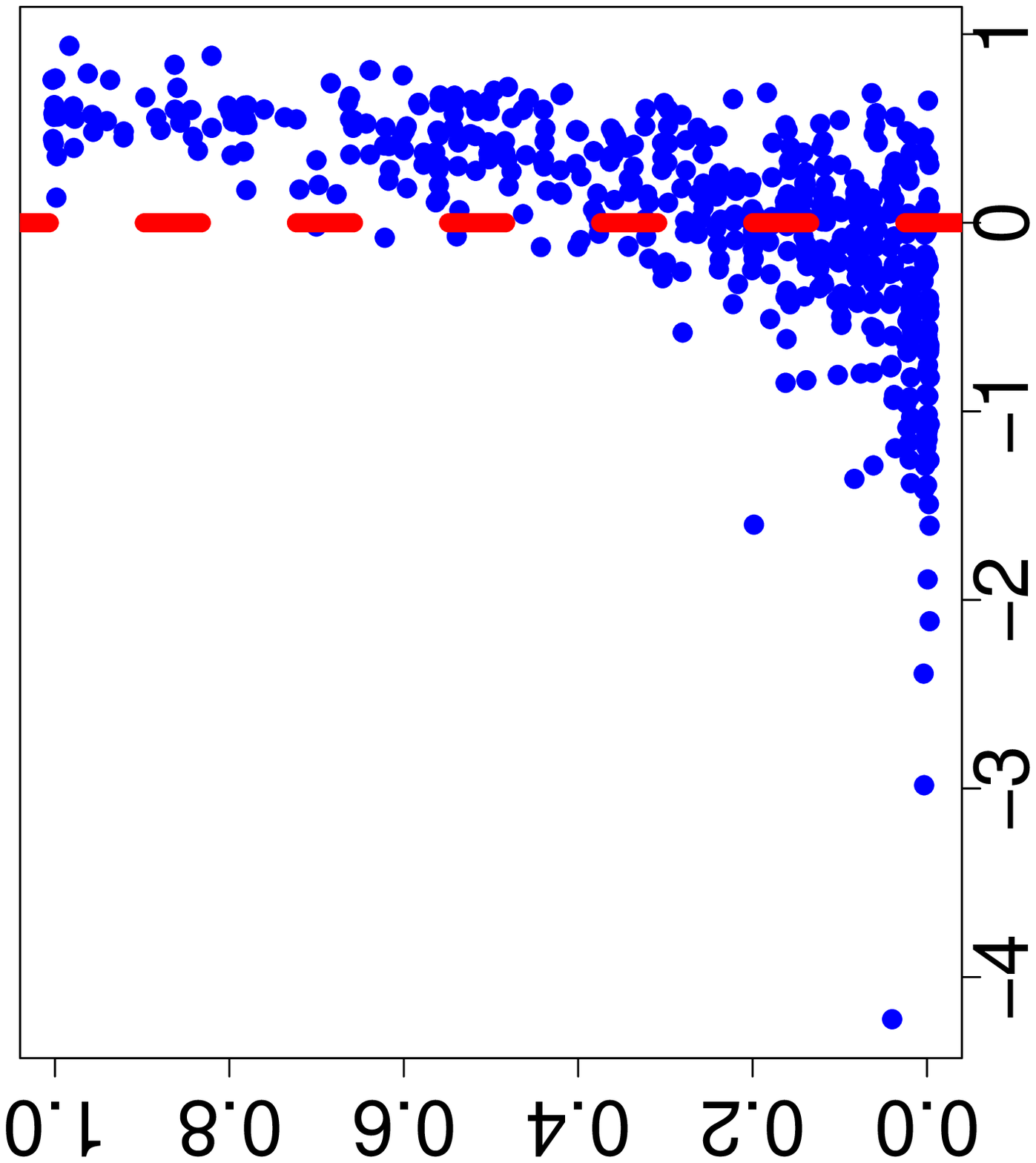}
\end{minipage}
&
\begin{minipage}[l]{\figurewidth\textwidth}
\includegraphics[width=\textwidth, angle=270]
{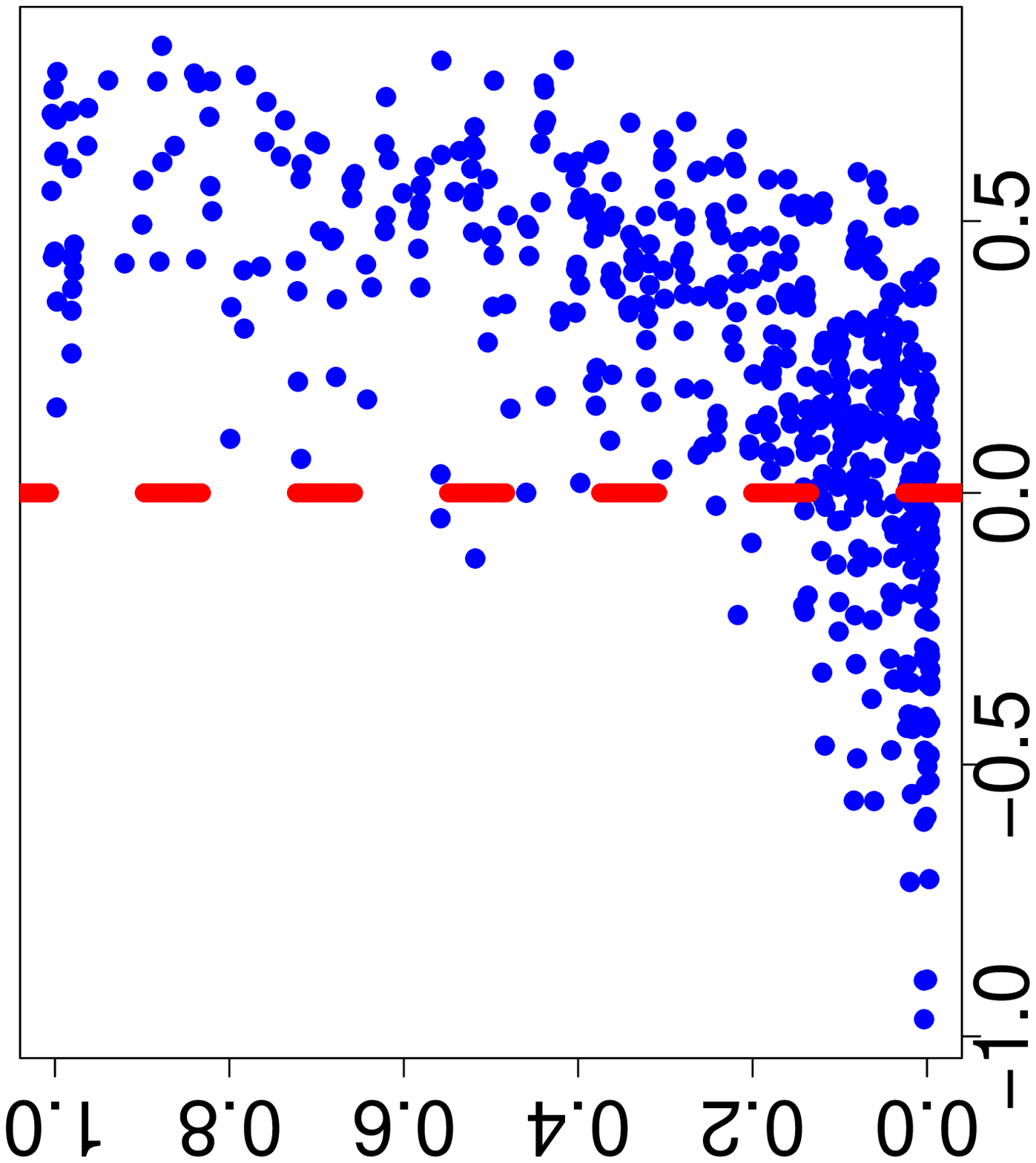}
\end{minipage}
&
\begin{minipage}[l]{\figurewidth\textwidth}
\includegraphics[width=\textwidth, angle=270]
{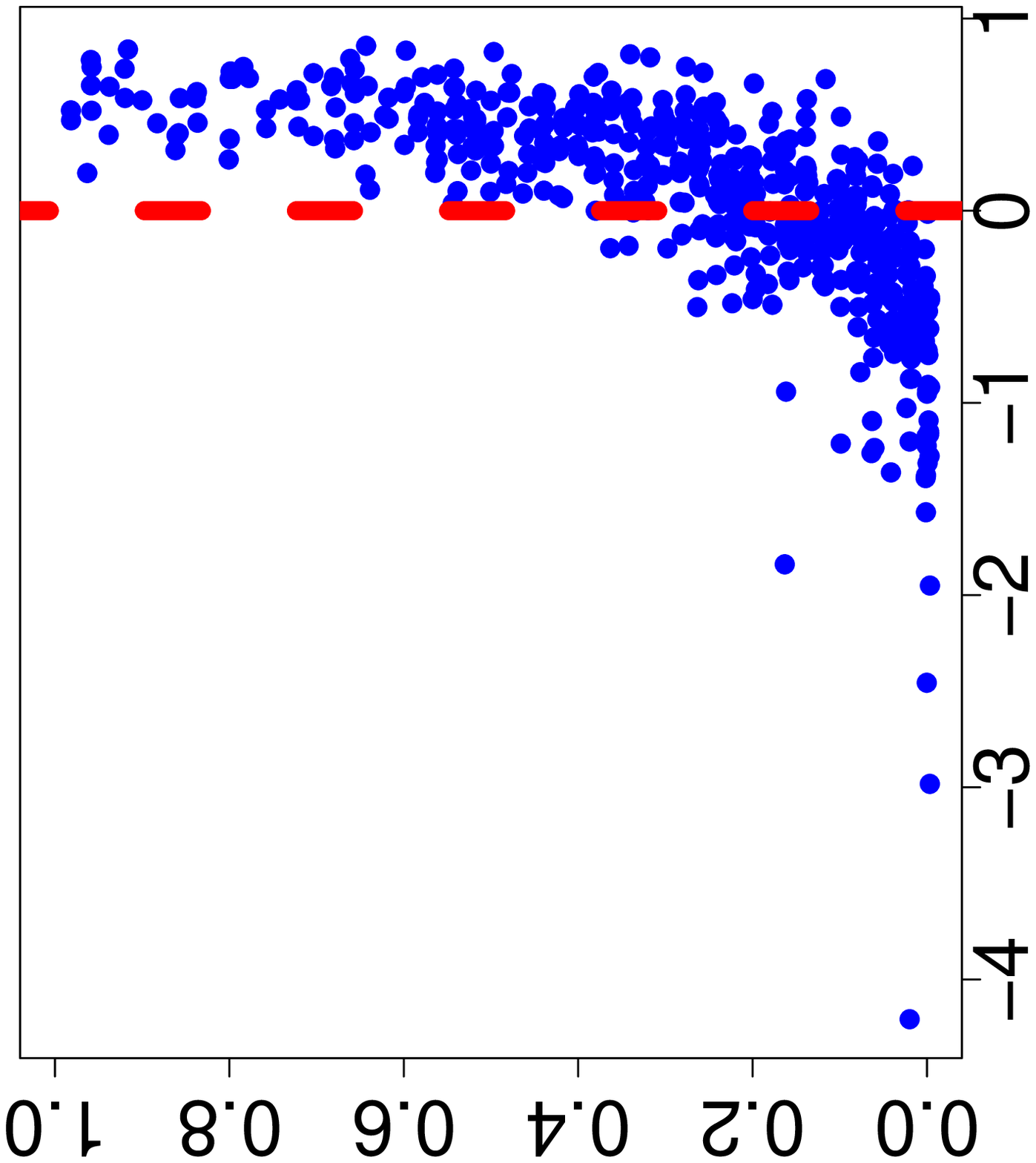}
\end{minipage}
&
\begin{minipage}[l]{\figurewidth\textwidth}
\includegraphics[width=\textwidth, angle=270]
{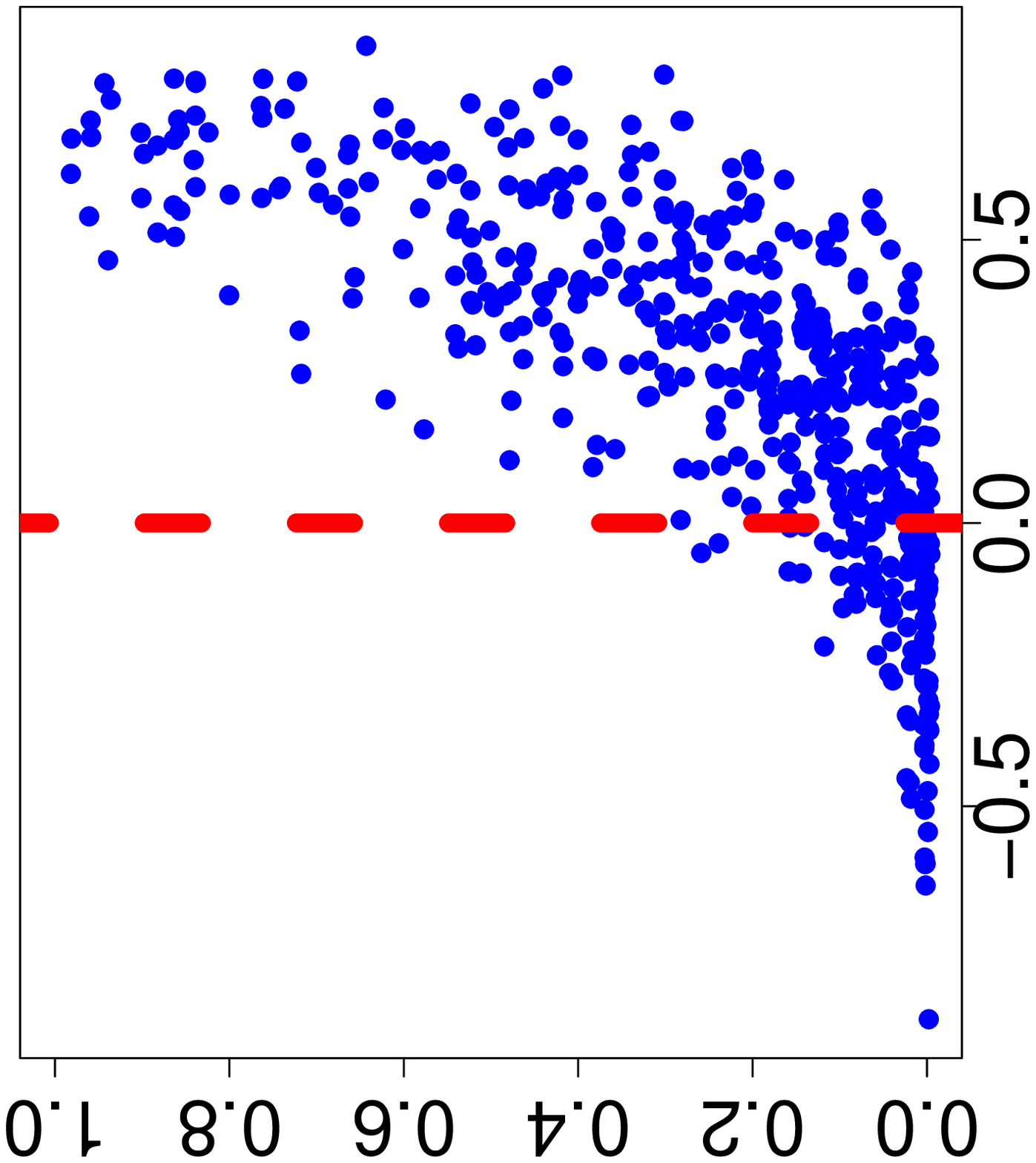}
\end{minipage}
\\
\hline
\begin{tabular}{c}
\begin{sideways}
	\begin{tabular}{c} 
	Mixed Gaussian \\ $g_{2}$ profile \\ $n=500$, $\sigma=1.0$ 
	\end{tabular}
\end{sideways}
\end{tabular}
&
\begin{minipage}[l]{\figurewidth\textwidth}
\includegraphics[width=\textwidth, angle=270]
{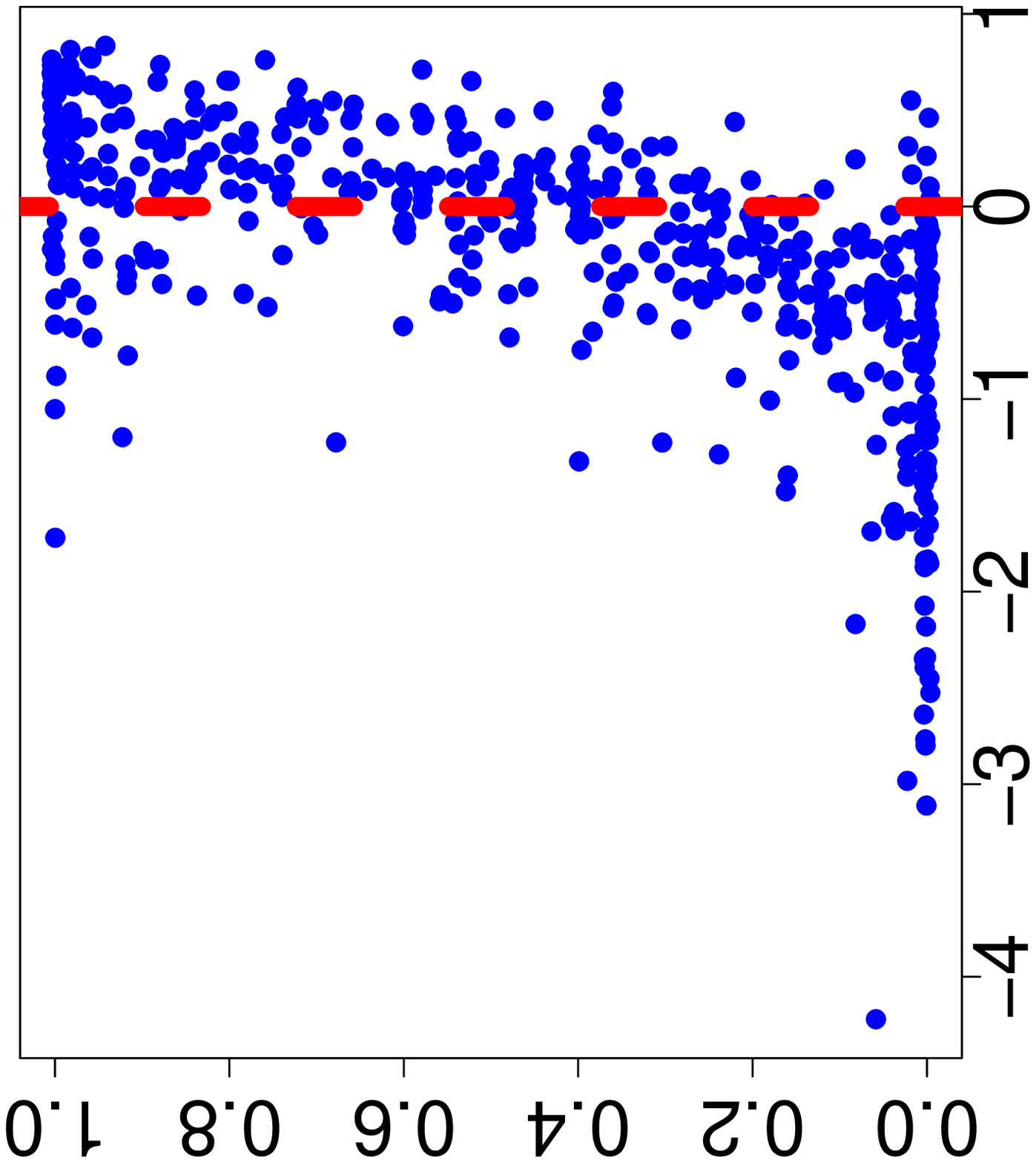}
\end{minipage}
&
\begin{minipage}[l]{\figurewidth\textwidth}
\includegraphics[width=\textwidth, angle=270]
{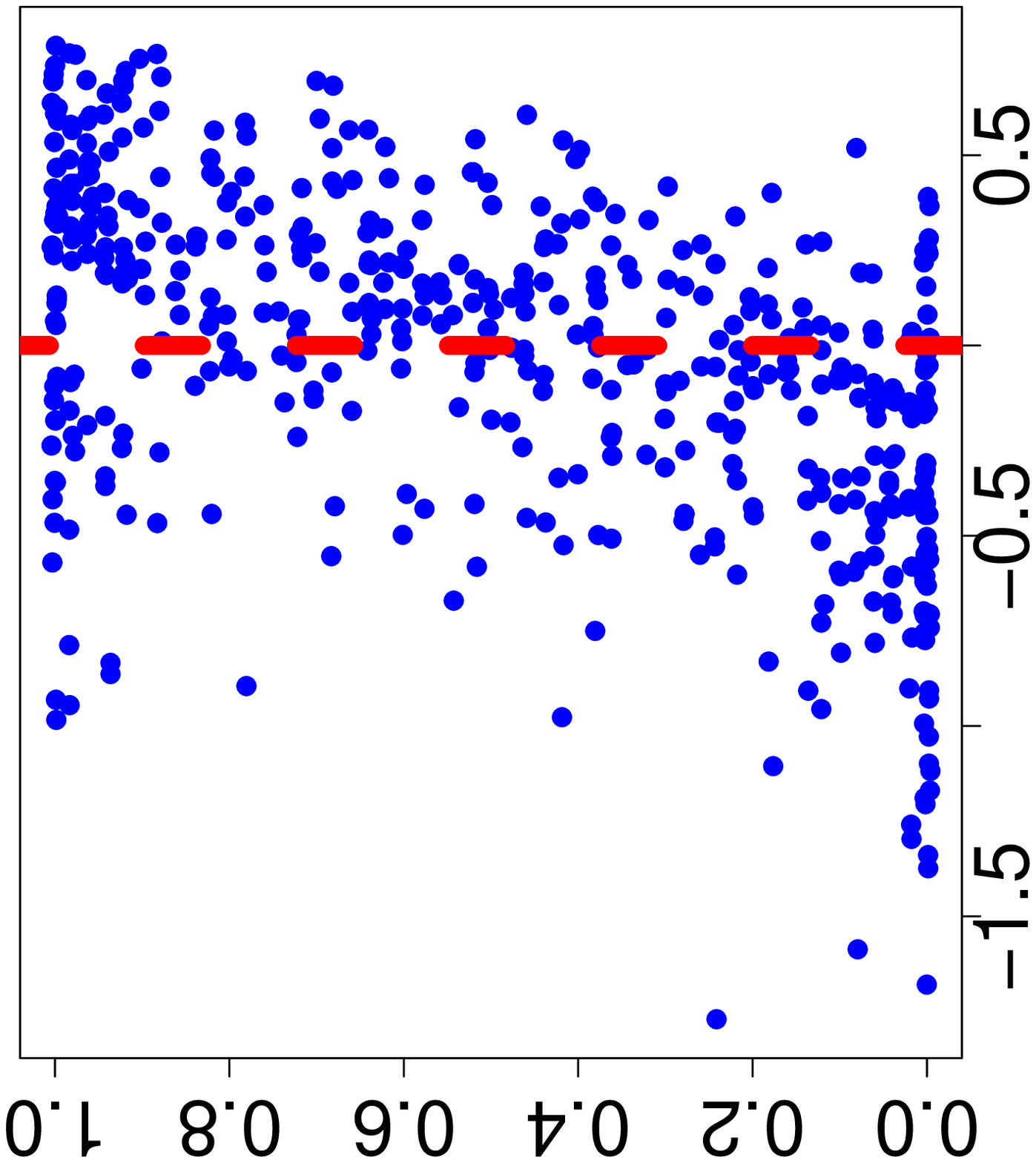}
\end{minipage}
&
\begin{minipage}[l]{\figurewidth\textwidth}
\includegraphics[width=\textwidth, angle=270]
{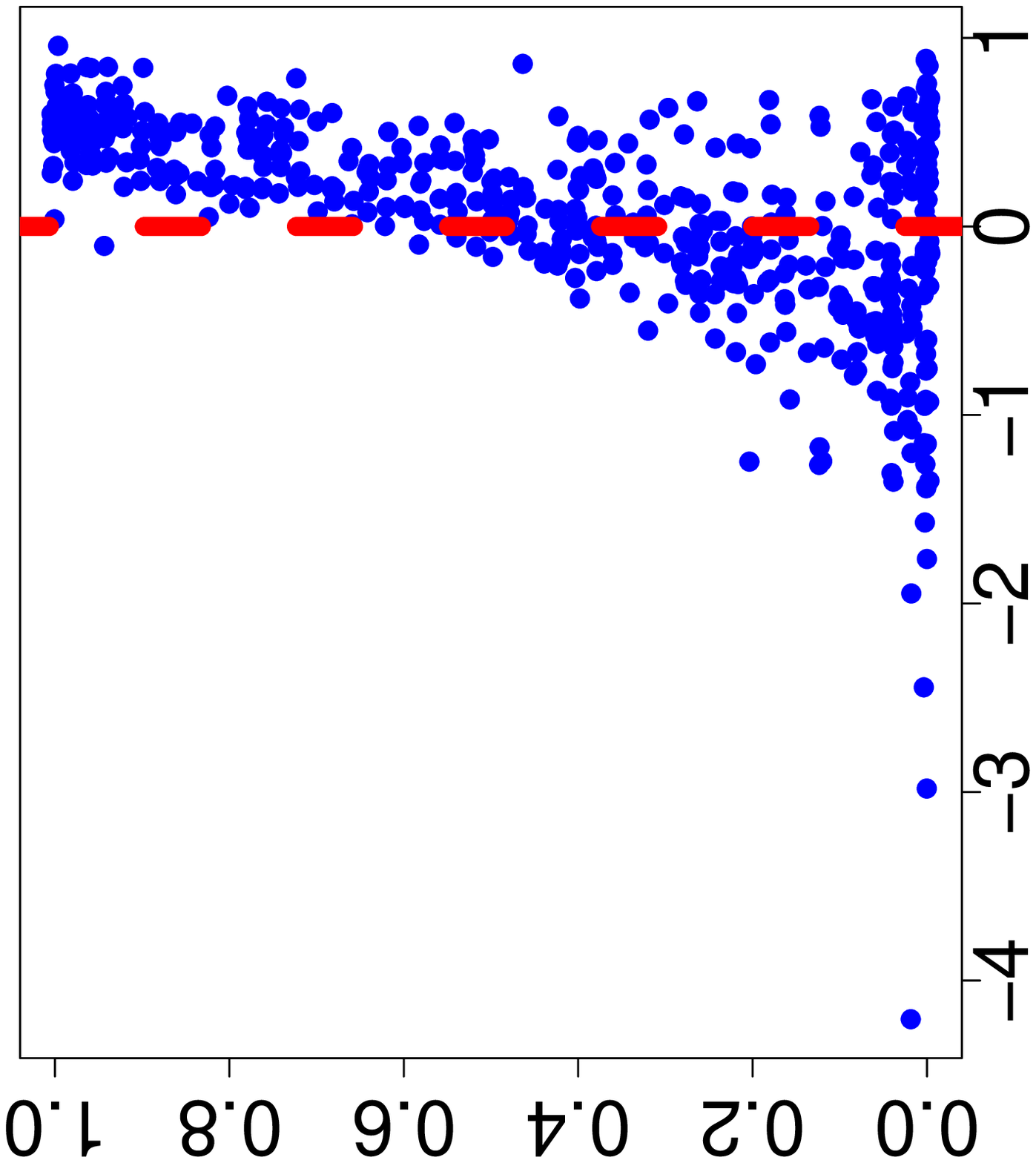}
\end{minipage}
&
\begin{minipage}[l]{\figurewidth\textwidth}
\includegraphics[width=\textwidth, angle=270]
{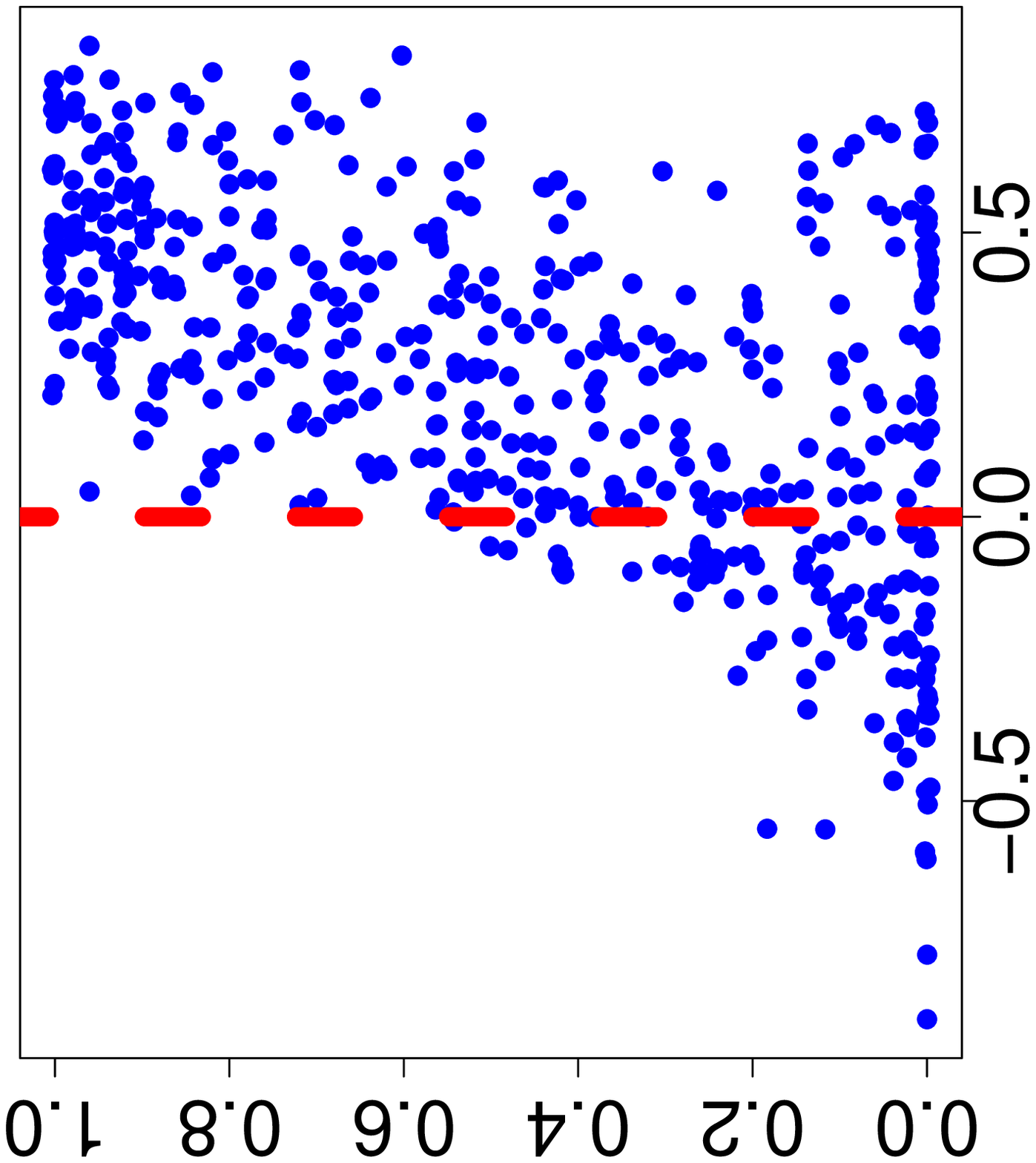}
\end{minipage}
\\
\hline
\begin{tabular}{c}
\begin{sideways}
	\begin{tabular}{c} 
	Mixed Gaussian \\ $g_{2}$ profile \\ $n=1,000$, $\sigma=1.0$ 
	\end{tabular}
\end{sideways}
\end{tabular}
&
\begin{minipage}[l]{\figurewidth\textwidth}
\includegraphics[width=\textwidth, angle=270]
{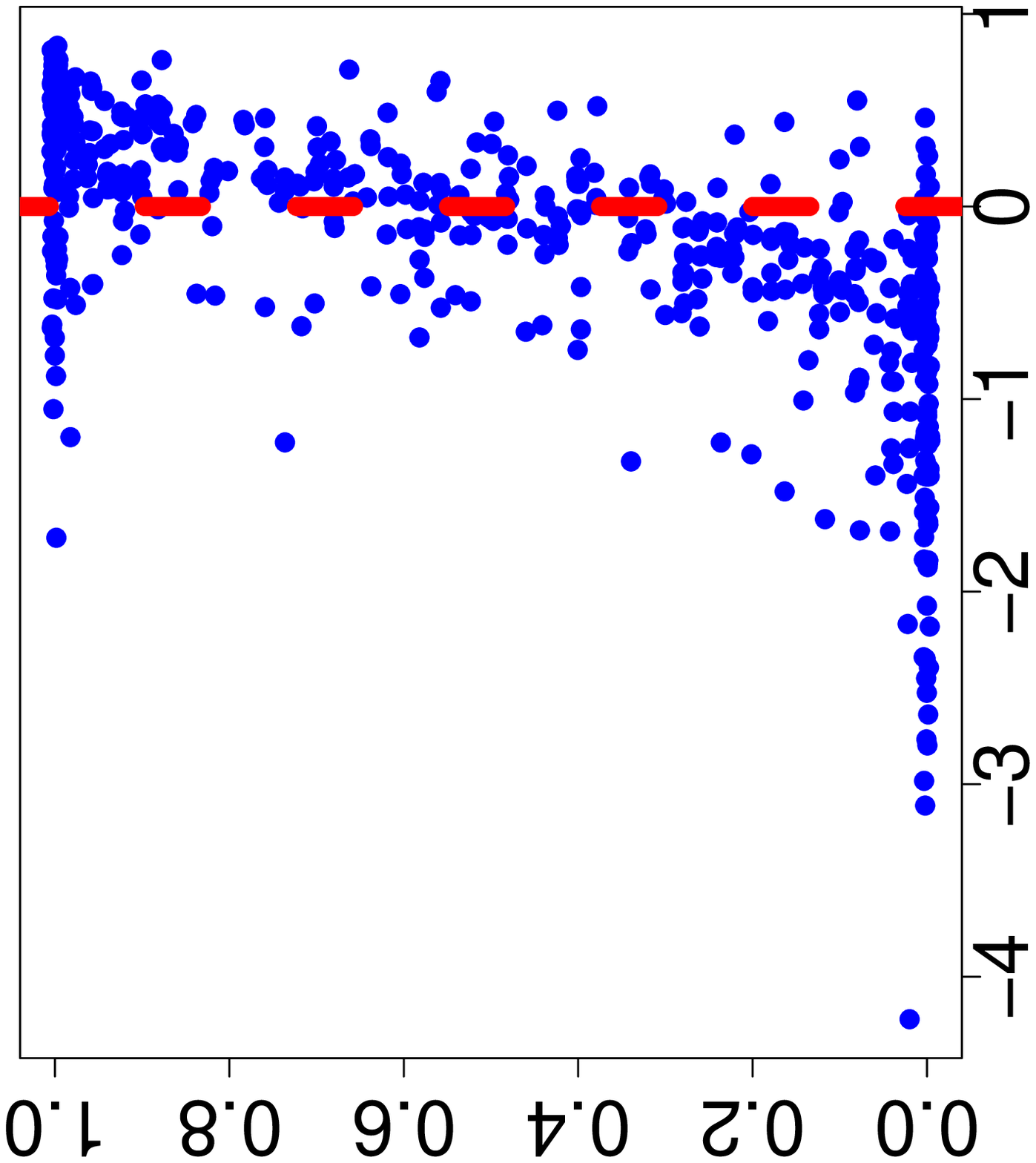}
\end{minipage}
&
\begin{minipage}[l]{\figurewidth\textwidth}
\includegraphics[width=\textwidth, angle=270]
{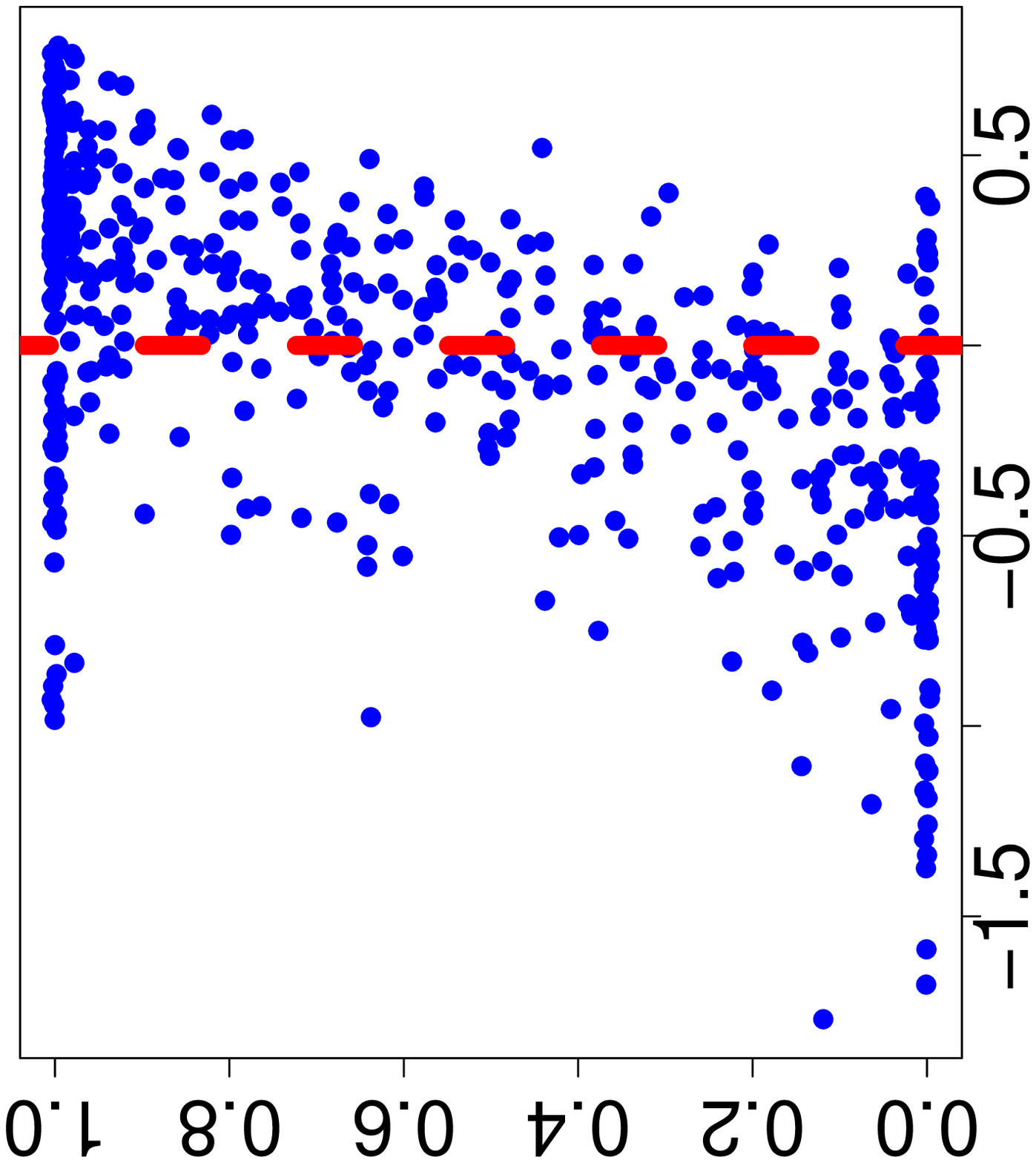}
\end{minipage}
&
\begin{minipage}[l]{\figurewidth\textwidth}
\includegraphics[width=\textwidth, angle=270]
{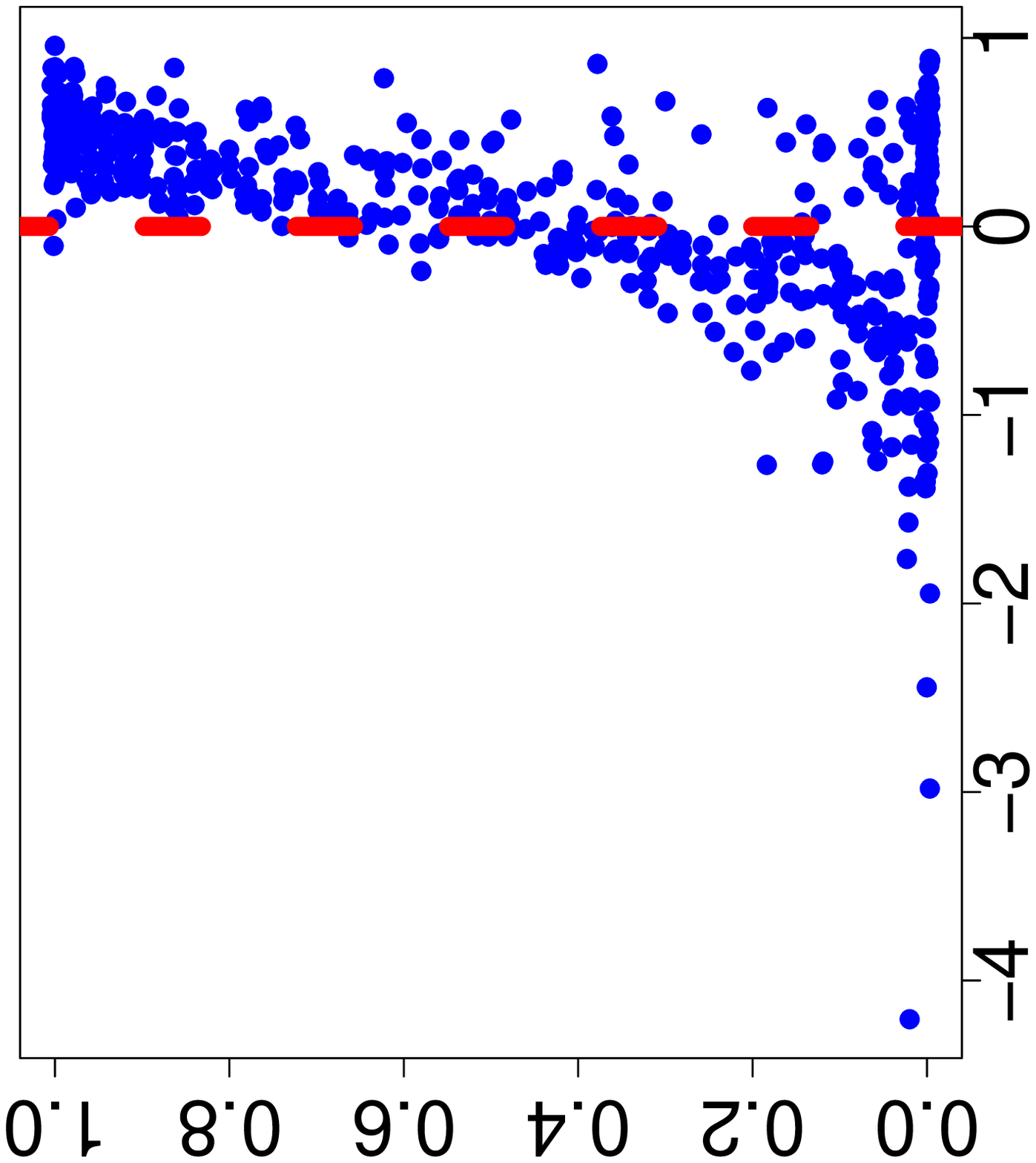}
\end{minipage}
&
\begin{minipage}[l]{\figurewidth\textwidth}
\includegraphics[width=\textwidth, angle=270]
{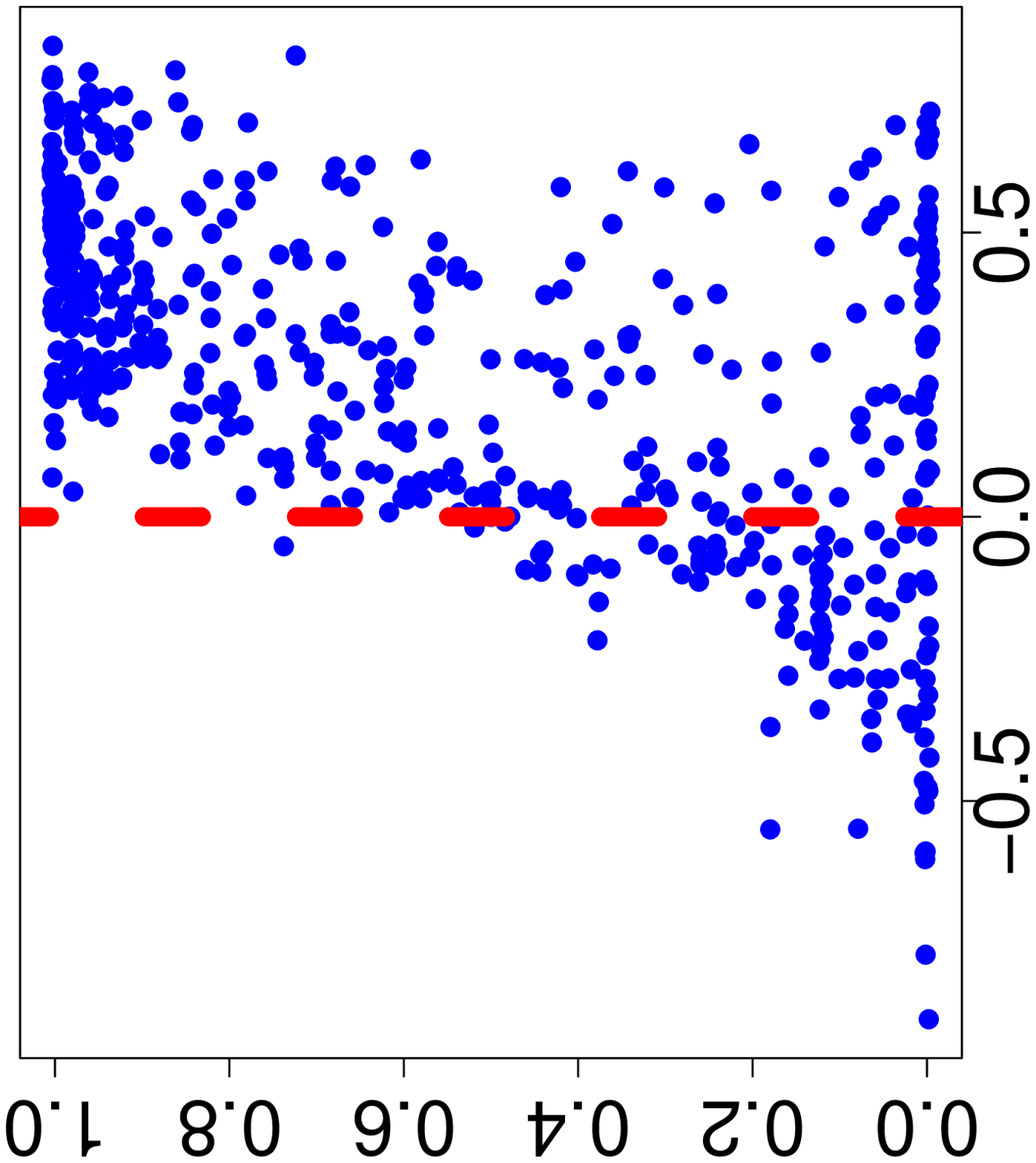}
\end{minipage}
\\
\hline
\end{tabular}
\caption{
\label{figure:classification_rmixednorm_model_selection_phase_transition}
\textbf{Proportion of sample paths containing a sign-correct model vs. GI index for mixed Gaussian predictors:}
The proportion at each point is based on 50 replicates of the sample regularization path for the corresponding design.
As in Figure~\ref{figure:classification_rnorm_model_selection_phase_transition}, the results displayed in these panels show good agreement with the theory for sign consistency of general $\ell_{1}$-penalized M-estimators developed in Section \ref{section:application}: for increasing sample sizes, the proportion of paths containing sign correct model approaches one as the sample size increases whenever $\eta(\theta)>0$.
It is interesting to notice that a high probability of correct sign recovery is possible even if  $\eta(\theta)<0$ (see the SVM estimates under the ``blip'' profile) but it does not approach one asymptotically.
Also notice that the fainter signal of the ``blip'' profile makes the recovery of the correct signs harder when $\eta(\theta)>0$, especially for the logistic classifiers.
}
\end{center}	
\end{figure}
\afterpage{\clearpage}

\subsection{Comparison of $\ell_{1}$-penalized SVM and logistic regression classifiers}

In addition to allowing us to study the model selection consistency of SVM and logistic classifiers, Theorem \ref{result:generalized_irrepresentability_condition} along with Lemmas \ref{result:variance_and_hessian_for_logistic_loss} and \ref{result:variance_and_hessian_for_hinge_loss} lets us to shed some light onto a question often asked by practitioners: which of SVM and logistic regression classifiers should be used for variable selection?
Our theoretical and experimental results suggest that, if variable selection is made through $\ell_{1}$-penalization, the answer depends critically on the sample size available.

\subsubsection{Large sample (asymptotic) comparison}

If a large enough sample size is available, Theorem~\ref{result:simplified_gi_condition} suggests that in terms of variable selection by means of $\ell_{1}$-norm penalized estimates logistic and SVM are equally likely to be model selection consistent for the designs sampled as described in \ref{section:sampling_joint_distributions}.
For non-Gaussian predictors, a comparison of the GI indices $\eta(\theta)$ shows that model selection consistency can be theoretically guaranteed for logistic regression classifiers in more designs than SVM.
The results are shown in Figure~\ref{figure:logistic_gi_vs_svm_gi} and Table~\ref{table:logistic_gi_vs_svm_gi}.
Interestingly, for the distribution of designs considered, logistic was more likely to be model selection consistent even under the ``blip'' conditional probability profile function -- thought to favor SVM by concentrating must of the class discrimination information on a band around the optimal separating hyperplane.


\subsubsection{Finite sample (asymptotic) comparison}

Figure~\ref{figure:finite_sample_sign_correct_proportion_comparison} shows a comparison of the proportion of times the $\ell_{1}$-penalized logistic and SVM regularization paths contained a model with correctly selected variables.
In each plot, each point is obtained by plotting the proportion of paths containing a sign-correct model for logistic (vertical axis) against the same proportion for SVM for a given design.
Thus, the further a point sits to the lower right corner, the better was the performance of SVM in comparison to logistic for that specific design.
The proportions are obtained from 50 replications of one of the designs sampled as described in Section~\ref{section:sampling_joint_distributions}.

The results shown in Figure~\ref{figure:finite_sample_sign_correct_proportion_comparison} suggest that the comparison of logistic and SVM classification based solely on the GI condition should be taken with a grain of salt.
Two factors are involved here: first, the results are based on asymptotic approximations and, second, a negative GI index does not necessarily imply a low probability of correct sign recovery (though such probability is known not to approach one).
While in most cases the two methods are comparable in their ability to contain a correct model in their regularization path, SVM does seem to have some advantage over logistic under Gaussian predictors and the ``blip'' conditional profile even at large sample sizes ($n=1,000$).
For smaller sample sizes ($n=100$), SVM did perform markedly better than logistic regression under mixed Gaussian predictors and the logistic profile.

\renewcommand{\figurewidth}{0.20}
\begin{figure}[p]
\begin{center}
\begin{tabular}{|c|cc|cc|}
\cline{2-5}
\multicolumn{1}{c|}{}
&
\multicolumn{2}{|c|}{Gaussian}
&
\multicolumn{2}{c|}{Mix. Gaussian}
\\
\multicolumn{1}{c|}{}
&
$p=08$
&
$p=16$
&
$p=08$
&
$p=16$
\\
\hline
&&&&
\\
\begin{tabular}{c}
\begin{sideways}
	{
	\begin{small}
	\begin{tabular}{c} $g_{1}$ profile \end{tabular}
	\end{small}
	}
\end{sideways}
\end{tabular}
&
\begin{tabular}{c}
\includegraphics[width=\figurewidth\textwidth, angle=270]
{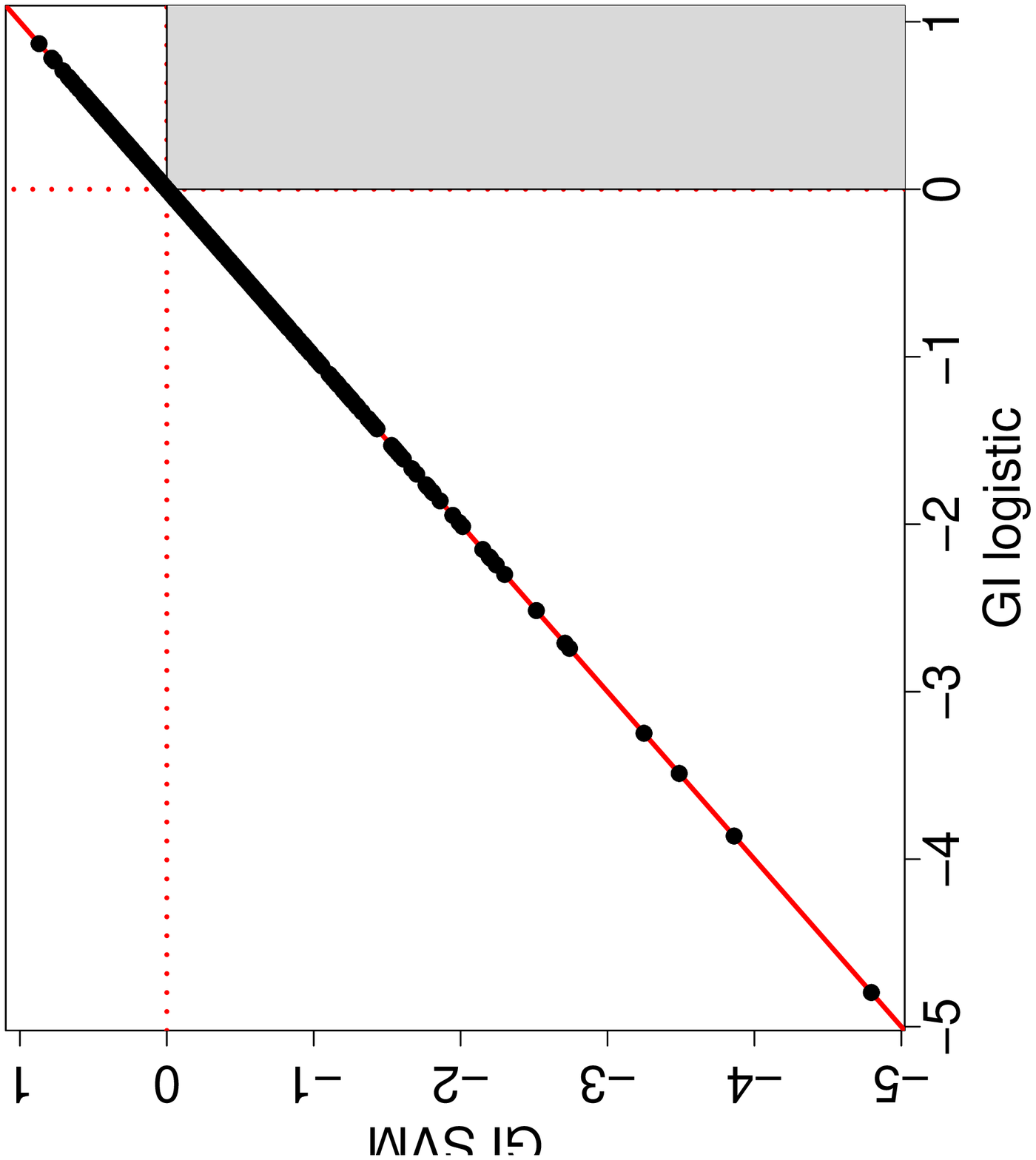}
\end{tabular}
&
\begin{tabular}{c}
\includegraphics[width=\figurewidth\textwidth, angle=270]
{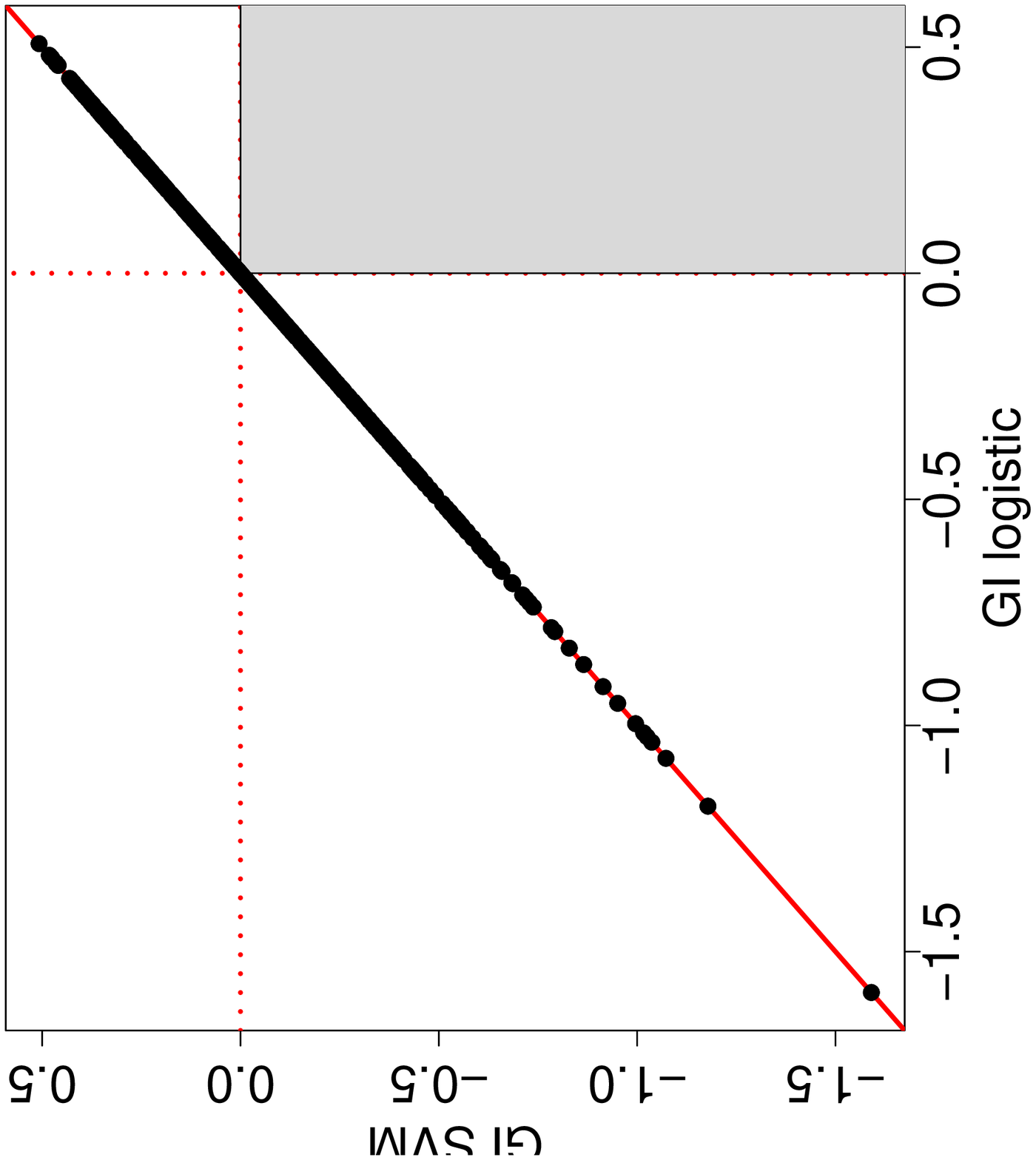}
\end{tabular}
&
\begin{tabular}{c}
\includegraphics[width=\figurewidth\textwidth, angle=270]
{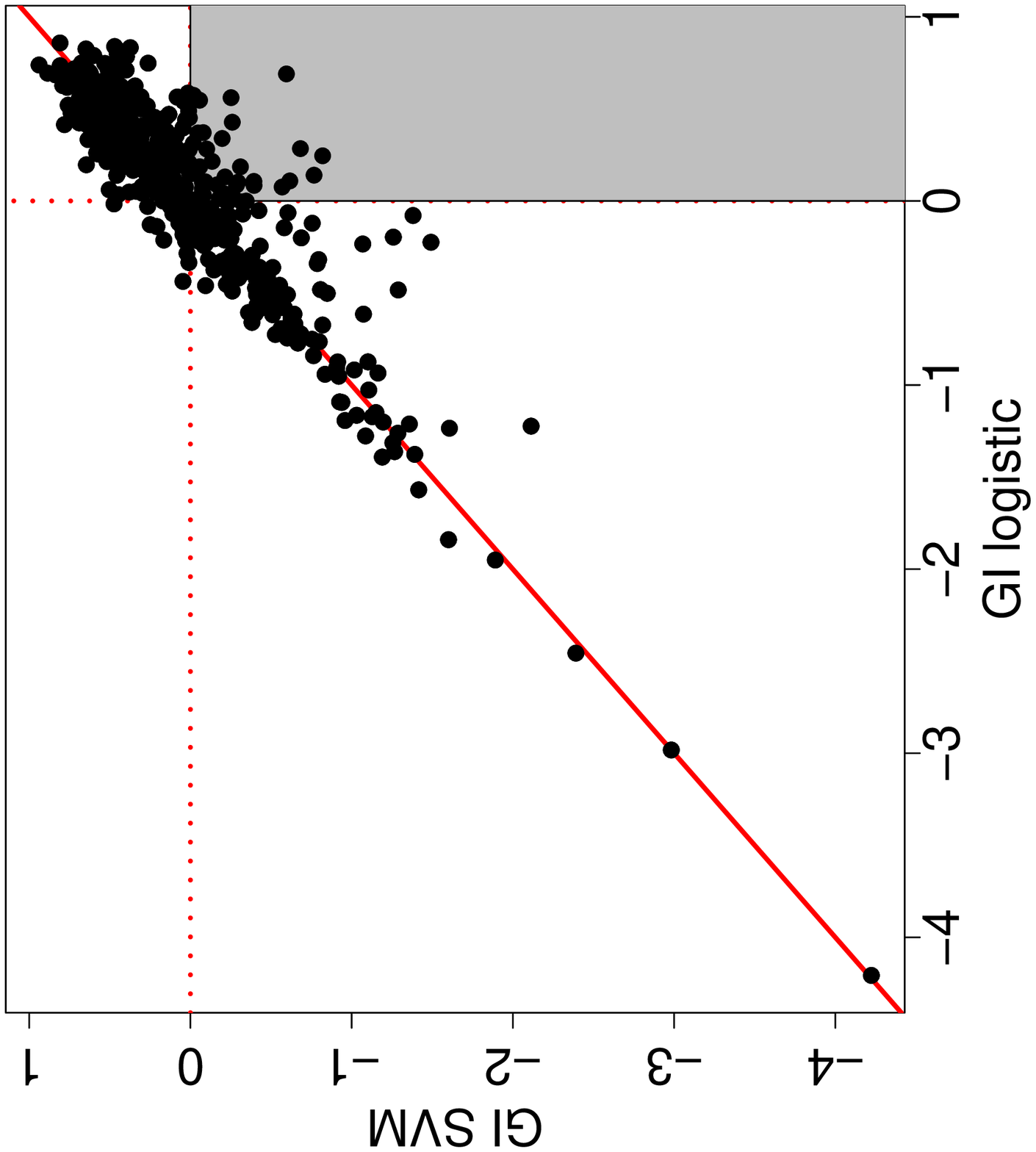}
\end{tabular}
&
\begin{tabular}{c}
\includegraphics[width=\figurewidth\textwidth, angle=270]
{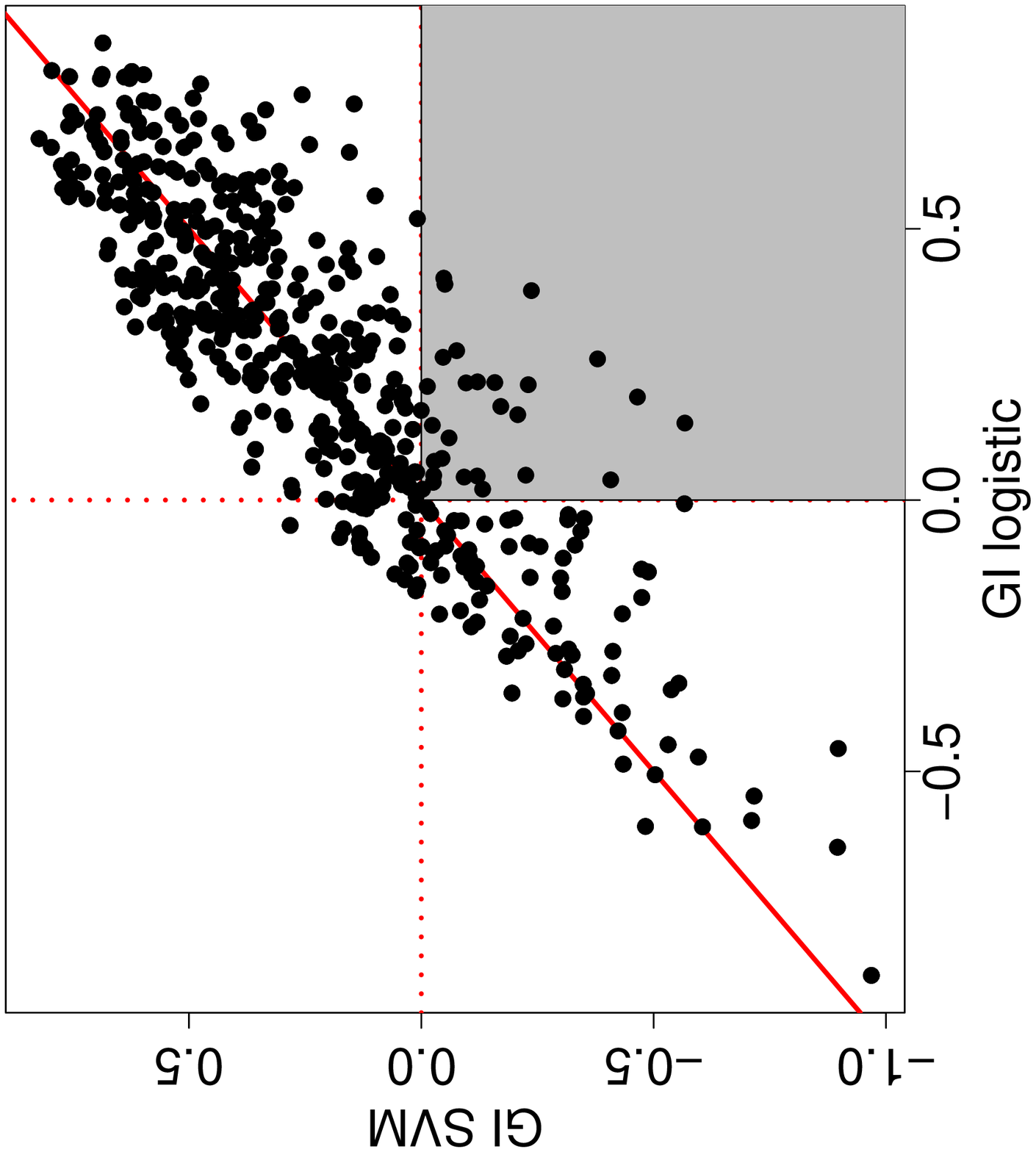}
\end{tabular}
\\
&&&&
\\
\hline
&&&&
\\
\begin{tabular}{c}
\begin{sideways}
	{
	\begin{small}
	\begin{tabular}{c} $g_{2}$ profile \end{tabular}
	\end{small}
	}
\end{sideways}
\end{tabular}
&
\begin{tabular}{c}
\includegraphics[width=\figurewidth\textwidth, angle=270]
{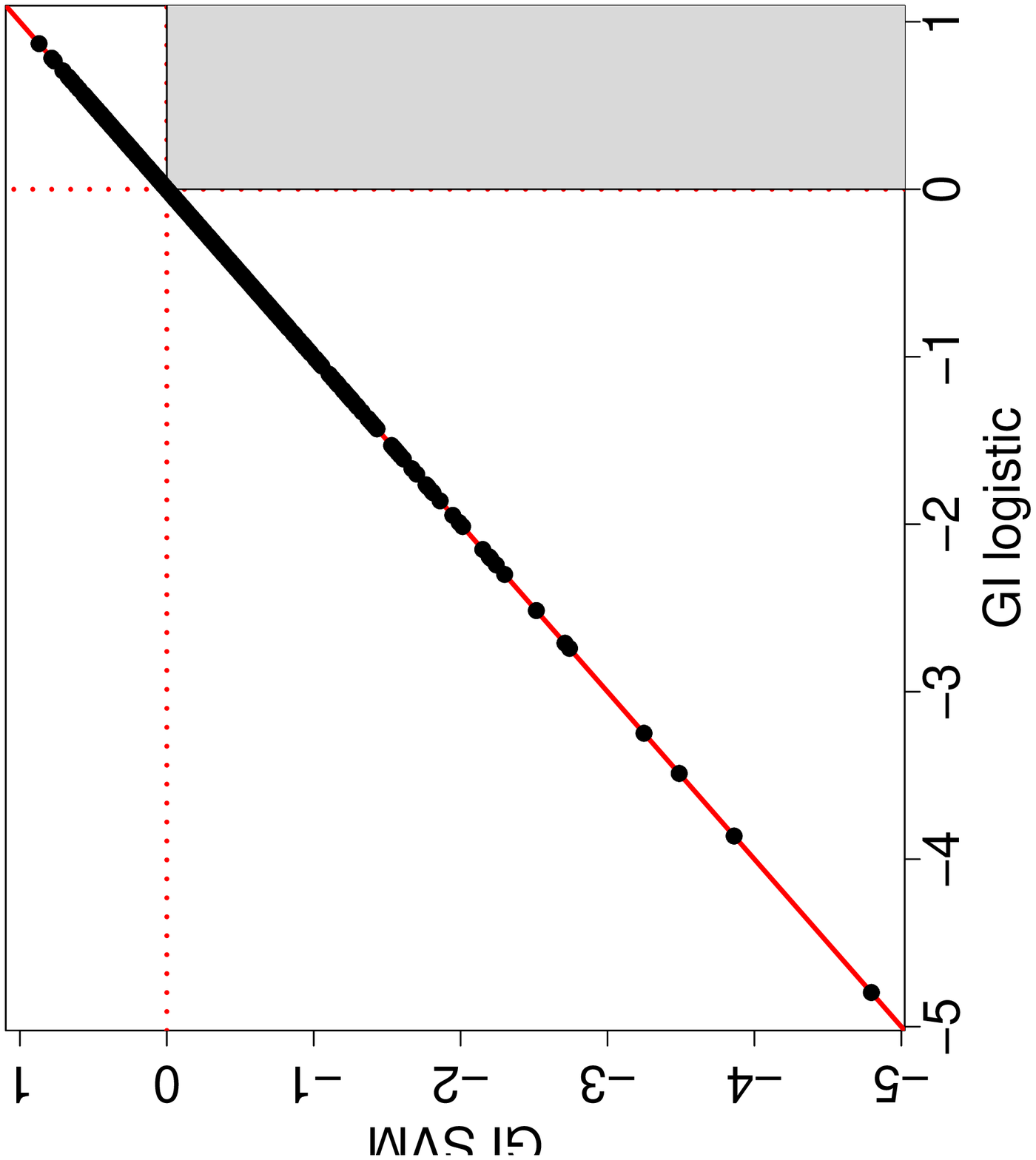}
\end{tabular}
&
\begin{tabular}{c}
\includegraphics[width=\figurewidth\textwidth, angle=270]
{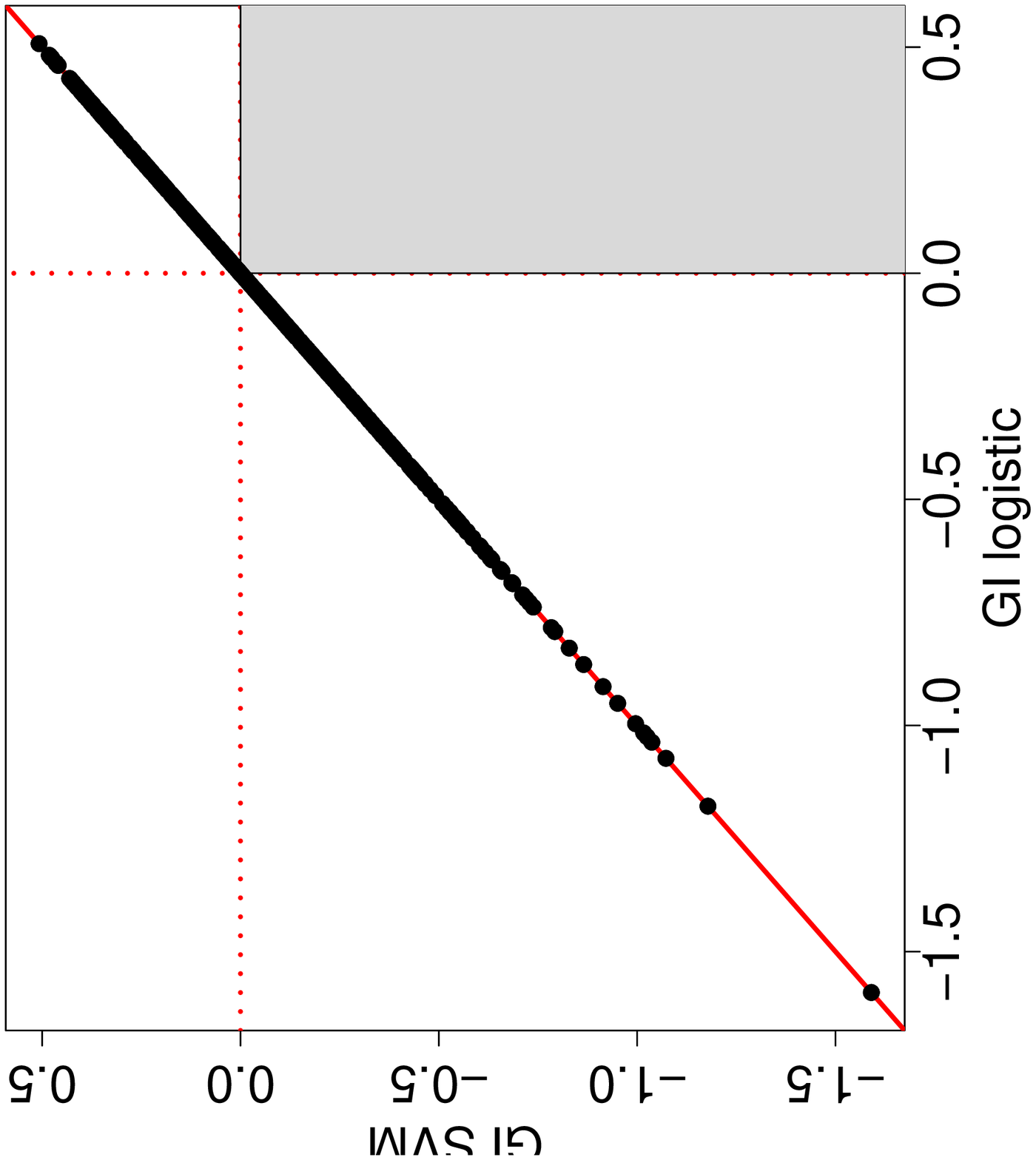}
\end{tabular}
&
\begin{tabular}{c}
\includegraphics[width=\figurewidth\textwidth, angle=270]
{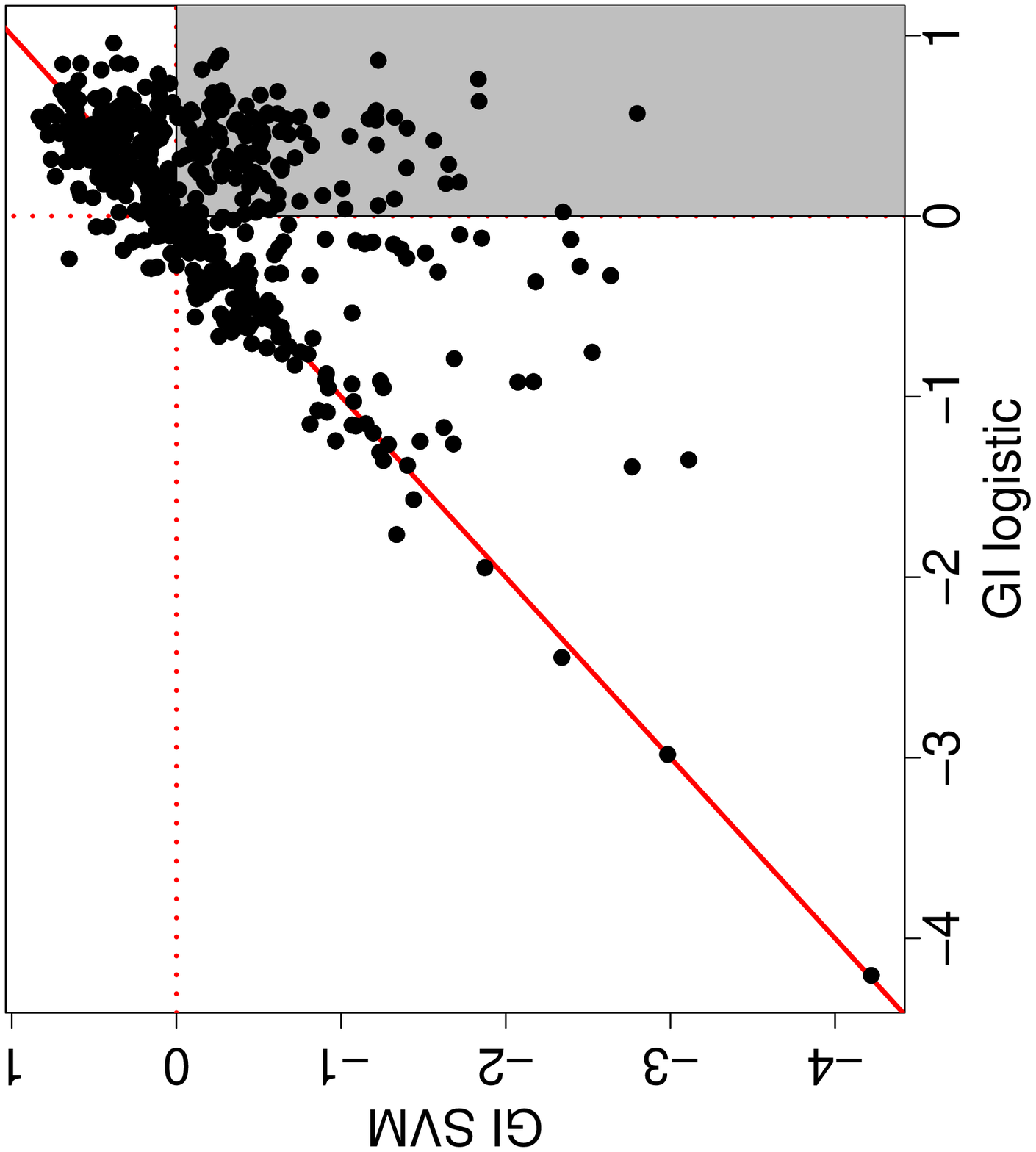}
\end{tabular}
&
\begin{tabular}{c}
\includegraphics[width=\figurewidth\textwidth, angle=270]
{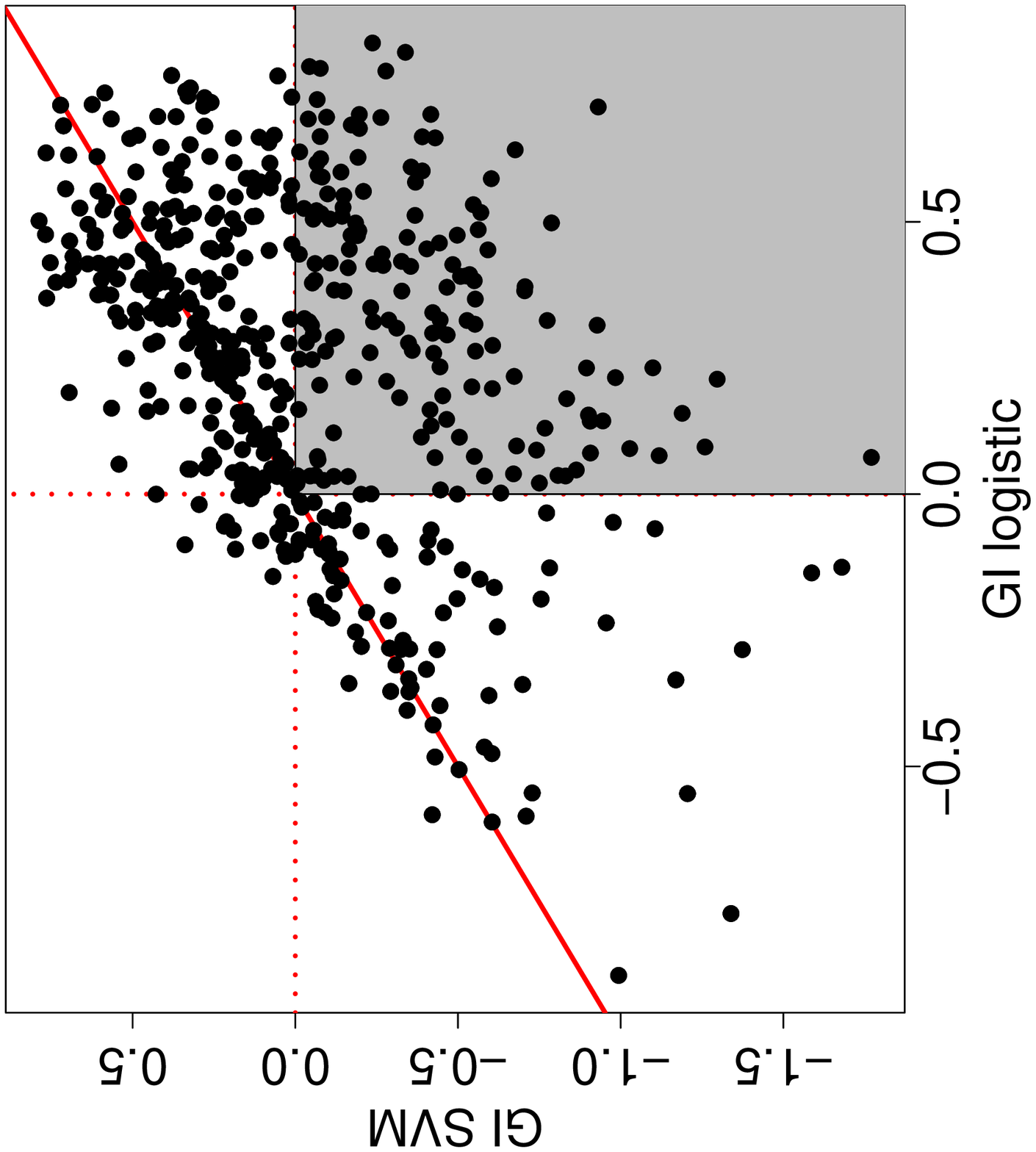}
\end{tabular}
\\
&&&&
\\
\hline
\end{tabular}
\caption{
\label{figure:logistic_gi_vs_svm_gi}
\textbf{Logistic GI vs. SVM GI for 500 designs:}
The shaded area shows where logistic regression is model selection consistent and SVM is not.
Under Gaussian predictors (the four leftmost panels), the GI indices are exactly the same for the SVM and logistic classifiers, as expected in view of Theorem \ref{result:simplified_gi_condition} and Lemmas \ref{result:variance_and_hessian_for_logistic_loss} and \ref{result:variance_and_hessian_for_hinge_loss}.
For mixed Gaussian predictors, the logistic regression classifier is model selection consistent slightly more often than SVM under the logistic conditional probability profile and, surprisingly, much more often under the ``blip'' design.
Recall, however, that SVM was shown to have high probability of correct sign recovery even in cases with $\eta(\theta)<0$.
}
\end{center}	
\end{figure}

\begin{figure}[p]
\begin{center}	
\begin{tabular}{|c|cc|cc|}
\cline{2-5}
\multicolumn{1}{c|}{}
&
\multicolumn{2}{|c|}{Gaussian}
&
\multicolumn{2}{|c|}{Mix. Gaussian}
\\
\multicolumn{1}{c|}{}
&
$p=08$
&
$p=16$
&
$p=08$
&
$p=16$
\\
\hline
\begin{tabular}{c}
\begin{sideways}
	{
	\begin{small}
	\begin{tabular}{c} $g_{1}$ profile \\ $n=100$ \end{tabular}
	\end{small}
	}
\end{sideways}
\end{tabular}
&
\begin{tabular}{c}
\includegraphics[width=\figurewidth\textwidth, angle=270]
{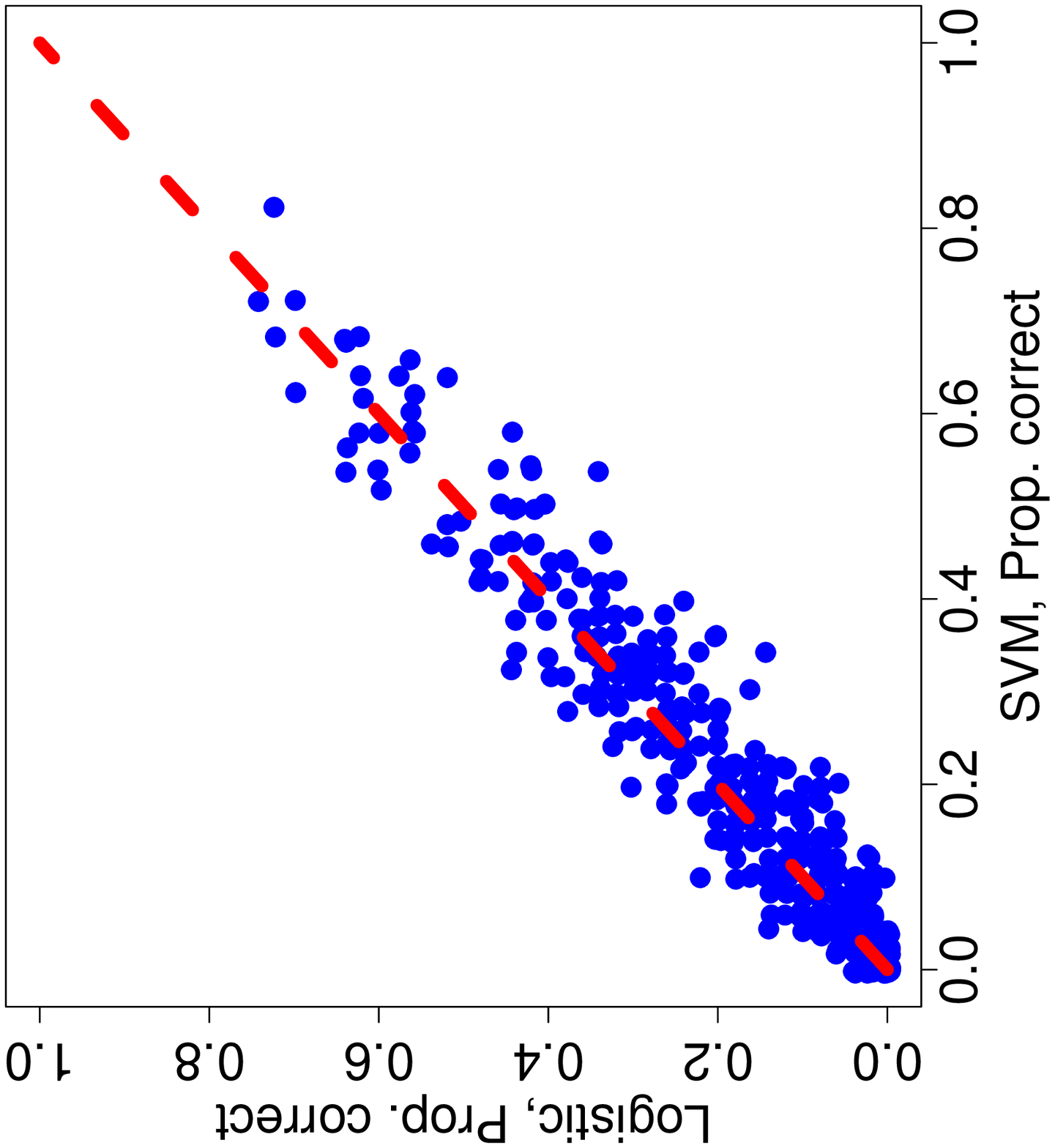}
\end{tabular}
&
\begin{tabular}{c}
\includegraphics[width=\figurewidth\textwidth, angle=270]
{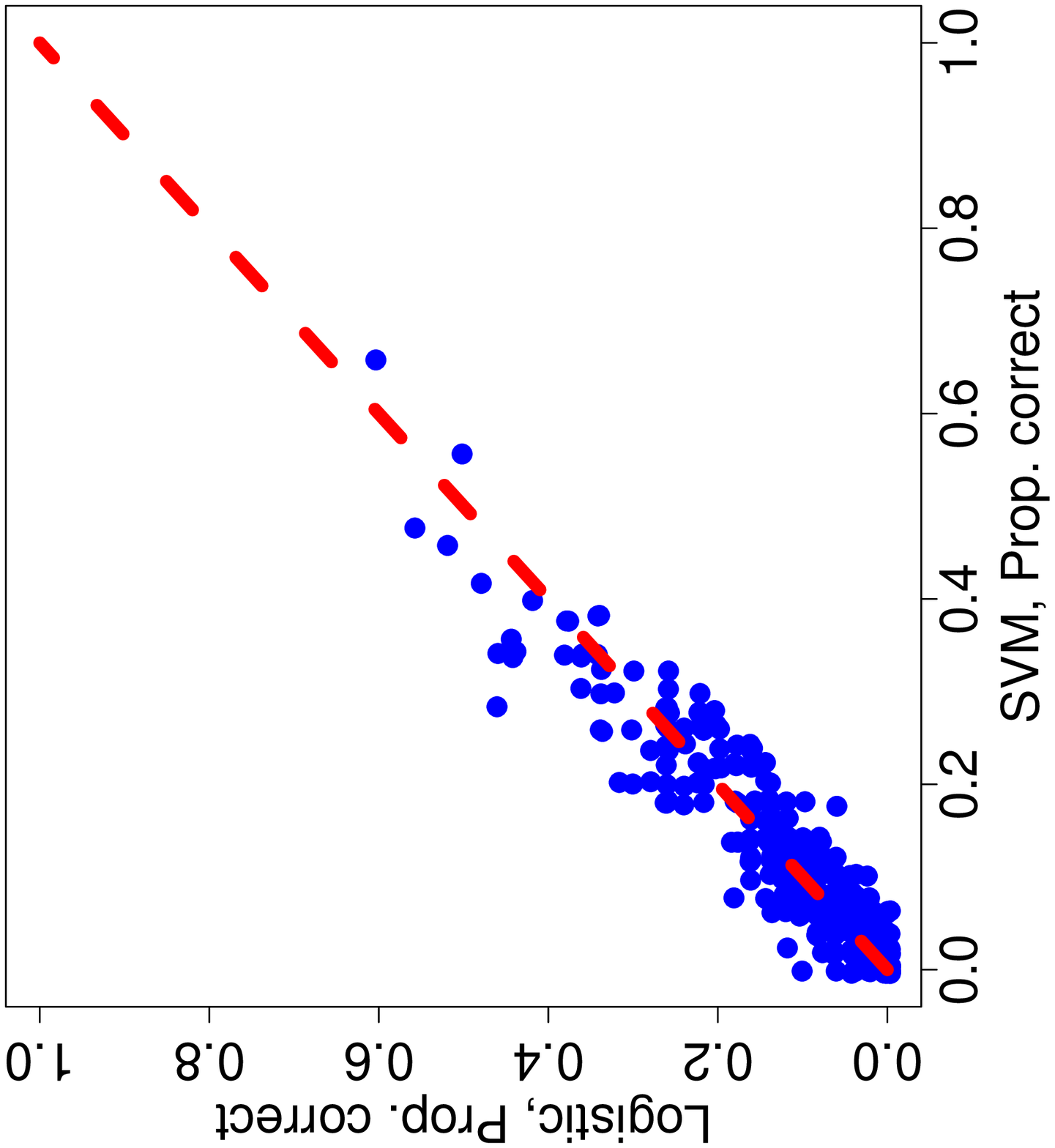}
\end{tabular}
&
\begin{tabular}{c}
\includegraphics[width=\figurewidth\textwidth, angle=270]
{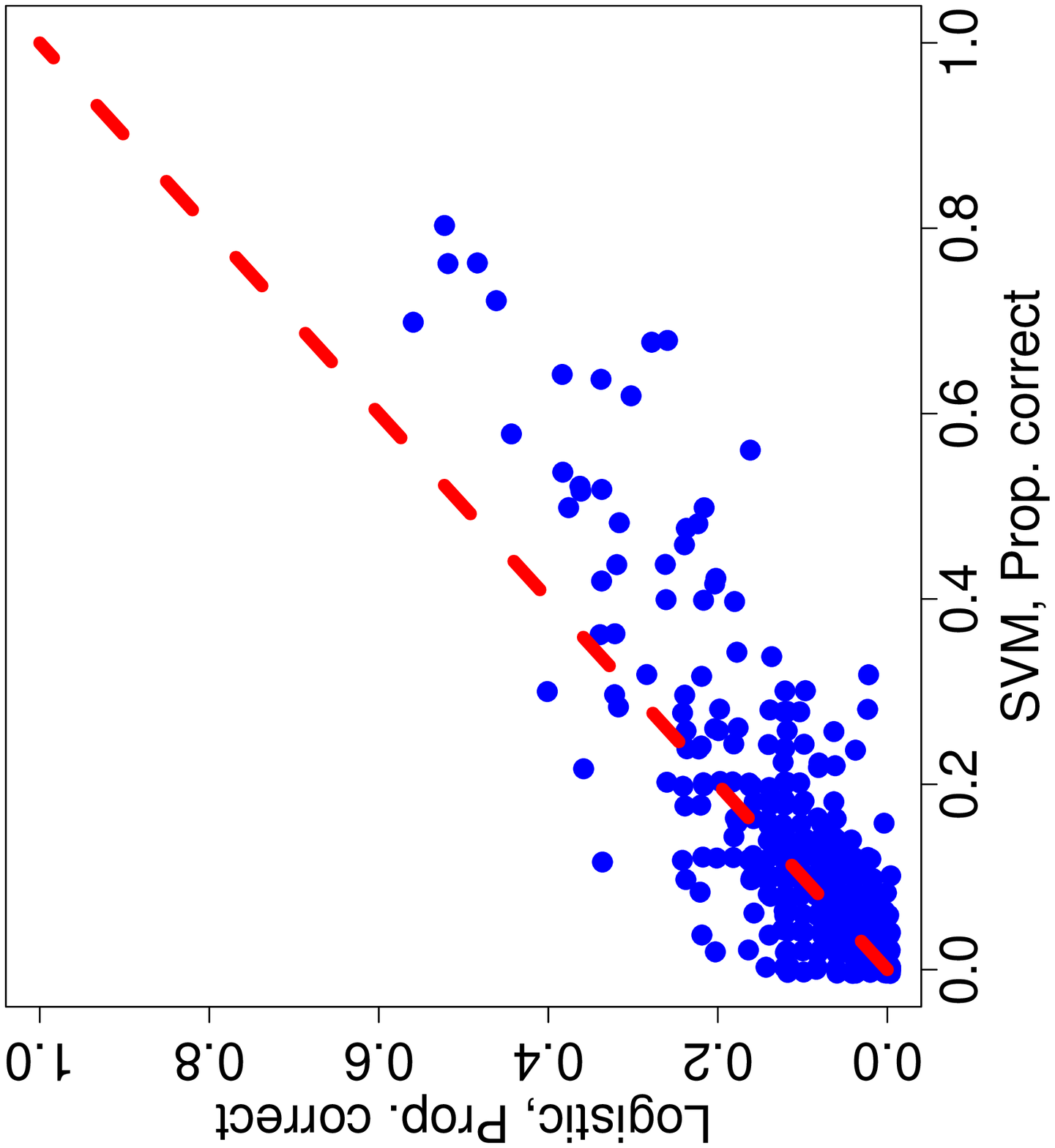}
\end{tabular}
&
\begin{tabular}{c}
\includegraphics[width=\figurewidth\textwidth, angle=270]
{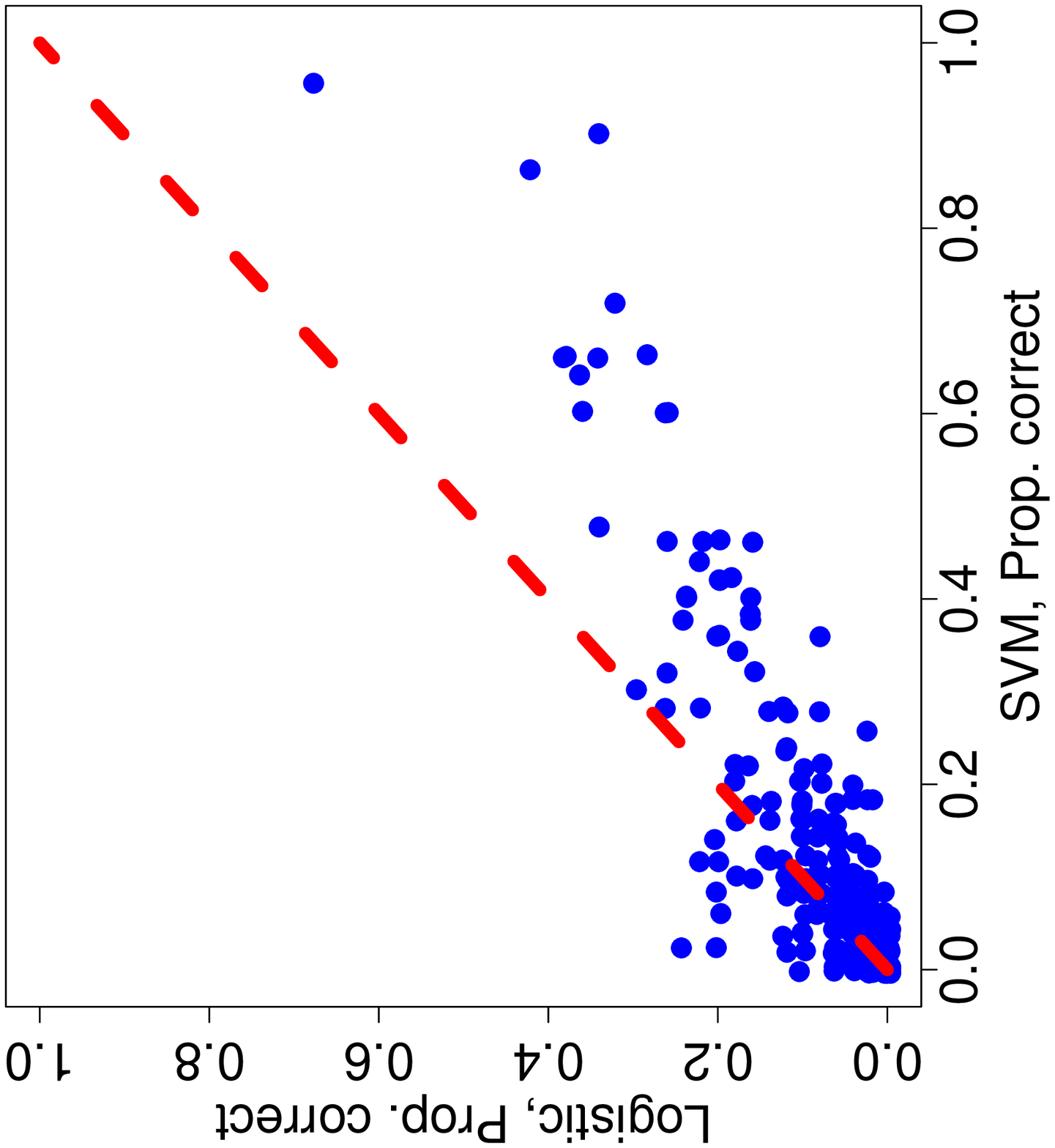}
\end{tabular}
\\
\hline
\begin{tabular}{c}
\begin{sideways}
	{
	\begin{small}
	\begin{tabular}{c} $g_{1}$ profile \\ $n=500$ \end{tabular}
	\end{small}
	}
\end{sideways}
\end{tabular}
&
\begin{tabular}{c}
\includegraphics[width=\figurewidth\textwidth, angle=270]
{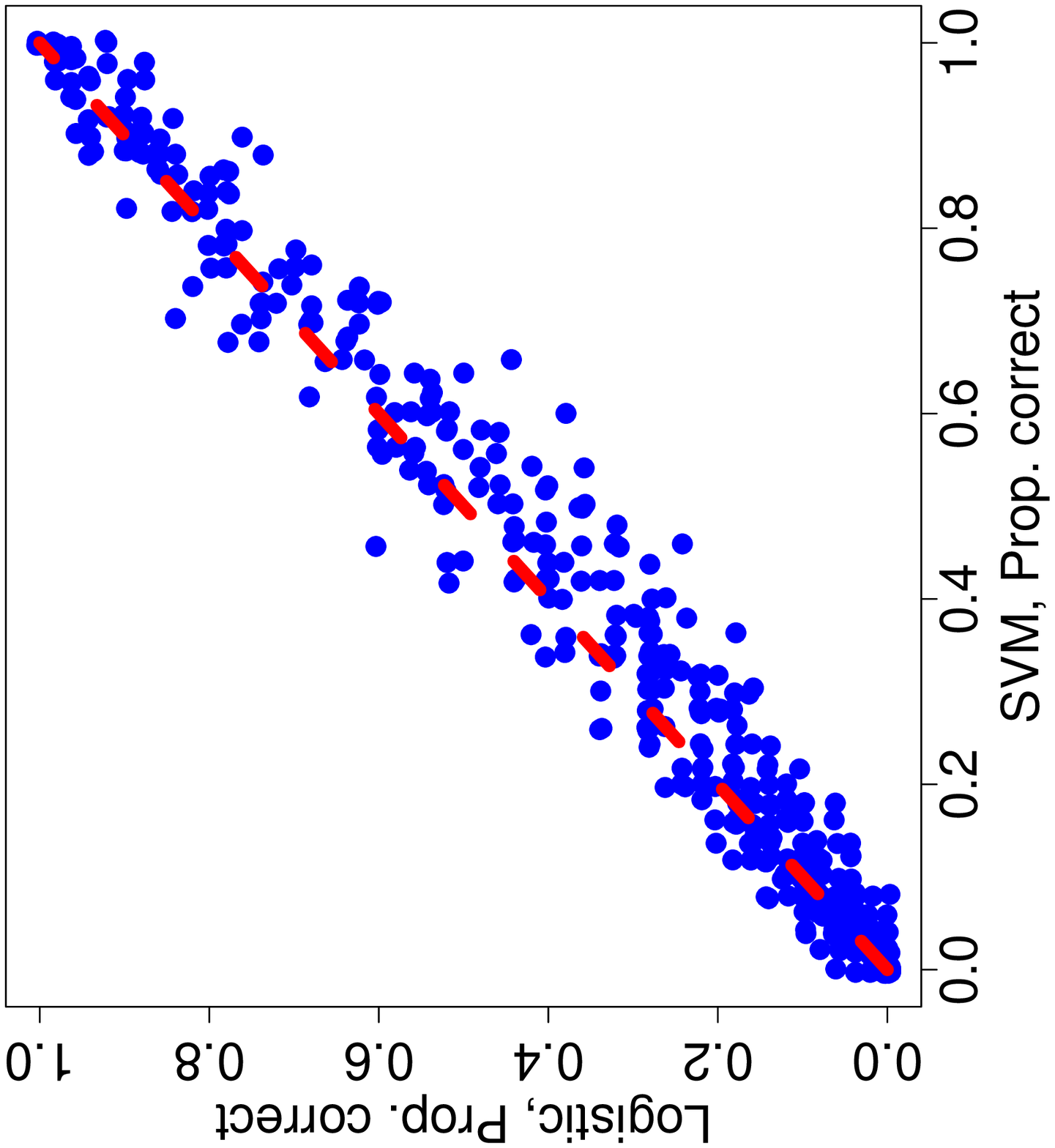}
\end{tabular}
&
\begin{tabular}{c}
\includegraphics[width=\figurewidth\textwidth, angle=270]
{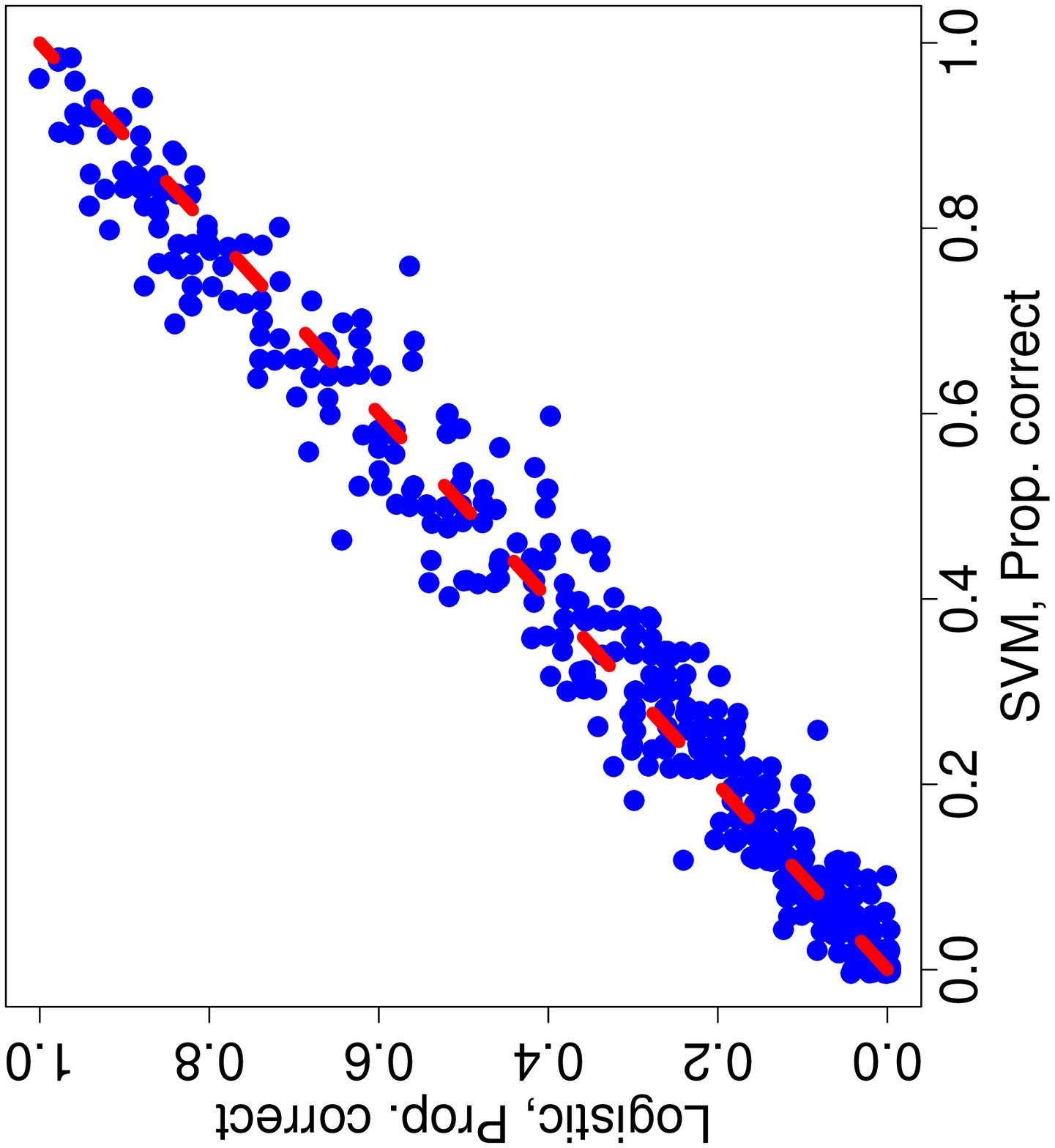}
\end{tabular}
&
\begin{tabular}{c}
\includegraphics[width=\figurewidth\textwidth, angle=270]
{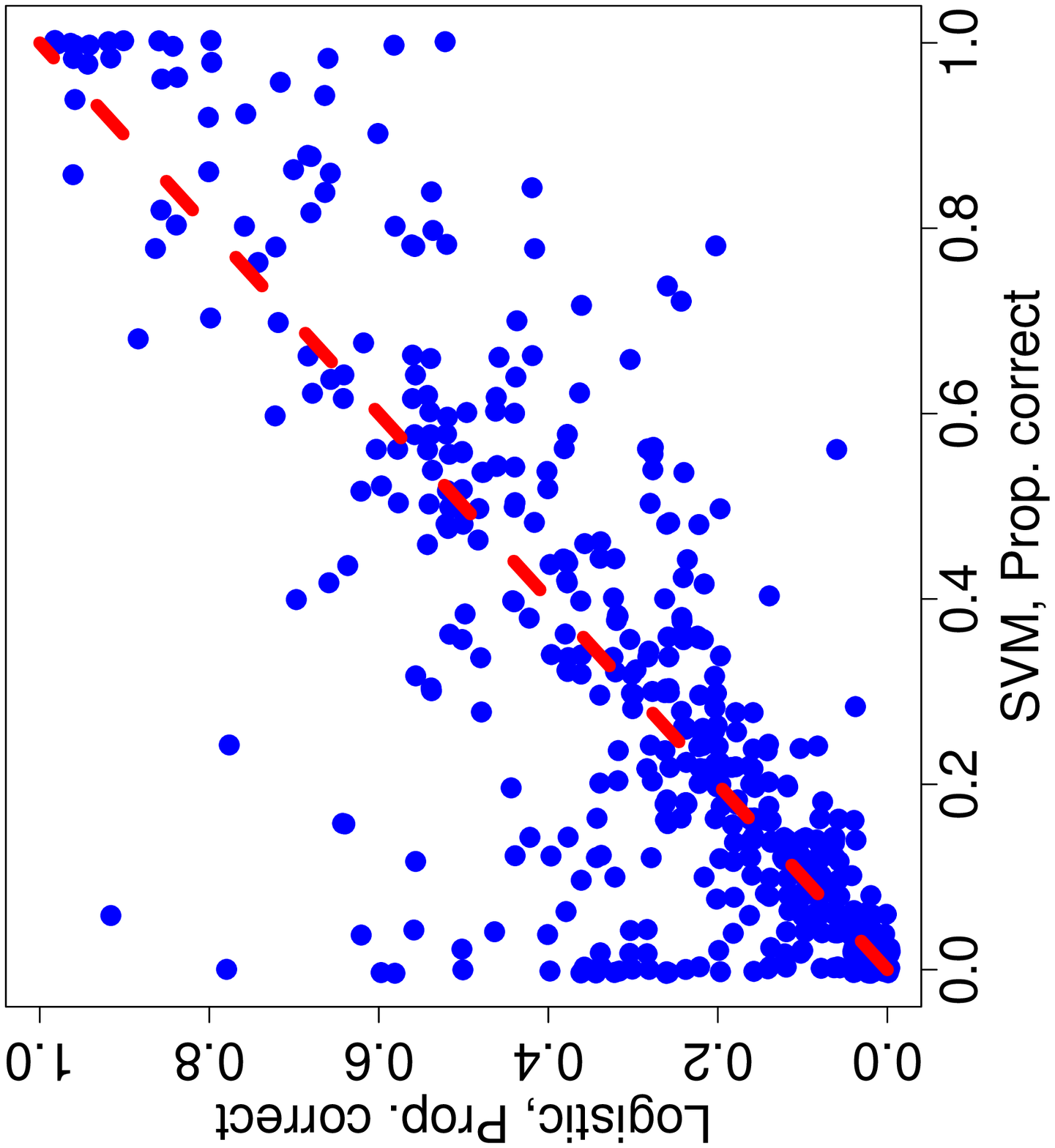}
\end{tabular}
&
\begin{tabular}{c}
\includegraphics[width=\figurewidth\textwidth, angle=270]
{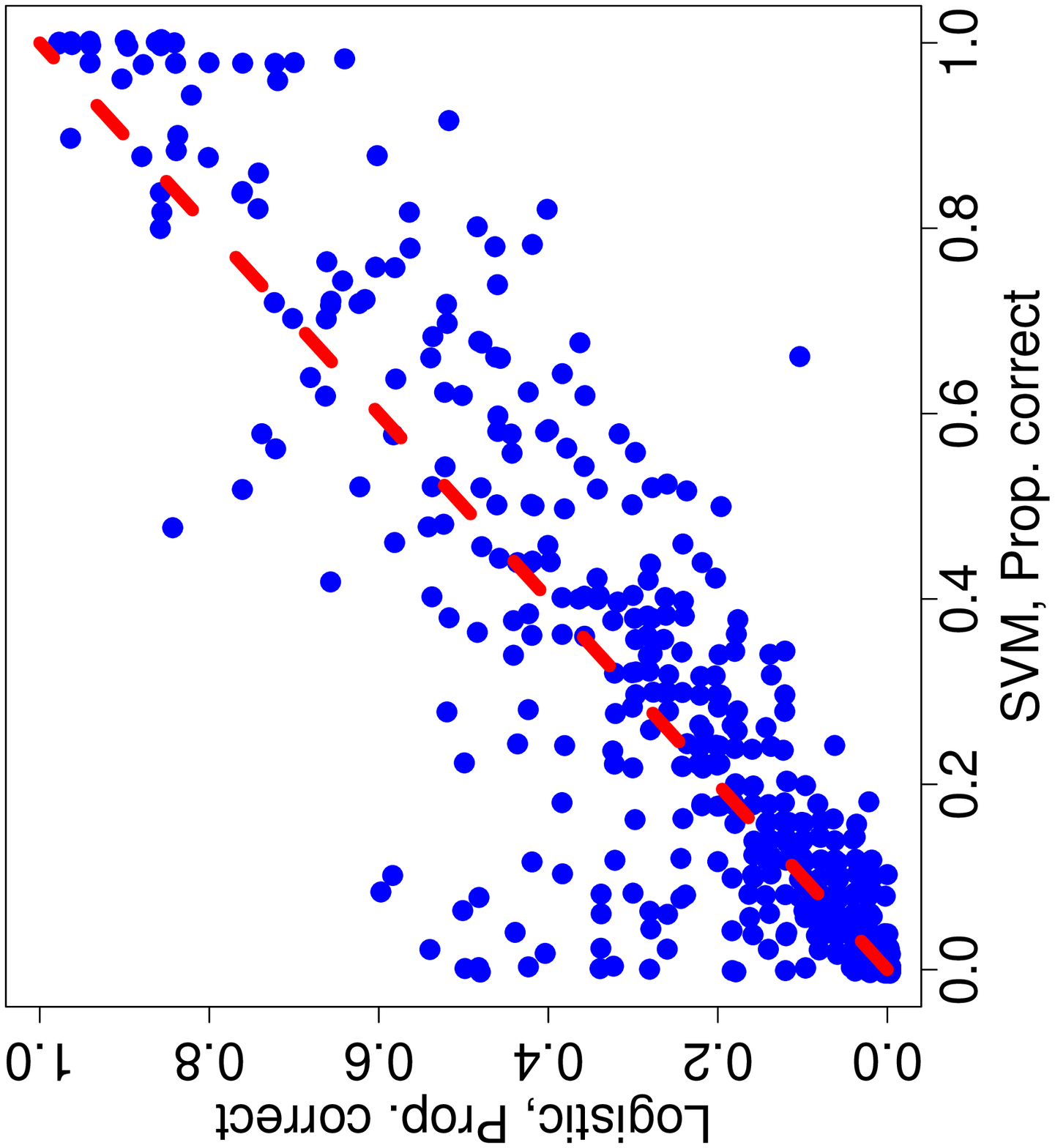}
\end{tabular}
\\
\hline
\begin{tabular}{c}
\begin{sideways}
	{
	\begin{small}
	\begin{tabular}{c} $g_{2}$ profile \\ $n=500$ \end{tabular}
	\end{small}
	}
\end{sideways}
\end{tabular}
&
\begin{tabular}{c}
\includegraphics[width=\figurewidth\textwidth, angle=270]
{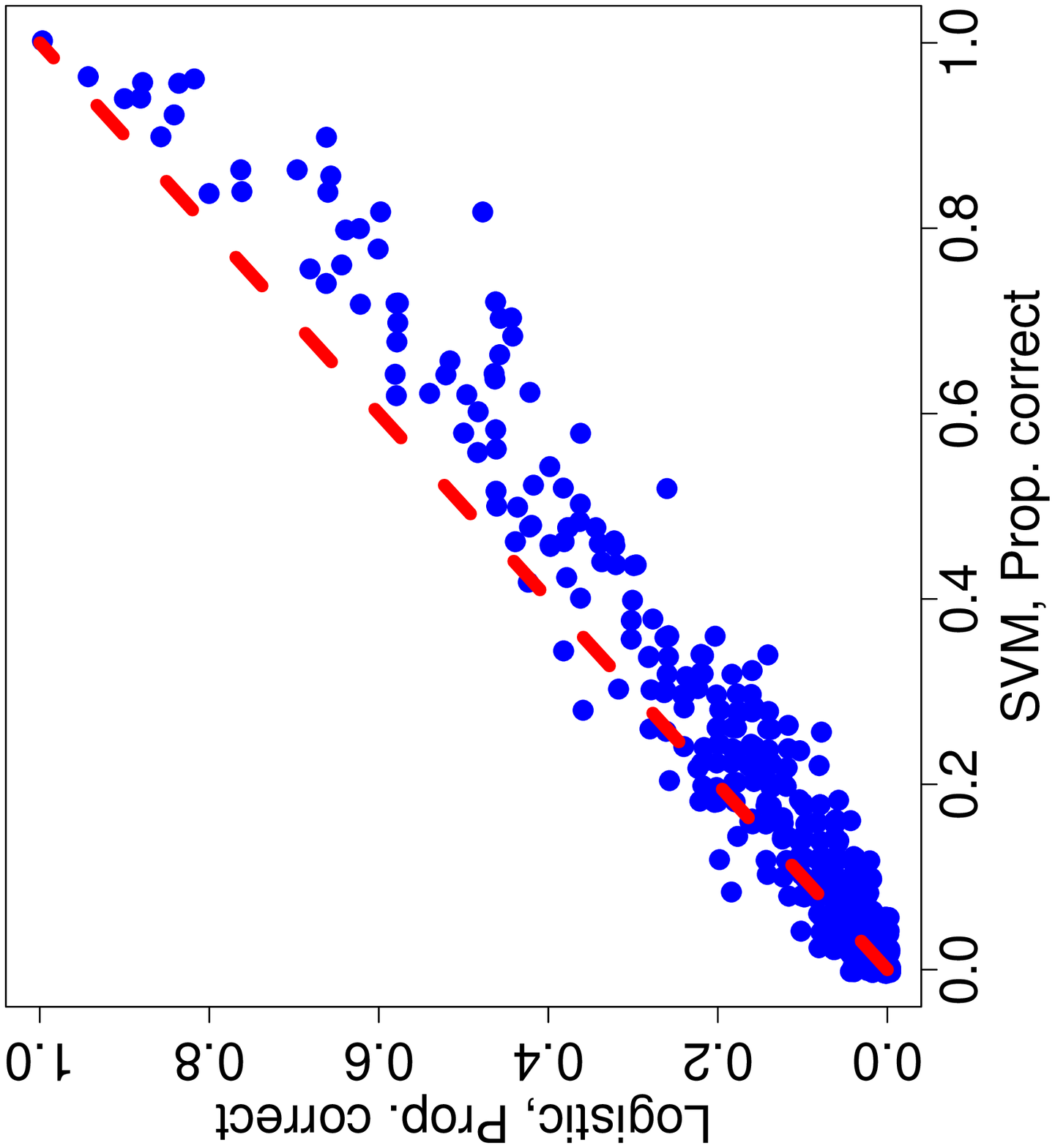}
\end{tabular}
&
\begin{tabular}{c}
\includegraphics[width=\figurewidth\textwidth, angle=270]
{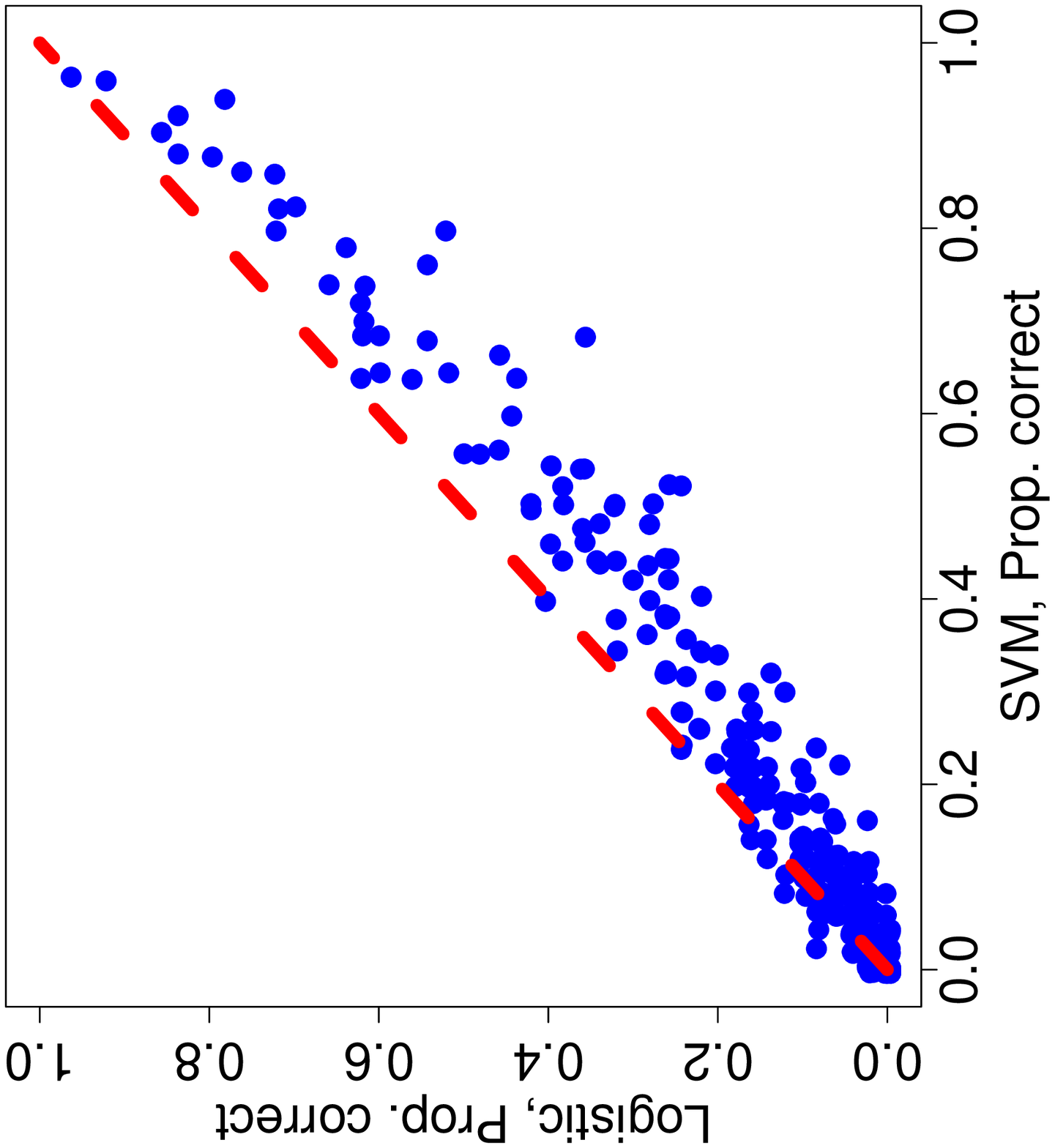}
\end{tabular}
&
\begin{tabular}{c}
\includegraphics[width=\figurewidth\textwidth, angle=270]
{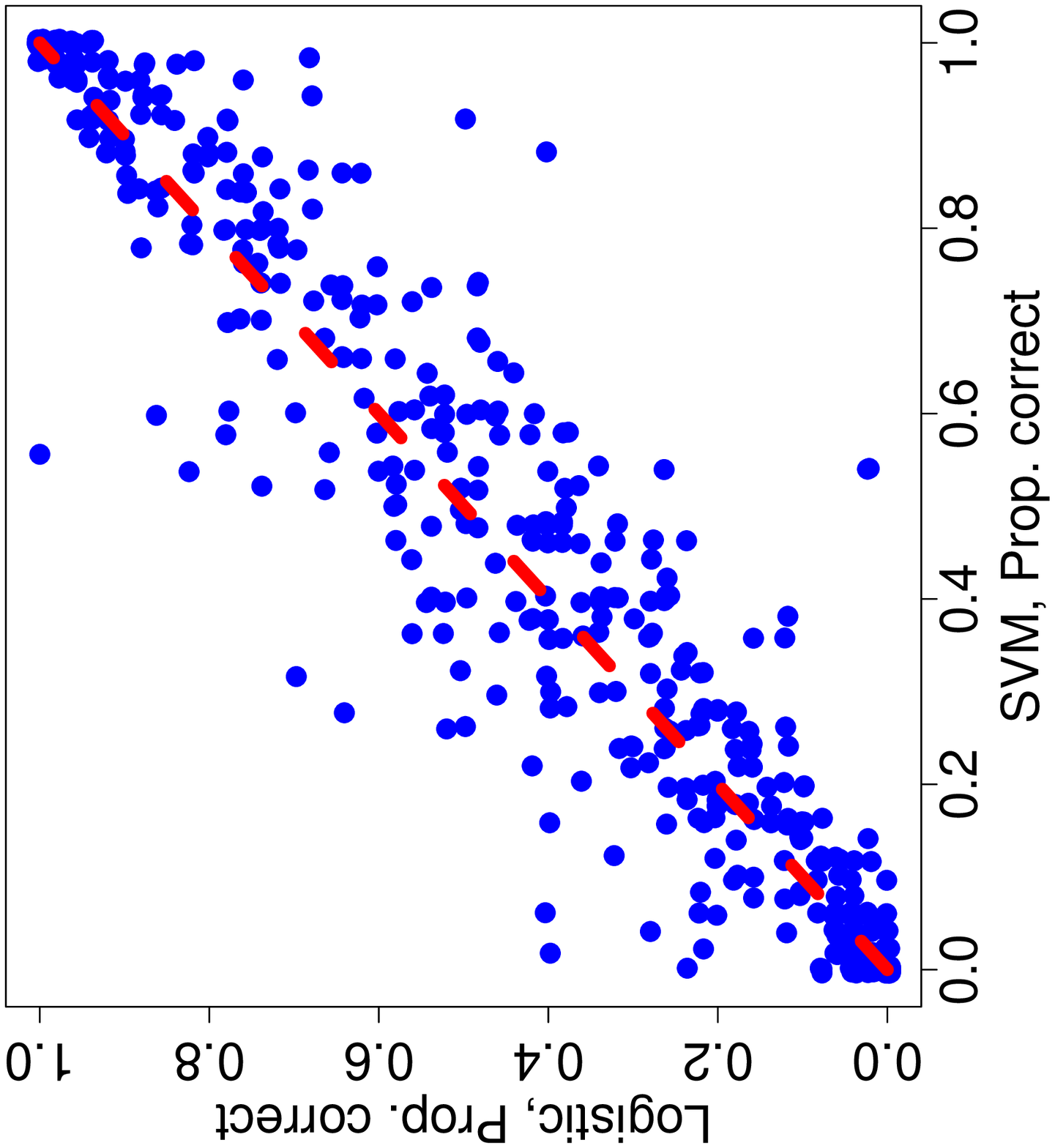}
\end{tabular}
&
\begin{tabular}{c}
\includegraphics[width=\figurewidth\textwidth, angle=270]
{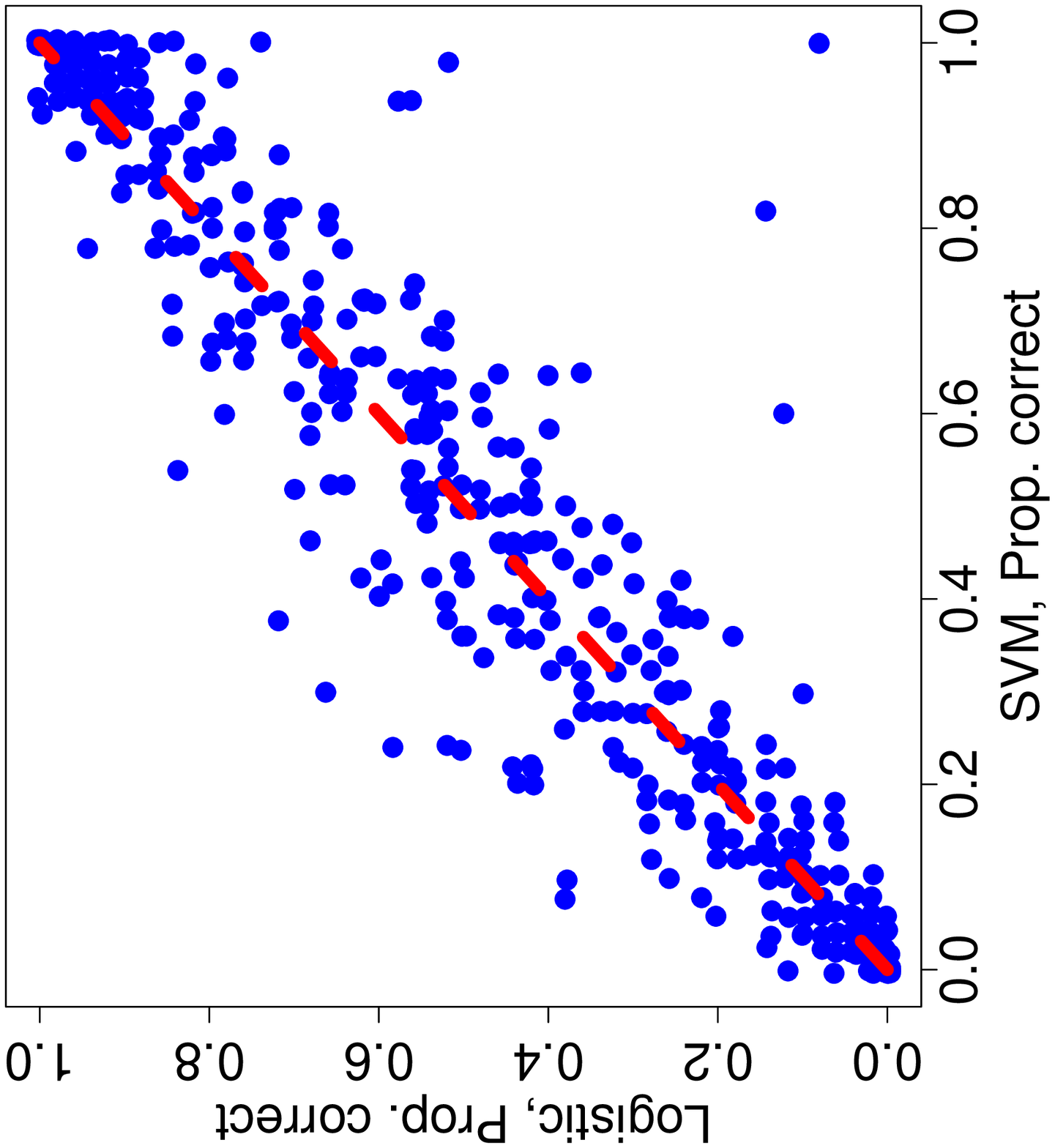}
\end{tabular}
\\
\hline
\begin{tabular}{c}
\begin{sideways}
	{
	\begin{small}
	\begin{tabular}{c} $g_{2}$ profile \\ $n=1,000$ \end{tabular}
	\end{small}
	}
\end{sideways}
\end{tabular}
&
\begin{tabular}{c}
\includegraphics[width=\figurewidth\textwidth, angle=270]
{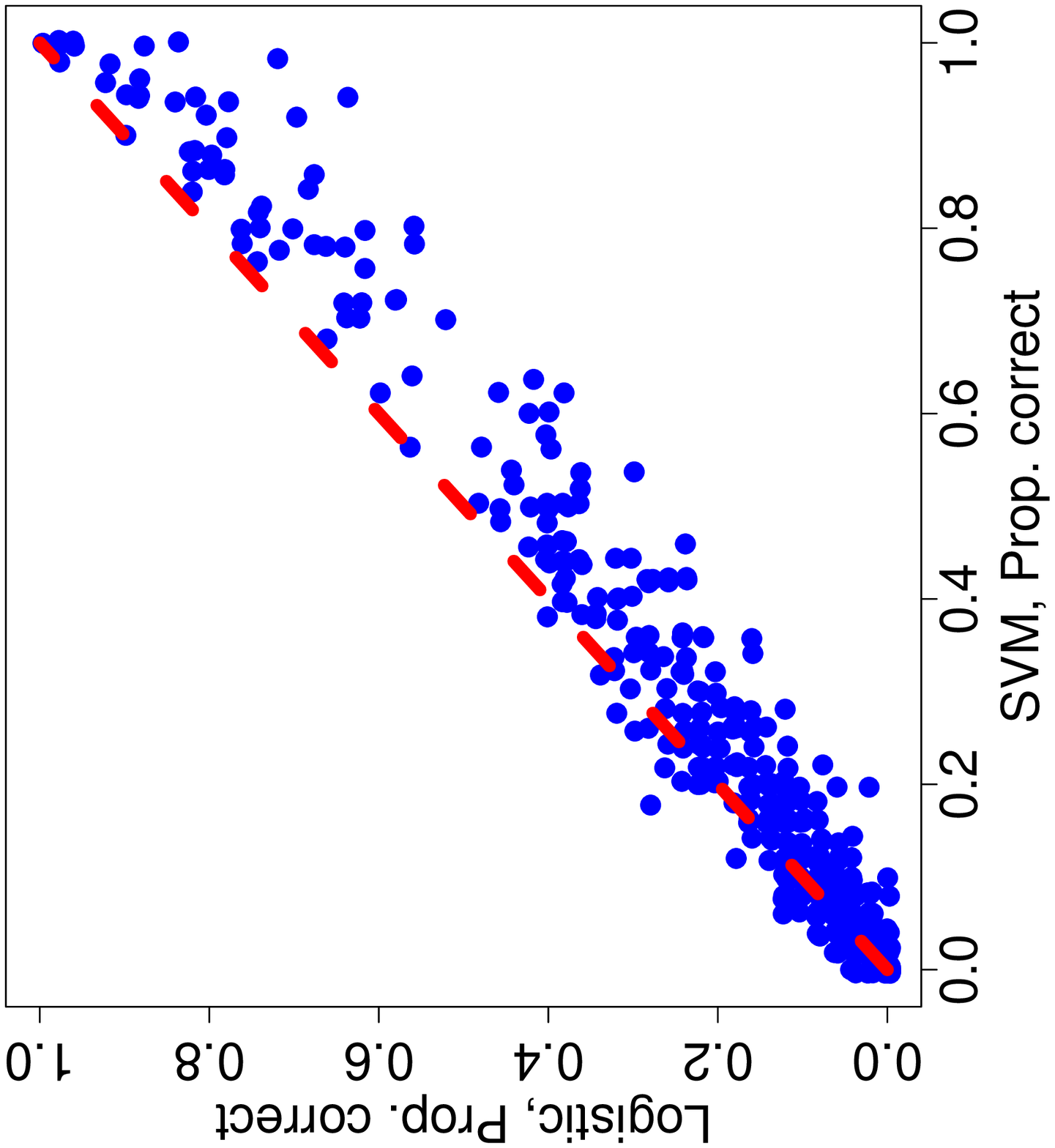}
\end{tabular}
&
\begin{tabular}{c}
\includegraphics[width=\figurewidth\textwidth, angle=270]
{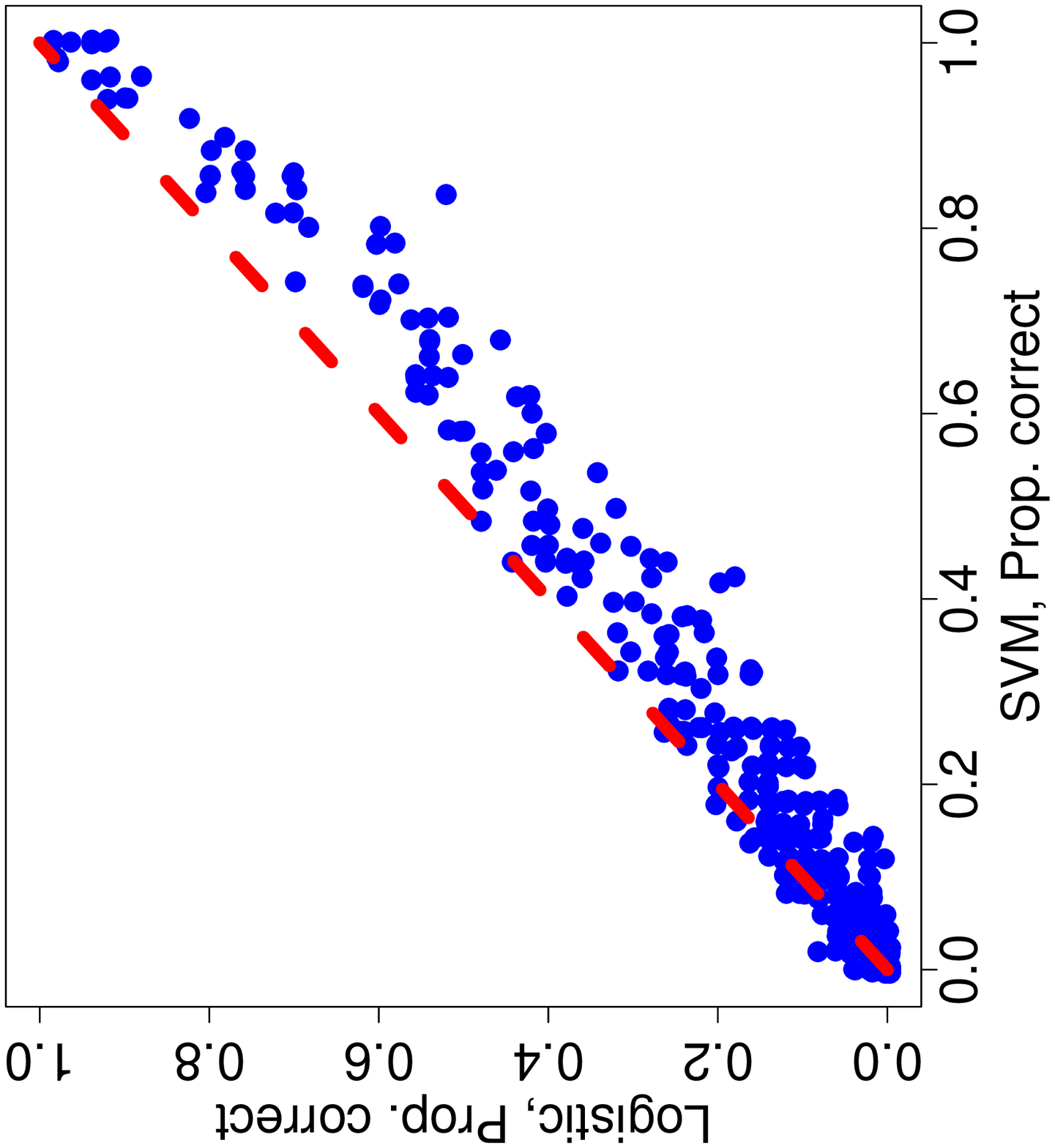}
\end{tabular}
&
\begin{tabular}{c}
\includegraphics[width=\figurewidth\textwidth, angle=270]
{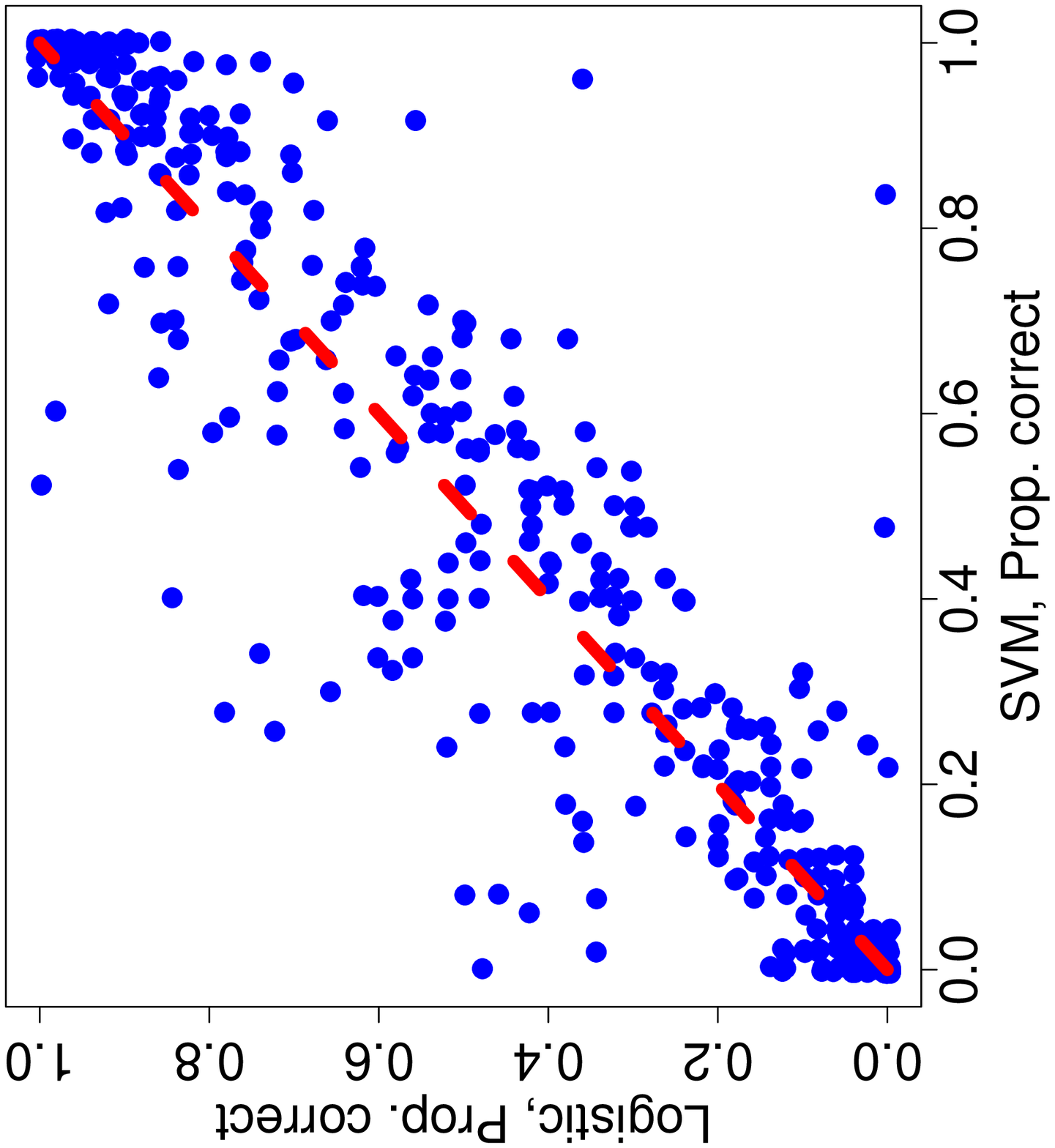}
\end{tabular}
&
\begin{tabular}{c}
\includegraphics[width=\figurewidth\textwidth, angle=270]
{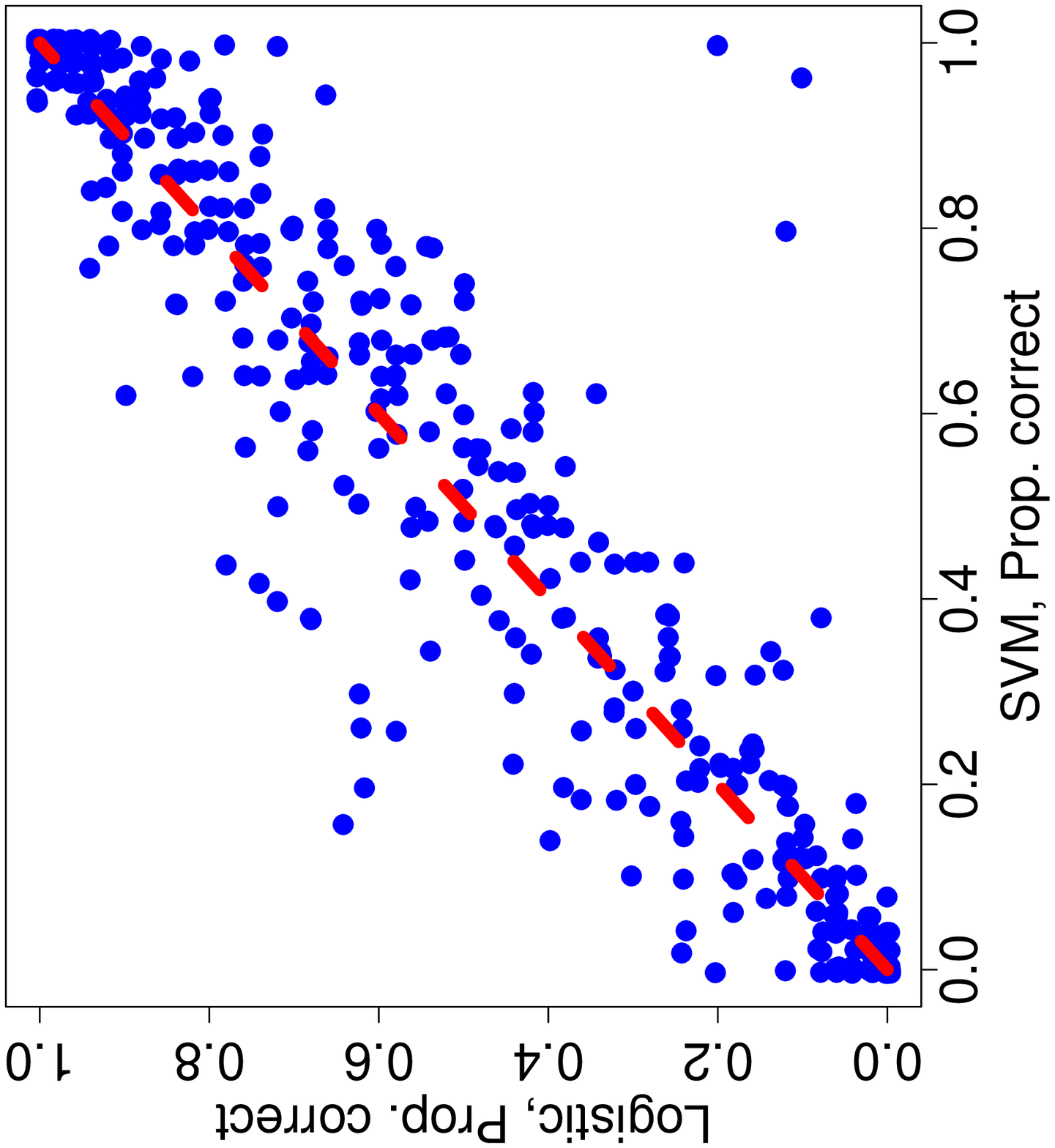}
\end{tabular}
\\
\hline
\end{tabular}
\caption{
\label{figure:finite_sample_sign_correct_proportion_comparison}
\textbf{Comparison of the proportion of sample paths containing sign correct estimates in finite samples:}
The GI condition (Theorem~\ref{result:generalized_irrepresentability_condition}) concerns an asymptotic guarantee and does not ensure the probability of correct sign recovery to be low if $\eta(\theta)$.
In these plots, we compare SVM and logistic classifiers in terms of probability of correct sign recovery in finite samples.
The SVM classifier seems to perform better in terms of the probability of correct sign recovery under Gaussian predictors and the ``blip'' conditional probability profile.
The SVM classifier also performs better in smaller sample sizes under mixed Gaussian predictors and the logistic conditional probability profile.
}
\end{center}	
\end{figure}
\afterpage{\clearpage}

\begin{table}[p]
\begin{center}
\begin{tabular}{|c|cc|}
\cline{2-3}
\multicolumn{1}{c|}{}
&
$p=08$
&
$p=16$
\\
\hline
&&
\\
\begin{tabular}{c}
\begin{sideways}
\begin{tabular}{c}
$g_{1}$ profile
\end{tabular}
\end{sideways}
\end{tabular}
&
\begin{tabular}{c}
\begin{tabular}{|c|cc|}
\hline
                  & \multicolumn{2}{|c|}{SVM}\\
                  & not MSC & MSC \\
\hline
Logistic not MSC & 31.8\% &  6.0\% \\
Logistic MSC     &  7.0\% & 55.2\%\\
\hline
\end{tabular}
\end{tabular}
&
\begin{tabular}{c}
\begin{tabular}{|c|cc|}
\hline
                  & \multicolumn{2}{|c|}{SVM}\\
                  & not MSC & MSC \\
\hline
Logistic not MSC & 15.8\% &  4.8\% \\
Logistic MSC     &  5.4\% & 74.0\% \\
\hline
\end{tabular}
\end{tabular}
\\
&&
\\
\begin{tabular}{c}
\begin{sideways}
\begin{tabular}{c}
$g_{2}$ profile
\end{tabular}
\end{sideways}
\end{tabular}
&
\begin{tabular}{c}
\begin{tabular}{|c|cc|}
\hline
                  & \multicolumn{2}{|c|}{SVM}\\
                  & not MSC & MSC \\
\hline
Logistic not MSC & 31.8\% &  5.2\% \\
Logistic MSC     & 26.0\% & 37.0\% \\
\hline
\end{tabular}
\end{tabular}
&
\begin{tabular}{c}
\begin{tabular}{|c|cc|}
\hline
                  & \multicolumn{2}{|c|}{SVM}\\
                  & not MSC & MSC \\
\hline
Logistic not MSC & 16.6\% &  3.6 \%\\
Logistic MSC     & 32.2\% & 47.6 \%\\
\hline
\end{tabular}
\end{tabular}
\\
&&
\\
\hline
\end{tabular}
\caption{
\label{table:logistic_gi_vs_svm_gi}
\textbf{Frequency at which SVM and logistic are model selection consistent:}
Each table shows the proportion out of 500 designs with mixed Gaussian predictors in which the $\ell_{1}$-norm penalized SVM and logistic classifiers are model selection consistent (MSC).
For most designs, both SVM and logistic would asymptotically contain estimates with all signs correct in their regularization paths.
Among the cases where only one of the two classifiers had would asymptotically contain a sign correct estimate in its path, the logistic classifier would be the correct one in most cases.
}
\end{center}	
\end{table}

\section{Discussion and concluding remarks}
\label{section:discussion}

In this paper, we have 
extended the asymptotic characterization of the distribution of LASSO estimates ($\ell_{1}$ penalized least squares) given by \citet{knight:2000:asymptotics} to more general loss and penalty functions in the parametric case.
The key to our extension consists of finding conditions under which it is possible to obtain a local quadratic approximation that is uniformly valid on a neighborhood of the risk minimizer.
Given the widespread use of convex loss functions use in the literature, the Convexity Lemma by \citet{pollard:1991:asymptotics-for-least-absolute-deviation-regression-estimators} was our tool of choice.
As we restrict attention to the parametric case, we have been able to keep the study of loss and penalty functions separate. 
To the possible extent, we have state our results in a modular fashion so they can be applied to various combinations of loss and penalty functions.

We have used the asymptotic characterization of the distribution of $\ell_{1}$ penalized parametric M-estimates to obtain sufficient conditions ensuring the existence of a model selection consistent estimate for some appropriate value of the regularization parameter.
Interestingly, the condition involves the Hessian but not the variance of the score function evaluated at the risk minimizer.
\citet{ravikumar:2008:high-dimensional-covariance-estimation-minimizing-ell1-penalized-log-determinant-divergence} have obtained a similar condition in the non-parametric case ($p\gg n$) for the penalized maximum likelihood estimate of Gaussian covariance matrices.
That suggests the results we present in this paper can be extended to the non-parametric setting under appropriate conditions, which will be the theme of future research.
We also show (Theorem~\ref{result:simplified_gi_condition}) that, under appropriate assumptions, the condition for sign-consistency of $\ell_{1}$ penalized parametric M-estimates can be expressed solely in terms of the matrix of second moments of the predictors. 

Our simulations provide ample empirical evidence to the theory we have presented in the context of SVM and logistic regression classification.
For Gaussian predictors and a given design, one of the two can happen: both logistic regression and linear SVM classifiers will are sparsistent and sign-consistent or neither of them is.
In finite samples, SVM seems to enjoy a slight advantage in picking the correct signs in the cases we simulated.
For a set of randomly selected designs with non-Gaussian predictors, logistic regression classifiers were sparsistent and sign-consistent more frequently than SVM classifiers.
In finite samples, however, the evidence in favor of either SVM or logistic regression classifiers was mixed.

\section*{Acknowledgments}

The authors would like to thank Youjuan Li, Wang Li and Ji Zhu for providing code for the $\ell_{1}$-norm penalized estimation of SVMs and to thankfully acknowledge support from grants NSF DMS-0605165 (06-08), NSFC (60628102), a grant from MSRA and a CDI award from NSF.
Guilherme Rocha would like to acknowledge helpful discussions with Karen Kafadar, Nicolai Meinshausen and Ram Rajagopal.

\renewcommand{\thesection}{\Alph{section}}
\setcounter{section}{0}

\renewcommand{\theequation}{A-\arabic{equation}}
\setcounter{equation}{0}

\onehalfspacing

\section{Proofs of theoretical results}
\label{section:proof}

\subsection{Proof of results in Section \ref{section:theory}}

We now state and prove the results in Section \ref{section:theory}.
Before that, we prove technical Lemma \ref{result:rescaled_recentered_objective_function} which is used in the proof of Theorem \ref{result:main_result}.

\begin{lemma}
\label{result:rescaled_recentered_objective_function}
Define:
\begin{eqnarray*}
	V_{\theta}^{(n)}(\hat{\bbP}_{n}, \lambda_{n},  u)
	:=
	\sum_{i=1}^{n}
	\left[
	L\left(Z_{i}, \theta+\frac{ u}{q_{n}}\right)
	-
	L\left(Z_{i}, \theta                 \right)
	\right]
	+
	\lambda_{n}
	\cdot
	\left[
	T\left(\theta+\frac{ u}{q_{n}}\right)
	-
	T\left(\theta\right)
	\right].
\end{eqnarray*}
Then:
\begin{eqnarray}
	q_{n}\left(\hat{\theta}_{n}(\lambda_{n})-\theta\right)
	& = & 
	\argmin_{ u\in \bbR^{p}}\left[V_{\theta}^{(n)}(\hat{\bbP}_{n}, \lambda_{n},  u)\right].
\end{eqnarray}
\begin{proof}[Proof of Lemma \ref{result:rescaled_recentered_objective_function}.]
From the definition of $\hat{\theta}_{n}(\lambda_{n})$, we know that:
\begin{small}
\begin{eqnarray*}
	\hat{\theta}_{n}(\lambda_{n})
	& = & 
	\argmin_{t\in \Theta}
	\left[
	\sum_{i=1}^{n}
	\left[
	L\left(Z_{i}, \theta + \frac{q_{n}\left(t-\theta\right)}{q_{n}}\right)
	\right]
	+
	\lambda_{n}\cdot
	T\left(\theta + \frac{q_{n}\left(t-\theta\right)}{q_{n}}\right)
	\right]
	\\
	& = & 	
	\argmin_{t\in \Theta}
	\left[
	\left[
	\sum_{i=1}^{n}
	\left[
	L\left(Z_{i}, \theta + \frac{q_{n}\left(t-\theta\right)}{q_{n}}\right)
	-
	L\left(Z_{i}, \theta \right)
	\right]
	\right]
	+
	\lambda_{n}\cdot
	\left[
	T\left(\theta + \frac{q_{n}\left(t-\theta\right)}{q_{n}}\right)
	-
	T\left(\theta\right)
	\right]
	\right].
\end{eqnarray*} 
\end{small}
The result follows from making a variable transformation $ u(t) = q_{n}\cdot(t-\theta)$ and letting $\hat{ u} =  u(\hat{\theta}_{n}(\lambda_{n}))$.
\end{proof}
\end{lemma}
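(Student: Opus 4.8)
\textbf{Proof proposal for Lemma~\ref{result:rescaled_recentered_objective_function}.} This is essentially a bookkeeping identity, so the plan is to track how the $\argmin$ operator transforms under the two elementary operations relating the objective defining $\hat{\theta}_{n}(\lambda_{n})$ in \eqref{equation:definition_penalized_estimate} to $V_{\theta}^{(n)}$: subtracting a quantity that does not depend on the optimization variable (together with removing the fixed positive factor $n^{-1}$), and the affine reparametrization $t\mapsto u=q_{n}(t-\theta)$.

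First I would use that $\argmin$ is unchanged when the objective is multiplied by a fixed positive scalar and when a term constant in the optimization variable is added to it. Since $\sum_{i=1}^{n}L(Z_{i},\theta)+\lambda_{n}T(\theta)$ does not involve $t$, this gives
\[
\hat{\theta}_{n}(\lambda_{n})=\argmin_{t\in\Theta}\left[\sum_{i=1}^{n}\bigl(L(Z_{i},t)-L(Z_{i},\theta)\bigr)+\lambda_{n}\bigl(T(t)-T(\theta)\bigr)\right].
\]
Next I would substitute $t=\theta+u/q_{n}$; because $q_{n}>0$ is a fixed deterministic number this is a bijection between $t$ and $u$, and it carries the bracketed expression above into exactly $V_{\theta}^{(n)}(\hat{\bbP}_{n},\lambda_{n},u)$ as defined in the statement. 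Since an invertible reparametrization sends minimizers to minimizers, it follows that $\argmin_{u}V_{\theta}^{(n)}(\hat{\bbP}_{n},\lambda_{n},u)=q_{n}\bigl(\hat{\theta}_{n}(\lambda_{n})-\theta\bigr)$, which is the assertion.

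I do not expect any genuine obstacle: the content is just a change of variables. The only point worth a remark is the domain of $u$, which is strictly $q_{n}(\Theta-\theta)$ rather than all of $\bbR^{p}$; to justify writing $\argmin_{u\in\bbR^{p}}$ one should appeal to $\theta$ being interior to $\Theta$ (so that every compact set is eventually contained in $q_{n}(\Theta-\theta)$, which is all that is used in the subsequent arguments, where $\hat{\theta}_{n}(\lambda_{n})$ is shown to lie in a fixed compact set with probability tending to one), or simply read the $\argmin$ as taken over the admissible set. No uniqueness of the minimizer is needed for this lemma; whatever uniqueness holds for $\hat{\theta}_{n}(\lambda_{n})$ transfers verbatim to the reparametrized problem.
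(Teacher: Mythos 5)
Your proof is correct and follows essentially the same route as the paper's: drop the terms constant in the optimization variable (and the positive factor $n^{-1}$), then apply the bijective reparametrization $u = q_{n}(t-\theta)$. Your added remark about the domain of $u$ being $q_{n}(\Theta-\theta)$ rather than $\bbR^{p}$ is a fair point that the paper glosses over, but it does not change the substance of the argument.
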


\begin{proof}[Proof of Theorem \ref{result:assembling_theorem}]
\begin{enumerate}
	\item [a)] The conclusion in (a) follows easily from the triangular inequality, since for each compact set $K$:
\begin{eqnarray*}
\sup_{ u\in K}
\left|
V_{\theta}^{(n)}(\hat{\bbP}_{n}, \lambda_{n},  u)
-
V_{\theta}(\bfW,  u)
\right|
& \le & 
\sup_{ u\in K}
\left|
\left[
\sum_{i=1}^{n}
\left[
L\left(Z_{i}, \theta+\frac{ u}{q_{n}}\right)
-
L\left(Z_{i}, \theta\right)
\right]
\right]
-C_{\theta}(\bfW,  u)
\right|
\\
& + & 
\sup_{ u\in K}
\left|
\lambda_{n}
\cdot
\left[
T\left(\theta+\frac{ u}{q_{n}}\right)
-T(\theta)
\right]
-\lambda\cdot G_{\theta}( u)
\right|.
\end{eqnarray*}

\item [b)] Define:
\begin{eqnarray*}
	\hat{ u}_{n} & = & q_{n}\cdot\left(\hat{\theta}(\lambda_{n})-\theta\right),
	\\
	\hat{ u} & = & \arg\min \left[C_{\theta}(\bfW,  u) - \lambda\cdot G_{\theta}( u)\right].
\end{eqnarray*}

For any compact set $K_{\varepsilon}$ and each $n$, we know:
\begin{eqnarray*}
	\bbP\left(\left|\hat{ u}_{n}-\hat{ u}\right|>\delta\right) 
	& = & 
	\bbP\left(\left|\hat{ u}_{n}-\hat{ u}\right|>\delta\left|\vphantom{\int}\right. \hat{ u}_{n}\in K_{\varepsilon}\right) \cdot 
	\bbP\left(\hat{ u}_{n}\in K_{\varepsilon}\right)
	\\
	& & + 
	\bbP\left(\left|\hat{ u}_{n}-\hat{ u}\right|>\delta\left|\vphantom{\int}\right. \hat{ u}_{n}\not\in K_{\varepsilon}\right) \cdot 
	\bbP\left(\hat{ u}_{n}\not\in K_{\varepsilon}\right).
	\\
	& \le & 
	\bbP\left(\left|\hat{ u}_{n}-\hat{ u}\right|>\delta\left|\vphantom{\int}\right. \hat{ u}_{n}\in K_{\varepsilon}\right)
	+ 
	\bbP\left(\hat{ u}_{n}\not\in K_{\varepsilon}\right).
\end{eqnarray*}

Since $\hat{ u}_{n}$ is $O_{p}(1)$, there exists a compact set $K_{\varepsilon}$ such that $\lim\limits_{n\rightarrow\infty}\bbP\left(\hat{ u}_{n} \not\in K_{\varepsilon}\right) = 0$ and thus:
\begin{eqnarray*}
	\lim_{n\rightarrow\infty}
	\bbP\left(\left|\hat{ u}_{n}-\hat{ u}\right|>\delta\left|\vphantom{\int}\right. \hat{ u}_{n}\not\in K_{\varepsilon}\right) \cdot 
	\bbP\left(\hat{ u}_{n}\not\in K_{\varepsilon}\right)
	& \le &
	\lim_{n\rightarrow\infty}
	\bbP\left(\hat{ u}_{n}\not\in K_{\varepsilon}\right)
	= 
	0.	
\end{eqnarray*}
To show the second term vanish, the uniform convergence over compact sets gives that:
\begin{eqnarray*}
	\lim_{n\rightarrow\infty}\bbP\left(\left|\hat{ u}_{n}-\hat{ u}\right|>\delta\left|\vphantom{\int}\right. \hat{ u}_{n}\in K_{\varepsilon}\right) = 0,
\end{eqnarray*}
\end{enumerate}
which concludes the proof.
\end{proof}

\begin{proof}[Proof of Lemma \ref{result:uniform_convergence_for_loss}]
\textbf{Proof of a) pointwise convergence:}
To establish pointwise convergence, define:
\begin{eqnarray}
	\begin{array}{rcl}
		\delta_{i,n}( u) 
		& = & 
		L(Z_{i}, \theta+\frac{ u}{\sqrt{n}}) 
		- 
		L(Z_{i}, \theta)
		\\	
		D_{i} & = & \nabla_{b}L(Z_{i}, \theta)
		\\
		R_{i,n}( u) & = & \delta_{i,n}( u) - D_{i}\frac{ u}{\sqrt{n}}
		\\
		W_{n} & = & \frac{1}{\sqrt{n}}\sum_{i=1}^{n}D_{i}
		\\
		B_{n}( u) & = & \sum_{i=1}^{n}\delta_{i,n}( u).
	\end{array}
\end{eqnarray}
In terms of these definitions, we have:
\begin{eqnarray}
	B_{n}( u) 
	& = & 
	n\cdot\left[
	\frac{1}{n}
	\sum_{i=1}^{n}
	\left[
	L\left(Z_{i}, \theta+\frac{ u}{\sqrt{n}}\right)
	-
	L\left(Z_{i}, \theta\right)
	\right]
	\right]
	\nonumber
	\\
	& = &
	\label{equation:bn_expansion_step_1}
	W_{n}^{T} \cdot  u + \sum_{i=1}^{n}\left[R_{i,n}( u)\right].
\end{eqnarray}

Now, because $\theta$ is optimal we have $\bbE D_{i} = 0$ and, thus:
\begin{eqnarray*}
	\bbE\left[B_{n}( u)\right] & = & \sum_{i=1}^{n}\bbE\left[R_{i,n}( u)\right].
\end{eqnarray*}

Summing $\bbE\left[B_{n}( u)\right]$ and subtracting $\sum_{i=1}^{n}\bbE\left[R_{i,n}( u)\right]$ from the right hand side of \eqref{equation:bn_expansion_step_1}:
\begin{eqnarray*}
	B_{n}( u) 
	& = & 
	\bbE\left[B_{n}( u) \right]
	+
	W_{n}^{T} u 
	+ 
	\sum_{i=1}^{n}\left[R_{i,n}( u)-\bbE\left[ R_{i,n}( u)\right]\right].
\end{eqnarray*}

Pointwise convergence for each $ u$ follows from obtaining a quadratic approximation to $\bbE\left[B_{n}( u)\right]$, a weak convergence for $W_{n}^{T} u$ and proving that the last term is $o_{p}(1)$.
These facts are established next.

\textbf{i) Quadratic approximation to $\bbE\left[B_{n}( u)\right]$:}
	
	First notice that this term is just the difference of the risk function evaluated at  $\theta$ and $\theta+\frac{ u}{\sqrt{n}}$:
	\begin{eqnarray*}
		\bbE\left[B_{n}( u)\right]
		& = & 
		n\cdot
		\bbE\left[
		\frac{1}{n}
		\sum_{i=1}^{n}
		\left[
		L\left(Z_{i}, \theta+\frac{ u}{\sqrt{n}}\right)
		-
		L\left(Z_{i}, \theta\right)
		\right]
		\right]
		\\
		& = & 
		n\cdot\left[
		\bbE\left[L\left(\bfZ, \theta+\frac{ u}{\sqrt{n}}\right)\right]
		-
		\bbE\left[L(\bfZ, \theta)\right]		
		\right]
		\\
		& = & 
		n\cdot\left[
		R\left(\theta+\frac{ u}{\sqrt{n}}\right)
		-
		R\left(\theta\right)
		\right]
	\end{eqnarray*}
	
	Since the risk function $R$ is twice differentiable (L2.c) and $\theta$ is optimal, the gradient of the risk with respect to its argument $t$ must be zero at $t=\theta$.
	In addition, for $H(\theta)$ as defined in assumption L2.c, we can write the approximation:
	\begin{eqnarray*}
		\bbE\left[B_{n}( u)\right]
		& = & 
		n\cdot \left[\frac{ u^{T}}{\sqrt{n}}\cdot\left[H(\theta)\vphantom{\frac{ u}{\sqrt{n}}}\right]\cdot\frac{ u}{\sqrt{n}}
		+
		o\left(\left(\frac{|u|}{\sqrt{n}}\right)^{2}\right)\right].
		\\
		& = & 
		u^{T}\cdot H(\theta)\cdot u
		+
		o(1).
	\end{eqnarray*}
	
\textbf{ii)] Weak convergence of $W_{n}^{T} u$:}
	
	Optimality of $\theta$ and differentiability of the risk function imply that $\bbE\left[ D_{i} \right]= 0$, thus $\bbE\left[W_{n}\right]=0$.

	Since $\nabla_{b}L(Z, b)$ exists almost everywhere, all terms in the summation defining $W_{n}^{T}$ almost surely exist. Since the terms in the summation are i.i.d. and each has finite variance, the Central Limit applies and we can conclude that:
	\begin{eqnarray*}
		W_{n}\convinlaw N\left(0, J(\theta)\right), \mbox{ with } J(\theta) = \bbE\left[\nabla_{t}L(\bfZ, \theta)\nabla_{t}L(\bfZ, \theta)^{T}\right].
	\end{eqnarray*}
	
\textbf{iii) $\sum_{i=1}^{n}\left[R_{i,n}( u)-\bbE\left[ R_{i,n}( u)\right]\right]$ is $o_{p}(1)$:}
	
	Let $\xi_{i} = R_{i,n}( u)-\bbE\left[R_{i,n}( u)\right]$.
	Since convergence in quadratic mean implies convergence in probability, it is enough to prove that $\bbP \left|\sum_{i=1}^{n} \xi_{i}\right|^{2} = o(1)$.
	
	Clearly $\bbE\xi_{i} = 0$ for all $i$. That, along with independence across the observed samples, yields:
	\begin{eqnarray*}
		\bbE\left[ \left|\sum_{i=1}^{n} \xi_{i}\right|^{2}\right] 
		& = &  
		\var\left[ \sum_{i=1}^{n} \xi_{i}\right] 
		\\
		& = & 
		\sum_{i=1}^{n}\var\left[  \xi_{i}\right]
		\\
		& = & 
		\sum_{i=1}^{n}\left(\bbE\left[  \left|R_{i, n}( u)\right|^{2}\right] - \left[\bbE\left(R_{i, n}( u)\right)\right]^{2}\right)
		\\
		& \le & 
		\sum_{i=1}^{n}\left(\bbE\left[  \left|R_{i, n}( u)\right|^{2}\right]\right)
		.
	\end{eqnarray*}
	
	Because $L(Z_{i}, t)$ is differentiable at $t = \theta$ for almost every $Z_{i}$, we have that:
	\begin{eqnarray*}
		\left|R_{i,n}( u)\right|^{2} 
		& = & 
		\left|L\left(Z_{i}, \theta+\frac{ u}{\sqrt{n}}\right)
		-
		L(\bfZ_{i}, \theta)
		-\nabla_{b}L(Z_{i}, \theta)\cdot\frac{ u}{\sqrt{n}}\right|^{2}
		= 
		o\left(\frac{1}{n}\right),
		\mbox{ for almost all } Z_{i}.
	\end{eqnarray*}
	We conclude that $\bbE\left[\sum_{i=1}^{n} \left|R_{i, n}( u)\right|^{2}\right] = o(1)$.
	
\textbf{Proof of b.1) uniform convergence over compact sets:}

Uniform convergence of $B_{n}( u)$ over compact sets follows from the pointwise convergence just proven and the Convexity Lemma due to \citet{pollard:1991:asymptotics-for-least-absolute-deviation-regression-estimators}.

\paragraph{Proof of b.2) boundedness of $\sqrt{n}\cdot \hat{\theta}(0)$:}
	
	Our proof of $\sqrt{n}$-boundedness of the un-penalized estimate is an adaptation of an argument due to \citet{pollard:1991:asymptotics-for-least-absolute-deviation-regression-estimators}.
	As a first step, we ``complete the squares'' in the quadratic approximation by letting $C$ be a decomposition of the (non-singular) Hessian matrix, i.e. $C^{T}C = H(\theta)$. 
	We then write $B_{n}( u)$ as:
	\begin{eqnarray*}
		B_{n}( u) 
		& = & 
		\left\|C u + \frac{1}{2}(C^{-1})^{T}\bfW_{n}\right\|^{2}
		-
		\left\|\frac{1}{2}(C^{-1})^{T}\bfW_{n}\right\|^{2}
		+
		r_{n}( u),
	\end{eqnarray*}
	Let $A_{n}$ denote the ball with center $-\frac{1}{2}(C^{-1})^{T}\bfW_{n}$ and radius $\delta>0$. Since $\bfW_{n}$ converges in distribution, it is stochastically bounded and, hence, a compact set $K^{*}$ with probability arbitrarily close to one can be chosen to contain $A_{n}$. Thus:
	\begin{eqnarray*}
		\Delta_{n} & := & \sup_{ u \in A_{n}}|r_{n}( u)| \convinprob 0.
	\end{eqnarray*}
	We now study the behavior of $B_{n}$ outside of $A_{n}$ to conclude that $\hat{\theta}_{n}(0)$ is consistent.
	To do that, let $z$ be a point outside the ball and define:
	\begin{eqnarray*}
		m 
		& = & 
		\|z - \frac{1}{2}(C^{-1})^{T}\bfW_{n}\|_{2}
		\\
		v 
		& = &  
		\frac
		{z - \frac{1}{2}(C^{-1})^{T}\bfW_{n}}
		{m}
	\end{eqnarray*}
	Because of convexity, we have that for $ u^{*}=-\frac{1}{2}C^{-1}(C^{-1})^{T}\bfW_{n}+\delta \cdot v$ on the boundary of the $A_{n}$ ball:
	\begin{eqnarray*}
		\left(\frac{\delta}{m}\right)B_{n}(z)
		+
		\left(1-\frac{\delta}{m}\right)
		B_{n}\left(\frac{1}{2}(C^{-1})^{T}\bfW_{n}\right)
		& \ge &
		B_{n}( u^{*})
		\\
		& \ge & \inf\limits_{|v|\le1}\left(v^{T}H(\theta)v\right)
		-
		\frac{1}{4}\bfW_{n}^{T}\left[H(\theta)\right]^{-1}\bfW_{n}
		-
		\Delta_{n}
		\\
		& \ge & \delta^{2}\inf\limits_{|v|\le1}\left(v^{T}H(\theta)v\right)
		-
		\frac{1}{4}\bfW_{n}^{T}\left[H^{-1}(\theta)\right]\bfW_{n}
		-
		\Delta_{n}		
		\\
		& \ge & 
		\delta^{2}\Lambda_{p}(H(\theta))
		-
		B_{n}\left(-\frac{1}{2}C^{-1}(C^{-1})^{T}\bfW_{n}\right)
		-
		2\Delta_{n},		
	\end{eqnarray*}
	where $\Lambda_{p}(H(\theta))$ is the smallest eigenvalue of $H(\theta)$.
	We then conclude that:
	\begin{eqnarray*}
		\inf\limits_{\left| u+\frac{1}{2}(C^{-1})^{T}\bfW_{n}\right|>\delta}
		B_{n}( u)
		\ge
		B_{n}(-\frac{1}{2}(C^{-1})^{T}\bfW_{n})
		+
		\frac{m}{\delta}\left[\delta^{2}\Lambda_{p}(H(\theta))-2 \Delta_{n}\right].
	\end{eqnarray*}
	Since $\Delta_{n}\convinprob 0$, we have that with probability approaching one that $\left|\hat{ u}+\frac{1}{2}(C^{-1})^{T}\bfW_{n}\right|<\delta$ and the result follows from recalling that $\hat{ u} = \sqrt{n}(\hat{\theta}_{n}(0)-\theta)$.
\end{proof}

\begin{proof}[Proof of Lemma \ref{result:uniform_convergence_for_penalty}]
We first brake the problem into two easier to handle pieces:
\begin{eqnarray*}
	\sup_{ u\in K\subset \bbR^{p}}
	\left\|
	\lambda_{n}\cdot\left[T(\theta + \frac{ u}{q_{n}}) - T(\theta)\right]
	-
	\lambda\cdot G_{\theta}( u)
	\right\|
	& \le & 
	\sup_{ u\in K\subset \bbR^{p}}
	\left\|
	\left(\frac{\lambda_{n}}{q_{n}}-\lambda\right)\cdot\left[
	\frac{T(\theta +  u\cdot q^{-1}_{n}) - T(\theta)}{q_{n}^{-1}}
	\right]
	\right\|
	\\
	& & + 
	\sup_{ u\in K\subset \bbR^{p}}
	\left\|
	\lambda\cdot
	\left(
	\frac{T(\theta +  u\cdot q^{-1}_{n}) - T(\theta)}{q_{n}^{-1}}-G_{\theta}( u)
	\right)
	\right\|
\end{eqnarray*}

Since $T$ is continuous, for the compact set $K\subset\bbR^{p}$ there exists $0<M_{K}<\infty$ such that:
\begin{eqnarray*}
	\sup_{ u\in K\subset \bbR^{p}}
	\left\|
	\left(\frac{\lambda_{n}}{q_{n}}-\lambda\right)\cdot\left[
	\frac{T(\theta +  u\cdot q^{-1}_{n}) - T(\theta)}{q_{n}^{-1}}
	\right]
	\right\|
	& \le & 
	\left\|
	\left(\frac{\lambda_{n}}{q_{n}}-\lambda\right)
	\right\|
	\cdot M_{K}
	\convinprob 0, \mbox{ as } n \rightarrow \infty.
\end{eqnarray*}

For the second term, we know from condition P4 in Assumption Set 2:
\begin{eqnarray*}
	\lim_{n\rightarrow \infty}
	\frac{T(\theta +  u\cdot q_{n}^{-1}) - T(\theta)}{q_{n}^{-1}}
	& = & 
	\lim_{h \downarrow 0}
	\frac{T(\theta + h\cdot  u) - T(\theta)}{h} = G_{\theta}( u).
\end{eqnarray*}
Because $G_{\theta}$ is assumed continuous, the pointwise convergence can be strengthened to uniform convergence over compact sets.
\end{proof}

\begin{lemma}
	\label{result:bounded_unregularized_estimate_implies_bounded_penalized_estimate}
	Let $\hat{\theta}_{n}(\lambda_{n})$ be as defined in \eqref{equation:definition_penalized_estimate}, $q_{n}$ be a sequence such that $q_{n}\rightarrow \infty$ as $n\rightarrow\infty$ and $\lambda_{n}$ be a sequence of (potentially random) non-negative real numbers.
	Assume $T$ is a penalty function satisfying condition P4 in PA.
	If $q_{n}\cdot\hat\theta_{n}(0)=O_{p}(1)$, then  $q_{n}\cdot\hat\theta_{n}(\lambda_{n})=O_{p}(1)$.
\end{lemma}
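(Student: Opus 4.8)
The plan is to reduce the claim to a comparison of objective-function values and then to balance the coercivity supplied by the convex loss against the compactness of the penalty's sub-level sets (P4); ``$q_{n}\cdot\hat\theta_{n}(0)=O_{p}(1)$'' is read throughout as $q_{n}(\hat\theta_{n}(0)-\theta)=O_{p}(1)$, and the case $\lambda_{n}=0$ is trivial, so one works on the event $\lambda_{n}>0$. Writing $Q_{n}(t):=\tfrac1n\sum_{i=1}^{n}L(Z_{i},t)$, optimality of $\hat\theta_{n}(\lambda_{n})$ for $Q_{n}+\lambda_{n}T$ and of $\hat\theta_{n}(0)$ for $Q_{n}$ gives
\[
Q_{n}(\hat\theta_{n}(\lambda_{n}))+\lambda_{n}T(\hat\theta_{n}(\lambda_{n}))\;\le\;Q_{n}(\hat\theta_{n}(0))+\lambda_{n}T(\hat\theta_{n}(0))\;\le\;Q_{n}(\hat\theta_{n}(\lambda_{n}))+\lambda_{n}T(\hat\theta_{n}(0)),
\]
so that $T(\hat\theta_{n}(\lambda_{n}))\le T(\hat\theta_{n}(0))$ and, after cancelling, $Q_{n}(\hat\theta_{n}(\lambda_{n}))-Q_{n}(\hat\theta_{n}(0))\le\lambda_{n}\bigl[T(\hat\theta_{n}(0))-T(\hat\theta_{n}(\lambda_{n}))\bigr]$.

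Next I would deduce plain boundedness. Since $q_{n}\to\infty$, the hypothesis forces $\hat\theta_{n}(0)\convinprob\theta$, hence $T(\hat\theta_{n}(0))\convinprob T(\theta)$; by the first inequality above, for each $\varepsilon>0$ the estimate $\hat\theta_{n}(\lambda_{n})$ lies in the sub-level set $\{t:T(t)\le T(\theta)+\varepsilon\}$ with probability tending to $1$, and this set is compact by P4, so $\hat\theta_{n}(\lambda_{n})=O_{p}(1)$. Combined with the loss inequality and a standard M-estimation argument --- uniform convergence of $Q_{n}$ to $R$ over the compact set (from L4) together with uniqueness of the risk minimizer (L1, L3) --- this strengthens to $\hat\theta_{n}(\lambda_{n})\convinprob\theta$.

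Finally, to obtain the $q_{n}$-rate I would pass to the recentered, rescaled objective of Lemma~\ref{result:rescaled_recentered_objective_function}: $\hat u_{n}:=q_{n}(\hat\theta_{n}(\lambda_{n})-\theta)$ minimizes $V^{(n)}_{\theta}(\hat{\bbP}_{n},\lambda_{n},u)=C^{(n)}_{\theta}(u)+\lambda_{n}\bigl[T(\theta+u/q_{n})-T(\theta)\bigr]$, where $C^{(n)}_{\theta}(u)=\sum_{i}\bigl[L(Z_{i},\theta+u/q_{n})-L(Z_{i},\theta)\bigr]$ is convex in $u$ (L4) with minimizer $\hat v_{n}=q_{n}(\hat\theta_{n}(0)-\theta)=O_{p}(1)$. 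From the local quadratic expansion of Lemma~\ref{result:uniform_convergence_for_loss}(b.1) and positive-definiteness of $H(\theta)$, convexity propagates the quadratic behaviour near the origin into an at-least-linear lower bound $C^{(n)}_{\theta}(u)\ge\kappa|u|-M_{n}$ on $\{|u|\ge R\}$, valid with probability $\to 1$ for $R$ fixed but large, $\kappa$ as large as desired, and $M_{n}=O_{p}(1)$; meanwhile the loss inequality of the first paragraph reads $C^{(n)}_{\theta}(\hat u_{n})\le C^{(n)}_{\theta}(\hat v_{n})+\lambda_{n}\bigl[T(\theta+\hat v_{n}/q_{n})-T(\theta+\hat u_{n}/q_{n})\bigr]$, whose penalty increment, since $\hat\theta_{n}(\lambda_{n}),\hat\theta_{n}(0)\to\theta$, is controlled by the displacement and is of order $(\lambda_{n}/q_{n})\,o_{p}(|\hat u_{n}|+1)$. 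Choosing $\kappa$ to dominate $\lambda_{n}/q_{n}$ then forces $|\hat u_{n}|=O_{p}(1)$. I expect this last step to be the main obstacle: because $T$ is not convex --- and, for $\ell_{\gamma}$ with $\gamma<1$, not even locally Lipschitz at the vanishing coordinates of $\theta$ --- one cannot simply say that adding the penalty leaves the minimizer where the loss is small, so all of the coercivity must come from the loss and the increment of $T$ along the segment from $\theta+\hat v_{n}/q_{n}$ to $\theta+\hat u_{n}/q_{n}$ must be bounded using only P2--P3; the argument also uses that $\lambda_{n}/q_{n}$ stays bounded in probability, which holds in every application of the lemma (in Theorem~\ref{result:main_result}, $\lambda_{n}n^{-1/2}\convinprob\lambda$ with $q_{n}=\sqrt n$).
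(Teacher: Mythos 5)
Your first two paragraphs reproduce the paper's proof: the paper derives $T(\hat\theta_{n}(\lambda_{n}))\le T(\hat\theta_{n}(0))$ from exactly the objective-value comparison you write down (phrased there as a contradiction), and then uses P4 to place $\hat\theta_{n}(\lambda_{n})$ in the compact sub-level set $\{t\in\Theta: T(t)\le\max_{t'\in K_{0}}T(t')\}$ with probability at least that of $\hat\theta_{n}(0)$ landing in $K_{0}$. That is where the paper stops; its argument delivers stochastic boundedness of $\hat\theta_{n}(\lambda_{n})$ in a fixed compact set and makes no further use of the loss. So up to that point you are on the paper's route.

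Your third paragraph tries to prove more, namely the genuine rate $q_{n}(\hat\theta_{n}(\lambda_{n})-\theta)=O_{p}(1)$ that condition (iii) of Theorem \ref{result:assembling_theorem} actually requires, and the obstacle you flag is real: the step asserting that the penalty increment is of order $(\lambda_{n}/q_{n})\,o_{p}(|\hat u_{n}|+1)$ does not follow from P1--P4. All that $T(\hat\theta_{n}(\lambda_{n}))\le T(\hat\theta_{n}(0))$ together with $T\ge 0$ gives is
$0\le\lambda_{n}\bigl[T(\theta+\hat v_{n}/q_{n})-T(\theta+\hat u_{n}/q_{n})\bigr]\le\lambda_{n}T(\theta+\hat v_{n}/q_{n})=\lambda_{n}\bigl(T(\theta)+O_{p}(q_{n}^{-1})\bigr)$,
so your coercivity bound only yields $|\hat u_{n}|=O_{p}(1+\lambda_{n})$. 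Without an additional hypothesis such as $\lambda_{n}/q_{n}=O_{p}(1)$ the rate conclusion (and even the consistency claim $\hat\theta_{n}(\lambda_{n})\convinprob\theta$ in your second paragraph) is false: for $\ell_{1}$-penalized least squares with $\lambda_{n}/q_{n}\rightarrow\infty$ the estimate is shrunk to zero and $q_{n}(\hat\beta_{n}(\lambda_{n})-\beta)$ diverges whenever $\beta\neq 0$, while all the stated hypotheses of the lemma hold. The fix is either to add $\lambda_{n}\cdot q_{n}^{-1}\convinprob\lambda$ as a hypothesis (it holds in the lemma's only application, inside Theorem \ref{result:main_result}) and then complete your coercivity argument using Lemma \ref{result:uniform_convergence_for_penalty} to control $q_{n}[T(\theta+u/q_{n})-T(\theta)]$ uniformly on compacts, or to settle, as the paper implicitly does, for the weaker ``contained in a compact set'' reading of the conclusion.
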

\begin{proof}
First, we use a contradiction to prove that $T(\hat\theta_{n}(\lambda_{n}))\le T(\hat\theta_{n}(0))$.
From the definition of $\hat\theta_{n}(0)$, we have:
\begin{eqnarray*}
	\frac{1}{n}\cdot \sum_{i=1}^{n}L\left(Z_{i}, \hat{\theta}_{n}(0)\right)
	& \le & 
	\frac{1}{n}\cdot \sum_{i=1}^{n}L\left(Z_{i}, \hat{\theta}_{n}(\lambda_{n})\right). 
\end{eqnarray*}
Supposing that $T(\hat\theta_{n}(\lambda_{n})) > T(\hat\theta_{n}(0))$, we get
\begin{eqnarray*}
	\frac{1}{n}\cdot \sum_{i=1}^{n}L\left(Z_{i}, \hat{\theta}_{n}(0)\right)
	+ 
	\lambda_{n}\cdot T(\hat\theta_{n}(0))
	& < & 
	\frac{1}{n}\cdot \sum_{i=1}^{n}L\left(Z_{i}, \hat{\theta}_{n}(\lambda_{n})\right) 
	+ 
	\lambda_{n}\cdot T(\hat\theta_{n}(\lambda_{n})),
\end{eqnarray*}
a contradiction with the definition of $\hat{\theta}_{n}(\lambda_{n})$ as the minimizer of $f(t) = \left[\frac{1}{n}\sum_{i=1}^{n}L\left(Z_{i}, t\right)\right] + \lambda_{n}\cdot T(t)$.

Now, from $q_{n}\cdot\hat{\theta}_{n}(0)=O_{p}(1)$, we have that, for any $\delta>0$, there exists compact $K_{0}\subset \Theta$ such that $\bbP\left(q_{n}\cdot\hat{\theta}_{n}(0)\in K_{0}\right)>1-\delta$.
Let $U = \max_{t\in K_{0}}q_{n}\cdot T(t)$ and define $\tilde{K}_{0} = \{t\in \Theta: q_{n}\cdot T(t)\le U\}$.
Since $T(\hat{\theta}_{n}(\lambda_{n}))<T(\hat{\theta}_{n}(0))$, it follows that $\bbP\left(\hat{\theta}(\lambda_{n})\in \tilde{K}_{0}\right) \ge \bbP\left(\hat{\theta}(0)\in \tilde{K}_{0}\right)>1-\delta$.
\end{proof}

\subsection{Proof of results in Section \ref{section:application}}

\begin{proof}[Proof of Theorem \ref{result:generalized_irrepresentability_condition}]
For the $\ell_{1}$-penalty, the difference between 
$	\lambda_{n}\left(\|\beta+\frac{u}{\sqrt{n}}\|_{1}-\|\beta\|_{1}\right)
    -
	\frac{\lambda_{n}}{\sqrt{n}}\left(u_{\mcalA}-\|u_{\mcalA^{c}}\|_{1}\right)
	\rightarrow
	0,$
uniformly as $n\rightarrow \infty$.
Using Theorem \ref{result:assembling_theorem} and Lemma \ref{result:uniform_convergence_for_loss},
\begin{eqnarray*}
	\sqrt{n}\left(\hat{\theta}_{n}(\lambda_{n})-\theta\right)
	\convinlaw 
	\hat{ u}
	:=
	\argmin_{ u} 
	\left[
	 u^{T} H(\theta) u + \bfW^{T} u + 
	\frac{\lambda_{n}}{\sqrt{n}}\left(u_{\mcalA} + \left\| u_{\mcalA^{c}}\right\|_{1}\right)
	\right],
\end{eqnarray*}
with $\bfW\sim N\left(0, J(\theta)\right)$.
%
We assume, without loss of generality that $\beta_{\mcalA}>0$ with the inequality holding element-wise.
In that case:
\begin{eqnarray*}
	\begin{array}{lllll}
		\sign(\hat{\beta}_{j}(\lambda_{n})) = \sign(\beta_{j}) 
		& \Leftrightarrow & 
		\frac{\hat{ u}_{j}}{\sqrt{n}} \ge -\beta_{j}, 
		&
		\mbox{ for } j\in \mcalA,
		\\
		\sign(\hat{\beta}_{j}(\lambda_{n})) = \sign(\beta_{j}) 
		& \Leftrightarrow & 
		\hat{ u}_{j} = 0, 
		&
		\mbox{ for } j\in \mcalA^{c}.
	\end{array}
\end{eqnarray*}
For the remainder of this proof, we drop denote $H(\theta)$ by $H$.
In terms of the $\alpha$, $\mcalA$, $\mcalA^{c}$ partition, the Karush-Kuhn-Tucker (KKT) conditions for optimization defining $\hat{u}$ above are
\begin{eqnarray*}
	\begin{array}{rllll}
		H_{\alpha    , \mcalA    }\cdot\hat{ u}_{\mcalA}
		+
		H_{\alpha    , \mcalA^{c}}\cdot\hat{ u}_{\mcalA^{c}}
		+
		H_{\alpha    , \alpha    }\cdot\hat{ u}_{\alpha}
		+
		\bfW_{\alpha   }
		& = & 
		0,
		\\
		H_{\mcalA    , \mcalA    }\cdot\hat{ u}_{\mcalA}
		+
		H_{\mcalA    , \mcalA^{c}}\cdot\hat{ u}_{\mcalA^{c}}
		+
		H_{\mcalA    , \alpha    }\cdot\hat{ u}_{\alpha}
		+
		\bfW_{\mcalA   }
		-
		\frac{\lambda_{n}}{\sqrt{n}}
		& = & 
		0,
		\\
		H_{j         , \mcalA    }\cdot\hat{ u}_{\mcalA}
		+
		H_{j         , \mcalA^{c}}\cdot\hat{ u}_{\mcalA^{c}}
		+
		H_{j         , \alpha    }\cdot\hat{ u}_{\alpha}
		+
		\bfW_{j}
		-
		\frac{\lambda_{n}}{\sqrt{n}}\cdot
		\sign(\hat{ u}_{j})
		& = & 
		0,
		&
		\mbox{ for } j \in \mcalA^{c} \mbox{ s.t. } \hat{u}_{j}\neq 0  
		\\
		\left|
		H_{j         , \mcalA    }\cdot\hat{ u}_{\mcalA    }
		+
		H_{j         , \mcalA^{c}}\cdot\hat{ u}_{\mcalA^{c}}
		+
		H_{j         , \alpha    }\cdot\hat{ u}_{\alpha}
		+
		\bfW_{j}
		\right|
		& \le & 
		\frac{\lambda_{n}}{\sqrt{n}},
		&
		\mbox{ for } j \in \mcalA^{c} \mbox{ s.t. } \hat{u}_{j}= 0.
	\end{array}
\end{eqnarray*}
To select the zero terms in $\beta$ correctly, we must have $\hat{ u}_{\mcalA^{c}=0}$.
In that case,
\begin{eqnarray*}
	\left[
	\begin{array}{c}
	\hat{ u}_{\alpha}
	\\
	\hat{ u}_{\mcalA}		
	\end{array}
	\right]
	& = & 
	\left[
	\begin{array}{cc}
	H_{\alpha, \alpha} & H_{\mcalA, \alpha}
	\\
	H_{\mcalA, \alpha} & H_{\mcalA, \mcalA}
	\end{array}
	\right]^{-1}
	\cdot
	\left[
	\begin{array}{c}
	-\bfW_{\alpha}
	\\
	\frac{\lambda_{n}}{\sqrt{n}}\cdot\bfone_{q} - \bfW_{\mcalA}
	\end{array}
	\right].
\end{eqnarray*}
Using Schur's inversion formula for partitioned matrices, we get:
\begin{eqnarray*}
	\hat{ u}_{\mcalA}
	& = & 
	\left[H_{\mcalA, \mcalA}-H_{\mcalA, \alpha}H_{\alpha, \alpha}^{-1}H_{\alpha, \mcalA}\right]^{-1}
	\cdot 
	\left[ 
	\frac{\lambda_{n}}{\sqrt{n}}\cdot\bfone_{q} 
	- 
	\bfW_{\mcalA}
	-
	\left[H_{\mcalA, \alpha}H_{\alpha, \alpha}^{-1}\right]\cdot \bfW_{\alpha}
	\right].
\end{eqnarray*}

Define a zero mean Gaussian random vector $\tilde{\bfW} = \left[\begin{array}{cc}\tilde{\bfW}_{\mcalA}, & \tilde{\bfW}_{\mcalA^{c}}\end{array}\right]$ :
\begin{small}
\begin{eqnarray*}
	\tilde{\bfW}_{\mcalA}     & := & 
	-
	\mcalH
	\cdot
	\left[\bfW_{\mcalA} + H_{\mcalA, \alpha}\cdot H_{\alpha, \alpha}^{-1} \cdot \bfW_{\alpha}\right], \mbox{ and }
	\\
	\tilde{\bfW}_{\mcalA^{c}} & := & 
	\bfW_{\mcalA^{c}} 
	-
	\left[
	\begin{array}{c}
	H_{\mcalA^{c}, \alpha}
	\\
	H_{\mcalA^{c}, \mcalA}
	\end{array}
	\right]
	^{T}
	\left[
	\begin{array}{cc}
	H_{\alpha, \alpha}	&	H_{\alpha, \mcalA}\\
	H_{\mcalA, \alpha}	&	H_{\mcalA, \mcalA}
	\end{array}
	\right]
	^{-1}
	\left[
	\begin{array}{c}
	\bfW_{\alpha}
	\\
	\bfW_{\mcalA}
	\end{array}
	\right], \mbox{ with }\\
	\mcalH & := & \left[H_{\mcalA, \mcalA}-H_{\mcalA, \alpha}H_{\alpha, \alpha}^{-1}H_{\alpha, \mcalA}\right]^{-1}
\end{eqnarray*}
\end{small}
The M-estimated parameter fails to have the correct signs if:
\begin{small}
\begin{eqnarray*}
	\begin{array}{lllll}
	\tilde{\bfW}_{j}	
	& \ge &
	\sqrt{n}\cdot \beta_{j}
	-
	\frac{\lambda_{n}}{\sqrt{n}}
	\cdot
	\sum_{k\in \mcalA}\mcalH_{jk},
	& 
	\mbox{ for some $j\in \mcalA$, } & \mbox{ OR, }
	\\
	\tilde{\bfW}_{j}
	& > & 
	\frac{\lambda_{n}}{\sqrt{n}}
	\left(
	1
	-
	H_{\mcalA^{c}, \mcalA}
	\left[H_{\mcalA, \mcalA}-H_{\mcalA, \alpha}H_{\alpha, \alpha}^{-1}H_{\alpha, \mcalA}\right]^{-1}
	\bfone_{q}
	\right),
	&
	\mbox{ for some $j\in \mcalA^{c}$,}
	&
	\mbox{ OR, }
	\\
	\tilde{\bfW}_{j}
	& < & 
	\frac{\lambda_{n}}{\sqrt{n}}
	\left(
	-1
	-
	H_{\mcalA^{c}, \mcalA}
	\left[H_{\mcalA, \mcalA}-H_{\mcalA, \alpha}H_{\alpha, \alpha}^{-1}H_{\alpha, \mcalA}\right]^{-1}
	\bfone_{q}
	\right),
	&
	\mbox{ for some $j\in \mcalA^{c}$.}
	\end{array}
\end{eqnarray*}
\end{small}
For the remainder of the proof, let $\Phi$ denote the standard normal cumulative distribution function, and define $\varsigma_{j}^{2}:=\var(\tilde{\bfW}_{j})$.

\textbf{Proof of a):}\\
We prove that if $\|H_{\mcalA^{c}, \mcalA}\left[H_{\mcalA, \mcalA}-H_{\mcalA, \alpha}H_{\alpha, \alpha}^{-1}H_{\alpha, \mcalA}\right]^{-1}\bfone_{q}\|_{\infty}<1$ and there exists $c>0$ and $\lambda\in \bbR$ sucg that $\frac{\lambda_{n}}{n^{\frac{1+c}{2}}}\convinprob \lambda$ and $\frac{\lambda_{n}}{n}\convinprob 0$, then the probability of each of these three events decreases to zero exponentially fast.

For the first event, use the union bound and the inequality $1-\Phi(r)\le\exp\left[-\frac{r^{2}}{2}\right]$ for large enough $r$, to get
\begin{small}
\begin{eqnarray*}
	\bbP\left(
	\exists j\in \mcalA:
	\tilde{\bfW}_{j}
	> 
	\sqrt{n}\cdot\left(\beta_{j} - \frac{\lambda_{n}}{n}\right)
	\right)
	& \le &
	\sum_{j\in \mcalA} 
	\bbP\left(
	\tilde{\bfW}_{j}
	> 
	\sqrt{n}\left(\beta_{j} - \frac{\lambda_{n}}{n}\right)
	\right)	 
	\\
	& \le &
	\sum_{j\in \mcalA} 
	\bbP\left(
	\frac{\tilde{\bfW}_{j}}{\varsigma_{j}}
	> 
	\sqrt{n}\left(\frac{\beta_{j}}{\varsigma_{j}} - \frac{\lambda_{n}}{n\cdot \varsigma_{j}}\right)
	\right)	 
	\\
	& \le & 
	q
	\cdot
	\left[1
	-
	\Phi\left(
	\sqrt{n}\cdot \left(\min\limits_{j\in\mcalA}\left(\frac{\beta_{j}}{\varsigma_{j}}\right) - \frac{\lambda_{n}}{n\cdot\max\limits_{j\in\mcalA}{\varsigma_{j}}}\right)
	\right)
	\right]
	\\
	& \le &
	q\cdot
	\frac{
	\exp\left[
	-\frac{n}{2}
	\cdot 
	\left(
	\min\limits_{j\in\mcalA}\left(\frac{\beta_{j}}{\varsigma_{j}}\right)^{2} - \frac{\lambda_{n}^{2}}{n^{2}\cdot\max\limits_{j\in\mcalA}{\varsigma_{j}^{2}}}
	\right)
	\right]
	}
	{
	\sqrt{n}\cdot\left(\min\limits_{j\in\mcalA}\left(\frac{\beta_{j}}{\varsigma_{j}}\right) - \frac{\lambda_{n}}{n\cdot\max\limits_{j\in\mcalA}{\varsigma_{j}}}\right)
	}
	\\
	& \sim &
	q\cdot
	\frac
	{\exp\left[-\frac{n}{2}\min\limits_{j\in\mcalA}\left(\frac{\beta_{j}}{\varsigma_{j}}\right)^{2}\right]}
	{\sqrt{n}\cdot \min\limits_{j\in\mcalA}\left(\frac{\beta_{j}}{\varsigma_{j}}\right)}.
\end{eqnarray*}	
\end{small}
To tackle the second and third events, define $\eta = 1-\|H_{\mcalA^{c}, \mcalA}
	\left[H_{\mcalA, \mcalA}-H_{\mcalA, \alpha}H_{\alpha, \alpha}^{-1}H_{\alpha, \mcalA}\right]^{-1}
	\bfone_{q}
	\|_{\infty}$ and notice that
\begin{small}
\begin{eqnarray*}
	\bbP
	\left(
	\tilde{W}_{j}
	>
	\frac{\lambda_{n}}{\sqrt{n}}
	\left(
	1
	-
	H_{\mcalA^{c}, \mcalA}
	\left[H_{\mcalA, \mcalA}-H_{\mcalA, \alpha}H_{\alpha, \alpha}^{-1}H_{\alpha, \mcalA}\right]^{-1}
	\bfone_{q}
	\right)
	\right)
	& \le & 
	\bbP
	\left(
	\tilde{W}_{j}
	>
	\eta
	\cdot
	\frac{\lambda_{n}}{\sqrt{n}}
	\right), 
	\mbox{ for all } j \in \mcalA^{c}, \mbox{ AND }
	\\
	\bbP
	\left(
	\tilde{W}_{j}
	<
	-
	\frac{\lambda_{n}}{\sqrt{n}}
	\left(
	1
	-
	H_{\mcalA^{c}, \mcalA}
	\left[H_{\mcalA, \mcalA}-H_{\mcalA, \alpha}H_{\alpha, \alpha}^{-1}H_{\alpha, \mcalA}\right]^{-1}
	\bfone_{q}
	\right)
	\right)
	& \le & 
	\bbP
	\left(
	-\tilde{W}_{j}
	>
	-
	\eta
	\cdot
	\frac{\lambda_{n}}{\sqrt{n}}
	\right)
	.
\end{eqnarray*}
\end{small}
As a result, using the union bound gives that the probability of the second or third event happening is bounded above by $\sum_{j\in \mcalA^{c}} \bbP\left(\left|\tilde{W}_{j}\right|>\eta\cdot \frac{\lambda_{n}}{\sqrt{n}}\right)$. To prove this probability vanishes exponentially fast, we use the same inequality as above:
\begin{small}	
\begin{eqnarray*}
\sum_{j\in \mcalA^{c}} \bbP\left(\left|\tilde{W}_{j}\right|>\eta\cdot \frac{\lambda_{n}}{\sqrt{n}}\right)
& \le & 
\sum_{j\in \mcalA^{c}} \bbP\left(\left|\frac{\tilde{W}_{j}}{\varsigma_{j}}\right|>\frac{\eta}{\varsigma_{j}}\cdot \frac{\lambda_{n}}{\sqrt{n}}\right)
\\
& \le & 
2\cdot
\sum_{j\in \mcalA^{c}} 
\left[1-\Phi\left(\frac{\eta}{\varsigma_{j}}\cdot \frac{\lambda_{n}}{\sqrt{n}}\right)\right]
\\
& \le & 
2(p-q)
\cdot 
\left[1-\Phi
\left(\frac{\eta}{\max\limits_{j\in\mcalA^{c}}\varsigma_{j}}\cdot \frac{\lambda_{n}}{\sqrt{n}}\right)
\right]
\\
& \le & 
2(p-q)
\cdot 
\exp
\left[
-
\frac{1}{2}
\left(\frac{\eta}{\max\limits_{j\in\mcalA^{c}}\varsigma_{j}}\cdot \frac{\lambda_{n}}{\sqrt{n}}\right)^{2}
\right]
\\
& \sim &
2(p-q)
\cdot 
\exp
\left[
-
\frac{1}{2}
\cdot
n^{c}
\cdot
\left(\min\limits_{j\in \mcalA^{c}}\frac{\eta}{\varsigma_{j}}\right)^{2}
\right].
\end{eqnarray*}
\end{small}

\noindent\textbf{Proof of b):}\\
To prove the converse in part (b), first notice that $\mcalH_{\mcalA, \mcalA}$ is a positive definite matrix.
It follows that
	$\sum_{j\in \mcalA}\sum_{k\in \mcalA}\mcalH_{jk}>0$, 
and thus that there must exists $j\in \mcalA$ with $\sum_{k\in \mcalA}\mcalH_{jk}>0$.
Thus, if $\frac{\lambda_{n}}{n}\rightarrow \infty$, the first event takes place with probability approaching one (exponentially fast) as long as $\mcalA$ is non-empty.
On the other hand, if $\frac{\lambda_{n}}{\sqrt{n}}\rightarrow 0$, then the union of the second and third event occurs with probability approaching one (exponentially fast).
Thus, we only need to consider the case $\frac{\lambda_{n}}{n^{\frac{1+c}{2}}}\rightarrow \lambda$, for some finite $\lambda\in \bbR$ and $c\in[0,1)$.

As before, let $\eta=1-\|H_{\mcalA^{c}, \mcalA}\left[H_{\mcalA, \mcalA}-H_{\mcalA, \alpha}H_{\alpha, \alpha}^{-1}H_{\alpha, \mcalA}\right]^{-1}\bfone_{q}\|_{\infty}$.
If $c>0$ and $\eta<0$, the probability of the second or third events converges to one (exponentially fast).
If $\eta=0$, the second or third events have a positive probability of taking place regardless of $\lambda_{n}$.
Likewise, if $c=0$, $\eta<0$, the second or third events happen with strictly positive probability.
\end{proof}

\begin{proof}[Proof of Theorem \ref{result:simplified_gi_condition}]
Throughout this proof we denote $\nu:=\frac{\beta}{\|\beta\|}\in \bbR^{p}$, the unit vector in the direction of $\beta$.
Using the properties of Gaussian distributions and the condition $\nu^{T}\mu=0$, we get
\begin{small}
\begin{eqnarray*}
	\begin{array}{rcl}
	\bbE\left[\bfX\bfX^{T}\left|\alpha+\bfX^{T}\beta\right.\right]
	& = &
	\mu\mu^{T}
	+
	\Sigma
	+
	\frac{\Sigma\nu\nu^{T}\Sigma}{(\nu^{T}\Sigma\nu)^{2}}
	\cdot
	\left[
	\left(
	\frac
	{\left(\alpha+\bfX^{T}\beta\right)-\left(\alpha+\beta^{T}\mu\right)}
	{\|\beta\|}
	\right)^{2}-1
	\right]
	.
	\end{array}
\end{eqnarray*}
\end{small}
For details, we refer the reader to Appendix \ref{section:conditional_moments_of_gaussian_distributions}.
Letting $f_{\bfM}$ denote the density of the random variable $\bfM=\alpha+\bfX^{T}\beta$ and defining
\begin{eqnarray*}
	\kappa 
	& := &
	\int
	\left[
	\left(
	\frac
	{m-\left(\alpha+\beta^{T}\mu\right)}
	{\|\beta\|}
	\right)^{2}-1
	\right]
	\cdot 
	w(m)
	\cdot 
	\tilde{f}_{\bfM}(m)
	\cdot 
	\mbox{d}m,
\end{eqnarray*}
the Hessian of the risk function becomes:
\begin{eqnarray*}
	H(\theta)
	& = &
	\mu\mu^{T}
	+
	\Sigma 
	+
	\kappa
	\cdot
	\frac{\Sigma \nu\nu^{T} \Sigma}{\left(\nu^{T}\Sigma\nu\right)^{2}}.
\end{eqnarray*}

Partition the vectors $\nu$, $\mu$, and the matrix $\Sigma$ according to the sparsity pattern in $\nu$:
\begin{eqnarray*}
	\nu & = & \left[\begin{array}{cc}\nu_{\mcalA}^{T} & \bfzero^{T}\end{array}\right]^{T},
	\\
	\mu & = & \left[\begin{array}{cc}\mu_{\mcalA}^{T} & \mu_{\mcalA^{c}}^{T}\end{array}\right]^{T}, \mbox{ and }
	\\
	\Sigma 
	& = & 
	\left[
	\begin{array}{cc}
		\Sigma_{\mcalA    ,\mcalA    } & \Sigma_{\mcalA    ,\mcalA^{c}} \\
		\Sigma_{\mcalA^{c},\mcalA    } & \Sigma_{\mcalA^{c},\mcalA^{c}} \\
	\end{array}
	\right].
\end{eqnarray*}

The partitioned Hessian becomes
\begin{small}
\begin{eqnarray*}
	H(\theta)
	& = & 
	\left[
	\begin{array}{cc}
	\left(\mu_{\mcalA}\mu_{\mcalA}^{T} + \Sigma_{\mcalA    , \mcalA    }\right)
	\cdot
	\left(\bfI_{q}+\kappa\cdot\nu_{\mcalA}\nu_{\mcalA}^{T}\cdot\Sigma_{\mcalA, \mcalA}\right)
	&
	\left(\bfI_{q}+\kappa\cdot\Sigma_{\mcalA, \mcalA}\cdot\nu_{\mcalA}\nu_{\mcalA}^{T}\right)
	\cdot
	\left(\mu_{\mcalA}\mu_{\mcalA^{c}}^{T} + \Sigma_{\mcalA    , \mcalA^{c}}\right)
	\\
	\left(\mu_{\mcalA^{c}}\mu_{\mcalA}^{T} + \Sigma_{\mcalA^{c}, \mcalA    }\right)
	\cdot
	\left(\bfI_{q}+\kappa\cdot\nu_{\mcalA}\nu_{\mcalA}^{T}\cdot\Sigma_{\mcalA, \mcalA}\right)
	&
	\mu_{\mcalA^{c}}\mu_{\mcalA^{c}}^{T}
	+
	\Sigma_{\mcalA^{c}, \mcalA^{c}} 
	+
	\kappa
	\cdot
	\Sigma_{\mcalA^{c}, \mcalA}\cdot\nu_{\mcalA}\nu_{\mcalA}^{T}\cdot\Sigma_{\mcalA    , \mcalA^{c}}
	\end{array}
	\right].
\end{eqnarray*}
\end{small}
Defining $\bfA := \left(\bfI_{q}+\kappa\cdot\nu_{\mcalA}\nu_{\mcalA}^{T}\cdot\Sigma_{\mcalA, \mcalA}\right)$, we get
\begin{small}
\begin{eqnarray*}
	H_{\mcalA^{c}, \mcalA}(\theta)\left[H_{\mcalA, \mcalA}(\theta)\right]^{-1}
	& = &
	\left(\mu_{\mcalA^{c}}\mu_{\mcalA}^{T} + \Sigma_{\mcalA^{c}, \mcalA    }\right)
	\cdot
	\bfA
	\times
	\left[
	\left(\mu_{\mcalA}\mu_{\mcalA}^{T} + \Sigma_{\mcalA    , \mcalA    }\right)
	\cdot
	\bfA
	\right]^{-1}
	\\
	& = &
	\left(\mu_{\mcalA^{c}}\mu_{\mcalA}^{T} + \Sigma_{\mcalA^{c}, \mcalA    }\right)
	\bfA\bfA^{-1}
	\left(\mu_{\mcalA}\mu_{\mcalA}^{T} + \Sigma_{\mcalA    , \mcalA    }\right)^{-1}
	\\
	& = &
	\left[\bbE\left(\bfX_{\mcalA^{c}, \mcalA    }\right)\right]
	\left[\bbE\left(\bfX_{\mcalA    , \mcalA    }\right)\right]^{-1}.
\end{eqnarray*}
\end{small}
The result follows from post-multiplying both sides of this last equation by $\sign(\beta_{\mcalA})$.
\end{proof}

\begin{proof}[Proof of Lemma \ref{result:variance_and_hessian_for_logistic_loss}]
Throughout the proof of Lemma \ref{result:variance_and_hessian_for_logistic_loss}, we define $\tilde{\bfX} = \left[\begin{array}{cc}1 & \bfX^{T}\end{array}\right]^{T}$.
\begin{enumerate}
	\item [L1)] 
	We first prove the existence of a minimizer.
	Given that the risk function is continuous, it is enough to prove that the closed set $\mcalS(\bfM) = \left\{t\in \bbR^{p}: \bbE\left[-\bbI(\bfY=1)\cdot \tilde{\bfX}^{T}t + \log\left(1+\exp\left(\tilde{\bfX}^{T}t\right)\right)\right]\le \bfM\right\}$ for large enough $\bfM$ is bounded.
	We establish boundedness of $\mcalS(\bfM)$ by proving that $\mcalS(\bfM)$ is contained on a finite sphere around the origin which can be established by proving  that, for any $\bfM$ there exists $\gamma$ such that:
	\begin{eqnarray}
		\label{equation:logistic_out_of_box_inclusion}
		\left|\langle t, t\rangle\right| > \gamma 
		& \Rightarrow & 
		\bbE\left[-\bbI(\bfY=1)\cdot \tilde{\bfX}^{T}t + \log\left(1+\exp\left(\tilde{\bfX}^{T}t\right)\right)\right]>\bfM.
	\end{eqnarray}
	To prove the assertion in \eqref{equation:logistic_out_of_box_inclusion}, let $t$ be a non-zero vector with $u = \sqrt{\langle t, t\rangle} \neq 0$, so $t$ can be written as $t = u v$, for some $v\neq 0$. For any $u_{0}\in \bbR$, we can write:
	\begin{small}
	\begin{eqnarray*}
		\begin{array}{ccc}
		\bbE\left[-u\cdot\bbI(\bfY=1)\cdot \tilde{\bfX}^{T}v + \log\left(1+\exp\left(u\cdot \tilde{\bfX}^{T}v\right)\right)\right]
		& = & 
		\\
		-u\cdot\bbE\left[p(\bfX)\cdot \tilde{\bfX}^{T}v\right]
		+ 
		\bbE\left[\log\left(1+\exp\left(u \cdot\tilde{\bfX}^{T}v\right)\right)\right]
		& \ge & 
		\\
		-u\cdot\bbE\left[p(\bfX)\cdot \tilde{\bfX}^{T}v\right]
		+ 
		\bbE\left[\log\left(1+\exp\left(u_{0} \cdot\tilde{\bfX}^{T}v\right)\right)\right]
		+ 
		\bbE
		\left[
		\frac
		{\exp\left(u_{0}\cdot \tilde{\bfX}^{T}v\right)}
		{1+\exp\left(u_{0}\cdot \tilde{\bfX}^{T}v\right)}
		\cdot 
		\tilde{\bfX}^{T}v
		\right]
		\cdot
		(u-u_{0})
		& = & 
		\\
		c(u_{0}, v)
		+
		\bbE\left[
		\left(
		\frac
		{\exp\left(u_{0}\cdot \tilde{\bfX}^{T}v\right)}
		{1+\exp\left(u_{0}\cdot \tilde{\bfX}^{T}v\right)}
		-
		p(\bfX)
		\right)
		\cdot 
		\tilde{\bfX}^{T}v\right]
		\cdot
		u,
		\end{array}
	\end{eqnarray*}
	\end{small}
	where the inequality follows from convexity of the mapping $s\mapsto \log\left(1+\exp\left(s \cdot\tilde{\bfX}^{T}v\right)\right)$ and $c(u_{0}, v) = 
		\bbE\left[\log\left(1+\exp\left(u_{0} \cdot\tilde{\bfX}^{T}v\right)\right)\right]
		-	
		\bbE
		\left[
		\frac
		{\exp\left(u_{0}\cdot \tilde{\bfX}^{T}v\right)}
		{1+\exp\left(u_{0}\cdot \tilde{\bfX}^{T}v\right)}
		\cdot 
		\tilde{\bfX}^{T}v
		\right]
		\cdot
		u_{0}$, which does not involve $u$.
	
	Under the assumptions made, we can use the dominated convergence theorem to get:
	\begin{eqnarray*}
		\lim_{u_{0}\rightarrow\infty}
		\bbE
		\left[
		\left(
		\frac
		{\exp\left(u_{0}\cdot \tilde{\bfX}^{T}v\right)}
		{1+\exp\left(u_{0}\cdot \tilde{\bfX}^{T}v\right)}
		-
		p(\bfX)
		\right)
		\cdot 
		\tilde{\bfX}^{T}v
		\right]
		& = & 
		\bbE
		\left[
		\left(
		1		
		-
		p(\bfX)
		\right)
		\cdot 
		\tilde{\bfX}^{T}v
		\right], \mbox{ and }
		\\
		\lim_{u_{0}\rightarrow-\infty}
		\bbE
		\left[
		\left(
		\frac
		{\exp\left(u_{0}\cdot \tilde{\bfX}^{T}v\right)}
		{1+\exp\left(u_{0}\cdot \tilde{\bfX}^{T}v\right)}
		-
		p(\bfX)
		\right)
		\cdot 
		\tilde{\bfX}^{T}v
		\right]
		& = & 
		-
		\bbE
		\left[
		p(\bfX)
		\cdot 
		\tilde{\bfX}^{T}v
		\right].
	\end{eqnarray*}
	Since the density is everywhere positive, the hyperplane $\{\bfs\in \bbR^{p+1}:\bfs^{T}v=0\}$ has probability zero for any $v\neq0$ and thus we have either $\bbE\left(\tilde{\bfX}v\right)>0$ or $\bbE\left(\tilde{\bfX}v\right)<0$.

	If $\bbE\left(\tilde{\bfX}v\right)>0$, pick $u_{0}$ large enough so $\bbE\left[\left(1-p(\bfX)\right)\cdot \tilde{\bfX}^{T}v \right]>0$ to conclude that 			
	\begin{eqnarray*}
		\lim_{u\rightarrow\infty}\bbE\left[-\bbI(\bfY=1)\cdot \tilde{\bfX}^{T}t + \log\left(1+\exp\left(\tilde{\bfX}^{T}t\right)\right)\right]=\infty.
	\end{eqnarray*}

	If, on the other hand, $\bbE\left(\tilde{\bfX}v\right)<0$, pick $u_{0}$ small enough so $\bbE\left[p(\bfX)\cdot \tilde{\bfX}^{T}v \right]<0$ to conclude:
	\begin{eqnarray*}
		\lim_{u\rightarrow\infty}\bbE\left[-\bbI(\bfY=1)\cdot \tilde{\bfX}^{T}t + \log\left(1+\exp\left(\tilde{\bfX}^{T}t\right)\right)\right]=\infty.
	\end{eqnarray*}

	This establishes that for any $\bfM$ there exists $\gamma$ such that the risk function exceeds $\bfM$ and completes the proof of existence.
	
	The proof of uniqueness follows from strict convexity of the risk function.
	We prove that below by showing that the Hessian matrix of the risk function is everywhere strictly positive definite under the assumptions made.
	
	\item [L2)]	For the canonical logistic regression loss function, we have:
	\begin{eqnarray*}
		\bbE\left[\left|L(\bfY, \bfX, t)\right|\right]
		& = & 
		\bbE\left[\left|\bbI(\bfY=1)\cdot \tilde{\bfX}^{T}t - \log\left(1+\exp\left(\tilde{\bfX}^{T}t\right)\right)\right|\right]
		\\
		& \le & 
		\bbE\left[\left|\tilde{\bfX}\right|\right]^{T}t
		+
		\bbE\left[\left|\log\left(1+\exp\left(\tilde{\bfX}^{T}t\right)\right)\right|\right]
		\\
		& = & 
		\bbE\left[\left|\tilde{\bfX}\right|\right]^{T}t
		+
		\bbE\left[\log\left(1+\exp\left(\tilde{\bfX}^{T}t\right)\right)\right],
	\end{eqnarray*}
	where the equality follows from $\exp(\tilde{\bfX}^{T}t)\ge0$ for all $t$.
	Because $\bbE\left[\bfX\bfX^{T}\right]<\infty$, there exists $C$ such that $\bbE\left[\left|\bfX_{j}\right|\right]< C$ for all $j=1, \ldots, p$ and the first term of the sum is bounded above.
	To bound the second term, write:
	\begin{eqnarray*}
		\bbE\left[\log\left(1+\exp\left(\tilde{\bfX}^{T}t\right)\right)\right]
		& \le & 
		\bbE\left[\log\left(1+\exp\left(\left|\tilde{\bfX}^{T}t\right|\right)\right)\right]
		\\
		& \le & 
		\log\left(2\right)
		+
		2\cdot\bbE\left[|\tilde{\bfX}^{T}t|\right],
	\end{eqnarray*}
	where the first inequality follows from $h(u) := \log\left(1+\exp\left(u\right)\right)$ being non-decreasing and the second stems from $h$ having derivatives bounded above by $1$.
	The result now follows from $\bbE\left|\bfX\right|$ being bounded.

	\item [L3)]
	The canonical logistic regression loss function is twice differentiable everywhere, with:
	\begin{eqnarray*}
		\nabla_{t}L(\bfY, \bfX, t) 
		& = &
		\left[\bbI\left(\bfY=1\right) - \frac{\exp\left(\tilde{\bfX}^{T}t\right)}{1+\exp\left(\tilde{\bfX}^{T}t\right)}\right]\cdot\tilde{\bfX}, \mbox{ and }
		\\
		\nabla_{t}^{2}L(\bfY, \bfX, t) 
		& = &
		\frac{\exp\left(\tilde{\bfX}^{T}t\right)}{\left(1+\exp\left(\tilde{\bfX}^{T}t\right)\right)^{2}}\cdot\tilde{\bfX}\tilde{\bfX}^{T}.
	\end{eqnarray*}
	
	For all $\bfY$, $\bfX$ and $t\in \bbR^{p+1}$, we have:
	\begin{eqnarray*}
		\left[\left(2\bfY-1\right) - \frac{\exp\left(\tilde{\bfX}^{T}t\right)}{1+\exp\left(\tilde{\bfX}^{T}t\right)}\right]
		& \le & 
		2, \mbox{ and }
		\\
		\frac{\exp\left(\tilde{\bfX}^{T}t\right)}{\left(1+\exp\left(\tilde{\bfX}^{T}t\right)\right)^{2}}
		& \le & 1,
	\end{eqnarray*}
	so, using the assumptions on the moments of $\bfX$, we know that:
	\begin{eqnarray*}
		\bbE\left[\left|\nabla_{t}L(\bfY, \bfX, t)\right|\right]
		& \le &
		2\max\left[1, \max_{1\le j\le p}\bbE \left|\bfX_{j}\right|\right] < \infty, \mbox{ and}
		\\
		\bbE\left[\left|\nabla_{t}^{2}L(\bfY, \bfX, t)\right|\right]
		& \le &
		\bbE \left[\bfQ(\bfX)\right] < \infty.
	\end{eqnarray*}
	Using the Dominated Convergence Theorem we get that:
	\begin{eqnarray*}
		\nabla_{t}R(t) 
		& = & 
		\bbE\left[\nabla_{t}L(\bfY, \bfX, t)\right] 
		= 
		\bbE\left[\left(\bbE\left(\bbI(\bfY=1)|\bfX\right)-\frac{\exp(\tilde{\bfX}^{T}t)}{1+\exp(\tilde{\bfX}^{T}t)}\right)\cdot\tilde{\bfX}\right]
		\\
		& = & 
		\bbE\left[\left(p(\bfX)-\frac{\exp(\tilde{\bfX}^{T}t)}{1+\exp(\tilde{\bfX}^{T}t)}\right)\cdot\tilde{\bfX}\right], \mbox{ and }
		\\
		\nabla^{2}_{t}R(t) 
		& = & 
		\bbE\left[\nabla^{2}_{t}L(\bfY, \bfX, t)\right] 
		=
		\bbE
		\left[
		\frac
		{\exp\left(\tilde{\bfX}^{T}t\right)}
		{\left(1+\exp\left(\tilde{\bfX}^{T}t\right)\right)^{2}}
		\cdot
		\tilde{\bfX}\tilde{\bfX}^{T}\right].
	\end{eqnarray*}
	
	We now prove that the population risk minimizer $\theta$ for the logistic regression is unique under the conditions of Assumption Set 3, by proving that the Hessian $\nabla^{2}_{t}R(\theta)$ is a strictly positive definite matrix.
	
	From the assumption that $\bbE\left[Q(\bfX)\right]$ is strictly positive definite and bounded, we get that:
	\begin{eqnarray*}
		\bbE\left[Q(\bfX)\right] 
		& = & 
		\lim_{s\rightarrow\infty}\bbE\left[Q(\bfX)\cdot\bbI(\|\bfX\|\le s)\right],
	\end{eqnarray*}
	and thus, there must exist large enough $S$ such that $\bbE\left[Q(\bfX)\cdot\bbI(\|\bfX\|\le S)\right]$ is strictly positive definite.
	Let $\xi = \inf_{\|\bfX\|\le S} \frac{\exp(\alpha+\bfX^{T}\beta)}{\left[1+\exp(\alpha+\bfX^{T}\beta)\vphantom{\int_{0}^{1}}\right]^{2}}$.
	Because $\|\bfX\|\le S$ is a compact set and $\frac{\exp(\alpha+\bfX^{T}\beta)}{\left[1+\exp(\alpha+\bfX^{T}\beta)\vphantom{\int_{0}^{1}}\right]^{2}}>0$ for all $\bfX$, we get that $\xi>0$.
	
	In what follows, the binary relationship between matrices $A$ and $B$ indicated by $A\succeq B$ means $A-B$ is positive semi-definite and its strict version $A\succ B$ means $A-B$ is strictly positive definite.
	Now,
	\begin{eqnarray*}
	\begin{array}{rcl}
	\nabla^{2}_{t}R(\theta)
	& = &
	\bbE
	\left[
	\frac
	{\exp\left(\tilde{\bfX}^{T}t\right)}
	{\left(1+\exp\left(\tilde{\bfX}^{T}t\right)\right)^{2}}
	\cdot
	Q(\bfX)
	\right]
	\\
	& = & 
	\bbE
	\left[
	\frac
	{\exp\left(\tilde{\bfX}^{T}t\right)}
	{\left(1+\exp\left(\tilde{\bfX}^{T}t\right)\right)^{2}}
	\cdot
	Q(\bfX)
	\cdot
	\bbI\left(\|\bfX\| \le S\right)
	\right]
	+
	\bbE
	\left[
	\frac
	{\exp\left(\tilde{\bfX}^{T}t\right)}
	{\left(1+\exp\left(\tilde{\bfX}^{T}t\right)\right)^{2}}
	\cdot
	Q(\bfX)
	\cdot
	\bbI\left(\|\bfX\| > S\right)
	\right]
	\\
	& \succeq &
	\xi
	\cdot
	\bbE
	\left[
	Q(\bfX)
	\cdot
	\bbI\left(\|\bfX\| \le S\right)
	\right]
	+
	\bbE
	\left[
	\frac
	{\exp\left(\tilde{\bfX}^{T}t\right)}
	{\left(1+\exp\left(\tilde{\bfX}^{T}t\right)\right)^{2}}
	\cdot
	Q(\bfX)
	\cdot
	\bbI\left(\|\bfX\| > S\right)
	\right]
	\succ 0, 
	\end{array}
	\end{eqnarray*}
	where the last generalized inequality follows from $\bbE\left[Q(\bfX)\cdot	\frac{\exp\left(\tilde{\bfX}^{T}t\right)}{\left(1+\exp\left(\tilde{\bfX}^{T}t\right)\right)^{2}}\cdot\bbI\left(\|\bfX\| > S\right)\right]\succeq 0$, $\xi>0$, and $\bbE\left[Q(\bfX)\cdot\bbI\left(\|\bfX\| \le S\right)\right]\succ 0$.
	\item [L4)]	
	The loss function corresponds to the neg-loglikelihood function of a canonical exponential family and is thus convex.
	As the risk is an expected value of convex functions, it is also convex.
\end{enumerate}

\end{proof}	

\begin{proof}[Proof of Lemma \ref{result:variance_and_hessian_for_hinge_loss}]
Throughout the proof of Lemma \ref{result:variance_and_hessian_for_hinge_loss}, we define $\tilde{\bfX} = \left[\begin{array}{cc}1 & \bfX^{T}\end{array}\right]^{T}$.
\begin{enumerate}
	\item [L1)]
	We first prove that a minimizer exist. 
	Given that the risk function is continuous, it is enough to prove that for large enough $M$ the closed set $\mcalS(M) = \left\{t\in \bbR^{p}: \bbE\left[\left|1-\bfY\tilde{\bfX}^{T}t\right|_{+}\right]\le M\right\}$ is bounded.
	We establish boundedness of $\mcalS(M)$ by proving that $\mcalS(M)$ is contained on a finite box around the origin.
	Letting $e_{j}$ be a unit vector with a $1$ in its $j$-th entry and zeroes in all other components, it is sufficient to prove that, for any $M$ and each $j=1, \ldots, p+1$, there exist $\gamma_{j, M}$ such that:
	\begin{eqnarray}
		\label{equation:out_of_box_inclusion}
		\left|\langle t, e_{j}\rangle\right| > \gamma_{j, M} & \Rightarrow & \bbE\left[\left|1-\bfY\tilde{\bfX}^{T}t\right|_{+}\right]>M.
	\end{eqnarray}
	To prove the assertion in $\eqref{equation:out_of_box_inclusion}$, let $t$ be a non-zero vector with $u = \langle t, e_{j}\rangle \neq 0$, so $t$ can be written as $t = u e_{j} + v$, for some $v$ with $\langle v, e_{j}\rangle = 0$.
	The risk function at $t$ becomes:
	\begin{eqnarray*}
		\bbE\left[\left|1-\bfY\tilde{\bfX}^{T}t\right|_{+}\right]
		& = &		\bbE\left[\left|1-u\cdot\tilde{\bfX}^{T}e_{j}-\tilde{\bfX}^{T}v\right|\bbI\left(\bfY=1\right)\right]
		+		\bbE\left[\left|1+u\cdot\tilde{\bfX}^{T}e_{j}-\tilde{\bfX}^{T}v\right|\bbI\left(\bfY=-1\right)\right]
		\\
		& \ge &
		\bbE\left[\left|u\cdot\tilde{\bfX}^{T}e_{j}+\tilde{\bfX}^{T}v\right|\right]-1
		\\
		& \ge &
		\inf_{v\in \mcalO(e_{j})}
		\bbE\left[\left|u\cdot\tilde{\bfX}^{T}e_{j}+\tilde{\bfX}^{T}v\right|\right]-1
		\\
		& = &
		|u|\cdot
		\inf_{v\in \mcalO(e_{j})}
		\bbE\left[\left|\tilde{\bfX}^{T}e_{j}+\tilde{\bfX}^{T}v\right|\right]-1,
	\end{eqnarray*}
	where $\mcalO(e_{j})$ is the set of all vectors orthogonal to $e_{j}$. Because $e_{j}$ has unit norm, $\|e_{j}+v\|\ge 1$ for all $v\in \mcalO(e_{j})$ and it follows that $\{e_{j}+v:v\in\mcalO(e_{j}) \}\subset \{v:\|v\|\ge 1\}$, yielding:
	\begin{eqnarray*}
		\bbE\left[\left|1-\bfY\tilde{\bfX}^{T}t\right|_{+}\right]
		& \ge &
		|u|\cdot
		\inf_{v:\|v\|\ge 1}
		\bbE\left[\left|\tilde{\bfX}^{T}v\right|\right]-1
		=
		|u|\cdot
		\inf_{v:\|v\| = 1}
		\bbE\left[\left|\tilde{\bfX}^{T}v\right|\right]-1,
	\end{eqnarray*}
	where the equality follows from noticing that $\left|\tilde{\bfX}^{T}v\right|$ is increasing in $\left|v\right|$.
	If we can find $c>0$, such that  $\inf_{v:\|v\|= 1}\bbE\left[\left|\tilde{\bfX}^{T}v\right|\right]>c$, it is possible to find the $\gamma_{j,M}$ we want.
	To find such a positive lower bound,  define the compact set $\bfK = \{\tilde{x}\in \bbR^{p+1}:\|\tilde{x}\|_{2}\le\bfC\}$ for some constant $\bfC$.
	Since it is assumed that $f_{\bfX}(\bfx)>0$ is continuous for all $\bfx\in\bbR^{p}$, we get that $f^{*}=\min_{\bfx\in \bfK}>0$.
	Now, letting $\mu(\bfA)$ denote the Lebesgue measure of a set $\bfA\subset\bbR^{p+1}$, we have:
	\begin{eqnarray*}
		\inf_{v:\|v\|= 1}
		\bbE\left[\left|\tilde{\bfX}^{T}v\right|\right]
		& \ge &
		\eta
		\cdot
		\inf_{v:\|v\|\ge 1}
		\bbP\left[\left|\tilde{\bfX}^{T}v\right|>\eta\right]
		\\
		& \ge &
		\inf_{v:\|v\|\ge 1}
		\bbP\left[\tilde{\bfX}^{T}v>\eta, \mbox{ and } \tilde{\bfX}\in \bfK \right]
		\\
		& \ge &
		f^{*}
		\cdot
		\inf_{v:\|v\|\ge 1}
		\mu\left(\{\bfx:\tilde{\bfx}^{T}v>\eta\}\right).
		\\
		& = &
		f^{*}
		\cdot
		\mu\left(\{\bfx:\tilde{\bfx}^{T}e_{1}>\eta\}\right)=:c_{\eta}>0,
	\end{eqnarray*}
	where the last equality follows from noticing that, because of symmetry:
	\begin{eqnarray*}
		\mu\left(\{\bfx:\tilde{\bfx}^{T}v>\eta\}\right) 
		= 
		\mu\left(\{\bfx:\tilde{\bfx}^{T}e_{1}>\eta\}\right), \mbox{ for all }v\in \bbR^{p+1}: \|v\|_{2}=1.
	\end{eqnarray*}
	
	Using the strictly positive lower bound afforded by $c_{\eta}$, we get:
	\begin{eqnarray*}
		|u| = \left|\langle t, e_{j}\rangle\right| > \gamma_{j, M}:= \frac{M+1}{f^{*}c_{\eta}} , \mbox{ for some } j=1, \ldots, p 
		&\Rightarrow& 
		\bbE\left[\left|1-\bfY\tilde{\bfX}^{T}t\right|_{+}\right]>M.
	\end{eqnarray*}
	
	Uniqueness of the minimizer follows from strict convexity of the risk function under the assumptions made.
	Strict convexity of the risk function in its turn is proved below, by showing the Hessian matrix for the risk is everywhere strictly positive definite.
	
	\item [L2)] For all $t\in \bbR^{p+1}$:
	\begin{eqnarray*}
		\bbE\left[\left|L(\bfZ, t)\right|\right]
		\le
		\bbE\left[\max\left\{\left|1-\tilde{\bfX}t\right|, \left|1+\tilde{\bfX}t\right|\right\}\right]
		\le
		1+\bbE\left[\left|\tilde{\bfX}t\right|\right]
		\le
		1+\sqrt{t^{t}\cdot\bbE\left[\tilde{\bfX}^{T}\tilde{\bfX}\right]\cdot t},
	\end{eqnarray*}
	which is bounded given the assumptions on the distribution of $\bfX$.
	
	\item [L3)]
	
		The hinge loss is not differentiable on the set $\{\bfX: \tilde{\bfX}t=1 \mbox{ or } \tilde{\bfX}t=-1\}$, which under the assumed conditions has zero probability.
		At all other points, hinge loss function has derivative with respect to $t$
		\begin{eqnarray*}
			\nabla_{t}L(\bfZ, t)
			& = & 
			\tilde{\bfX}
			\cdot
			\left[
			\bbI(\bfY=1)\cdot\bbI(\tilde{\bfX}t-1<0)
			-
			\bbI(\bfY=-1)\cdot\bbI(\tilde{\bfX}t+1>0)
			\right]
		\end{eqnarray*}
		
		To obtain the Hessian, write the SVM risk as $R(t) = R_{1}(t) + R_{2}(t)$, with
		\begin{eqnarray*}
		R_{1}\left(t\right)	& := & \bbE
		                               \left[
		                               p\left(\bfX\right)
		                               \cdot
			                           \left(1-\tilde{\bfX}t\right)
		                               \cdot
		                               \bbI\left(1-\tilde{\bfX}t>0\right)
		                               \right],\mbox{ and }
		\\
		R_{2}\left(t\right)	& := & \bbE
		                               \left[
								      \left(1-p\left(\bfX\right)\right)
								      \cdot
								      \left(\tilde{\bfX}t-1\right)
								      \cdot
								      \bbI\left(\tilde{\bfX}t-1>0\right)
		                               \right].
		\end{eqnarray*}
		Let
			$
			\nabla_{t}R_{1}(t) 
			:= 
			\bbE\left[
			p(\tilde{\bfX})
			\cdot 
			\bbI\left(1-\tilde{\bfX}t>0\right) 
			\cdot 
			\tilde{\bfX}
			\right]
			$ 
			and 
			$
			\nabla^{2}_{t}R_{1}(t) 
			:=
			\bbE\left[
			p(\tilde{\bfX})
			\cdot
			\delta(1-\tilde{\bfX}t) 
			\cdot 
			\tilde{\bfX}^{T}
			\tilde{\bfX}
			\right].
			$
		We first show that $r_{1}(t) := R_{1}(t+\Delta t) - R_{1}(t) = \nabla_{t}R_{1}(t)\cdot\Delta t = o(\|\Delta t\|)$. 
		To do that, let $\bfd_{k}=\frac{\Delta t_{k}}{|\Delta t_{k}|}$ and write
		\begin{small}
		\begin{eqnarray*}
		\frac{R_{1}(t+\Delta t_{k}) - R_{1}(t)}{\Delta t_{k}}
		& = &
		\bbE
		\left[
		\frac
		{
		p(\tilde{\bfX})
		\left(1-\tilde{\bfX}t\right)
		\left[
		  \bbI\left(1-\tilde{\bfX}t>\tilde{\bfX}\Delta t\right) 
		  - 
		  \bbI\left(1-\tilde{\bfX}t>0\right) 
		\right]
		}{\tilde{\bfX}\Delta t}
		\cdot
		\tilde{\bfX}
		\right]
		\bfd_{k}
		\\
		& &
		-
		\bbE
		\left[
		p(\tilde{\bfX})
		\cdot 
		\bbI\left(1-\tilde{\bfX}t>\tilde{\bfX}\Delta t\right) 
		\cdot 
		\tilde{\bfX}
		\right]
		\bfd_{k}.
		\end{eqnarray*}
		\end{small}
		Using the Dominated Convergence Theorem to take the limit as $\Delta t\downarrow 0$ and collecting the limit of the multiplier of $\bfd_{k}$ yields
		\begin{small}
		\begin{eqnarray*}
		\nabla_{t}R_{1}(t)
		& = &
		\lim_{|\Delta t|\rightarrow 0}
		\bbE
		\left[
		\frac
		{
		p(\tilde{\bfX})
		\left(1-\tilde{\bfX}t\right)
		\left[
		  \bbI\left(1-\tilde{\bfX}t>\tilde{\bfX}\Delta t\right) 
		  - 
		  \bbI\left(1-\tilde{\bfX}t>0\right) 
		\right]
		}{\tilde{\bfX}\Delta t}
		\cdot
		\tilde{\bfX}
		\right]
		\\
		& &
		-
		\lim_{|\Delta t|\rightarrow 0}
		\bbE
		\left[
		p(\tilde{\bfX})
		\cdot 
		\bbI\left(1-\tilde{\bfX}t>\tilde{\bfX}\Delta t\right) 
		\cdot 
		\tilde{\bfX}
		\right]
		\\
		& = &
		\bbE
		\left[
		p(\tilde{\bfX})
		\left(1-\tilde{\bfX}t\right)
		\delta(1-\tilde{\bfX}^{T}t)
		\cdot
		\tilde{\bfX}
		\right]
		-
		\bbE
		\left[
		p(\tilde{\bfX})
		\cdot 
		\bbI\left(1-\tilde{\bfX}t>0\right) 
		\cdot 
		\tilde{\bfX}
		\right]
		\\
		& = & 
		-
		\bbE
		\left[
		p(\tilde{\bfX})
		\cdot 
		\bbI\left(1-\tilde{\bfX}t>0\right) 
		\cdot 
		\tilde{\bfX}
		\right].
		\end{eqnarray*}
		\end{small}
		
		To obtain the second differential for $R_{1}(t)$, write the residuals from the approximation from the first differential:
		\begin{small}
		\begin{eqnarray*}
		\frac
		{
		r_{1}(\Delta t)
		}
		{|\Delta t_{k}|^{2}}
		& = &
		\bfd_{k}
		\bbE
		\left[
		\frac{
		p(\tilde{\bfX})
		\cdot 
		\tilde{\bfX}^{T}
		\cdot
		\left[
		  \bbI\left(1-\tilde{\bfX}t>\tilde{\bfX}\Delta t\right) 
		  - 
		  \bbI\left(1-\tilde{\bfX}t>0\right) 
		\right]
		\cdot 
		\left(1-\tilde{\bfX}t\right)
		\cdot 
		\tilde{\bfX}
		}
		{\Delta t^{T}\tilde{\bfX}\tilde{\bfX}^{T}\Delta t}
		\right]
		\bfd_{k}
		\\
		& &
		-
		\bfd_{k}
		\cdot
		\bbE
		\left[
		\frac{
		p(\tilde{\bfX})
		\cdot 
		\tilde{\bfX}^{T}
		\cdot
		\left[
		\bbI\left(1-\tilde{\bfX}t>\tilde{\bfX}\Delta t\right) 
		-
		\bbI\left(1-\tilde{\bfX}t>0\right) 
		\right]
		\cdot 
		\tilde{\bfX}
		}
		{\tilde{\bfX}\Delta t}
		\right]
		\cdot
		\bfd_{k}
		\end{eqnarray*}
		\end{small}
	The second derivative is obtained using the Dominated Convergence Theorem to compute the limits of the terms in the sum.
	For the second term, the limit follows directly from pointwise convergence to a Dirac delta function:
	\begin{small}
	\begin{eqnarray*}
	\begin{array}{ccl}
	\lim\limits_{|\Delta t|\rightarrow 0}
	\bbE
	\left[
	\frac{
	p(\tilde{\bfX})
	\cdot 
	\tilde{\bfX}
	\cdot
	\left[
	\bbI\left(1-\tilde{\bfX}^{T}t>\tilde{\bfX}^{T}\Delta t\right) 
	-
	\bbI\left(1-\tilde{\bfX}^{T}t>0\right) 
	\right]
	\cdot 
	\tilde{\bfX}^{T}
	}
	{\tilde{\bfX}^{T}\Delta t}
	\right]
	=
	-
	\bbE
	\left[
	p(\tilde{\bfX})
	\cdot
	\delta(1-\tilde{\bfX}^{T}t) 
	\cdot
	\tilde{\bfX}
	\tilde{\bfX}^{T}
	\right].
	&&
	\end{array}
	\end{eqnarray*}
	\end{small}
	
	To obtain the limit for the other term, let $W = R\tilde{\bfx} = \left[\begin{array}{cc}\bfw_{1} & \bfw_{2}\end{array}\right] \in \bbR\times \bbR^{p-1}$ be a linear rotation of $\tilde{x}$ such that $\bfw_{1} = \tilde{\bfx}^{T}t$, and let $F_{\tilde{\bfx}}$, $F_{\bfw_{2}}$, and  $f_{\bfw_{1}|\bfw_{2}}$ denote the distributions of $\tilde{X}$, $W_{2}$ and the conditional distribution of $W_{1}$ given $W_{2}$ respectively.
	Then write
	\begin{small}
	\begin{eqnarray*}
	\begin{array}{lc}
	\bbE
	\left[
	\frac{
	p(\tilde{\bfX})
	\cdot 
	\tilde{\bfX}^{T}
	\cdot
	\left[
	  \bbI\left(1-\tilde{\bfX}t>\tilde{\bfX}\Delta t_{k}\right) 
	  - 
	  \bbI\left(1-\tilde{\bfX}t>0\right) 
	\right]
	\cdot 
	\left(1-\tilde{\bfX}t\right)
	\cdot 
	\tilde{\bfX}
	}
	{\Delta t_{k}^{T}\tilde{\bfX}\tilde{\bfX}^{T}\Delta t_{k}}
	\right]
	& 
	= 
	\\
	\int
	\left[
	\frac{
	\left(1-\tilde{\bfx}t\right)
	\cdot
	\left[
	\bbI\left(1-\tilde{\bfx}t>\tilde{\bfx}\Delta t_{k}\right) 
	- 
	\bbI\left(1-\tilde{\bfx}t>0\right) 
	\right]
	}
	{\Delta t_{k}^{{T}}\tilde{\bfx}^{T}\tilde{\bfx}^{T}\Delta t_{k}}
	\cdot	
	p(\tilde{\bfx})
	\cdot	
	\tilde{\bfx}
	\tilde{\bfx}^{T}
	\right]
	\mbox{d}F_{\tilde{\bfx}}\left(\tilde{\bfx}\right)
	& 
	= 
	\\
	\int
	\int
	\frac{
	\left(1-\bfw_{1}\right)
	\cdot
	\left[
	\bbI\left(1-\bfw_{1}>\tilde{\bfx}(\bfw)\Delta t_{k}\right) 
	- 
	\bbI\left(1-\bfw_{1}>0\right) 
	\right]
	}
	{\Delta t_{k}^{T}\tilde{\bfx}(\bfw)\tilde{\bfx}^{T}(\bfw)\Delta t_{k}}
	s(\bfw_{1}, \bfw_{2})
	dF_{\bfw_{1}|\bfw_{2}}(\bfw_{1}\left|\bfw_{2}\right.)
	dF_{\bfw_{2}}(\bfw_{2}),
	&
	\end{array}
	\end{eqnarray*}
	\end{small}
	where we used the notation $s(\bfw_{1}, \bfw_{2}) := p(\tilde\bfx(\bfw_{1}, \bfw_{2}))\bfx(\bfw_{1}, \bfw_{2})\bfx(\bfw_{1}, \bfw_{2})^{T}$.

	To obtain the limit, write the inner integral as:
	\begin{small}
	\begin{eqnarray*}
		\begin{array}{ll}
		\int
		\left[
		\frac{
		\left(1-\bfw_{1}\right)
		\cdot
		\left[
		\bbI\left(1-\bfw_{1}>\tilde{\bfx}(\bfw_{1}, \bfw_{2})\Delta t_{k}\right) 
		- 
		\bbI\left(1-\bfw_{1}>0\right) 
		\right]
		}
		{\Delta t_{k}^{T}\tilde{\bfx}(\bfw_{1}, \bfw_{2})\tilde{\bfx}^{T}(\bfw)\Delta t_{k}}
		\cdot	
		s(\bfw_{1}, \bfw_{2})
		\right]
		dF_{\bfw_{1}|\bfw_{2}}(\bfw_{1}\left|\bfw_{2}\right)
		&
		=
		\\
		\int
		\left[
		\frac{
		v
		\cdot
		\left[
		\bbI\left(v>\tilde{\bfx}(\bfw_{1}, \bfw_{2})\Delta t_{k}\right) 
		- 
		\bbI\left(v>0\right) 
		\right]
		}
		{\Delta t_{k}^{T}\tilde{\bfx}(1-v, \bfw_{2})\tilde{\bfx}^{T}(\bfw)\Delta t_{k}}
		\cdot	
		s(1-v, \bfw_{2})
		\right]
		dF_{\bfw_{1}|\bfw_{2}}(1-v\left|\bfw_{2}\right)
		&
		=
		\\
		-
		\int
		\left[
		\frac{1}{2}
		\cdot	
		s(\bfw_{1}, \bfw_{2})
		\cdot
		\delta(1-\bfw_{1})
		\right]
		dF_{\bfw_{1}|\bfw_{2}}(\bfw_{1}\left|\bfw_{2}\right)
		+
		o(\|\Delta t\|).
		\end{array}
	\end{eqnarray*}
	\end{small}
	Plugging that back into the expression for the expected value, we get:
	\begin{small}
	\begin{eqnarray*}
		\begin{array}{ll}
		\lim_{|\Delta t_{k}|\rightarrow 0}
		\bbE
		\left[
		\frac{
		p(\tilde{\bfX})
		\cdot 
		\tilde{\bfX}^{T}
		\cdot
		\left[
		  \bbI\left(1-\tilde{\bfX}t>\tilde{\bfX}\Delta t_{k}\right) 
		  - 
		  \bbI\left(1-\tilde{\bfX}t>0\right) 
		\right]
		\cdot 
		\left(1-\tilde{\bfX}t\right)
		\cdot 
		\tilde{\bfX}
		}
		{\Delta t_{k}^{T}\tilde{\bfX}\tilde{\bfX}^{T}\Delta t_{k}}
		\right]
		& = 
		\\
		-
		\int
		\left[
		\int
		\left[
		\frac{1}{2}
		\cdot	
		s(\bfw_{1}, \bfw_{2})
		\cdot
		\delta(1-\bfw_{1})
		\right]
		dF_{\bfw_{1}|\bfw_{2}}(\bfw_{1}\left|\bfw_{2}\right)
		\right]
		dF{\bfw_{2}}(\bfw_{2})
		& = 
		\\
		-\frac{1}{2}
		\cdot
		\bbE
		\left[
		p(\tilde{\bfX})
		\cdot 
		\tilde{\bfX}
		\tilde{\bfX}^{T}
		\cdot
		\delta(1-\tilde{\bfX}^{T}t)
		\right].
		&
	\end{array}
	\end{eqnarray*}
	\end{small}
	
	Summing the two terms (and taking into account the factor $\frac{1}{2}$ in the Taylor expansion) yield 
	\begin{eqnarray*}
	\nabla_{t}^{2}R_{1}(t) = 
	\bbE
	\left[
	p(\tilde{\bfX})
	\cdot
	\delta(1-\tilde{\bfX}^{T}t) 
	\cdot
	\tilde{\bfX}
	\tilde{\bfX}^{T}
	\right].
	\end{eqnarray*}
	
	For $R_{2}$, analogous steps yield
	\begin{eqnarray*}
	\nabla_{t}R_{2}(t)
		& = &
		\bbE
		\left[
		\left(1-p(\tilde{\bfX})\right)
		\cdot 
		\bbI\left(1+\tilde{\bfX}t>0\right) 
		\cdot 
		\tilde{\bfX}
		\right], \mbox{ and }
		\\
		\nabla_{t}^{2}R_{2}(t)
		& = & 
		\bbE
		\left[
		\left(1-p(\tilde{\bfX})\right)
		\cdot
		\delta(1+\tilde{\bfX}^{T}t) 
		\cdot
		\tilde{\bfX}
		\tilde{\bfX}^{T}
		\right].
	\end{eqnarray*}
		The result follows from summing the differentials for $R_{1}(t)$ and $R_{2}(t)$.
	\begin{eqnarray*}
	\nabla_{t}R(t)
	& = & 
	\bbE
	\left[
		\left(
		\left(1-p(\tilde{\bfX})\right)
		\cdot 
		\bbI\left(1+\tilde{\bfX}t>0\right) 
		-
		p(\tilde{\bfX})
		\cdot 
		\bbI\left(1+\tilde{\bfX}t>0\right) 
		\right)
		\cdot 
		\tilde{\bfX}
	\right], \mbox{ and }
	\\
	\nabla_{t}^{2}R(t)
	& = & 
	\bbE
	\left[
	\left(
	\left(1-p(\tilde{\bfX})\right)
	\cdot
	\delta(1+\tilde{\bfX}^{T}t) 
	+
	p(\tilde{\bfX})
	\cdot
	\delta(1-\tilde{\bfX}^{T}t) 
	\right)
	\cdot
	\tilde{\bfX}
	\tilde{\bfX}^{T}
	\right].
	\end{eqnarray*}
	
	Finally, we prove that the minimizer of the SVM risk is unique by establishing that $\nabla_{t}^{2}R(\theta)$ is strictly positive definite.
	To do that, first write:
	\begin{eqnarray*}
	\nabla_{t}^{2}R(t)
	& = & 
	\bbE
	\left[
	Q(\bfX)
	\cdot 
	\bbI\left(\bfY=1\right)
	\left|
	\alpha+\bfX^{T}\beta = 1
	\right.
	\right]
	\cdot 
	\tilde{f}(1)
	\\
	&&
	+
	\bbE
	\left[
	Q(\bfX)
	\cdot 
	\bbI\left(\bfY=-1\right)
	\left|
	\alpha+\bfX^{T}\beta = -1
	\right.
	\right]
	\cdot 
	\tilde{f}(-1)
	\\
	& = & 
	\bbE
	\left[
	Q(\bfX)
	\left|
	\bfY=1, 
	\alpha+\bfX^{T}\beta = 1
	\right.
	\right]
	\cdot 
	\bbP\left(\bfY=1\left|\vphantom{\int_{0}^{1}}\alpha+\bfX^{T}\beta = 1\right.\right)
	\cdot 
	\tilde{f}(1)
	\\
	&&
	+
	\bbE
	\left[
	Q(\bfX)
	\left|
	\bfY=-1,
	\alpha+\bfX^{T}\beta = -1
	\right.
	\right]
	\cdot 
	\bbP\left(\bfY=-1\left|\vphantom{\int_{0}^{1}}\alpha+\bfX^{T}\beta = -1\right.\right)
	\cdot 
	\tilde{f}(-1)
	,
	\end{eqnarray*}
	where $\tilde{f}$ denote the density of the random variable $\alpha+\bfX^{T}\beta$.
	Given assumption C2, $\tilde{f}(-1)>0$ and $\tilde{f}(1)>0$.
	In addition, assumption C3 gives that $\bbP\left(\bfY=-1\left|\alpha+\bfX^{T}\beta=1\right.\right)>0$ and $\bbP\left(\bfY=1\left|\alpha+\bfX^{T}\beta=1\right.\right)>0$.
	It is thus, enough to prove that either $\bbE\left[Q(\bfX)\left|\bfY=1, \alpha+\bfX^{T}\beta = -1\right.\right]$ or $\bbE\left[Q(\bfX)\left|\bfY=-1, \alpha+\bfX^{T}\beta = -1\right.\right]$ is strictly positive definite (or both).
	
	Define $\bfv_{\beta} = \frac{\beta}{\|\beta\|}$, the unit vector in the direction of $\beta$.
	The condition $\alpha+\bfX^{T}\beta = \kappa$ is equivalent to $\bfX^{T}\bfv_{\beta} = \frac{\kappa-\alpha}{\|\beta\|}$, so for any scalar $\kappa$:
	\begin{eqnarray*}
		\bbE\left[Q(\bfX)\left|\bfY, \alpha+\bfX^{T}\beta = \kappa\right.\right]
		& = &
		\bbE\left[Q(\bfX)\left|\bfY, \bfX^{T}\bfv_{\beta} = \frac{\kappa-\alpha}{\|\beta\|}\right.\right].
	\end{eqnarray*}
	Then notice that:
	\begin{eqnarray*}
		\bbE\left[Q(\bfX)\left|\bfY, \bfX^{T}\bfv_{\beta}=\frac{\kappa-\alpha}{\|\beta\|}\right.\right]
		\succeq
		\left(\frac{\kappa-\alpha}{\|\beta\|}\right)^{2}\left(\bfv_{\beta}\bfv_{\beta}^{T}\right)
		+
		\var\left(\bfX\left|\bfY, \bfX^{T}\bfv_{\beta}=\frac{\kappa-\alpha}{\|\beta\|}\right.\right)			
		,
	\end{eqnarray*}
	where $A\succeq B$ denotes that $A-B$ is positive semi definite.
	Because $\var\left[\bfX\left|\bfY\right.\right]$ is assumed to be non-singular, $\var\left(\bfX\left|\bfY, \bfX^{T}\bfv_{\beta}=\frac{\kappa-\alpha}{\|\beta\|}\right.\right)$ has rank $p-1$ and $\var\left(\bfX\left|\bfY, \bfX^{T}\bfv_{\beta}=\frac{\kappa-\alpha}{\|\beta\|}\right.\right)\bfv_{\beta}=0$.
	Thus, as long as $\kappa\neq \alpha$, $\bbE\left[Q(\bfX)\left|\bfY, \bfX^{T}\bfv_{\beta} = \frac{\kappa-\alpha}{\|\beta\|}\right.\right]$ is strictly positive definite.
	
	If $\alpha\not\in\{-1,1\}$, both terms in the sum defining $\nabla_{t}^{2}R(\theta)$ are strictly positive definite.
	If $\alpha\in\{-1,1\}$, one of the terms in the sum defining $\nabla_{t}^{2}R(\theta)$ is singular, but the other is necessarily strictly positive definite.
	Thus, it follows that $\nabla_{t}^{2}R(\theta)$ is strictly positive definite as stated.
	
	\item [L4)] 
	We can write $\|1-\tilde{\bfX}t\|_{+}$ as the maximum between the constant function $0$ and the function $\bfY-\tilde{\bfX}t$ which is linear -- thus, convex -- on $t$. Since it is the maximum between two convex functions on $t$, $\|\bfY-\tilde{\bfX}t\|_{+}$ is convex on $t$. A similar argument yields that $\|\tilde{\bfX}t-1\|_{-}$ is convex on $t$.
	
	The loss function $L(\bfZ, t)$ is written as the sum (with positive weights) of convex functions, which proves that AL.IV holds for the SVM loss function.

\end{enumerate}

\end{proof}

\section{Calculations for SVM and logistic risk Hessians in selected cases}
\label{section:analytical_hessians}

In this section, we first obtain expressions of the second moment of the predictors given the value of the margin variable $\alpha+\bfX^{T}\beta$ in the case of predictors $\bfX$ having a Gaussian and a mixture of Gaussian distributions.
Given the characterization of the SVM and logistic Hessians as a ``weighted average'' of such conditional second moments in Equations \eqref{equation:logistic_hessian_in_conditional_form} and \eqref{equation:svm_hessian_in_conditional_form}, the expressions for such conditional moments are useful in analytically comparing $\ell_{1}$-penalized SVM and logistic classifiers with respect to their model selection properties.
We then give explicit analytical expressions for the Hessian and logistic regression risk functions in the case of Gaussian and mixed Gaussian predictors.

For the duration of this section, $\bfZ$ denotes a rotated version of $\bfX$ whose first component is the projection of $\bfX$ along the direction normal to the optimal separating hyperplane $\mcalH(\theta)$.

\subsection{Conditional moments of Gaussian predictors given the value of one of its projections}
\label{section:conditional_moments_of_gaussian_distributions}

To obtain the conditional second moments used in the expressions for Hessians of the SVM and logistic regression risk functions, we first construct an orthogonal matrix $S$ according to
\begin{eqnarray*}
	S
	& := & 
	\left[
	\begin{array}{cc}
		\nu & U
	\end{array}
	\right],
\end{eqnarray*}
with $U$ a $p\times(p-1)$ matrix constructed using a Gram-Schmidt orthogonalization (as long as $\beta\neq \bfzero$).
By construction, $U^{T}\nu=\bfzero$ and $U^{T}U=\bfI_{p-1}$. 
The random vector $\bfZ = S^{T}\bfX\in\bbR^{p}$ is partitioned into a random scalar $\bfZ_{1} = \nu^{T}\bfX$ in the direction of $\nu$ and a $p-1$ dimensional random vector $\bfZ_{2} = U^{T}\bfX$ orthogonal to $\nu$,
\begin{eqnarray*}
	\bfZ^{T}
	& := & 
	\left[
	\begin{array}{cc}
	\bfZ_{1} &
	\bfZ_{2}^{T}
	\end{array}
	\right]
	=
	\left[
	\begin{array}{cc}
	\bfX^{T}\nu
	&
	\bfX^{T}U
	\end{array}
	\right]^{T}
	.
\end{eqnarray*}
Conditioning on the \textit{margin variable} $\bfM=\alpha+\bfX^{T}\beta$ -- defined in \eqref{equation:definition_margin_variable} -- is equivalent to conditioning on the  $\bfZ_{1}=\nu^{T}\bfX$ since:
\begin{eqnarray*}
	\alpha + \bfX^{T}\beta = \bfM
	\Leftrightarrow 
	\nu^{T}\bfX = \frac{\bfM-\alpha}{{\|\beta\|}}
	\Leftrightarrow 
	\bfZ_{1} = \frac{\bfM-\alpha}{{\|\beta\|}}.
\end{eqnarray*}
Since $S$ is orthogonal, $\bfX = S\bfZ$ and
\begin{eqnarray*}
	\begin{array}{ccccc}
	\bbE\left[\bfX\bfX^{T}|\nu^{T}\bfX\right]
	& = & 
	S\bbE\left[\bfZ\bfZ^{T}|\bfZ_{1}\right]S^{T}
	& = &
	\left[S\;\bbE\left[\bfZ|\bfZ_{1}\right]\vphantom{\int}\right]\cdot\left[S\;\bbE\left[\bfZ|\bfZ_{1}\right]\vphantom{\int}\right]^{T}
	+
	S\cdot\var\left[\bfZ\left|\bfZ_{1}\right.\right]\cdot S^{T}.
	\end{array}
\end{eqnarray*}
For $\bfX\sim\normal(\mu, \Sigma)$, $\bfZ$ is also Gaussian with expected value $S^{T}\mu$ and variance $S^{T} \Sigma S$. Partitioning the expressions for the expected value and variance of $\bfZ$ we get
\begin{eqnarray*}
	\begin{array}{ccccccc}
	\bbE\bfZ
	& = &
	\left[
	\begin{array}{c}
	\nu^{T}\mu 
	\\
	U^{T}\mu 
	\end{array}
	\right], & 
	\mbox{ and }
	& 
	\var\left[\bfZ\right]
	& = &
	\left[
	\begin{array}{cc}
	\nu^{T}\Sigma \nu & \nu^{T}\Sigma U
	\\
	U^{T}\Sigma\nu & U^{T}\Sigma U
	\end{array}
	\right].	
	\end{array}
\end{eqnarray*}
Based on these expressions and standard results on multivariate Gaussian distributions, we get:
\begin{eqnarray*}
	\begin{array}{cclcclcccc}
	\bbE\left[\bfZ_{1}\left|\bfZ_{1}\right.\right]
	&
	=  
	&
	\bfZ_{1},
	&
	\bbE\left[\bfZ_{2}\left|\bfZ_{1}\right.\right]
	&
	= 
	&
	U^{T}
	\left[
	\mu 
	+
	\frac{\Sigma\nu}
	{\nu^{T}\Sigma\nu}
	\left(\bfZ_{1}-\nu^{T}\mu\right)\right],
	\\
	\var\left[\bfZ_{1}\left|\bfZ_{1}\right.\right]
	&
	=
	&  
	0,
	&
	\var\left[\bfZ_{2}\left|\bfZ_{1}\right.\right]
	&
	=
	& 
	U^{T}
	\left[ 
	\Sigma 
	- 
	\frac
	{\Sigma\nu\nu^{T}\Sigma}
	{\nu^{T}\Sigma\nu}
	\right]
	U,
	&
	\mbox{ and }
	&
	\cov\left[\bfZ_{1}, \bfZ_{2}\left|\bfZ_{1}\right.\right]
	&
	=
	& 
	0.
	\end{array}
\end{eqnarray*}
It thus follows that:
\begin{eqnarray*}
	\bbE\left[\bfX\left|\nu^{T}\bfX\right.\right]
	& = & 
	S\cdot\bbE\left[\bfZ\left|\bfZ_{1} = \nu^{T}\bfX\right.\right]
	= 
	\nu\cdot \bbE\left[\bfZ_{1}\left|\bfZ_{1}\right.\right] +
	U \cdot\bbE\left[\bfZ_{2}\left|\bfZ_{1}\right.\right]
	\\
	& = & 
	\left(\nu^{T}\bfX\right)
	\cdot
	\left[\bfI_{p} + UU^{T}\frac{\Sigma}{\nu^{T}\Sigma\nu}\right]
	\cdot
	\nu
	+ 
	UU^{T}
	\left[
	\bfI_{p}
	-
	\frac{\Sigma}{\nu^{T}\Sigma\nu}\nu\nu^{T}
	\right]
	\cdot
	\mu
	, \mbox{and}
	\\
	\var\left[\bfX\left|\nu^{T}\bfX\right.\right]
	& = & 
	S\cdot\var\left[\bfZ\left|\bfZ_{1} = \nu^{T}\bfX\right.\right]\cdot S^{T}
	\\
	& = & 
	\left(UU^{T}\right)
	\cdot
	\left(
	\Sigma 
	- 
	\frac
	{\Sigma\nu\nu^{T}\Sigma}
	{\nu^{T}\Sigma\nu}
	\right)
	\cdot
	\left(UU^{T}\right).
\end{eqnarray*}
By noticing that $UU^{T} = U(U^{T}U)^{-1}U^{T}$ is a projection matrix on the orthogonal complement of the space spanned by $\nu$, 
$UU^{T}$ can be rewritten as $UU^{T} = \bfI_{p} - \nu\left(\nu^{T}\nu\right)^{-1}\nu^{T} = \bfI_{p} - \nu\nu^{T}$.
Using this expression for $UU^{T}$ and some algebra,
\begin{small}
\begin{eqnarray}
	\begin{array}{ccl}
	\bbE\left[\bfX\left|\nu^{T}\bfX\right.\right]
	& = &
	\mu
	+
	\left[
	\bfI_{p}
	+
	\left(\bfI_{p}-\nu\nu^{T}\right)\frac{\Sigma\nu}{\nu^{T}\Sigma\nu}
	\right]
	\nu\nu^{T}
	\left(\bfX-\mu\right)
	\\
	& = &
	\mu
	+
	\frac{\Sigma\nu}{\nu^{T}\Sigma\nu}
	\left(\nu^{T}\bfX-\nu^{T}\mu\right)
	, \mbox{and}
	\\
	\var\left[\bfX\left|\nu^{T}\bfX\right.\right]
	& = &
	\Sigma 
	- 
	\frac{\Sigma \nu\nu^{T}\Sigma}{\nu^{T}\Sigma\nu}.
	\end{array}
	\label{equation:conditional_mean_and_variance_gaussian}
\end{eqnarray}
\end{small}
From \eqref{equation:conditional_mean_and_variance_gaussian}, the second moment of $\bfX$ given $\nu^{T}\bfX$ becomes
\begin{small}
\begin{eqnarray}
	\begin{array}{rcl}
	\bbE\left[\bfX\bfX^{T}\left|\nu^{T}\bfX\right.\right]
	& = & 
	\bbE\left[\bfX\left|\nu^{T}\bfX\right.\right]
	\bbE\left[\bfX\left|\nu^{T}\bfX\right.\right]^{T}
	+
	\var\left[\bfX\left|\nu^{T}\bfX\right.\right]
	\\
	& = &
	\left(\mu\mu^{T}+\Sigma\right)
	+
	\frac{\Sigma\nu\nu^{T}\Sigma}{(\nu^{T}\Sigma\nu)^{2}}
	\cdot
	\left[\left(\nu^{T}\bfX-\nu^{T}\mu\right)^{2}-1\right]
	+
	\left[
	\frac{\Sigma\nu\mu^{T}}{\nu^{T}\Sigma\nu}
	+
	\frac{\mu\nu^{T}\Sigma}{\nu^{T}\Sigma\nu}
	\right]
	\cdot
	\left[\nu^{T}\bfX-\nu^{T}\mu\right].
	\end{array}
	\label{equation:conditional_second_moment_gaussian}
\end{eqnarray}
\end{small}
Given the linear predictor variable as defined in \eqref{equation:definition_margin_variable}, the conditional first and second moments of $\bfX$ are
\begin{eqnarray*}
	\begin{array}{rcl}
	\bbE\left[\bfX\left|\bfM\right.\right]
	& = &
	\mu
	+
	\left(\frac{\bfM-\alpha-\mu^{T}\beta}{\|\beta\|}\right)
	\cdot
	\frac{\Sigma}{\nu^{T}\Sigma\nu}
	\cdot
	\nu
	, \mbox{and}
	\\
	\bbE\left[\bfX\bfX^{T}\left|\bfM\right.\right]
	& = &
	\bbE\left(\bfX\bfX^{T}\right)
	+
	\frac{\Sigma\nu\nu^{T}\Sigma}{(\nu^{T}\Sigma\nu)^{2}}
	\cdot
	\left[\left(\frac{\bfM-\alpha-\beta^{T}\mu}{\|\beta\|}\right)^{2}-1\right]
	+
	\left[
	\frac{\Sigma\nu\mu^{T}}{\nu^{T}\Sigma\nu}
	+
	\frac{\mu\nu^{T}\Sigma}{\nu^{T}\Sigma\nu}
	\right]
	\cdot
	\left[\frac{\bfM-\alpha-\beta^{T}\mu}{\|\beta\|}\right].
	\end{array}
	\label{equation:conditional_second_moment_gaussian_in_terms_of_margin}
\end{eqnarray*}

\subsection{Hessians for SVM and Logistic regression risk functions}

With the expression for the conditional second moments of a multivariate Gaussian variable given the value of one of its projections along the direction $\nu=\frac{\beta}{\|\beta\|}$, equations \eqref{equation:svm_hessian_in_conditional_form} and \eqref{equation:logistic_hessian_in_conditional_form} give expressions for the Hessian of SVM and logistic regression risk functions.

\subsubsection{Hessians for Gaussian predictors}

To simplify the expressions, we partition the Hessian according to the intercept and the predictors $\bfX$ as
\begin{eqnarray*}
	H_{\theta}(t)
	& = &
	\left[
	\begin{array}{cc}
		\left[H_{\theta}(t)\right]_{\alpha, \alpha} 
		& 
		\left[H_{\theta}(t)\right]_{\alpha, \beta} 
		\\ 
		\left[H_{\theta}(t)\right]_{\beta, \alpha} 
		& 
		\left[H_{\theta}(t)\right]_{\beta, \beta} 
	\end{array}
	\right], \mbox{ with }
\end{eqnarray*}
$\left[H_{\theta}(t)\right]_{\alpha, \beta} =\left[H_{\theta}(t)\right]_{\beta, \alpha}^{T}$.
Throughout this section $\tilde{f}$ denotes the density of the linear predictor variable $\bfM$.

\paragraph{Hessian for the Logistic regression classifier:}

Using the expressions derived in Section \ref{section:logistic_risk}, we get
\begin{eqnarray}
	\label{equation:logistic_hessian}
	\begin{array}{rcll}
	\left[H_{\theta}(\theta)\right]_{\alpha, \alpha}
	& = &
	\kappa_{0}
	\\
	\left[H_{\theta}(\theta)\right]_{\beta, \alpha}
	& = &
	\kappa_{0}\cdot\mu + \kappa_{1}\cdot \frac{\Sigma}{\nu^{T}\Sigma\nu}\cdot \nu
	\\
	\left[H_{\theta}(\theta)\right]_{\beta, \beta}
	& = &
	\left(\mu\mu^{T}+\Sigma\right)\cdot \kappa_{0} 
	+ 
	\left[\frac{\Sigma\nu\mu^{T}+\mu\nu^{T}\Sigma}{\nu^{T}\Sigma\nu}\right] \cdot \kappa_{1}
	+
	\left[\frac{\Sigma \nu \nu^{T}\Sigma}{\nu^{T}\Sigma \nu}\right] \cdot \kappa_{2}
	\end{array}
\end{eqnarray}
where $\kappa_{0}$, $\kappa_{1}$ and $\kappa_{2}$ are scalars given by
\begin{eqnarray}
	\label{equation:logistic_hessian_constants}
	\begin{array}{rcll}
	\kappa_{0} 
	& = & 
	\int\left.
	\left[\frac{\exp(m)}{\left(1+\exp(m)\vphantom{\int}\right)^{2}}\right]
	\cdot
	\tilde{f}(m)
	\cdot
	\mbox{d}m\right.
	, 
	\\
	\kappa_{1} 
	& = & 
	\int\left.
	\left(\frac{m-\alpha-\mu^{T}\beta}{\|\beta\|}\right)
	\cdot
	\left[\frac{\exp(m)}{\left(1+\exp(m)\vphantom{\int}\right)^{2}}\right]
	\cdot
	\tilde{f}(m)
	\cdot
	\mbox{d}m\right.
	, \mbox{ and }
	\\
	\kappa_{2} 
	& = & 
	\int\left.
	\left[\left(\frac{m-\alpha-\mu^{T}\beta}{\|\beta\|}\right)^{2}-1\right]
	\cdot
	\left[\frac{\exp(m)}{\left(1+\exp(m)\vphantom{\int}\right)^{2}}\right]
	\cdot
	\tilde{f}(m)
	\cdot
	\mbox{d}m\right.
	. 
	\end{array}
\end{eqnarray}

\paragraph{Hessian for the SVM classifier:}
Using the expressions derived in Section \ref{section:svm_risk}, we get
\begin{eqnarray}
	\label{equation:svm_hessian}
	\begin{array}{rcll}
	\left[H_{\theta}(\theta)\right]_{\alpha, \alpha}
	& = &
	\kappa_{0}
	\\
	\left[H_{\theta}(\theta)\right]_{\beta, \alpha}
	& = &
	\kappa_{0}\cdot\mu + \kappa_{1}\cdot \frac{\Sigma}{\nu^{T}\Sigma\nu}\cdot \nu
	\\
	\left[H_{\theta}(\theta)\right]_{\beta, \beta}
	& = &
	\left(\mu\mu^{T}+\Sigma\right)\cdot \kappa_{0} 
	+ 
	\left[\frac{\Sigma\nu\mu^{T}+\mu\nu^{T}\Sigma}{\nu^{T}\Sigma\nu}\right] \cdot \kappa_{1}
	+
	\left[\frac{\Sigma \nu \nu^{T}\Sigma}{\nu^{T}\Sigma \nu}\right] \cdot \kappa_{2}
	\end{array}
\end{eqnarray}
where $\kappa_{0}$, $\kappa_{1}$ and $\kappa_{2}$ are scalars given by
\begin{eqnarray}
	\label{equation:svm_hessian_constants}
	\begin{array}{rcll}
	\kappa_{0} 
	& = & 
	\tilde{f}(1) 
	\cdot 
	\bbP\left(\bfY=1\left|\bfM=1\right.\right)
	+
	\tilde{f}(-1) 
	\cdot 
	\bbP\left(\bfY=-1\left|\bfM=-1\right.\right)
	, 
	\\
	\kappa_{1} 
	& = & 
	\left(\frac{1-\alpha-\mu^{T}\beta}{\|\beta\|}\right)
	\cdot
	\tilde{f}(1) 
	\cdot 
	\bbP\left(\bfY=1\left|\bfM=1\right.\right)
	\\
	&&
	+
	\left(\frac{-1-\alpha-\mu^{T}\beta}{\|\beta\|}\right)
	\cdot
	\tilde{f}(-1) 
	\cdot 
	\bbP\left(\bfY=-1\left|\bfM=-1\right.\right)
	, \mbox{ and }
	\\
	\kappa_{2} 
	& = & 
	\left[\left(\frac{1-\alpha-\mu^{T}\beta}{\|\beta\|}\right)^{2}-1\right]
	\cdot
	\tilde{f}(1) 
	\cdot 
	\bbP\left(\bfY=1\left|\bfM=1\right.\right)
	\\
	&&
	+
	\left[\left(\frac{-1-\alpha-\mu^{T}\beta}{\|\beta\|}\right)^{2}-1\right]
	\cdot
	\tilde{f}(-1) 
	\cdot 
	\bbP\left(\bfY=-1\left|\bfM=-1\right.\right)
	. 
	\end{array}
\end{eqnarray}

\subsubsection{Hessians for mixed Gaussian predictors}

When $\bfX$ is distributed according to a mixture of $K$ multivariate Gaussians, the conditional moments of $\bfX$ involved in the expression for the risk Hessian can be written as a weighted sum of the corresponding conditional moments for each of the individual Gaussian components as detailed next.
Letting $\pi_{k}$ denote the proportion of the mixture sampled from a multivariate Gaussian with mean $\mu_{k}$ and covariance matrix $\Sigma_{k}$, for $k=1, \ldots, K$, the density function of $\bfX$ is:
\begin{eqnarray*}
	f\left(\bfx\right) 
	& = & 
	\sum_{k=1}^{K}\pi_{k}\cdot\left(\frac{1}{2\pi|\Sigma_{k}|}\right)^{\frac{p}{2}}\exp\left[-\frac{1}{2}\cdot(\bfx-\mu_{k})\Sigma_{k}^{-1}(\bfx-\mu_{k})^{T}\right].
\end{eqnarray*}
The conditional second moment $\bbE\left[\bfX\bfX^{T}\left|\bfM, \mu_{k}, \Sigma_{k}\right.\right]$ given the margin variable $\bfM$ \underline{and} that $\bfX$ was sampled from the component with mean $\mu_{k}$ and covariance $\Sigma_{k}$ follows from \eqref{equation:conditional_second_moment_gaussian} above.
The first and second moment conditional solely on the margin variable $\bfM$ can then be computed as:
\begin{eqnarray*}
	\begin{array}{rcll}		
	\bbE\left[\bfX\left|\bfM\right.\right]
	& = & 
	\sum_{k=1}^{K}
	\bbE\left[\bfX\left|\bfM, \mu_{k}, \Sigma_{k}\right.\right]\cdot\bbP(\mu_{k}, \Sigma_{k}\left|\bfM\right.), 
	&
	\mbox{ and }
	\\
	\bbE\left[\bfX\bfX^{T}\left|\bfM\right.\right]
	& = & 
	\sum_{k=1}^{K}
	\bbE\left[\bfX\bfX^{T}\left|\bfM, \mu_{k}, \Sigma_{k}\right.\right]\cdot\bbP(\mu_{k}, \Sigma_{k}\left|\bfM\right.),
	\end{array}
\end{eqnarray*}
where $\bbP(\mu_{k}, \Sigma_{k}\left|\bfM\right)$ denotes the probability of a point having been sampled from the Gaussian component with center $\mu_{k}$ and variance $\Sigma_{k}$ given the margin variable $\bfM$.
The distribution of $\bfM=\alpha+\bfX^{T}\beta$ is itself a mixture of Gaussians whose density $\tilde{f}$ is
\begin{eqnarray*}
	\tilde{f}\left(m\right) 
	& = & 
	\sum_{k=1}^{K}
	\pi_{k}
	\cdot
	\left(\frac{1}{2\pi|\beta^{T}\Sigma_{k}\beta|}\right)^{\frac{1}{2}}
	\exp\left[-\frac{1}{2}\cdot\frac{(m-\alpha-\beta^{T}\mu_{k})^{2}}{\beta^{T}\Sigma_{k}\beta}\right].
\end{eqnarray*}
An expression for $\bbP(\mu_{k}, \Sigma_{k}\left|\bfM\right.)$ then follows from using Bayes's theorem:
\begin{eqnarray*}
	\bbP\left(\mu_{j}, \Sigma_{j}\left|\bfM\right.\right) 
	& = & 	
	\frac
	{
	\pi_{k} 
	\cdot
	\left(\beta^{T}\Sigma_{k}\beta\right)^{-\frac{1}{2}}
	\cdot 
	\exp
	\left[
	-\frac{1}{2}
	\cdot
	\frac
	{\left(\bfM-\alpha-\beta^{T}\mu_{k}\right)^{2}}
	{\beta^{T}\Sigma_{k}\beta}\right]
	}
	{
	\sum\limits_{\tilde{k}=1}^{K}
	\pi_{\tilde{k}}
	\cdot
	\left(\beta^{T}\Sigma_{\tilde{k}}\beta\right)^{-\frac{1}{2}}
	\cdot 
	\exp\left[-\frac{1}{2}
	\cdot
	\frac
	{\left(\bfM-\alpha-\beta^{T}\mu_{\tilde{k}}\right)^{2}}
	{\beta^{T}\Sigma_{\tilde{k}}\beta}
	\right]
	}.
\end{eqnarray*}

Using \eqref{equation:logistic_hessian} and \eqref{equation:svm_hessian}, we have that the Hessian for SVM and logistic regression risks are given by:
\begin{eqnarray}
	\label{equation:gaussian_mixture_hessian}
	\begin{array}{rcll}
	\left[H_{\theta}(\theta)\right]_{\alpha, \alpha}
	& = &
	\sum\limits_{k=1}^{K}\kappa_{k,0}
	\\
	\left[H_{\theta}(\theta)\right]_{\beta, \alpha}
	& = &
	\sum\limits_{k=1}^{K}
	\left(
	\kappa_{k,0}\cdot\mu_{k} 
	+ 
	\kappa_{k,1}\cdot \frac{\Sigma_{k}\nu}{\nu^{T}\Sigma_{k}\nu}
	\right), 
	&
	\mbox{ and }
	\\
	\left[H_{\theta}(\theta)\right]_{\beta, \beta}
	& = &
	\sum\limits_{k=1}^{K}
	\left[
	\kappa_{k,0} \cdot \left(\mu_{k}\mu_{k}^{T}+\Sigma_{k}\right)
	+ 
	\kappa_{k,1}\cdot\left(\frac{\Sigma_{k}\nu\mu_{k}^{T}+\mu_{k}\nu^{T}\Sigma_{k}}{\nu^{T}\Sigma_{k}\nu}\right)
	+
	\kappa_{k,2}\cdot\left(\frac{\Sigma_{k}\nu \nu^{T}\Sigma_{k}}{\nu^{T}\Sigma_{k} \nu}\right)
	\right],
	\end{array}
\end{eqnarray}
where the scalars $\kappa_{k,0}$, $\kappa_{k,1}$ and $\kappa_{k,2}$ for $k=1, \ldots, K$ are computed according to the risk function.
For each Gaussian component, the $\kappa_{k,0}$, $\kappa_{k,1}$ and $\kappa_{k,2}$ correspond to the $\kappa_{0}$, $\kappa_{1}$ and $\kappa_{2}$ scalars in \eqref{equation:logistic_hessian_constants} and \eqref{equation:svm_hessian_constants} multiplied by the conditional probability of that component given the margin variable as indicated next.

For the logistic risk and mixed Gaussian predictors, the $\kappa$ scalars are:
\begin{eqnarray}
	\label{equation:logistic_hessian_constants_mixed_gaussians}
	\begin{array}{rcll}
	\kappa_{k, 0} 
	& = & 
	\int\left.
	\bbP\left(\mu_{k}, \Sigma_{k}\left|\bfM=m\right.\right)
	\cdot
	\left[\frac{\exp(m)}{\left(1+\exp(m)\vphantom{\int}\right)^{2}}\right]
	\cdot
	\tilde{f}_{k}(m)
	\cdot
	\mbox{d}m\right.
	, 
	\\
	\kappa_{k, 1} 
	& = & 
	\int\left.
	\bbP\left(\mu_{k}, \Sigma_{k}\left|\bfM=m\right.\right)
	\cdot
	\left(\frac{m-\alpha-\mu_{k}^{T}\beta}{\|\beta\|}\right)
	\cdot
	\left[\frac{\exp(m)}{\left(1+\exp(m)\vphantom{\int}\right)^{2}}\right]
	\cdot
	\tilde{f}_{k}(m)
	\cdot
	\mbox{d}m\right.
	, \mbox{ and }
	\\
	\kappa_{k, 2} 
	& = & 
	\int\left.
	\bbP\left(\mu_{k}, \Sigma_{k}\left|\bfM=m\right.\right)
	\cdot
	\left[\left(\frac{m-\alpha-\mu_{k}^{T}\beta}{\|\beta\|}\right)^{2}-1\right]
	\cdot
	\left[\frac{\exp(m)}{\left(1+\exp(m)\vphantom{\int}\right)^{2}}\right]
	\cdot
	\tilde{f}_{k}(m)
	\cdot
	\mbox{d}m\right.
	. 
	\end{array}
\end{eqnarray}

For the SVM risk and mixed Gaussian predictors, the $\kappa$ scalars are:
\begin{eqnarray}
	\label{equation:svm_hessian_constants_mixed_gaussians}
	\begin{array}{rcll}
	\kappa_{k, 0} 
	& = & 
	\bbP\left(\mu_{k}, \Sigma_{k}\left|\bfM=-1\right.\right)
	\cdot
	\tilde{f}_{k}(1) 
	\cdot 
	\bbP\left(\bfY=1\left|\bfM=1\right.\right)
	\\
	& &
	+
	\bbP\left(\mu_{k}, \Sigma_{k}\left|\bfM=1\right.\right)
	\cdot
	\tilde{f}_{k}(-1) 
	\cdot 
	\bbP\left(\bfY=-1\left|\bfM=-1\right.\right)
	, 
	\\
	\kappa_{k, 1} 
	& = & 
	\bbP\left(\mu_{k}, \Sigma_{k}\left|\bfM=1\right.\right)
	\cdot
	\left(\frac{1-\alpha-\mu_{k}^{T}\beta}{\|\beta\|}\right)
	\cdot
	\tilde{f}_{k}(1) 
	\cdot 
	\bbP\left(\bfY=1\left|\bfM=1\right.\right)
	\\
	&&
	+
	\bbP\left(\mu_{k}, \Sigma_{k}\left|\bfM=-1\right.\right)
	\cdot
	\left(\frac{-1-\alpha-\mu_{k}^{T}\beta}{\|\beta\|}\right)
	\cdot
	\tilde{f}_{k}(-1) 
	\cdot 
	\bbP\left(\bfY=-1\left|\bfM=-1\right.\right)
	, \mbox{ and }
	\\
	\kappa_{k, 2} 
	& = & 
	\bbP\left(\mu_{k}, \Sigma_{k}\left|\bfM=1\right.\right)
	\cdot
	\left[\left(\frac{1-\alpha-\mu_{k}^{T}\beta}{\|\beta\|}\right)^{2}-1\right]
	\cdot
	\tilde{f}_{k}(1) 
	\cdot 
	\bbP\left(\bfY=1\left|\bfM=1\right.\right)
	\\
	&&
	+
	\bbP\left(\mu_{k}, \Sigma_{k}\left|\bfM=-1\right.\right)
	\cdot
	\left[\left(\frac{-1-\alpha-\mu_{k}^{T}\beta}{\|\beta\|}\right)^{2}-1\right]
	\cdot
	\tilde{f}_{k}(-1) 
	\cdot 
	\bbP\left(\bfY=-1\left|\bfM=-1\right.\right).
	\end{array}
\end{eqnarray}

\nocite{meinshausen:2006:lasso-type-recovery-of-sparse-representations-for-high-dimensional-data}
\nocite{li:2008:variable-selection-in-semiparametric-regression-modeling}
\nocite{zhang:2009:some-sharp-performance-bounds-for-least-squares-regression-with-l1-regularization}

\begin{small}
\singlespacing
\bibliographystyle{acmtrans}
\bibliography{my_papers}
\end{small}

\end{document}